\theoremstyle{plain}
\newtheorem{thm}{Theorem}[section]
\newtheorem{coroll}[thm]{Corollary}
\newtheorem{lemma}[thm]{Lemma}
\newtheorem{prop}[thm]{Proposition}
\newtheorem{conj}[thm]{Conjecture}
\newtheorem*{thm*}{Theorem}
\theoremstyle{remark}
\newtheorem{remark}[thm]{Remark}
\newtheorem{example}[thm]{Example}
\newtheorem{defn}[thm]{Definition}
\newcommand\hollowslash{\setbox0=\hbox{/}\def\holwd{3pt}%
  \stackengine{-.3pt}{/}{\rlap{\kern-1pt\rule{\holwd}{.4pt}}}{O}{r}{F}{F}{S}%
  \kern\dimexpr\holwd-\wd0-.2pt\relax%
  \stackengine{-.4pt}{/}{\llap{\rule{\holwd}{.4pt}\kern-1pt}}{U}{l}{F}{F}{S}%
}
\tikzset{
  symbol/.style={
    draw=none,
    every to/.append style={
      edge node={node [sloped, allow upside down, auto=false]{$#1$}}}
  }
}
\DeclareMathOperator{\Sym}{Sym}
\DeclareMathOperator{\Alb}{Alb}
\DeclareMathOperator{\alb}{alb}
\DeclareMathOperator{\Spec}{Spec}
\newcommand{\bQ}{\mathbb{Q}}
\newcommand{\nn}{\mathbb{N}}
\renewcommand{\P}{\mathbb{P}}
\newcommand{\pp}{\mathbb{P}}
\newcommand{\bR}{\mathbb{R}}
\newcommand{\bZ}{\mathbb{Z}}
\newcommand{\Pic}{\operatorname{Pic}}
\newcommand{\rk}{\operatorname{rk}}
\newcommand{\Kum}{\operatorname{Kum}}
\newcommand{\dendegs}{\delta}
\newcommand{\potdendegs}{\wp}
\newcommand{\ind}{\operatorname{ind}}
\newcommand{\gon}{\operatorname{gon}}
\newcommand{\im}{\operatorname{Im}}
\newcommand{\red}{\operatorname{red}}
\newcommand{\AV}{\operatorname{AV}}
\newcommand{\effind}{\operatorname{eff-ind}}
\newcommand{\defi}[1]{\textsf{#1}} 
\title{Density of algebraic points on products of curves}
 \author[J.~Berg]{Jennifer Berg}
 \address{Jennifer Berg: Bucknell University, Department of Mathematics, Lewisburg, PA 17837, USA}
\email{jsb047@bucknell.edu}
 \urladdr{\url{https://sites.google.com/view/jenberg}}
 \author[Y.~Fu]{Yu Fu}
 \address{Yu Fu: Caltech, Department of Mathematics, 1200 E California Blvd, Pasadena, CA 91125, USA}
 \email{yufu@caltech.edu}
 \urladdr{\url{https://sites.google.com/wisc.edu/jerryyfu/home}}
 \author[E.~Gazaki]{Evangelia Gazaki}
 \address{Evangelia Gazaki: University of Virginia, Department of Mathematics, 141 Cabell Drive, Charlottesville, VA 22904, USA}
\email{eg4va@virginia.edu}
 \urladdr{\url{https://sites.google.com/view/valiagazakihomepage/home}}
 \author[M.~Porzio]{Morena Porzio}
 \address{Morena Porzio: Department of Mathematics, Columbia University, New York, NY 10027, USA}
 \email{mp3947@columbia.edu}
 \urladdr{\url{https://www.math.columbia.edu/~morenaporzio/}}
 \author[J.~Rawson]{James Rawson} 
 \address{James Rawson: Mathematics Institute, University of Warwick, Coventry, CV4 7AL, United Kingdom}
 \email{james.rawson@warwick.ac.ak}
 \urladdr{\url{https://warwick.ac.uk/fac/sci/maths/people/staff/rawson/}}
 \author[I.~Vogt]{Isabel Vogt}
\address{Isabel Vogt: Brown University, Department of Mathematics, Box 1917, 151 Thayer Street, Providence, RI 02912, USA}
 \email{ivogt.math@gmail.com}
 \urladdr{\url{https://www.math.brown.edu/ivogt/}}
\begin{document}

\begin{abstract}
    In this paper, we initiate the systematic study of density of algebraic points on surfaces.  We  give an effective asymptotic range in which the density degree set  has regular behavior dictated by the index.  By contrast, in small degree, the question of density is subtle and depends on the arithmetic of the curves.  We give several explicit examples displaying these different behaviors, including products of genus \(2\) curves  with and without dense quadratic points. 
    These results for products of curves have applications to questions about algebraic points on closely related surfaces, such as rank growth on abelian surfaces and bielliptic surfaces.
\end{abstract}

\maketitle

\section{Introduction}

Let \(X\) be a smooth, projective, and geometrically connected scheme defined over a number field \(k\).  
We will refer to such a scheme as a \defi{nice variety}.
Write
\(\dendegs(X/k)\) for the \defi{density degree set} of \(X/k\): the set of natural numbers \(d\) for which the closed points of degree $d$ are Zariski dense in $X$. 
We denote by $\nn$ the set of all positive integers. 
For a nice curve $C$ of genus $g$, the structure of the set \(\dendegs(C/k)\) is known to reflect the geometry of \(C\).  
For instance, $\min \dendegs(C/k)$ is related to the gonality of \(C\) \cite[Corollary 5.2.2]{VV}, and 
\begin{equation}\label{eq:boundgenusdendegsetforcurve}
    \dendegs(C/k) \cap \nn_{ \geq 2g} =  \ind(C/k) \mathbb{N} \cap \nn_{ \geq 2g},
\end{equation} 
where \(\ind(C/k)\) denotes the \defi{index} of \(C\)
\cite[Proposition 5.1.1]{VV}.

Far less is known when the dimension of \(X\) is at least \(2\).  
In this paper, we initiate the study of Zariski density of algebraic points on surfaces in a case that builds upon our understanding for curves. The primary focus is the product of two nice curves \(X = C \times D\) over \(k\).  
The structure of \(\dendegs(C \times D/k)\) is constrained by the structures of \(\dendegs(C/k)\) and \(\dendegs(D/k)\): if the genus of one of \(C\) or \(D\) is at most 9 we show that
\begin{equation}\label{eq:basic_range}
\dendegs(C/k)\cdot  \dendegs(D/k) \subseteq \dendegs(C \times D/k) \subseteq \dendegs(C/k) \cap \dendegs(D/k).
\end{equation}
For curves of arbitrary genera, the upper bound still holds but the lower bound depends on degrees of the \(\mathbb{P}^1\)-parameterized points (see \Cref{inclusionsprod}).

Asymptotically, it is known that for any nice variety \(X\), the density degree set \(\dendegs(X/k)\) contains all sufficiently large multiplies of \(\ind(X/k)\) \cite[Proposition B.0.1]{VV}, though this result is not effective.  Our first main result is an analogue of \eqref{eq:boundgenusdendegsetforcurve}
for \(X = C \times D\).  For simplicity, we state the result in the case that \(C\) and \(D\) both have a \(k\)-point; the general result is Theorem~\ref{thm:generalN}.

\begin{thm}[{Corollary~\ref{coroll:inclusionofN(C,D)}}]\label{thm:asym}
    Let $C$ and $D$ be nice curves over $k$ of genera $g_C$ and $g_D$. Assume further that $C$ and $D$ both have $k$-rational points, then 
    \[
    \nn_{ \geq 2(3g_D g_D + g_C + g_D) } \subseteq \dendegs(C \times D / k).
    \]
\end{thm}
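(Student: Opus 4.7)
The plan is to reduce to the curve density result~\eqref{eq:boundgenusdendegsetforcurve} by exhibiting a pencil of smooth curves on $S := C \times D$ each carrying a $k$-rational point and of genus exactly $3 g_C g_D + g_C + g_D$.

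Using the given rational points $P_0 \in C(k)$ and $Q_0 \in D(k)$, I would take $k$-rational line bundles $A$ on $C$ of degree $g_C + 1$ (e.g.\ $A := \cO_C((g_C+1) P_0)$) and $B$ on $D$ of degree $g_D + 1$ (e.g.\ $B := \cO_D((g_D+1) Q_0)$), and form $L := p_C^*(A) \otimes p_D^*(B)$ on $S$. Riemann--Roch gives $h^0(A), h^0(B) \geq 2$, so $h^0(S, L) \geq 4$ by K\"unneth. The class of $L$ in $\mathrm{NS}(S)$ is $(g_C+1)[F] + (g_D+1)[G]$, where $F, G$ are the two natural fibers, and so adjunction applied to any smooth irreducible $Y \in |L|$ gives
\[
g(Y) = 1 + \tfrac{1}{2}\bigl(2(g_C+1)(g_D+1) + (g_C+1)(2g_D-2) + (g_D+1)(2g_C-2)\bigr) = 3 g_C g_D + g_C + g_D.
\]

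The next step is to extract a pencil through a $k$-point. The subspace $V \subseteq H^0(S, L)$ of sections vanishing at $z_0 := (P_0, Q_0)$ has dimension at least $3$, and any generic $2$-dimensional subspace of $V$ spans a pencil $\{Y_t\}_{t \in \P^1}$, all of whose members pass through $z_0$. A Bertini-type argument shows that the generic $Y_t$ is smooth irreducible (hence of the genus above) and contains the $k$-point $z_0$, so $\ind(Y_t/k) = 1$. Applying~\eqref{eq:boundgenusdendegsetforcurve} yields
\[
\dendegs(Y_t/k) \supseteq \nn_{\geq 2(3 g_C g_D + g_C + g_D)}.
\]
Finally, for any $d$ in this range and any non-empty open $U \subseteq S$, a generic $t \in \P^1(k)$ produces a smooth $Y_t$ meeting $U$, and any degree-$d$ closed point of $Y_t$ lying in $U$ is also a degree-$d$ closed point of $S$ inside $U$. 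Varying $t$ over $\P^1(k)$ then yields a Zariski-dense set of degree-$d$ points on $S$.

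The main technical hurdle is the Bertini step. When $A$ is the specific bundle $\cO_C((g_C+1)P_0)$, its unique section vanishing at $P_0$ does so with multiplicity $g_C + 1$, and the members of the pencil through $z_0$ can acquire a singularity at $z_0$ of type $\alpha x^{g_C+1} + \beta y^{g_D+1} = 0$, which invalidates a direct application of~\eqref{eq:boundgenusdendegsetforcurve} to $Y_t$. The remedy is either to choose $A$ and $B$ more generically among the $k$-rational line bundles of degrees $g_C + 1$ and $g_D + 1$ so that the generic member through $z_0$ is smooth at $z_0$, or to pass to the blow-up of $S$ at $z_0$ (with an appropriately twisted line bundle) and verify that the genus bound survives on the strict transform. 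This is presumably the technical core of the more general Theorem~\ref{thm:generalN}, of which the corollary above is the index-$1$ specialization.
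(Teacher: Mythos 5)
Your construction is, up to reformulation, the same as the paper's: the generic member of $|p_C^*A \otimes p_D^*B|$ with $A,B$ the (pointed) gonality pencils is exactly the paper's fiber-product curve $X_\gamma$ (the preimage under $(f_C,f_D)\colon C\times D \to \P^1\times\P^1$ of a $(1,1)$-curve, i.e.\ the graph of an automorphism of $\P^1$), and your adjunction computation reproduces their Riemann--Hurwitz genus $3g_Cg_Dg + \dots = 3g_Cg_D+g_C+g_D$. The genuine gap is exactly at the step you flag as the ``technical hurdle,'' and neither of your proposed remedies closes it. The point of forcing the curves through $z_0=(P_0,Q_0)$ is not the genus bound but the \emph{index}: \eqref{eq:boundgenusdendegsetforcurve} gives all $d\geq 2g$ only in $\ind\cdot\nn$, so you must produce a $k$-rational point on the \emph{normalization} $Y_t^\nu$. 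With $A=\cO_C((g_C+1)P_0)$, $B=\cO_D((g_D+1)Q_0)$ (and $P_0,Q_0$ not Weierstrass-type special points), every member through $z_0$ has local equation $b\,x^{g_C+1}+c\,y^{g_D+1}+\text{h.o.t.}=0$ at $z_0$; when $s=\gcd(g_C+1,g_D+1)>1$ the branches at $z_0$ need not contain a $k$-rational one, so the normalization may have no rational point above $z_0$ and its index is no longer controlled. Your remedy (b) (blow up and check ``the genus bound survives'') tracks only the genus and says nothing about rationality of the branches, which is the actual issue; your remedy (a) (choose $A,B$ generically among $k$-rational bundles of those degrees, so that some section vanishes to order exactly one at $P_0$, resp.\ $Q_0$) is not available in general, because the $k$-points of $\Pic^{g_C+1}_C$ and $\Pic^{g_D+1}_D$ may be finite (rank-zero Jacobians with little torsion) and all of them special, and the only effective rational divisors you are guaranteed are multiples of $P_0$ and $Q_0$.

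The paper fills precisely this hole with Lemma~\ref{lem:split_over_node}: a formal-local analysis showing that the normalization has a $k$-point over the singular point provided the ratio of the leading coefficients ($b/c$ above, corrected by units) is an $s$-th power, and that this can be arranged for infinitely many members of the family by rescaling (choosing $\gamma$, equivalently moving in your pencil, where the ratio is a nonconstant fractional-linear function of the pencil parameter and hence hits $(b/c)(k^\times)^s$ infinitely often). Your pencil does contain infinitely many such members, so your argument is repairable, but the repair is exactly this missing lemma, and as written the proposal does not identify the condition, let alone verify it. (The remaining steps---Bertini irreducibility of the generic member of the pencil, the sweep of $C\times D$ by the $Y_t$, and transferring dense degree-$d$ points from $Y_t^\nu$ to $S$ away from the finitely many singular points---are fine with standard arguments.)
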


For \(g_C, g_D \leq 2\), we can sharpen this containment using the explicit geometry of low genus curves. As above, we state our result when \(C\) and \(D\) both have a \(k\)-point.

\begin{thm}[{Theorem~\ref{Ec:3,5,7}, Theorem~\ref{thm:nngeq9indeltaCxE}, \Cref{prop:summary1}}]\label{thm:preciseresults}
Suppose that \(C\) and \(D\) are nice curves of genera \(g_C\) and \(g_D\) and assume that $C$ and $D$ both have $k$-rational points.
\begin{enumerate}[label = (\roman*)]
\item\label{main:g1} If \(g_C = g_D = 1\), then
$\nn \setminus \{1, 2\} \subseteq \dendegs(C \times D/k) \subseteq \nn$.
\item \label{main:g1g2} If \(g_C = 1\) and \(g_D = 2\), then
\(
    \nn_{\geq 2} \setminus \{2, 3, 5, 7\} \subseteq \dendegs(C\times D/k) \subseteq \nn_{\geq 2} .
\)
\item \label{main:g2} If \(g_C = g_D = 2\), then
\(\nn_{\geq 2} \setminus \{2, 3, 5, 6, 7, 9, 11\} \subseteq \dendegs(C\times D/k) \subseteq \nn_{\geq 2} .\)
\end{enumerate}
\end{thm}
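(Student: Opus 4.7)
The plan is to prove the upper bounds using~\eqref{eq:basic_range} together with Faltings's theorem, to establish the lower bounds via the product inclusion from~\eqref{eq:basic_range} combined with the asymptotic Theorem~\ref{thm:asym}, and then to handle the finitely many small degrees in each case by explicit constructions on low-genus curves. For the \emph{upper bounds}, the inclusion $\dendegs(C \times D/k) \subseteq \dendegs(C/k) \cap \dendegs(D/k)$ from~\eqref{eq:basic_range} is immediate, and Faltings's theorem forces $1 \notin \dendegs(D/k)$ whenever $D$ has genus~$2$, yielding $\dendegs(C \times D/k) \subseteq \nn_{\geq 2}$ in cases~(ii) and~(iii).

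For the \emph{lower bounds}, Theorem~\ref{thm:asym} gives $\nn_{\geq 2(3g_C g_D + g_C + g_D)} \subseteq \dendegs(C \times D/k)$, i.e., $\nn_{\geq 10}$, $\nn_{\geq 18}$, and $\nn_{\geq 32}$ in cases~(i), (ii), (iii) respectively, leaving only finitely many small degrees to check. For these I would combine the multiplicative inclusion $\dendegs(C/k) \cdot \dendegs(D/k) \subseteq \dendegs(C \times D/k)$ from~\eqref{eq:basic_range} with explicit constructions of dense degree-$d$ closed points on each factor. Every nice curve of genus $\leq 2$ with a $k$-point admits a degree-$2$ map to $\P^1$, so $2 \in \dendegs(C/k) \cap \dendegs(D/k)$; for an elliptic curve $E$ with $k$-point $O$, the linear system $|dO|$ combined with Hilbert irreducibility for hyperplane sections of $E \hookrightarrow \P^{d-1}$ produces $d \in \dendegs(E/k)$ for every $d \geq 2$; and for a genus-$2$ curve $D$ with $k$-point $P_0$, the pencil $|3P_0|$ together with the linear systems $|dP_0|$ for $d \geq 4$ supplies the needed small-degree density on the genus-$2$ factor. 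Multiplying these across the product delivers all even degrees and all composite small degrees in the claimed lower bounds.

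The \emph{main obstacle} is to show that the small odd \emph{prime} (or otherwise stubborn) degrees---$3, 5, 7$ in case~(i), $9$ and the prime range $11, 13, 17$ in case~(ii), and the odd range $13, 15, \ldots, 31$ in case~(iii)---lie in $\dendegs(C \times D/k)$ uniformly across all admissible curves. These degrees cannot be decomposed as $d_1 \cdot d_2$ with $d_i$ in the density degree sets of the factors whenever those sets are minimal (e.g., rank-$0$ elliptic curves, or genus-$2$ curves without atypical pencils), so they require a genuinely two-dimensional construction on $C \times D$. The idea is to produce a degree-$d$ closed point as a pair $(P, Q)$ whose residue fields $k(P)$ and $k(Q)$ have compositum of degree exactly~$d$, controlling the Galois orbit structure by Hilbert irreducibility on an appropriate two-parameter family (for instance, combining a pencil on $C$ with a linear system on $D$, or base-changing one factor to a small extension in which the other Jacobian picks up additional Mordell--Weil rank). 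Verifying that such families yield Zariski-dense closed points of the exact target degree, rather than collapsing to points of smaller degree, is the technical heart of the argument, and is encapsulated in the cited Theorem~\ref{Ec:3,5,7}, Theorem~\ref{thm:nngeq9indeltaCxE}, and Proposition~\ref{prop:summary1}.
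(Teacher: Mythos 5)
Your reduction steps are fine (upper bounds via \eqref{eq:basic_range} and Faltings, composites via \(\dendegs(C/k)\cdot\dendegs(D/k)\), large degrees via Theorem~\ref{thm:asym}), but the proof has a genuine gap at exactly the degrees that constitute the content of the theorem. For the ``stubborn'' degrees --- \(3,5,7\) in (i); \(7,9,11,13\) (and \(17\), given your weaker asymptotic bound) in (ii); the odd degrees up to \(31\) in (iii) --- you do not give an argument: you sketch the idea of pairs \((P,Q)\) whose residue fields have compositum of exact degree \(d\), and then state that the verification ``is encapsulated in the cited Theorem~\ref{Ec:3,5,7}, Theorem~\ref{thm:nngeq9indeltaCxE}, and Proposition~\ref{prop:summary1}.'' Since the statement you are asked to prove \emph{is} a summary of those three results, this is circular. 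Moreover, the sketched mechanism does not obviously work: e.g.\ in case (i) with both elliptic curves of rank \(0\), pairing a dense family of cubic points on \(E_1\) with the finitely many points of \(E_2(k)\) only produces points on finitely many fibers \(E_1\times\{Q\}\), which are not Zariski dense; what is needed is \emph{correlated} rank growth of both curves over a moving family of cubic (resp.\ degree \(5,7,\dots\)) fields. The paper obtains this by an explicit one-parameter family of curves inside \(C\times D\) (the fiber-product construction of Lemma~\ref{lem:fiber_product_construction}, with rational points supplied by Lemma~\ref{lem:split_over_node}) of genus \(4\), \(6\)--\(7\), or \(8\)--\(9\) in the three cases, on which the sharpened Riemann--Roch/Castelnuovo--Severi results (Lemma~\ref{curve_asymptotic}, Lemma~\ref{cs_asymptotic}) give \(\mathbb{P}^1\)-parameterized points of degrees \(2g-3\), \(2g-1\), \(2g-2-d\) below the generic \(2g\) threshold. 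Your proposal contains no substitute for this construction, so the heart of the argument is missing.

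A secondary error: you assert that a pointed genus-\(2\) curve \(D\) always has \(3\in\dendegs(D/k)\) via the pencil \(|3P_0|\). If \(P_0\) is a Weierstrass point then \(3P_0\sim K_D+P_0\) and \(|3P_0|\) has \(P_0\) as a base point, so it only gives the degree-\(2\) map; by Lemma~\ref{lemma:deltagenus2} and \cite[Example 5.1.3]{VV} there are pointed genus-\(2\) curves with \(\dendegs(D/k)=\{2\}\cup\nn_{\geq 4}\). Consequently the composite degrees \(9\) (case (ii)) and \(9,15,21,27\) (case (iii)) cannot in general be reached by the product inclusion and must also come out of the surface-level covering-curve argument --- which is again precisely what Theorem~\ref{thm:nngeq9indeltaCxE} and Proposition~\ref{prop:summary1} supply and what your proposal defers to them.
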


\begin{remark}
In each case of Theorem~\ref{thm:preciseresults}, the results can be sharpened further with additional assumptions on the curves.

If \(C\) and \(D\) are elliptic curves, we show in \Cref{cor:prod_ell_curve_pos_rank}  that $1\in \dendegs(C \times D/k)$ if and only if both curves have positive rank. In Theorem~\ref{Ec w/o bad j-invariant} we show that \(2 \in \dendegs(C \times D/k)\) provided, for instance, that one of \(C\) or \(D\) has \(j\)-invariant different from \(0, 1728\). 

In Theorem~\ref{thm:preciseresults} part~\ref{main:g1g2}, if $C$ is an isogeny factor of the Jacobian of $D$, we show in Proposition~\ref{isogenyfactor} that $\dendegs(C \times D / k) = \dendegs(D / k)$. 
Further results depend on whether $D$ has a rational Weierstrass point (see \Cref{prop:index1genus2andE} and Theorem~\ref{thm:nngeq9indeltaCxE}).

Finally, when both curves have genus $2$, our general bounds depend on the index of each of the curves and whether the corresponding product surface has index \(1\), $2$ or $4$
(cf.~\Cref{prop:summary1} and \Cref{cor:summary2}).
\end{remark}

To prove Theorem~\ref{thm:asym}, we construct explicit collections of curves \(X_\gamma\) of bounded genus in $C \times D$ passing through fixed sets of closed points, and such that their union is dense in \(C \times D\). 
Consequently, applying the bound \eqref{eq:boundgenusdendegsetforcurve} on each curve, we conclude that for any $d \geq 2 g(X_\gamma)$, the surface has dense degree \(d\) points. To prove Theorem~\ref{thm:preciseresults}, we both construct lower genus families and exploit the fact that \eqref{eq:boundgenusdendegsetforcurve} can be sharpened with more explicit knowledge of the divisors on \(X_\gamma\). 

\subsection{Small Degree Points}
The method of covering the surface with curves and applying the previous strategy, which relies on the Riemann--Roch Theorem, is impractical or impossible for points of very small degree.  Instead, we construct these points by pulling back points from other varieties, which requires proving density of rational points on surfaces.  This is quite difficult and entangled with several deep conjectures in number theory. For example, 
we show in \Cref{thm:parity_3} that the Parity Conjecture sometimes implies that \(3 \in \dendegs(C\times D/k)\) when \(g_C=1\) and \(g_D = 2\).

For quadratic points on \(C \times D\) with \(g_C, g_D \leq 2\), 
we consider the quotient surface \(S \colonequals (C \times D)/\iota\), where \(\iota\) is a hyperelliptic involution.  
Since \(C \times D \to S\) is a finite map of degree \(2\), if \(1 \not\in \dendegs(C \times D/k)\) and \(1 \in \dendegs(S/k)\), then we can conclude via pullback that \(2 \in \dendegs(C \times D/k)\).
(In fact, if $\Pic^0_C$ and $\Pic_D^0$ both have rank $0$, the density of quadratic points on \(C \times D\) can be reduced to the density of rational points on finitely many surfaces of the form \((C \times D)/\iota\).)

In low genus, the surface \(S\) has interesting geometry.
When \(C\) and \(D\) are elliptic curves, \(S\) is the Kummer $K3$ surface associated to the abelian surface $C\times D$.
When $g_C=1, g_D=2$, \(S\) is a quadratic twist elliptic fibration over $\mathbb{P}^1$. 
Conjecturally, all such surfaces have dense rational points \cite[Conjecture 1.1]{DeJard}. 
In Section~\ref{sec:quadraticpoints} we give some examples when this conjecture can be verified.

Lastly, when both \(C\) and \(D\) have genus $2$, the surface \(S\) is of general type, and the Bombieri-Lang conjecture predicts that it does not have dense rational points. 
In \Cref{prop: 2notindelta}, we give an example, based on an idea of Adam Logan, of genus \(2\) curves \(C\) and \(D\) with \(\dendegs(C/k) = \dendegs(D/k) = 2 \nn\) for which \(2 \notin \dendegs(C \times D/k)\) despite the fact that there exist degree \(2\) points on \(C \times D\). 

On the other hand, when \(\Pic^0_C\) or \(\Pic^0_D\) has positive rank, there can be additional sources of quadratic points.  For example, we show in Proposition~\ref{CxC} that the quadratic points are dense on a self-product $C\times C$ of a genus $2$ curve with $\Pic_C^0$ of positive rank by pulling back rational points from $\Pic_C^0$.

Some of the arithmetic complexity is eased by considering the \defi{potential density degree set}.  Recall that for a variety \(X/k\), an integer \(d\) is in the potential density degree set \(\potdendegs(X/k)\) if there exists a finite extension \(k'/k\) for which \(d \in \dendegs(X/k')\).  If \(X = C \times D\) is a product of curves and \(C\) has genus \(0\) or \(1\), we have \(\potdendegs(C \times D/k) = \potdendegs(D/k)\). In the case of products of genus \(2\) curves, we give a nearly complete description of the potential density degree set in Theorem~\ref{thm:pot}.

\subsection{Applications to non-product surfaces}
One of the main reasons for studying product surfaces is their close relation to the more well-studied setting of curves. 
Another reason is their closeness to several other interesting families of surfaces. Our methods and results can be applied to principally polarized abelian surfaces and to biellliptic surfaces in particular.

In both cases, we make use of the fact they are covered by a product of curves.  One subtlety is that the image of a degree \(d\) point under a finite map may be a smaller degree point.  To circumvent this issue, we  utilize constructions of points where we can guarantee that the residue field is a primitive extension of \(k\). 

\begin{thm}[{cf.~Theorem~\ref{thm:Jac of genus 2}, Theorem~\ref{thm:isogeneousabsurf}}]
    If \(A/k\) is an abelian surface that is isogenous (over \(k\)) to a principally polarized abelian surface, then
    \[\nn_{\geq 3} \subseteq \dendegs(A/k).\]
    If, furthermore, \(A\) is isogenous to the Jacobian of a genus \(2\) curve \(C/k\), then \(2 \in \dendegs(A/k)\).
\end{thm}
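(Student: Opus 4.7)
The plan is to use the classification of principally polarized abelian surfaces: any such $P/k$ is either isomorphic (over $k$) to a product $E_1\times E_2$ of elliptic curves with the product polarization, or to the Jacobian $\Jac(C)$ of a smooth genus $2$ curve $C/k$ with the canonical polarization. Given a $k$-isogeny $\varphi\colon P\to A$, I would produce dense degree $d$ closed points on $P$ using the results of the paper and then push them forward to $A$ along $\varphi$. The subtlety, flagged in the introduction, is that $\varphi$ may collapse the residue field of a closed point to a proper subfield, thereby lowering the degree.

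The main tool to address this is the primitivity principle: if $R\in P$ is a closed point of degree $d$ whose residue field $k(R)/k$ is a primitive extension (i.e.\ has no proper intermediate subfield), then $\varphi(R)\in A$ has residue field equal to either $k$ or $k(R)$, hence degree $1$ or $d$. Since $A(k)$ is finitely generated by Mordell--Weil and $\ker\varphi$ is finite, the subset $\varphi^{-1}(A(k))\subset P(\bar k)$ is countable; because our constructions produce degree $d$ points in positive-dimensional families on $P$, excluding this countable set preserves Zariski density, and the images therefore yield dense degree $d$ points in $A$. Primitivity is automatic whenever $d$ is prime (since then $d$ has no nontrivial divisors), so for prime $d$ -- including $d=2$ in the Jacobian case -- the primitivity check is free. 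For composite $d\geq 4$, one must choose the construction so that the residue fields have no proper intermediate subfields, e.g.\ by sweeping $P$ with curves whose generic degree $d$ divisors give rise to residue fields with primitive Galois action.

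For $\nn_{\geq 3}\subseteq \dendegs(A/k)$ when $P\cong E_1\times E_2$: the factors $E_i$ are $k$-abelian varieties, so they have the identity as a $k$-rational point, and Theorem~\ref{thm:preciseresults}\ref{main:g1} gives $\nn_{\geq 3}\subseteq \dendegs(E_1\times E_2 /k)$. Applying the primitivity principle pushes this through to $\nn_{\geq 3}\subseteq \dendegs(A/k)$. When $P\cong \Jac(C)$, I would use the surjective difference map $C\times C\to \Jac(C)$, $(x,y)\mapsto [x-y]$, which is defined over $k$ without requiring $C(k)\neq\emptyset$; dense primitive degree $d$ closed points on $C\times C$ (from Theorem~\ref{thm:preciseresults}\ref{main:g2} when $C$ has a $k$-point, or from the general Theorem~\ref{thm:asym} otherwise) push forward to dense degree $d$ closed points on $\Jac(C)$, and then on $A$ via $\varphi$.

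For $2\in \dendegs(A/k)$ when $A$ is isogenous to $\Jac(C)$: I would exploit the $k$-rational hyperelliptic double cover $C\to \pp^1$. Pulling back $\pp^1(k)$ yields a Zariski dense set of closed points of $C$ of degree at most $2$, with a Zariski dense subset of degree exactly $2$. Combining such points gives dense degree $2$ closed points of $C\times C$ whose image under the difference map is a degree $2$ closed point of $\Jac(C)$ with automatically primitive (quadratic) residue field; pushing along $\varphi$ yields dense degree $2$ closed points on $A$ outside a countable exceptional set. The main obstacle throughout is the primitivity step for composite $d\geq 4$: for $d$ prime (including $d=2$) the argument is immediate, but for composite $d$ one must carefully choose the sweeping curves on $P$ so that the resulting residue fields do not decompose as a compositum of proper subextensions.
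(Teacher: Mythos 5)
You follow the paper's broad outline (split the principally polarized surface into the cases $E_1\times E_2$ and $\Jac(C)$, produce dense degree $d$ points there, transfer along the isogeny), but the two load-bearing steps of your transfer and of your Jacobian case have genuine gaps. First, the transfer: you rule out the bad case $\varphi(R)\in A(k)$ by saying $\varphi^{-1}(A(k))$ is countable and that removing a countable set preserves Zariski density. This is not an argument: the set of all degree $d$ closed points of $P$ is itself countable, so countability of the bad locus proves nothing, and in fact $\varphi^{-1}(A(k))$ is a finitely generated subgroup of $P(\bar k)$ which can perfectly well be Zariski dense (for instance whenever $A(k)$ has positive rank), so a priori every point you produce could be bad. (A repair is possible -- the bad points are all defined over one fixed number field, and the sweeping curves have genus at least $2$, so Faltings bounds the bad points on each curve -- but you do not make it.) The paper's Theorem~\ref{thm:isogeneousabsurf} avoids any exceptional set: it takes the complementary isogeny $\mu\colon B\to A$ with $\gamma\circ\mu=[m]$ and constructs points $x\in B$ with $k([m]x)=k(x)$ (via $S_d$-primitivity together with $\langle x\rangle\cap B(k)=0$, as in Corollaries~\ref{cor:k(mx)=k(x)} and~\ref{cor:k(P1,P2)=k(nP1,nP2)}, and via $\bZ$-linear independence for $d=4$, where the available points are biquadratic, hence \emph{not} primitive -- a case your primitivity scheme cannot handle and for which you give no construction); then $k([m]x)\subseteq k(\mu(x))\subseteq k(x)$ forces $\deg\mu(x)=d$ outright. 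Note also that Theorem~\ref{thm:preciseresults}\eqref{main:g1} alone does not supply primitive residue fields; the refinements in Corollary~\ref{cor:k(P1,P2)=k(nP1,nP2)} are exactly what is needed and proved for this purpose.

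Second, your sources of points in the Jacobian case are insufficient. Theorem~\ref{thm:preciseresults}\eqref{main:g2} omits $3,5,6,7,9,11$ and Theorem~\ref{thm:asym} only starts at $32$ when $g_C=g_D=2$, so the route through $C\times C$ cannot produce the degrees $3\le d\le 11$ required for $\nn_{\geq 3}\subseteq\dendegs(A/k)$; the paper instead proves $\nn_{\geq 2}\subseteq\dendegs(\Pic^0_C/k)$ directly (Theorem~\ref{thm:Jac of genus 2}) by exhibiting curves inside the Jacobian (the Abel--Jacobi image of $C$ and a trigonal curve of arithmetic genus $4$, index $1$) and spreading them with multiplication maps. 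For $d=2$ your construction does not give density at all: the quadratic points of $C$ pulled back from $\pp^1(k)$ are the hyperelliptic fibers $\{x,\iota x\}$, and the degree $2$ points of $C\times C$ they naturally yield, namely $(x,\iota x)$ and $(x,x)$, map under the difference map to the single curve which is the image of $x\mapsto\mathcal{O}_C(2x)\otimes\omega_C^{-1}$ and to the origin, respectively -- a one-dimensional locus; pairs $(x,y)$ from distinct fibers with a common quadratic field are finite for each field by Faltings applied to the twists, and you offer no argument that any remaining supply is dense. Density of quadratic points on $\Pic^0_C$ really comes from translating the curve $C\subset\Pic^0_C$ by the maps $[n]$ as in Theorem~\ref{thm:Jac of genus 2} and Proposition~\ref{CxC}, and the subsequent passage to $A$ again needs the $k([m]x)=k(x)$ degree control rather than a countability argument.
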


These density statements for abelian surfaces can be re-interpreted as statements about the behavior of ranks of these abelian varieties over field extensions (see \Cref{cor:simultaneousrankjumps}). This interpretation is one of the key steps for the following description of the density degree set of a bielliptic surface.

\begin{thm}[{\Cref{cor:biellipticsurface}}]
    Let $S = (E_1 \times E_2) / G$ be a bielliptic surface over $k$, where $G$ acts on $E_1$ via translations by $k$-rational points and on $E_2$ such that $E_2/G\simeq \mathbb{P}^1$. Then,
    $$\mathbb{N} \setminus \{1, 2\} \subseteq \dendegs(S/k) \subseteq \dendegs(E_1 / k).$$
\end{thm}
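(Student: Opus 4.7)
The plan is to prove the two inclusions separately, with the upper bound exploiting the elliptic fibration $S \to E_1/G$ and the lower bound transferring the density results on $A \colonequals E_1 \times E_2$ through the finite quotient $\pi\colon A \to S$.

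For the upper bound, since $G$ acts on $E_1$ by $k$-rational translations, $E_1/G$ is an elliptic curve $k$-isogenous to $E_1$; by isogeny invariance of rank over any extension, $\dendegs(E_1/G\,/\,k) = \dendegs(E_1/k)$. The projection $A \to E_1$ is $G$-equivariant, so composing with $E_1 \to E_1/G$ descends to a surjective elliptic fibration $S \to E_1/G$ with generic fiber isomorphic to $E_2$. The inclusion $\dendegs(S/k) \subseteq \dendegs(E_1/G\,/\,k)$ then follows from the same fibration argument underlying the upper bound in \eqref{eq:basic_range} for products of curves.

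For the lower bound, $A$ is principally polarized (via the product polarization), so \Cref{thm:isogeneousabsurf} yields $\nn_{\geq 3} \subseteq \dendegs(A/k)$. The crucial refinement provided by \Cref{cor:simultaneousrankjumps} is that, for each $d \geq 3$, this density can be realized by closed points of $A$ whose residue field $L$ is a \emph{primitive} degree-$d$ extension of $k$ over which both $E_1$ and $E_2$ experience simultaneous rank jumps. For such a closed point $x \in A$, the residue field of $\pi(x) \in S$ lies between $k$ and $L$, so by primitivity equals either $k$ or $L$; in the latter case $\pi(x)$ is a degree-$d$ closed point of $S$, and it suffices to exhibit a Zariski-dense subfamily of $\{x\}$ for which this holds.

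The main obstacle is to exclude the scenario $\pi(x) \in S(k)$ for all but a non-dense subset of our rank-jump points. If $\pi(x) \in S(k)$, then the Galois orbit of $x$ (of size $d$) sits inside the $G$-torsor fiber $\pi^{-1}(\pi(x))$ (of size $|G|$), forcing $d \leq |G|$ and producing a nontrivial homomorphism from the Galois closure $\mathrm{Gal}(\tilde L/k)$ into $G$. Since $|G|$ is bounded by a small constant for any bielliptic surface, this bad scenario is vacuous whenever $d$ exceeds this bound. For the remaining small values $d \leq |G|$, the flexibility of \Cref{cor:simultaneousrankjumps}---which supplies infinitely many primitive degree-$d$ extensions rather than a single one---allows us to restrict to $L$ whose Galois closure admits no nontrivial homomorphism to $G$ (for example, $L$ of generic type with $\mathrm{Gal}(\tilde L/k) \cong S_d$, whose abelianization $\mathbb{Z}/2$ imposes only a specific discriminant condition that we may arrange to avoid). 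Restricting to such $L$ and applying $\pi$ produces the required Zariski-dense family of degree-$d$ closed points on $S$.
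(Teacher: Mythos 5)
The upper bound and the broad structure of your lower bound (push degree-$d$ points of $E_1\times E_2$ through the \'etale quotient $\pi$ and control how the residue field can drop) match the paper, but your key input is stated too strongly and this creates a genuine gap at $d=4$. \Cref{cor:simultaneousrankjumps} does \emph{not} provide, for every $d\geq 3$, rank-jump points whose residue field is a primitive degree-$d$ extension: its $d=4$ construction is explicitly a point $(P_1,P_2)$ with each $P_i$ quadratic, so $L=k(P_1,P_2)$ is biquadratic --- imprimitive and abelian, i.e.\ it fails both of the properties your argument needs. Primitivity is what you use to force $\deg k(\pi(x))\in\{1,d\}$, and for the biquadratic points the dangerous case is a drop to degree $2$, which your torsor/counting argument (which only rules out $\pi(x)\in S(k)$) does not touch. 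You would need dense degree-$4$ points on $E_1\times E_2$ with, say, $S_4$ or $A_4$ residue fields and both coordinates nontorsion; the paper neither proves this nor do you, and it is not obvious how to get it when $\rk E_2(k)=0$ (Lemma~\ref{lem:Sd_points} does not apply to a degree-$4$ pencil on the genus-$4$ curves $X_\lambda$). A smaller unverified point of the same kind occurs at $d=3$ when $3\mid |G|$: your torsor argument shows the image can only be a $k$-point if $L$ is abelian over $k$, so you must know the cubic points can be taken with non-Galois (e.g.\ $S_3$) residue fields; this requires checking that the trigonal maps on $X_\lambda$ are not cyclic, which you assert implicitly but do not justify. (For $d=5$ and, with the $S_d$ points of Theorem~\ref{Ec:3,5,7}, for $d\geq 6$, your argument does go through, since no bielliptic $G$ has a subgroup of order $5$ and $S_d$-fields are primitive and non-abelian.)

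For comparison, the paper avoids primitivity altogether: for $d\in\{3\}\cup\nn_{\geq 5}$ it uses points with $k([n]P_i)=k(P_i)$ (\Cref{cor:k(P1,P2)=k(nP1,nP2)}) so that the composite $E_1\times E_2\xrightarrow{\pi}S\xrightarrow{\alb_S}E_1'$ already sends $x=(P_1,P_2)$ to the degree-$d$ point $\phi(P_1)$, sandwiching $k(\pi(x))$ between two copies of a degree-$d$ field; and for $d=4$ it runs a short direct Galois argument with the biquadratic point, using that $G$ acts on $E_1$ by \emph{translations}: $\pi(P_1,P_2)=\pi(P_1,\sigma(P_2))$ would force a nontrivial translation to fix $P_1$, a contradiction. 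If you want to salvage your route, the cleanest fix is to adopt this last observation for $d=4$ (or, more generally, replace your counting argument by the paper's translation argument, which also disposes of the $d=3$ cyclic-cubic worry), rather than trying to upgrade \Cref{cor:simultaneousrankjumps} to primitive quartic fields.
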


\begin{remark}
One of the main properties of the density degree set of curves is that it is closed under multiplication by positive integers \cite[Section 5.2.1]{VV}.
Whether the same property holds for higher dimensional varieties is still unknown.
However, in many cases that we consider, our sharper description of the density degree set implies that it satisfies this multiplicativity property.
\end{remark}

\subsection{Outline}
The paper is organized as follows. In Section 2, we recall what is known about the density degree sets of curves.  We also prove the upper and lower bounds on the density degree set of a product surface given in \eqref{eq:basic_range}. Section 3 is devoted to describing our general strategy and proving Theorem~\ref{thm:asym}. Products of elliptic curves and bielliptic surfaces are discussed in Section 4. Section 5 concerns the product of an elliptic curve and a curve of genus 2. Products of two genus 2 curves, and also Jacobians of genus 2 curves are studied in Section 6. 
In this section we also study the potential density degree set for products of low genus curves. Section 7 extends the description of the density degree set of a principally polarized abelian surfaces via isogenies.

\subsection{Notation} Unless otherwise stated, throughout this paper $k$ will be a number field. All varieties will be defined over $k$ and will be assumed to be nice (i.e. smooth, projective, geometrically connected). 
The index of a variety $X$ will be denoted by $\ind(X/k)$.

\subsection*{Acknowledgments}

This work began at the American Institute of Mathematics workshop ``Degree \(d\) points on algebraic surfaces'' in March 2024.  We would like to thank  the organizers Nathan Chen and Bianca Viray, as well as all of the staff at AIM and the National Science Foundation for their support.
We would also like to thank Niven Achenjang, Adam Logan, Samir Siksek, and  Rosa Winter for helpful conversations.
E.G.~was partially supported by NSF grant DMS-2302196. 
J.R.~was supported by UK EPSRC grant EP/W523793/1.  I.V.~was supported in
part by NSF grants DMS-2200655 and DMS-2338345.

\section{Background and first computations}\label{bounds} 

\subsection{Density and potential density degree sets}\label{densitybackground} We recall the following definitions \cite[Definitions 1.2.1, 1.2.3]{VV}. 
\begin{defn}
    Let $X$ be a nice variety over a field $k$. The density degree set $\dendegs(X/k)$ is the set of all positive integers $d$ such that the degree $d$ points on $X$ are Zariski dense. 
\end{defn}
The set $\dendegs(X/k)$ is sensitive to the ground field $k$. For example, if $k=\bar{k}$, then $\dendegs(X/k)=\{1\}.$ If $k$ is not algebraically closed, then for extensions $k'/k$, there is no general containment between $\dendegs(X/k')$ and $\dendegs(X/k).$ For instance, if $X/\bR$ is a conic without a rational point, then $\dendegs(X/\mathbb{R})=\{2\}$, while $\dendegs(X/\mathbb{C})=\{1\}$. For this reason, the following definition makes sense.
\begin{defn}\label{def:potentialdensity}
    Let $X$ be a nice variety over a field $k.$ The potential degree set $\wp(X/k)$ is the union of $\dendegs(X/k')$ as $k'/k$ ranges over all possible finite extensions.
\end{defn}
\subsection{Density degrees of curves}
From now on we assume that $k$ is a number field and \(C/k\) is a nice curve. We recall briefly some of the structure of \(\dendegs(C/k)\).  More details can be found in \cite{VV}.  A degree \(d\) point is \(\mathbb{P}^1\)-parameterized if it is the fiber of a degree \(d\) morphism \(C \to \mathbb{P}^1\).  A degree \(d\)-point \(P\) is \(\AV\)-parameterized if there is a positive-rank abelian subvariety \(A \subset \Pic^0_{C/k}\) such that the translate \([P] + A\) is contained in the image of the Abel--Jacobi map \(\Sym^d_C \to \Pic^d_C\).  The curve \(C/k\) has infinitely many points of degree \(d\) if and only if it has a point of degree \(d\) that is either \(\mathbb{P}^1\)- or \(\AV\)-parameterized \cite[Theorem 4.4.3]{VV}.  Consequently,
the density degree set $\dendegs(C/k)$ decomposes
\[\dendegs(C/k)=\dendegs_{\mathbb{P}^1}(C/k)\cup\dendegs_{\AV}(C/k),\]
where $\dendegs_{\AV}(C/k)$ is the set of degrees of \(\AV\)-parameterized points\cite[Corollary 5.0.1, Definition 5.0.2]{VV}.

The main properties of  $\dendegs(C/k)$ that we will need in this article are the following:

\begin{enumerate}
    \item\label{multiplicativity} \(\dendegs(C/k)\) is closed under multiplication by positive integers \cite[Section 5.2.1]{VV}.
    \item\label{2g_bound} \(\dendegs(C/k)\) contains all multiples of the index of \(C\) that are at least \(\max(2g, 1)\) \cite[Proposition 5.1.1]{VV}.
    \item\label{gp1_bound} Every closed point on \(C\) of degree at least \(g+1\) is \(\mathbb{P}^1\)-parameterized \cite[Corollary 4.1.2]{VV}.
    \item\label{finite_extension_P1} If \(k'/k\) is a finite extension, then \(\dendegs_{\mathbb{P}^1}(C/k) \subset \dendegs_{\mathbb{P}^1}(C/k')\) \cite[Lemma 5.5.2]{VV}.
\end{enumerate}

The issue at the heart of property \eqref{finite_extension_P1} which does not allow us to immediately generalize this for the \(\AV\)-parametrized points is that for a degree \(d\) point \(P\), the base change \(P_{k'}\) may become reducible.  The way that \(P_{k'}\) splits as a sum of lower degree points is recorded by a partition of \(d\).  If every partition of \(d\) contains an integer that divides \(d\), then the pigeonhole principle and \eqref{multiplicativity} imply that again \(d \in \dendegs(C/k')\). 
If, instead, every partition that does not contain an integer dividing $d$ has greatest common divisor 1, the same conclusion holds. By \cite[Proposition 2.2.1]{VV}, if one of these partitions arises infinitely often, then over the Galois closure, $k''$, of $k'$, we have $1 \in \dendegs(C / k'')$. This implies that $C$ is genus 0 or 1, and applying \eqref{gp1_bound} and \eqref{finite_extension_P1}, $d \in \dendegs(C / k')$. The first $d$ for which this does not hold is 10.
In particular, combining \eqref{gp1_bound} and \eqref{finite_extension_P1} we see that if \(C\) is a nice curve of genus at most \(9\), then for all finite extensions \(k'/k\), we have \(\dendegs(C/k) \subset \dendegs(C/k')\).

In low genus, we can completely determine the density degree set in terms of the index of the curve and arithmetic of low degree points.

\begin{lemma}\label{lemma:deltagenus2}
Let $C$ be a nice genus $g$ curve over $k$.
\begin{enumerate}
    \item If \(g=1\), then
    \[\dendegs(C/k) = \begin{cases}
        \nn & \textup{if \(C(k)\) is infinite.}\\
        \big(\ind(C/k) \nn \big) \cap \nn_{\geq 2} & \textup{if \(C(k)\) is finite.} 
    \end{cases}\]
    In particular, if \(C\) is an elliptic curve, then \(\nn_{\geq 2} \subseteq \dendegs(C/k)\), with equality if and only if \(\rk C(k) = 0\).
    
\item If \(g=2\), then   
    \[
        \dendegs(C/k) = 
        \begin{cases} 
        2\nn & \textup{if}\  \ind(C/k) = 2, \\
        \nn_{\geq 2} &  \textup{if}\  \ind(C/k) = 1\ \textup{and}\ \exists\, x\in C \ \textup{of degree}\ 3, \\
        \{2\} \sqcup \nn_{\geq 4} & \textup{otherwise}.
        \end{cases}
        \]
        Moreover, if $C$ has index $1$ but has no degree $3$ points (namely the third case), then $C(k)\neq \varnothing$.
        \end{enumerate}
\end{lemma}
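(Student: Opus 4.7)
The plan is to treat the two parts in turn, each time combining the divisibility constraint coming from the index with the listed properties \eqref{multiplicativity}--\eqref{finite_extension_P1} of $\dendegs$ for curves, and handling the low-degree tail by hand.

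For part (1), the containment $\dendegs(C/k)\subseteq \ind(C/k)\nn$ is immediate because every closed point has degree divisible by the index, while property \eqref{2g_bound} with $2g=2$ gives the reverse inclusion intersected with $\nn_{\geq 2}$. Whether $1\in \dendegs(C/k)$ is precisely whether $C(k)$ is infinite, since Zariski density on a curve means infiniteness, and if $C(k)$ is infinite then $\ind(C/k)=1$ automatically. This yields the piecewise description. The elliptic curve statement then follows since $\ind(E/k)=1$ and Mordell--Weil identifies infiniteness of $E(k)$ with positive rank.

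For part (2), since every smooth genus $2$ curve is hyperelliptic, the canonical class $K_C$ is $k$-rational of degree $2$, forcing $\ind(C/k)\in\{1,2\}$. I would split on this value. When $\ind(C/k)=2$, the hyperelliptic map $C\to\pp^1$ is a finite degree $2$ cover, so pulling back rational points on $\pp^1$ produces a Zariski dense family of degree $2$ closed points; combined with property \eqref{2g_bound} this gives $2\nn\subseteq\dendegs(C/k)\subseteq 2\nn$. When $\ind(C/k)=1$, property \eqref{2g_bound} yields $\nn_{\geq 4}\subseteq \dendegs(C/k)$, the hyperelliptic map contributes $2$, and Faltings' theorem excludes $1$. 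For $d=3$, since a degree $3$ point has degree $\geq g+1$, property \eqref{gp1_bound} says any such point is $\pp^1$-parameterized, so its mere existence is equivalent to $3\in\dendegs(C/k)$. These observations together distinguish the last two cases.

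The substantive remaining step is the ``moreover'' claim: if $\ind(C/k)=1$ and $C$ has no degree $3$ closed point, then $C(k)\neq\varnothing$. I would argue by contradiction, assuming $C(k)=\varnothing$. Since the gcd of degrees of closed points equals $\ind(C/k)=1$ and the canonical class contributes degree $2$, some closed point $P$ must have odd degree, and by hypothesis $\deg P = d \geq 5$. Consider the degree $3$ class $[P]-\tfrac{d-3}{2}[K_C]$. Riemann--Roch on a genus $2$ curve gives $\ell(D)=2$ for any degree $3$ class $D$ (the $\ell(K_C-D)$ term vanishes because $\deg(K_C-D)=-1$), so this class is represented by an effective $k$-rational divisor of degree $3$. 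In the absence of rational points, the only way to write such a divisor is a single closed point of degree $3$, contradicting the hypothesis. This Riemann--Roch reduction is the main idea; everything else is bookkeeping.
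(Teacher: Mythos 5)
Your proof is correct and takes essentially the same approach as the paper: index divisibility together with properties \eqref{2g_bound} and \eqref{gp1_bound}, Faltings' theorem, the hyperelliptic pencil for degree $2$, and a Riemann--Roch argument for the ``moreover'' claim. The only cosmetic difference is that the paper applies Riemann--Roch to $\gamma + K_C$ for a degree-$1$ zero-cycle $\gamma$, whereas you apply it to $[P] - \tfrac{d-3}{2}K_C$ for an odd-degree closed point $P$; in both cases one obtains an effective $k$-rational degree-$3$ divisor which, absent a degree-$3$ point, must contain a $k$-point.
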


\begin{proof} \hfill
\begin{enumerate}
    \item By property \eqref{2g_bound}, we have \(\dendegs(C/k) \cap \nn_{\geq 2} = \big(\ind(C/k)\nn\big) \cap \nn_{\geq 2}\).  The only question, therefore, is whether \(1\) is in \(\dendegs(C/k)\), which is precisely the question of whether \(C(k)\) is finite or infinite.
    \item Faltings' Theorem implies that \(1 \not\in \dendegs(C/k)\).  Since a genus \(2\) curve has gonality \(2\), we have \(2 \in \dendegs_{\mathbb{P}^1}(C/k)\) and \(\ind(C/k)\mid 2\).  By property \eqref{2g_bound}, we have \(\dendegs(C/k) \cap \nn_{\geq 4} = \big(\ind(C/k)\nn\big) \cap \nn_{\geq 4}\).  The only question, therefore, is whether \(3\) is in \(\dendegs(C/k)\), which is only possible if \(\ind(C/k)=1\). 
    If $P\in C$ is a point of degree $3$, then it is \(\mathbb{P}^1\)-parameterized by property \eqref{gp1_bound}, and $3$ is in $\dendegs(C/k)$.

    In the case when $\ind(C/k) =1$ but there is no degree $3$ point, and so \(3 \not\in \dendegs(C/k)\), let $\gamma$ be a $0$-cycle on $C$ of degree $1$. 
    By the Riemann--Roch theorem, $\gamma + K_C$ is equivalent to an effective $0$-cycle $\gamma'$ of degree $3$. Since $C$ has no degree $3$ point, the cycle $\gamma'$ contains a $k$-point. \qedhere
\end{enumerate}
\end{proof}

As in \cite[Example 5.1.3]{VV}, there exist genus \(2\) curves with $\dendegs(C/k)=\{2\}\cup\nn_{\geq 4}$, demonstrating that the bound in \eqref{2g_bound} is sharp. However, the asymptotic bound $\max(2g,1)$ can be sharpened if the curve is not hyperelliptic, as the following Lemma shows.

\begin{lemma}\label{curve_asymptotic}
    Let \(C\) be a non-hyperelliptic curve of genus \(g \geq 3\) with index \(1\).
    \begin{enumerate}
        \item If \(C(k) \neq \emptyset\), then \(2g - 3 \in \dendegs(C/k)\).
        \item If \(\#C(k) \geq 2\) or \(C(k) = \emptyset\), then \(2g - 1\in \dendegs(C/k)\).
    \end{enumerate}
\end{lemma}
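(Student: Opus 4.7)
The plan is, for both parts, to produce a $k$-rational base-point-free linear system $|L|$ on $C$ of the desired degree $d$ (where $d = 2g-3$ in part (1) and $d = 2g-1$ in part (2)), extract from $|L|$ a $k$-rational base-point-free pencil, and conclude that the resulting degree-$d$ morphism $C \to \mathbb{P}^1$ is a $\mathbb{P}^1$-parameterization, giving $d \in \dendegs(C/k)$. When $\dim |L| = 1$ the system itself is such a pencil, while for $\dim |L| \geq 2$ a dimension count in $\on{Gr}(2, H^0(L))$ shows that pencils with a base point form a sublocus of codimension $\geq 1$: for each $Q \in C$ (not a base point of $|L|$) the pencils $V \subset H^0(L - Q)$ form a sub-Grassmannian of codimension $2$, and varying $Q$ over the $1$-dimensional $C$ contributes at most $+1$ to the dimension. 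Since $k$ is infinite, a $k$-rational base-point-free pencil exists.

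For part (1), set $L = K_C - P$ for $P \in C(k)$; then $\deg L = 2g-3$ and $\dim |L| = g - 2 \geq 1$ by Riemann--Roch. A hypothetical $\bar k$-base point $Q$ of $|L|$ would force $h^0(P + Q) = 2$, making $|P + Q|$ a $g^1_2$-pencil --- impossible since $C$ is non-hyperelliptic.

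For part (2), I construct $L$ of degree $2g-1$ in two subcases. If $P_1, P_2 \in C(k)$ are distinct, take $L = K_C + 2P_1 - P_2$, which has $\dim |L| = g - 1 \geq 2$ by Riemann--Roch. A $\bar k$-base point $Q$ would satisfy $P_2 + Q \sim 2P_1$, but non-hyperellipticity forces $h^0(2P_1) = 1$ (else $|2P_1|$ would be a $g^1_2$), so $|2P_1| = \{2P_1\}$ and $P_2 + Q = 2P_1$, forcing the contradictory $P_2 = P_1$. If $C(k) = \varnothing$, the index-$1$ hypothesis provides a $k$-rational degree-$1$ divisor class $\alpha$; take $L = K_C + \alpha$. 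A $\bar k$-base point $Q$ would satisfy $Q \sim \alpha$, but since $C \hookrightarrow \Pic^1_C$ is defined over $k$ and injective for $g \geq 1$, any $\bar k$-point representing a $k$-rational degree-$1$ class is itself $k$-rational, contradicting $C(k) = \varnothing$.

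The main anticipated obstacle is the base-point-freeness verification --- choosing the correct twist of $K_C$ --- which dictates the precise hypotheses on $C(k)$ in part (2): the excluded case $\#C(k) = 1$ is problematic because the natural choice $L = K_C + P_1$ then has $P_1$ as a fixed base point, so every effective divisor in $|L|$ is reducible and the argument fails.
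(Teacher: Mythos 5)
Your proposal is correct and follows essentially the same route as the paper: you use the very same divisors \(K_C - P\), \(K_C + 2P_1 - P_2\), and \(K_C + D\) for a degree-one divisor \(D\), with the same non-hyperellipticity and index-one arguments to verify base-point-freeness. The only difference is that the paper converts ``base-point-free complete linear system of dimension \(\geq 1\)'' into density by citing \cite[Lemma 4.1.4]{VV}, whereas you re-derive that step by extracting a \(k\)-rational base-point-free pencil via a Grassmannian dimension count and invoking \(\mathbb{P}^1\)-parameterization, which is a harmless repackaging of the same content.
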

\begin{proof}
    By \cite[Lemma 4.1.4]{VV}, it suffices to exhibit a divisor of each of these degrees whose complete linear system is basepoint free of (projective) dimension at least \(1\).  

    For part (1), let \(P \in C(k)\).   Since \(C\) is nonhyperelliptic of genus \(g \geq 3\), the complete linear system of the canonical is very ample of dimension \(g-1 \geq 2\).  Hence the complete linear system of \(K_C(-P)\) is basepoint free of dimension at least \(1\). 

    For part (2), by Riemann--Roch, every line bundle of degree \(2g-1\) has complete linear system of dimension at least \(g\) (which is assumed to be at least \(3\)) and the only line bundles in \(\Pic_C(\bar{k})\) of degree \(2g-1\) that are not basepoint free are of the form \(K_C(P)\) for \(P \in C(\bar{k})\).

    First assume that \(C(k) = \emptyset\).  Let \(D\) be a divisor of degree \(1\) (which exists since \(C\) has index \(1\) by assumption).  Then \(\mathcal{O}_C(D) \not\simeq \mathcal{O}_C(P)\) for any \(P \in C(\bar{k})\), since then \(D\) would be effective and \(C\) would have a rational point. Thus \(K_C(D)\) is basepoint free and hence satisfies the necessary conditions of \cite[Lemma  4.1.4]{VV}.
    
    Now let \(P, Q \in C(k)\) be distinct points. 
    We claim the divisor $K_C + 2P - Q$ is basepoint free. If not, then $K_C + 2P - Q \sim K_C + R$ for some $R \in C(\bar{k})$. In particular, $2P \sim Q + R$, and as $C$ is not hyperelliptic, equality holds on divisors. This contradicts $P$ and $Q$ being distinct.
\end{proof}

Following in a similar vein, if we have further control on the possible morphisms from the curve to $\mathbb{P}^1$, it is possible to strengthen these results further. One common source of such a constraint is the Castelnuovo--Severi inequality.

\begin{lemma}\label{cs_asymptotic}
Let $C_1$ and $C_2$ be curves of genus $g_1$ and $g_2$ respectively, and assume there exists a map $\phi : C_1 \to C_2$ of degree $n$. 
Let $d$ be an integer with $(n - 1)d < g_1 - ng_2$ and $d \leq g_1 - 2$. 
If there exists an effective divisor $Z_1$ on $C_1$ of degree $d$, such that for all points $P \in C_1(\bar{k})$, and all effective divisors $Z_2$ on $C_2$ with $h^0(Z_2) \geq 2$, $Z_1 + P - \phi^* Z_2$ is not effective, then $2g_1 - 2 - d \in \dendegs(C_1 / k)$.
\end{lemma}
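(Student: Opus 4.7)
The plan is to apply \cite[Lemma 4.1.4]{VV} to the divisor $D := K_{C_1} - Z_1$ of degree $2g_1 - 2 - d$; it then suffices to show that the complete linear system $|D|$ is basepoint free of projective dimension at least $1$. Riemann--Roch combined with the effectivity of $Z_1$ gives
\[h^0(D) = h^0(Z_1) + g_1 - 1 - d \geq g_1 - d \geq 2,\]
using $d \leq g_1 - 2$, so $\dim |D| \geq 1$.

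For basepoint-freeness, I argue by contradiction: suppose $P \in C_1(\bar{k})$ is a base point of $|D|$. A second Riemann--Roch computation forces $h^0(Z_1 + P) = h^0(Z_1) + 1$, so the inclusion $H^0(\cO(Z_1)) \hookrightarrow H^0(\cO(Z_1 + P))$ is strict and produces an effective divisor $E \sim Z_1 + P$ with $P \notin \operatorname{supp}(E)$. Equivalently, there is a non-constant rational function $f \in K(C_1)$ with $(f) = E - (Z_1 + P)$, whose polar divisor satisfies $(f)_\infty \leq Z_1 + P$ and contains $P$. Consequently $f$ induces a morphism $C_1 \to \mathbb{P}^1$ of degree $m := \deg (f)_\infty$ with $1 \leq m \leq d+1$.

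The crucial step applies the Castelnuovo--Severi inequality to the pair $\phi : C_1 \to C_2$ (degree $n$) and $f : C_1 \to \mathbb{P}^1$ (degree $m$). If these share no common intermediate cover, then
\[g_1 \leq (n-1)(m-1) + n g_2 \leq (n-1) d + n g_2,\]
contradicting the numerical hypothesis $(n-1) d < g_1 - n g_2$. Hence $\phi$ and $f$ share a common factor, and after the analysis below one concludes that $f$ factors as $f = \psi \circ \phi$ for some non-constant $\psi : C_2 \to \mathbb{P}^1$. Setting $Z_2 := (\psi)_\infty$ produces an effective divisor on $C_2$ with $h^0(Z_2) \geq 2$ satisfying $\phi^* Z_2 = (f)_\infty \leq Z_1 + P$, contradicting the hypothesis of the lemma.

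The main obstacle is upgrading the Castelnuovo--Severi conclusion ``$\phi$ and $f$ share a common cover'' to the stronger statement ``$f$ factors through $\phi$''. When $n = 2$ this is automatic since $\phi$ admits no non-trivial intermediate factor. In general, I would choose a maximal common intermediate cover $h : C_1 \to C'$ with factorizations $\phi = \phi' \circ h$ and $f = f' \circ h$, and then combine Castelnuovo--Severi on $C'$ (where $\phi'$ and $f'$ share no further common factor by maximality of $h$) with Riemann--Hurwitz for $h$ to force $\deg \phi' = 1$, at which point $\phi = h$ up to isomorphism and $f = f' \circ \phi'^{-1} \circ \phi$ gives the required factorization.
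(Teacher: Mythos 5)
Your proposal follows the paper's proof essentially step for step: the same divisor \(K_{C_1}-Z_1\), the same two Riemann--Roch computations (first to get \(h^0(K_{C_1}-Z_1)\geq 2\) from \(d\leq g_1-2\), then to convert a base point \(P\) into \(h^0(Z_1+P)\geq 2\)), the same degree-\(\leq d+1\) map to \(\mathbb{P}^1\), the same appeal to Castelnuovo--Severi using \((n-1)d< g_1-ng_2\), and the same contradiction by taking \(Z_2=(\psi)_\infty\) with \(\phi^*Z_2\leq Z_1+P\). The only divergence is at the step you yourself single out as the main obstacle: the paper simply asserts that the low-degree map factors through \(\phi\), whereas you correctly point out that Castelnuovo--Severi in its usual form only yields a common intermediate cover \(h\colon C_1\to C'\) of degree \(e>1\). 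Your remark that the upgrade is automatic for \(n=2\) is right, and in fact it is automatic whenever \(n\) is prime, since \(\phi^*k(C_2)\subseteq h^*k(C')\subseteq k(C_1)\) forces \(e\mid n\), hence \(e=n\) and \(h^*k(C')=\phi^*k(C_2)\); this already covers every application of the lemma in the paper (\(n=2\) or \(3\)).

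However, the bridging argument you sketch for general \(n\) does not work. After passing to the maximal common cover, Castelnuovo--Severi applied to \(\phi'\) and \(f'\) on \(C'\) gives an \emph{upper} bound \(g(C')\leq (n/e)g_2+(n/e-1)(m/e-1)\), while Riemann--Hurwitz for \(h\) reads \(2g_1-2=e\bigl(2g(C')-2\bigr)+\deg R\) with \(\deg R\) a priori unbounded, so it also only bounds \(g(C')\) from above (by roughly \((g_1-1)/e+1\)). Two upper bounds on \(g(C')\) cannot force \(\deg\phi'=1\), and when \(e\) is a proper divisor of \(n\) the function \(f\) may genuinely factor through \(C'\) without factoring through \(C_2\), in which case the hypothesis on divisors \(Z_2\) of \(C_2\) produces no contradiction. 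So, as written, your general-\(n\) step is a genuine gap; to be fair, the paper's own proof makes the same jump without comment, and its validity in the cases actually used rests on the prime-degree observation above (or on adding the factorization through \(\phi\) as a hypothesis).
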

\begin{proof}
    Consider the divisor $K_{C_1} - Z_1$ which has degree $2g_1 - 2 - d \geq g_1$. 
    By Riemann-Roch, since $h^0(Z_1) \geq 1$, we have $h^0(K_{C_1} - Z_1) \geq 2$. 
    It remains to show $K_{C_1} - Z_1$ is base-point free. 
    If $h^0(K_{C_1} - Z_1 - P) = h^0(K_{C_1} - Z_1)$ for some $P \in C_1(\bar{k})$, then $h^0(Z_1) < h^0(Z_1 + P)$. 
    In particular, $h^0(Z_1 + P) \geq 2$. 
    This divisor has degree $d + 1$, and so the induced morphism to $\mathbb{P}^1$ has degree at most $d + 1$. 
    By the Castelnuovo--Severi inequality, as $ng_2 + (n - 1)d < g_1$, this morphism must factor through $\phi$ as $\psi \circ \phi$. 
    Let the poles of $\psi$ be $Z_2$: then $Z_1 + P$ contains $\phi^*(Z_2)$, contradicting the assumption that $Z_1 + P - \phi^*(Z_2)$ is not effective.
    It follows that $K_{C_1} - Z_1$ is base-point free.
\end{proof}

\begin{remark}\label{rmk:applicationofLemma2.5}
    Note that if $g_2 > 0$ and $C_1$ has a $k$-rational point $P$, where $P$ is not the unique point in the fiber $\phi$ above $\phi(P)$, then the lemma applies to $dP$.
\end{remark}

\subsection{Galois groups of algebraic points}

\begin{lemma}\label{lem:Sd_points}
    Suppose that \(C/k\) admits a degree \(d\) divisor \(D\) whose complete linear system \(|D|\) is birationally very ample.  Then \(C\) has a dense set of degree \(d\) points \(P \in C\), such that \(|P| = |D|\) and \(k(P)/k\) is a degree \(d\) extension with Galois group of the Galois closure equal to \(S_d\). 
\end{lemma}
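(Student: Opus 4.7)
The plan is to realize the desired degree \(d\) points on \(C\) as pullbacks of generic \(k\)-rational hyperplane sections under the morphism determined by \(|D|\), and then invoke the uniform position principle together with Hilbert's irreducibility theorem to pin down the Galois group.

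Let \(\phi = \phi_{|D|} \colon C \to \mathbb{P}^N\) denote the morphism associated to the linear system, where \(N = \dim|D|\). By hypothesis \(\phi\) is birational onto its image \(C' \colonequals \phi(C) \subset \mathbb{P}^N\), a curve of degree \(d\). The degenerate case \(N=1\) forces \(C \simeq \mathbb{P}^1\) and \(d = 1\), which is trivial, so I assume \(N \geq 2\). First, I would consider the incidence correspondence over the dual projective space \((\mathbb{P}^N)^*\), and let \(\mathcal{U} \subset (\mathbb{P}^N)^*\) be the nonempty Zariski open subset parameterizing hyperplanes \(H\) for which \(H \cap C'\) consists of \(d\) distinct smooth points lying in the smooth locus of \(\phi\). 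This is nonempty by Bertini in characteristic zero together with generic smoothness, and the restriction \(\mathcal{Z} \to \mathcal{U}\) of the incidence variety is a finite étale cover of degree \(d\).

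The geometric input at the heart of the argument is the uniform position principle of Harris: the monodromy group of \(\mathcal{Z}_{\bar{k}} \to \mathcal{U}_{\bar{k}}\) is the full symmetric group \(S_d\). Equivalently, the Galois group of the Galois closure of the function field extension \(k(\mathcal{Z})/k(\mathcal{U})\) is \(S_d\), and this group is already equal to \(S_d\) geometrically so no descent issue arises. Applying Hilbert's irreducibility theorem to this \(S_d\)-cover over the rational variety \(\mathcal{U}\), I obtain a Zariski dense (Hilbertian) set \(\mathcal{U}(k)^{\mathrm{irr}} \subset \mathcal{U}(k)\) of \(k\)-rational hyperplanes \(H\) for which the specialization \(\mathcal{Z}_H\) is irreducible as a \(k\)-scheme, with residue field of degree \(d\) over \(k\) whose Galois closure has Galois group \(S_d\). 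For such \(H\), the pullback \(P_H \colonequals \phi^*H\) is a single closed point on \(C\) of degree \(d\), and since \(\phi^*H \sim D\) as divisors on \(C\), we have \(|P_H| = |D|\) as required.

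For density, the set \(\mathcal{U}(k)^{\mathrm{irr}}\) is Zariski dense in \((\mathbb{P}^N)^*\), so the union of the corresponding hyperplanes is Zariski dense in \(\mathbb{P}^N\); intersecting with \(C'\) and pulling back via the birational morphism \(\phi\) produces a Zariski dense set of points \(P_H\) on \(C\). The main obstacle is Step 2, the uniform position principle, which requires characteristic zero and is a nontrivial geometric fact, but it is a classical theorem that can be invoked off the shelf; the remaining steps reduce to a standard Bertini plus Hilbert irreducibility argument applied to a rational base.
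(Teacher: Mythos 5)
Your proposal is correct and follows essentially the same route as the paper: the morphism \(\phi_{|D|}\), the incidence correspondence over the dual projective space, the Uniform Position Theorem to get monodromy \(S_d\), and Hilbert's irreducibility theorem to specialize to a dense set of \(k\)-rational hyperplanes, pulling the resulting fibers back through the birational map to \(C\). The extra Bertini/étale-locus refinement and the explicit density argument are minor elaborations of the same proof.
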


\begin{proof}
Consider the morphism \(\varphi \colon C \to \pp^r\) given by the complete linear system \(|D|\).  Since this is birationally very ample, it is an embedding away from finitely many points and \(r \geq 2\) unless \(d=1\) and \(g=0\); since the statement of the lemma in this latter case carries no content, we will assume that \(r \geq 2\).
By the Uniform Position Theorem \cite[Lemma, Page 111]{ACGH}, the Galois group of the incidence correspondence 
\[I \colonequals \{(P, H) \in \varphi(C) \times (\P^
{r})^\vee :  P\in \varphi(C) \cap H\} \to (\P^{r})^\vee\] 
is the full symmetric group \(S_d\).  By Hilbert's irreducibility theorem, the Galois group of the fiber \(I_t\) remains \(S_d\) for a dense set of \(t \in (\P^{r})^\vee(k)\).  Since \(C\) and \(\varphi(C)\) are birational, there exist infinitely many degree \(d\) points \(P \in C\) for which the Galois group of \(k(P)\) is \(S_d\) and \(|P| = |D|\).
\end{proof}

\begin{remark}\label{rem:asymptotic_Sd}
   By the Riemann--Roch Theorem, every divisor of degree at least \(2g\) on a nonhyperelliptic curve is birationally very ample.  In addition, the canonical linear series is very ample of degree \(2g-2\), and the only divisors of degree \(2g-1\) whose complete linear series are not birationally very ample are of the form \(K_C + P\) for \(P \in C\).
   
   If the requirement that the curve be nonhyperelliptic is dropped, then every divisor of degree at least \(2g + 1\) is very ample.  Thus a nice curve of genus \(g\) has \(S_d\)-points for every \(d \in \left(\ind(C/k)  \nn \right) \cap \nn_{\geq 2g+1}\).
\end{remark}

This result allows us to deduce the existence of extensions where the rank of an elliptic curve jumps.

\begin{lemma}\label{lem:rankgoesup}
Let $E/k$ be an elliptic curve defined over a number field. 
Then for every integer \(d \geq 2\) there exists an extension $L/k$ with $[L:k] = d$ with Galois group (of the Galois closure) equal to \(S_d\) such that
\[\textup{rank }E(L) \geq \textup{ rank }E(k)+1.\]
\end{lemma}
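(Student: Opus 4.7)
The plan is to realize $L$ as the residue field of a generic member of an algebraic family of degree-$d$ closed points on $E$ with $S_d$ Galois closure, and to use Silverman's specialization theorem to guarantee that the corresponding $L$-point is $\mathbb{Q}$-linearly independent from $E(k)$ in $E(L)$.

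Fix $O \in E(k)$, let $D = dO$, and set $T = |D|$; for $d \geq 3$ the complete linear system $|D|$ is very ample, while for $d = 2$ it realizes the double cover $E \to \mathbb{P}^1$. Let $\cZ \subseteq E \times T$ be the universal incidence variety, with projections $\pi \colon \cZ \to T$ (of degree $d$) and $\phi \colon \cZ \to E$, where $\phi$ is surjective (hence dominant) for $d \geq 2$. The argument of Lemma~\ref{lem:Sd_points}, via the Uniform Position Theorem and Hilbert irreducibility (supplemented for $d = 2$ by the irreducibility of the generic fiber of the double cover), shows that for $t$ outside a thin subset $\Theta_1 \subseteq T(k)$ the fiber $\cZ_t$ is an integral closed point of $E$ whose residue field $L_t$ has Galois closure of group $S_d$; write $\tilde P_t := \phi(\cZ_t) \in E(L_t)$.

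For each $m \geq 1$ and $R \in E(k)$, consider the locus $T_{m,R} \subseteq T$ of parameters where $[m] \circ \phi$ is identically equal to $R$ on $\cZ_t$. Each $T_{m,R}$ is Zariski closed, and it is a \emph{proper} subvariety of $T$ because $\phi$, and hence $[m] \circ \phi$, is dominant onto $E$. If we can find $t \in T(k) \setminus \bigl(\Theta_1 \cup \bigcup_{m,R} T_{m,R}\bigr)$, then $m\tilde P_t \neq R$ for every $m \geq 1$ and $R \in E(k)$, so $\tilde P_t \notin E(k) \otimes \mathbb{Q}$ inside $E(L_t) \otimes \mathbb{Q}$, and $L := L_t$ does the job, yielding $\rank E(L) \geq \rank E(k) + 1$.

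The main obstacle is to find such a $t$, since we must avoid the \emph{countable} union $\bigcup_{m,R} T_{m,R}$ of proper closed subvarieties simultaneously with $\Theta_1$. The cleanest way is via Silverman's specialization theorem applied to the Weil restriction $\cA := \pi_\ast(E_{\cZ})$, regarded as an abelian scheme over the open $T^\circ \subseteq T$ on which $\pi$ is \'etale; here $\cA_t(k) = E(L_t)$, and the constant subscheme $E \hookrightarrow \cA$ recovers the inclusion $E(k) \hookrightarrow E(L_t)$. Under the identification $\cA(k(T^\circ)) = E(k(\cZ))$, the morphism $\phi$ gives a section $s_\phi$ which, by the dominance argument above, does not lie in $E(k) \otimes \mathbb{Q}$, so the finitely generated subgroup $\Gamma := \langle E(k), s_\phi \rangle$ has rank $\rank E(k) + 1$. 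Silverman's theorem then produces a thin subset $\Theta_2 \subseteq T^\circ(k)$ outside of which the specialization $\Gamma \to \cA_t(k) = E(L_t)$ is injective modulo torsion; because $T = \mathbb{P}^{d-1}$ is Hilbertian and $\Theta_1 \cup \Theta_2$ is a finite union of thin sets, its complement in $T(k)$ is non-empty, and any $t$ in this complement produces the required extension.
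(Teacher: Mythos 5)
Your construction of the $S_d$-point itself is the same as the paper's (uniform position plus Hilbert irreducibility applied to the linear system $|d\mathcal{O}|$, with the $d=2$ case handled by the double cover), but your mechanism for forcing the rank to grow is genuinely different. The paper exploits the linear equivalence $|P| = |d\mathcal{O}|$: since addition on $E$ respects rational equivalence, the new point lies in the kernel of the trace map $E(L) \to E(k)$, while the trace restricted to $E(k)$ is multiplication by $d$, so (after discarding the finitely many torsion specializations via Northcott) the new point cannot meet $E(k)$ nontrivially. You instead never use the divisor class of the fiber: you work with the Weil restriction $\cA = \pi_*(E_{\cZ})$ over the open locus of the linear system where $\pi$ is \'etale, observe that the tautological section $s_\phi$ has infinite order modulo constant sections because $[m]\circ\phi$ is dominant (note that torsion sections are constant, which closes the small remaining case), and then specialize the rank-$(\rk E(k)+1)$ group $\Gamma = \langle E(k), s_\phi\rangle$ outside a thin set. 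This is correct, and it has the virtue of applying to any dominant family of degree-$d$ points, not just those cut out by $|d\mathcal{O}|$; the price is a heavier input, and one citation needs care: Silverman's specialization theorem is stated over a one-dimensional base, whereas your base is (an open in) $\mathbb{P}^{d-1}$ for $d \geq 3$, so you should either invoke N\'eron's specialization theorem (thin exceptional set over a Hilbertian rational base, as in Serre's \emph{Lectures on the Mordell--Weil Theorem}, \S 11.1) or first restrict the family to a suitably general line in the linear system and then apply Silverman there. The paper's trace argument avoids this machinery entirely and also delivers, with no extra work, the precise statement $\langle P\rangle \cap E(k) = \mathcal{O}$ that Corollary~\ref{cor:k(mx)=k(x)} relies on; your argument yields an equivalent independence statement, so the downstream applications go through either way.
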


\begin{proof}

When \(d=2\), the complete linear system \(|2\mathcal{O}|\) represents \(E\) as a degree \(2\) cover of \(\P^1\).  By Hilbert's irreducibility theorem, there is a dense set of points \(t \in \P^1(k)\) for which the fiber \(E_t\) is a degree \(2\) point, which necessarily has Galois group \(S_2\).  For \(d\geq 3\), the complete linear system \(|d\mathcal{O}|\) gives an embedding \(E \hookrightarrow \P^{d-1}\) of degree \(d\).  Hence, by Lemma~\ref{lem:Sd_points}, for all \(d \geq 3\), there exist infinitely many degree \(d\) points \(P \in E\) for which the Galois group of \(k(P)\) is \(S_d\) and \(|P| = |d\mathcal{O}|\). By the Northcott property of the canonical height, only finitely many of these can be torsion points on \(E\).

For any such nontorsion \(P\), let \(L = k(P)\).
Consider the trace homomorphism \(E(L) \to E(k)\), which takes the sum in the group law on \(E\) of the \(L/k\) conjugates of a point in \(E(L)\).
A geometric point \(Q\) in \(P_{\bar{k}}\) gives a nontorsion \(L\)-point in \(E(L)\).  The \(L/k\)-conjugates of this point are the other geometric points in \(P_{\bar{k}}\).  Hence the image of \(Q\) under the trace map is the sum of the geometric points of \(P_{\bar{k}}\) in the group law on \(E\).  Since the sum in the group law respects rational equivalence, and \(|P| = |d \mathcal{O}|\), \(Q\) is in the kernel of the trace map.  On the other hand, the trace map is multiplication by \(d\) on \(E(k)\), so the intersection of the kernel with \(E(k)\) consists entirely of \(d\)-torsion points.  Since \(P\) is nontorsion, we have \(\langle P \rangle \cap E(k) = \mathcal{O}\).  Hence \(\rk E(L) > \rk E(k)\). 
\end{proof}

\begin{coroll}\label{cor:k(mx)=k(x)}
    Let $E/k$ be an elliptic curve defined over a number field $k$. 
    For each \(d \geq 2\), there exist infinitely many degree \(d\) points $P\in E$ of infinite order such that \(k([m]P) = k(P)\) for all \(m \in \mathbb{Z}\).
\end{coroll}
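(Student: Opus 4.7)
My plan is to prove the corollary by showing that the degree \(d\) points \(P\) produced in the proof of \Cref{lem:rankgoesup} already satisfy the stronger conclusion. That lemma constructs, for every \(d \geq 2\), infinitely many non-torsion degree \(d\) points \(P \in E\) whose residue field \(L = k(P)\) has Galois closure with Galois group \(S_d\) acting naturally on the \(d\) geometric points of \(P_{\bar{k}}\), and for which \(|P| = |d\mathcal{O}|\), so that the sum in the group law of those \(d\) geometric points equals \(\mathcal{O}\).

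Fix such a \(P\), a geometric point \(Q\) above \(P\), and a nonzero integer \(m\). Since multiplication by \(m\) is defined over \(k\), we have \(k([m]Q) \subseteq L\). By Galois theory, this intermediate field corresponds to a subgroup of \(S_d\) containing the point stabilizer \(\operatorname{Gal}(L'/L) \cong S_{d-1}\), where \(L'\) is the Galois closure of \(L/k\). The key observation is that this point stabilizer is maximal in \(S_d\): the natural action of \(S_d\) on \(\{1, \ldots, d\}\) is \(2\)-transitive, hence primitive, so point stabilizers are maximal. Therefore \(k([m]Q)\) is either \(L\) or \(k\).

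It then remains to rule out \([m]Q \in E(k)\). This follows from a trace-type argument: if \([m]Q\) were Galois-invariant, then the Galois conjugates \(Q_1 = Q, Q_2, \ldots, Q_d\) of \(Q\) would all differ from \(Q\) by \(m\)-torsion points, and using \(\sum_{i} Q_i = \mathcal{O}\) one deduces that \([d]Q\) itself lies in \(E[m]\), so \(Q\) has order dividing \(dm\)---contradicting that \(Q\) was chosen non-torsion. Since this argument applies to all nonzero \(m\) simultaneously for each individual \(P\), and Lemma~\ref{lem:rankgoesup} provides infinitely many such \(P\), the corollary follows.

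The main obstacle, such as it is, is the group-theoretic step of verifying that the point stabilizer \(S_{d-1} \subset S_d\) is maximal for every \(d \geq 2\); this is a standard consequence of the primitivity of the natural action and poses no real difficulty. The only genuine input from arithmetic geometry is the trace identity \(\sum_i Q_i = \mathcal{O}\), which is exactly what was exploited already in the proof of \Cref{lem:rankgoesup}.
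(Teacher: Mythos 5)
Your proposal is correct and follows essentially the same route as the paper: both arguments rest on the points constructed in the proof of Lemma~\ref{lem:rankgoesup} (trace zero because \(|P| = |d\mathcal{O}|\), hence no nonzero multiple of \(P\) lies in \(E(k)\)) together with the primitivity of \(S_d\)-extensions, i.e.\ the maximality of the point stabilizer \(S_{d-1}\subset S_d\), to force \(k([m]P)=k(P)\). The only cosmetic differences are that you re-derive the trace-zero step inline instead of quoting \(\langle P\rangle\cap E(k)=\mathcal{O}\) from that lemma's proof, and you obtain infinitude from the infinitely many points supplied by Hilbert irreducibility rather than from the multiples of a single point, as the paper does.
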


\begin{proof}
By Lemma~\ref{lem:rankgoesup}, there exists an \(S_d\) extension \(L/k\) and an infinite order point \(P \in E(L)\) such that \(\langle P \rangle \cap E(k) = \mathcal{O}\).  Hence, for every \(m \in \mathbb{Z} \setminus \{0\}\), the field \(k([m]P)\) is a subfield of \(k(P)\) which is not equal to \(k\).  Since \(S_d\)-extensions are primitive, we must have \(k([m]P) = k(P)\).  Since \(\langle P \rangle \subset E(L)\) is infinite, we obtain the desired infinite collection of points by considering multiples of \(P\).
\end{proof}

\subsection{First results on $\dendegs(C\times D/k)$}

\begin{lemma}\label{bound1} Let $X$ be a nice surface over $k$, $C$ a nice curve over $k$ and $\pi: X\to C$ a surjective map between them. Then 
    $\dendegs(X/k)\subseteq \dendegs(C/k)$ and $\wp(X/k)\subseteq \wp(C/k)$.
\end{lemma}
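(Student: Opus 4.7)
The plan is to reduce to the multiplicativity property \eqref{multiplicativity} of density degree sets of curves via a pigeonhole argument on the images of closed points.

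First I would take $d \in \dendegs(X/k)$ and let $S_d \subseteq X$ denote the Zariski dense set of degree $d$ closed points. Since $\pi \colon X \to C$ is surjective and $C$ is irreducible, $\pi(S_d)$ is Zariski dense in $C$. The key observation is that for any closed point $P \in X$, its image $\pi(P)$ is a closed point of $C$ with residue field $k(\pi(P)) \hookrightarrow k(P)$, so
\[
\deg_k(\pi(P)) \;\bigm|\; \deg_k(P) = d.
\]
Thus $\pi(S_d) \subseteq \bigcup_{e \mid d} C^{(e)}$, where $C^{(e)}$ denotes the set of closed points of $C$ of degree exactly $e$.

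Next, since $C$ is a nice curve (hence irreducible and one-dimensional), any proper closed subset is finite, so a subset of $C$ is Zariski dense if and only if it is infinite. A finite union of sets is infinite exactly when at least one of them is infinite, so there exists some $e \mid d$ for which $C^{(e)}$ is infinite, i.e. $e \in \dendegs(C/k)$. By property \eqref{multiplicativity} from Section~\ref{densitybackground}, $\dendegs(C/k)$ is closed under multiplication by positive integers, and $d = (d/e) \cdot e \in \dendegs(C/k)$, as desired.

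For the potential density statement, I would run the same argument after base change. If $d \in \wp(X/k)$, pick a finite extension $k'/k$ with $d \in \dendegs(X_{k'}/k')$; the surjective map $\pi_{k'} \colon X_{k'} \to C_{k'}$ then yields $d \in \dendegs(C_{k'}/k') \subseteq \wp(C/k)$. The only subtle point in the whole argument is justifying that scheme-theoretic surjectivity of $\pi$ forces $\pi(S_d)$ to be dense in $C$, which follows because the preimage of $\overline{\pi(S_d)}$ is a closed subset of $X$ containing the dense set $S_d$, hence equal to $X$; combined with surjectivity of $\pi$ this gives $\overline{\pi(S_d)} = C$.
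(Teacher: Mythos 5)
Your proposal is correct and follows essentially the same argument as the paper: project the dense set of degree $d$ points to $C$, use that residue degrees of images divide $d$, pigeonhole over the divisors $e \mid d$ to find a dense set of degree $e$ points on $C$, and conclude by multiplicativity of $\dendegs(C/k)$, with the potential density statement reduced to this by base change. Your extra justifications (density of the image via the closed preimage, and finiteness of proper closed subsets of a curve) only make explicit what the paper leaves implicit.
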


\begin{proof} The containment $\wp(X/k)\subseteq \wp(C/k)$ will follow from the corresponding one for $\dendegs(X/k)$. 
    Let $d\in \dendegs(X/k)$, then the set $\pi (\{ P\in X(\overline{k}): \deg(P)=d\} )$ is dense. 
    For any $P\in X$, a closed point of degree $d$, we have:
    \[
        k\subseteq k(\pi(P))\subseteq k(P).
    \]
    Let $S_e\coloneqq \{ \pi(P): P\in X$ closed point of degree $d$ and $\deg(k(P))=e \}$. 
    Since $e$ divides $d$, there is an $e$ for which $S_e$ is infinite and therefore dense in $C$. 
    In particular $e\in \dendegs(C/k)$.
    Since $\dendegs(C/k)$ is closed under multiplication, $d\in \dendegs(C/k)$ as well.
\end{proof} 

The above lemma gives the following upper bound for $\dendegs(C\times D/k)$.

 \begin{coroll}\label{cor:intersection}
     Let $C, D$ be nice curves. 
     Then $\dendegs(C\times D / k) \subseteq \dendegs(C/k)\cap \dendegs(D/k)$.
 \end{coroll}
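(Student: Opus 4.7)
The plan is to apply the preceding lemma (Lemma~\ref{bound1}) twice, once for each of the two projection maps from $C \times D$.

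More precisely, consider the canonical projections $\pi_C \colon C \times D \to C$ and $\pi_D \colon C \times D \to D$. Both are surjective morphisms from the nice surface $C \times D$ to a nice curve, so Lemma~\ref{bound1} applies to each: we obtain the two containments
\[
\dendegs(C \times D / k) \subseteq \dendegs(C/k), \qquad \dendegs(C \times D / k) \subseteq \dendegs(D/k).
\]
Intersecting these yields the claim.

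There is no real obstacle here; the corollary is an immediate consequence of Lemma~\ref{bound1} applied to the two coordinate projections, which is why it is stated as a corollary of that lemma. The only thing to verify is that both projections are surjective, which is automatic for a product of nice (in particular, geometrically connected and nonempty) varieties.
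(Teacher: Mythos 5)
Your proposal is correct and is exactly the argument the paper intends: Corollary~\ref{cor:intersection} is stated as an immediate consequence of Lemma~\ref{bound1} applied to the two coordinate projections $\pi_C$ and $\pi_D$, and intersecting the resulting containments. Nothing further is needed, and your remark that surjectivity of the projections is automatic for a product of nice varieties is the only point worth checking.
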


 When we know that the density of degree \(d\) points on \(C\) implies the same for \(C_{k'}\) for any finite extension \(k'/k\), we also get lower bounds on the density degree set of \(C \times D\).

\begin{prop}\label{inclusionsprod}
    Let $C, D$ be nice curves.  Then
     \(\dendegs_{\pp^1}(C/k)\cdot \dendegs(D/k)  \subseteq \dendegs(C\times D / k)\). In particular,
    \[
       \left( \dendegs(C/k)\cap \nn_{\geq g_C + 1}\right) \cdot \dendegs(D/k)  \subseteq \dendegs(C\times D / k).
    \]
    If \(g_C \leq 9\), then \(\dendegs(C/k)\cdot \dendegs(D/k)  \subseteq \dendegs(C\times D / k)\).
\end{prop}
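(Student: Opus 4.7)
The plan is to build degree $d_1 d_2$ closed points on $C \times D$ by ``slicing'': for each degree-$d_2$ closed point $P \in D$ and each degree-$d_1$ closed point on the fiber $C_{k(P)}$ over $P$, the composition gives a closed point on $C \times D$ of degree $d_1 d_2$. The crux is then verifying that the resulting family is Zariski dense.

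More precisely, fix $d_1 \in \dendegs_{\mathbb{P}^1}(C/k)$ and $d_2 \in \dendegs(D/k)$, and let $S \subseteq D$ denote the dense set of degree-$d_2$ closed points. For each $P \in S$, the closed immersion $\Spec k(P) \hookrightarrow D$ induces a closed immersion $\iota_P \colon C_{k(P)} \hookrightarrow C \times_k D$ identifying the fiber of $C \times D \to D$ over $P$. By property \eqref{finite_extension_P1}, $d_1 \in \dendegs_{\mathbb{P}^1}(C_{k(P)}/k(P))$, so there is a Zariski dense set $T_P \subseteq C_{k(P)}$ of $\mathbb{P}^1$-parameterized degree-$d_1$ closed points over $k(P)$. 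For each $Q \in T_P$, the composition $\Spec k(Q) \hookrightarrow C_{k(P)} \hookrightarrow C \times_k D$ is a closed immersion, so $\iota_P(Q)$ is a closed point of $C \times D$ with residue field $k(Q)$ of degree $[k(Q) : k(P)] \cdot [k(P) : k] = d_1 d_2$ over $k$.

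Showing that $\mathcal{P} \coloneqq \bigcup_{P \in S} \iota_P(T_P)$ is Zariski dense in $C \times D$ is the heart of the argument and the step I expect to be the main obstacle. I plan to argue by contradiction: if $Z \subsetneq C \times D$ is a proper closed subset containing $\mathcal{P}$, then $\pi_D(Z) \supseteq S$ is dense in $D$, so $Z$ must have an irreducible component dominating $D$; since $C \times D$ is irreducible of dimension $2$, we have $\dim Z \leq 1$. Hence $\pi_D \colon Z \to D$ is generically finite, so $Z \cap (C \times \{P\})$ is finite for all but finitely many $P \in D$, contradicting the fact that these fibers contain the infinite sets $T_P$ for cofinitely many $P \in S$.

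For the two ``in particular'' clauses, I would combine the first statement with the remaining properties listed in the excerpt. Property \eqref{gp1_bound} yields $\dendegs(C/k) \cap \nn_{\geq g_C + 1} \subseteq \dendegs_{\mathbb{P}^1}(C/k)$, from which the first inclusion follows immediately. When $g_C \leq 9$, the excerpt notes that $\dendegs(C/k) \subseteq \dendegs(C/k')$ for every finite extension $k'/k$; this is precisely the strengthening of property \eqref{finite_extension_P1} needed to rerun the argument above with $d_1 \in \dendegs(C/k)$ (not merely $\dendegs_{\mathbb{P}^1}(C/k)$), since the density argument in the previous paragraph only used the density of $T_P$ in $C_{k(P)}$, not its $\mathbb{P}^1$-parameterized character.
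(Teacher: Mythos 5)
Your proposal is correct and follows essentially the same route as the paper: fix a degree-$d_2$ point $P\in D$, use property \eqref{finite_extension_P1} (or the genus $\leq 9$ remark) to get dense degree-$d_1$ points on $C_{k(P)}$, and view these as degree-$d_1d_2$ points of $C\times D$ via the fiber over $P$, together with property \eqref{gp1_bound} for the first ``in particular'' clause. The only difference is that you spell out the Zariski-density of the resulting family (which the paper dismisses with ``by assumption''), and your dimension-count argument for that step is sound.
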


\begin{proof} 
    Let $d\in \dendegs(C/k)$ and $e\in \dendegs(D/k)$.  If either \(g_C \leq 9\) or \(d \in \dendegs_{\pp^1}(C/k)\), then it follows that for every finite extension \(k'/k\) we have \(d \in \dendegs(C/k')\) (see property \ref{finite_extension_P1} and the paragraph proceeding it).  In particular, for any degree \(e\) point \(P \in D\), we have \(d \in \dendegs(C_{k(P)}/k(P))\).  The image of a degree \(d\) point of \(C_{k(P)}\) under the basechange map \(C_{k(P)} \to C\) is a point of degree \(de\).  The points obtained in this way are dense on \(C\times D\) by assumption. 
\end{proof}

\subsection{A strategy to prove density}
One technique for guaranteeing density of degree \(d\) points on a nice surface \(S\) is to find a family of curves \(\{X_t\}_{t \in T}\) covering a dense subset of \(S\).  This opens the door to using the density of degree \(d\) points on each of the curves \(X_t\) (where we have more-developed techniques, such as the Abel--Jacobi map and the Riemann--Roch theorem) to guarantee density of degree \(d\) points on \(S\).  This is made precise in the following.

\begin{lemma}\label{lem:covcurvs}
  Let $S$ be a nice surface over $k$.  Suppose that there exists an infinite set \(T\) and nice curves \(X_t\) over \(k\), for each \(t \in T\), such that
    \begin{enumerate}
        \item\label{assp1} for all $t \in T $, there exists a map \(X_t \to S\) which is birational onto its image \(\im(X_t)\), and
        \item\label{assp2} \(\bigcup_{t \in T} \im(X_t)\) is not contained in a proper closed subvariety of \(S\), and
        \item\label{assp3} \(d \in \dendegs(X_t/k)\) for all \(t \in T\). 
    \end{enumerate}
    Then \(d \in \dendegs(S/k)\).
\end{lemma}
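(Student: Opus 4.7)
The plan is to let $Z \subseteq S$ denote the Zariski closure of the set of degree $d$ closed points on $S$, and to show directly that $Z \supseteq \im(X_t)$ for every $t \in T$. Once this is established, assumption \eqref{assp2} forces $Z = S$, which is exactly the statement that $d \in \dendegs(S/k)$.

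To carry out the inclusion $\im(X_t) \subseteq Z$, I would fix $t \in T$ and exploit hypothesis \eqref{assp1}: the morphism $\phi_t \colon X_t \to S$ is birational onto its image, so there is a dense open $U_t \subseteq X_t$ on which $\phi_t$ restricts to an isomorphism onto a dense open $V_t \subseteq \im(X_t)$. By hypothesis \eqref{assp3}, the degree $d$ closed points of $X_t$ are Zariski dense in $X_t$, so they meet $U_t$ in a Zariski dense subset. For any degree $d$ closed point $P \in U_t$, the fact that $\phi_t|_{U_t}$ is an isomorphism identifies the residue fields $k(P) \cong k(\phi_t(P))$, so $\phi_t(P)$ is a degree $d$ closed point of $\im(X_t)$ and hence of $S$. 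This produces a Zariski dense collection of degree $d$ points of $S$ lying in $\im(X_t)$, which shows $\im(X_t) \subseteq Z$.

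Finally, summing over $t \in T$ gives $\bigcup_{t \in T} \im(X_t) \subseteq Z$. Since $Z$ is closed, the Zariski closure of this union is also contained in $Z$. By hypothesis \eqref{assp2} this closure is not a proper closed subvariety of $S$, so $Z = S$ and the lemma follows.

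The only mild subtlety is making sure that a degree $d$ point $P$ on $X_t$ maps to an honest degree $d$ point of $S$ rather than collapsing to a lower-degree point; this is precisely what the birationality hypothesis \eqref{assp1} guarantees on the locus $U_t$, and since $U_t$ is a nonempty open in the irreducible curve $X_t$ only finitely many degree $d$ points of $X_t$ can lie outside $U_t$, so essentially all of the dense supply survives the transfer to $S$.
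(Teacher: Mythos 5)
Your proof is correct and follows essentially the same route as the paper's (which simply combines hypotheses \eqref{assp1} and \eqref{assp3} to get dense degree \(d\) points on each \(\im(X_t)\), then invokes \eqref{assp2}); you have merely spelled out the residue-field preservation on the open locus where the map is an isomorphism, which the paper leaves implicit.
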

\begin{proof}
    By combining assumptions~\eqref{assp1} and \eqref{assp3}, \(\im(X_t) \subset S\) has dense degree \(d\) points.  Ranging over all \(t \in T\), assumption~\eqref{assp2} guarantees that these cover a dense subset of \(S\).  Thus \(d \in \dendegs(S/k)\) as desired.
\end{proof}

The following proposition gives a special case where Lemma~\ref{lem:covcurvs} can immediately be put into practice. 
\begin{prop}\label{isogenyfactor}
    Let $C$ be a curve of genus $g\geq 1$ and $E$ an elliptic curve over $k$.
    Assume that $E$ is an isogeny factor of $\Pic^0_C$.
    Then $\dendegs(C\times E/k)=\dendegs(C/k)$. 
\end{prop}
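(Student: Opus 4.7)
The upper bound $\dendegs(C \times E/k) \subseteq \dendegs(C/k)$ is immediate from Corollary~\ref{cor:intersection}. For the reverse inclusion, the plan is to fix $d \in \dendegs(C/k)$ and apply Lemma~\ref{lem:covcurvs} to an infinite family of $k$-curves in $C \times E$, each isomorphic to $C$, built as graphs of iterates of a single surjective $k$-morphism $f\colon C \to E$. To construct $f$, I will use that $E$ is an isogeny factor of $\Pic^0_C$ to obtain a surjective $k$-homomorphism $\pi\colon \Pic^0_C \twoheadrightarrow E$, together with the fact that $d \in \dendegs(C/k)$ forces $\ind(C/k) \mid d$ and hence that there is a $k$-rational divisor class $L$ on $C$ of degree $d$. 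The assignment $x \mapsto d[x] - L$ then defines a $k$-morphism $\alpha_L\colon C \to \Pic^0_C$, and I set $f := \pi \circ \alpha_L$. To see $f$ is non-constant, I will check that the differences $\alpha_L(x) - \alpha_L(y) = d([x] - [y])$ for $x, y \in C$ generate $d \cdot \Pic^0_C = \Pic^0_C$ (using that the Abel map generates $\Pic^0_C$ and that multiplication by $d \neq 0$ is a surjective isogeny on $\Pic^0_C$); combined with surjectivity of $\pi$, this forces $f$ to be non-constant, and hence surjective onto $E$.

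Next, for each $n \in \mathbb{Z} \setminus \{0\}$, the composition $[n] \circ f\colon C \to E$ is a surjective $k$-morphism, and its graph $\Gamma_n = \{(x, [n]f(x)) : x \in C\} \subset C \times E$ is the image of the closed immersion $\varphi_n\colon C \hookrightarrow C \times E$, $x \mapsto (x, [n]f(x))$. Thus $\Gamma_n \cong C$ as a $k$-curve, so $d \in \dendegs(\Gamma_n/k) = \dendegs(C/k)$. I will verify that the graphs $\Gamma_n$ are pairwise distinct: if $\Gamma_m = \Gamma_n$ with $m \neq n$, then $[m-n] \circ f\colon C \to E$ would vanish identically, contradicting that it is surjective (as a composition of the nonzero isogeny $[m-n]$ with the surjective $f$). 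Since $C \times E$ is a surface, every proper closed subvariety contains only finitely many irreducible one-dimensional components, and hence cannot contain the infinite family $\{\Gamma_n\}_{n \neq 0}$; so $\bigcup_{n \neq 0} \Gamma_n$ is Zariski dense in $C \times E$. Applying Lemma~\ref{lem:covcurvs} with $T = \mathbb{Z} \setminus \{0\}$ and $X_n = C$ will then yield $d \in \dendegs(C \times E/k)$.

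The main technical obstacle is the construction of the $k$-morphism $f\colon C \to E$ without assuming $C$ has a $k$-rational point. The key observation is that one can replace the usual Abel map (based at a rational point) with the $d$-fold Abel map $x \mapsto d[x] - L$, exploiting $\ind(C/k) \mid d$ to produce a $k$-rational degree $d$ divisor class $L$. Once $f$ has been built, the remainder of the argument becomes a clean application of the covering strategy: iterating $f$ via multiplication-by-$n$ on $E$ alone produces infinitely many distinct curves, and on a surface this dimensional count automatically forces density, so no further constructions (such as translates by $E(k)$) are needed.
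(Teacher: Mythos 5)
Your proposal is correct and follows essentially the same route as the paper: build a nonconstant $k$-morphism $f\colon C\to E$ from the degree-$d$ Abel-type map $x\mapsto d[x]-L$ composed with the projection $\Pic^0_C\to E$ coming from the isogeny, then apply Lemma~\ref{lem:covcurvs} to the graphs of $[n]\circ f$, with the upper bound coming from the projection to $C$. The only differences are in two sub-steps, both handled validly: you prove nonconstancy of $f$ by noting that the differences $d([x]-[y])$ generate $\Pic^0_C(\bar k)$ (divisibility) so a constant $f$ would force $\pi=0$, where the paper instead invokes birational invariance of the Albanese over $\bar k$, and you deduce density of $\bigcup_n\Gamma_n$ by counting one-dimensional components of a proper closed subvariety, where the paper argues via fibers through points with nontorsion image.
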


\begin{proof}
    Let $\phi$ be an isogeny between $\Pic^0_C$ and $E\times A$, where $A$ is an abelian variety of dimension $g-1$. 
    Let \(d \geq 1\) be the index
    of \(C\) and let \(Q \in \Pic^d_C(k)\) be a rational point.  Consider the composite
    \[
        C \xhookrightarrow[]{ \Delta_C } C^{d} \to  \Pic^{d}_C  \xrightarrow[]{(-) \otimes (-Q)}\Pic_C^0  \overset{\phi}{\to}E\times A.
    \]
    Projection to \(E\) defines a map $f:C\to E$.  
    Explicitly, this map sends 
    \[
    P\mapsto (P,\dots,P)\mapsto \mathcal{O}_C(dP) \mapsto \mathcal{O}_C(dP-Q)
    \]
    and then uses the isogeny $\phi$ and projects on $E$. We claim that this map is nonconstant. We can check this by base change to $\overline{k}$. 
    Over $\overline{k}$, up to translation, we can assume $Q=dQ_0$ where $Q_0\in C(\overline{k})$.
    Therefore the map $C_{\overline{k}}\to \Pic^0_{C_{\overline{k}}}$ obtained by base change sends $P\mapsto \mathcal{O}_C(d(P-Q_0))$, and hence it
     can be described as the composition
    \[
    C_{\overline{k}} \xrightarrow{P\mapsto \mathcal{O}_{C_{\overline{k}}(P-Q_0)} } \Pic_{C_{\overline{k}}}^0 \overset{[d]}{\to} \Pic^0_{C_{\overline{k}}}. 
    \] The image of the above map is a curve $D/\overline{k}$ isomorphic to $C_{\overline{k}}$.  If postcomposing with the projection on $E$ were constant, then the universal property of the Albanese variety would imply that the Albanese variety of $D$ is an abelian variety isogenous to $A$, and not $\Pic^0_{C_{\overline{k}}}$. This contradicts the fact that the Albanese variety is a birational invariant.
    
    Then, for any positive integer $n$ consider the multiplication $[n]: E \to E$.
    Consider the graphs $\Gamma_{[n]\circ f}$ in the product $ C\times E$. These are all isomorphic to $C$ and we claim that their union is not contained in a proper Zariski closed subvariety of \(C \times E\).  Indeed, for any geometric point \(P \in C\) such that \(f(P)\) is nontorsion (of which there are infinitely many), the set \(\bigcup_n (P, [n]_* f(P))\) is infinite.  For any given Zariski closed subvariety \(Z \subset C \times E\), there are only finitely many geometric points \(P \in C\) such that \((P \times E)\cap Z\) is infinite (since \(Z\) can contain only finitely many fibers of projection onto \(C\)).  Thus $\dendegs(C/k)\subseteq \dendegs(C\times E/k)$ by Lemma~\ref{lem:covcurvs}.  Since \(\dendegs(C\times E/k) \subset \dendegs(C/k)\)) by Proposition~\ref{inclusionsprod}, we conclude that equality holds.
\end{proof}

Note that the upper bound of Proposition~\ref{inclusionsprod} is achieved in this case. Indeed, for an elliptic curve $E$ over $k$ we have $\dendegs(E/k)=\nn$ or $\nn_{\geq 2}$, and hence $\dendegs(E/k)\cap\dendegs(C/k)=\dendegs(C/k)$.

\section{Asymptotic Results for $\dendegs(C\times D/k)$} \label{sec: asymptoticresults}
 Throughout this section let \(C\) and \(D\) be nice curves over \(k\) of genera \(g_C\) and \(g_D\).  
In this section we give an explicit construction of a family of curves \(X_\gamma\) with \(\ind(X_\gamma/k) \mid \ind(C \times D/k)\) that will satisfy Lemma~\ref{lem:covcurvs} for the surface \(S = C \times D\).  The density of the degree \(d\) points on the curves in this family can be used to prove \Cref{thm:asym}.  The basic construction is as follows.

\begin{lemma}\label{lem:fiber_product_construction}
    Let \(C\) and \(D\) be nice curves and let \(Z \subset C \times D\) be an effective zero-cycle with projections \(Z_C \subset C\) and \(Z_D \subset D\).
    Let \(f_C \colon C \to \P^1\) and \(f_D \colon D \to \P^1\) be nonconstant morphisms such that \(f_C(Z_C^{\red}) = f_D(Z_D^{\red}) = \infty \in \P^1\).  For any automorphism \(\gamma \colon \P^1 \to \P^1\) fixing \(\infty\), the fiber product
\begin{center}
    \begin{tikzcd}
        X_\gamma \colonequals C \times_{\P^1} D \arrow[d, "\pi_C"] \arrow[rr, "\pi_D"] && D \arrow[d, "f_D"] \\
        C \arrow[r, "f_C"] & \P^1 \arrow[r, "\gamma"] & \P^1
    \end{tikzcd}
\end{center}
is a curve of arithmetic genus \((\deg(f_C) - 1)( \deg(f_D) -1) + \deg(f_C)g_D + \deg(f_D)g_C \).  The inclusion \((\pi_C, \pi_D) \colon X_\gamma \to C \times D\) sends \(X_\gamma\) to a curve on \(C \times D\) through the zero-cycle \(Z\).  Varying \(\gamma\) in any infinite  family, this family of curves sweeps out a dense locus in \(C \times D\).
\end{lemma}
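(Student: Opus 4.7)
The statement decomposes into three claims: the arithmetic genus formula, the incidence with $Z$, and the density of the family $\{X_\gamma\}$. The plan is to treat each in turn, with the divisor class of $X_\gamma$ on $C \times D$ serving as a common starting point.

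For the genus formula, I would identify $X_\gamma$ as the effective Cartier divisor on $C \times D$ cut out by $\gamma \circ f_C = f_D$; equivalently, it is the preimage of the diagonal $\Delta \subset \P^1 \times \P^1$ under $(\gamma \circ f_C, f_D) \colon C \times D \to \P^1 \times \P^1$. Letting $H_C \colonequals [\{\mathrm{pt}\} \times D]$ and $H_D \colonequals [C \times \{\mathrm{pt}\}]$ (so that $H_C^2 = H_D^2 = 0$ and $H_C \cdot H_D = 1$), pulling back $\Delta \sim H_1 + H_2$ yields $[X_\gamma] = \deg(f_C)\, H_C + \deg(f_D)\, H_D$. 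Combined with $K_{C \times D} = (2g_C - 2) H_C + (2g_D - 2) H_D$, the adjunction formula $2 p_a(X_\gamma) - 2 = X_\gamma \cdot (X_\gamma + K_{C \times D})$ gives the stated arithmetic genus after a short intersection computation.

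The incidence claim is immediate from the hypotheses: any closed point $(c,d)$ in the support of $Z$ satisfies $f_C(c) = f_D(d) = \infty$, and because $\gamma$ fixes $\infty$, the defining equation $\gamma(f_C(c)) = f_D(d)$ holds at $(c,d)$. For the density claim, my plan is to argue that distinct automorphisms $\gamma \neq \gamma'$ fixing $\infty$ produce $X_\gamma, X_{\gamma'}$ with no common irreducible component. If $Y$ were common to both, then $\gamma(f_C) = f_D = \gamma'(f_C)$ on $Y$; nonconstancy of $f_C|_Y$ would make it surjective onto $\P^1$ (since $Y$ is complete), forcing $\gamma = \gamma'$, while $f_C|_Y \equiv a$ would imply $f_D|_Y \equiv \gamma(a)$ is also constant, placing $Y$ inside the zero-dimensional locus $f_C^{-1}(a) \times f_D^{-1}(\gamma(a))$---both possibilities yield contradictions. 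Consequently, any infinite family of $\gamma$'s gives rise to infinitely many pairwise-distinct irreducible curves on the irreducible surface $C \times D$, whose union has $2$-dimensional Zariski closure, necessarily all of $C \times D$.

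The main technical step is the adjunction calculation; the incidence is by construction, and the density follows once one observes that distinct $\gamma$'s give rise to $X_\gamma$'s with disjoint families of irreducible components.
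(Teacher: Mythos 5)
Your proposal is correct, but two of its three parts take a genuinely different route from the paper. For the genus, the paper applies Riemann--Hurwitz to the projection $\pi_C \colon X_\gamma \to C$, whose ramification is pulled back from that of $f_D$; you instead realize $X_\gamma$ as the preimage of the diagonal of $\P^1 \times \P^1$ under the finite map $(\gamma \circ f_C, f_D)$, compute its divisor class $\deg(f_C)H_C + \deg(f_D)H_D$, and apply adjunction on $C \times D$. Your route has the advantage of computing the \emph{arithmetic} genus of the possibly singular, possibly reducible divisor $X_\gamma$ directly, which is literally what the lemma asserts, whereas the Riemann--Hurwitz computation is cleanest when $X_\gamma$ is smooth (the paper later passes to normalizations and only needs the arithmetic genus as an upper bound on the geometric genus). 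For density, the paper pushes forward by the finite map $(f_C, f_D)$, observes that $X_\gamma$ maps onto the graph of $\gamma^{-1}$, and notes that these graphs sweep out $\P^1 \times \P^1$; you argue instead that distinct $\gamma$'s yield divisors with no common irreducible component, so an infinite family produces infinitely many distinct irreducible curves, which cannot all lie in a proper closed subset of the irreducible surface $C \times D$. Both arguments are valid; the paper's is slicker, yours is more elementary and self-contained. The incidence claim is handled identically. The only point worth making explicit in your write-up is that $X_\gamma$ is pure of dimension one (each component is finite over $\Delta \cong \P^1$ since $(\gamma \circ f_C, f_D)$ is finite), so it really is an effective divisor to which adjunction applies and whose components are curves, as your density step uses.
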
 
\begin{proof}
    We compute the genus of \(X_\gamma\) by the Riemann--Hurwitz formula.  The map \(\pi_C \colon X_\gamma \to C\) has degree \(f_D\) and is ramified at the preimages of the ramification points of \(f_D\).  In total we see
    \[2g(X_\gamma) - 2 = \deg(f_D)(2g_C - 2) + \deg(f_C)(2g_D - 2 + 2\deg(f_D)),\]
    and hence
    \[g(X_\gamma) = \deg(f_C) \deg(f_D)- \deg(f_C) - \deg(f_D) + 1 + \deg(f_C)g_D + \deg(f_D)g_C . \]
    For every geometric point \(P \in Z_{\bar{k}}\), we have \(f_C(P) = f_D(P) = \infty \in \P^1\).  Since \(\gamma(\infty) = \infty\) by assumption, this guarantees that \(\gamma(f_C(P)) = f_D(P)\), and hence \(P\) is a point of \(X_\gamma\).
    Hence, the zero-cycle \(Z \subset C \times D\) belongs to \(X_\gamma\) for every choice of \(\gamma\).  
    
    Finally, we must verify that as we vary \(\gamma\), the curves \(X_\gamma \subset C \times D\) move.  Composing with the finite map \((f_C, f_D) \colon C \times D \to \P^1 \times \P^1\) the curve \(X_\gamma\) maps onto the graph of \(\gamma^{-1}\).  Since, varying \(\gamma\), these sweep out \(\P^1 \times \P^1\), we get the analogous result for \(X_\gamma\).
\end{proof}

Using Lemma~\ref{lem:covcurvs}, the basic construction in Lemma~\ref{lem:fiber_product_construction} will let us deduce density results for \(C \times D\) using density results for the curves \(X_\gamma\) covering \(C \times D\).
We would like to apply the bounds on the density degree set of a smooth curve coming from the Riemann--Roch theorem.  
One small subtlety is that the curve \(X_\gamma\) may be singular if the branch divisors of \(\gamma \circ f_C\) and \(f_D\) in \(\P^1\) intersect,
so we actually need to apply the Riemann--Roch theorem on the normalization \(X_\gamma^\nu\).  The following will be used to guarantee that \(X_\gamma^\nu\) has the right index.

\begin{lemma}\label{lem:split_over_node}
Let \(K/k\) be a finite extension.
Suppose that \(P \in C(K)\) is a ramification point of \(f_C\) and \(Q \in D\) is a ramification point of \(f_D\) and \(f_C(P) = f_D(Q) = \infty\).  Assume that either 
\begin{itemize}
    \item The ramification indices of \(f_C\) at \(P\) and \(f_D\) at \(Q\) are relatively prime; or
    \item The points \(P\) and \(Q\) are \(k\)-rational (i.e., \(K = k\)).
\end{itemize}
Then there exist infinitely many distinct automorphisms \(\gamma \colon \P_k^1 \to \P_k^1\) fixing \(\infty\) such that the normalization \(X_\gamma^\nu\) of \(X_\gamma\) has a \(K\)-rational point over \((P,Q)\). 
\end{lemma}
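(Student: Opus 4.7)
The plan is to analyze the completed local ring of $X_\gamma$ at $(P,Q)$ and use a Newton polygon argument to locate a rational place in the normalization.

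Fix a uniformizer $t$ at $\infty \in \P^1$ defined over $k$. Working over $K$, choose uniformizers $s$ of $\hat{\cO}_{C_K, P} \cong K[[s]]$ and $r$ of $\hat{\cO}_{D_K, Q} \cong K[[r]]$, so that $f_C^* t = u_C(s)\, s^{e_P}$ and $f_D^* t = v_D(r)\, r^{e_Q}$ with $u_C(0) = c_1,\ v_D(0) = c_2 \in K^\times$. In the affine coordinate $u = 1/t$ at $\infty$, any $\gamma$ fixing $\infty$ has the form $u \mapsto \alpha u + \beta$ for $(\alpha, \beta) \in k^\times \times k$, giving $\gamma^* t = t/(\alpha + \beta t)$. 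Expanding, the local equation of $X_\gamma$ at $(P,Q)$ in $K[[s,r]]$ becomes
\[
F(s,r) \;\coloneqq\; u_C(s)\, s^{e_P} \;-\; \alpha\, v_D(r)\, r^{e_Q} \;-\; \beta\, u_C(s) v_D(r)\, s^{e_P} r^{e_Q} \;=\; 0.
\]

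The Newton polygon of $F$ has a single lower edge from $(e_P, 0)$ to $(0, e_Q)$ of slope $-e_Q/e_P$. Setting $g \coloneqq \gcd(e_P, e_Q)$ and writing $e_P = g e_P'$, $e_Q = g e_Q'$ with $\gcd(e_P', e_Q') = 1$, a standard Puiseux/branch analysis shows that the branches of $X_\gamma$ at $(P,Q)$ over $\bar K$ are in bijection with the $g$ distinct $g$-th roots $\eta \in \bar K$ of $c \coloneqq c_1/(\alpha c_2)$: the branch labeled by $\eta$ is locally cut out by $s^{e_P'} - \eta\, r^{e_Q'} = 0$ at leading order, and admits a parametrization $s = \eta^{\alpha'} t^{e_Q'},\ r = \eta^{-\beta'} t^{e_P'}$ where $\alpha', \beta' \in \mathbb{Z}$ satisfy the Bezout identity $\alpha' e_P' + \beta' e_Q' = 1$. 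Consequently, a branch corresponds to a $K$-rational (respectively $k$-rational) place of $X_\gamma^\nu$ over $(P,Q)$ precisely when $\eta \in K$ (respectively $\eta \in k$).

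In case (i) we have $g = 1$, the unique branch corresponds to $c \in K^\times$ itself, and hence $X_\gamma^\nu$ automatically acquires a $K$-rational place over $(P,Q)$ for \emph{every} $\gamma$ fixing $\infty$; the full two-parameter family works. In case (ii), with $K = k$, I would choose $\alpha = (c_1/c_2)\,\mu^{-g}$ for $\mu \in k^\times$, so that $c = \mu^g$ is a $g$-th power in $k$; then $\mu \in k$ is a $k$-rational $g$-th root of $c$ and the branch it labels is $k$-rational. Letting $\mu$ vary over $k^\times$ and $\beta$ remain free gives infinitely many such $\gamma$.

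The main obstacle is verifying that a $K$-rational leading Puiseux coefficient lifts to a full $K$-rational power series parametrization of the branch. This is a Hensel-type iteration: once the nonlinear leading equation $\eta^g = c$ has been solved in the base field (which is the content of cases (i) and (ii)), every subsequent Puiseux coefficient is determined by a single linear equation with coefficients in $K$, and hence lies in $K$. This produces a genuine $K$-rational place of $X_\gamma^\nu$ lying over $(P,Q)$, completing the proof.
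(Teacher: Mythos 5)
Your proposal is correct and follows essentially the same route as the paper: reduce to the complete local ring at \((P,Q)\), observe that the geometric branches correspond to the \(\gcd(e_P,e_Q)\)-th roots of the ratio of leading coefficients (twisted by \(\alpha\)), and then choose \(\gamma\) of the form \(u\mapsto\alpha u+\beta\) with \(\alpha\) scaled so that this ratio becomes a perfect \(g\)-th power in the base field (no condition being needed when the ramification indices are coprime). The paper simply makes your invoked ``Puiseux/Hensel'' step explicit, by extracting \(s\)-th roots of the unit power series over \(K\), factoring, and writing down the normalization map to \(\Spec K[[t]]\) directly.
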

\begin{proof}
    We work in formal local coordinates.  Let \(z_C\) be a formal local coordinate on \(C\) near \(P\) and let \(z_D\) be a formal local coordinate on \(D\) near \(Q\).  Let \(w\) be a formal local coordinate at \(\infty \in \P^1\).  Then formally locally near \(P\) the map \(f_C\) is given by \(w = a_nz_C^n + a_{n+1} z_C^{n+1} + \cdots \in K[[z_C]]\) and near \(Q\) the map \(f_D\) is given by \(w = b_mz_D^m + b_{m+1}z_D^{m+1} + \cdots \in K[[z_C]]\).  Near the point \((P, Q)\), the curve \(X_K \colonequals (C \times_{\P^1} D)_K\) has equation 
    \begin{equation}\label{equationXinformallocalcoordinates}
        \cdots + a_{n+1}z_C^{n+1} + a_nz_C^n = b_mz_D^m + b_{m+1}z_D^{m+1} + \cdots. 
    \end{equation} 
    Let \(s\) be the greatest common divisor of \(n\) and \(m\).  Then we claim that the normalization \(X^\nu\) has a \(K\)-rational point above \((P,Q) \in X(K)\) if \(a_n/b_m\) is a perfect \(s\)th power.  Indeed, in this case, if we let \(n = sn'\) and \(m= sm'\) and \(c^s = a_n/b_m\), then the equation for \(X\) becomes
    \[\cdots + (a_{n+1}/b_m)z_C^{sn'+1} + c^sz_C^{sn'} = z_D^{sm'} + (b_{m+1}/b_m)z_D^{sm'+1} + \cdots\]
    Since \(c^s\) is a perfect \(s\)th power, the power series \(c^s + (a_{n+1}/b_m)z_C + \cdots \) has a formal \(s\)th root \(r_C = r_C(z_C)\in K[[z_C]]^\times\).  Similarly the power series \(1 + (b_{m+1}/b_m)z_D + \cdots\) has an \(s\)th root \(r_D = r_D(z_D)\in K[[z_D]]^\times\).  The equation for \(X_K\) formally locally near \((P,Q)\) is thus equivalently
    \[(r_C z_C^{n'})^s - (r_D z_D^{m'})^s = 0.\]
    This always has a nontrivial factor \(r_Cz_C^{n'} - r_D z_D^{m'} = 0\).  Since \(\gcd(n', m') = 1\), there exist integers \(a\) and \(b\) such that \(an' +bm' = 1\).  We claim that the normalization of \(\Spec K[[z_C, z_D]]/(r_Cz_C^{n'} - r_Dz_D^{m'})\) is isomorphic to \(\Spec K[[t]]\) via the map 
    \begin{equation}\label{eq:normalization_map}
        z_C\mapsto \left(\frac{r_D}{r_C}\right)^at^{m'}, \qquad z_D \mapsto \left(\frac{r_C}{r_D}\right)^bt^{n'}.
    \end{equation}
    Indeed, this is well-defined since, using \(an' +bm' = 1\), we see
    \begin{align*}
        r_C \left(\frac{r_D}{r_C}\right)^{an'}t^{n'm'} &= r_C^{bm'}r_D^{an'}t^{n'm'} \\
        &= r_D\left(\frac{r_C}{r_D}\right)^{bm'}t^{n'm'}.
    \end{align*}
    Combining \(an' +bm' = 1\) and \(r_Cz_C^{n'} = r_D z_D^{m'}\), we have 
    \[r_C^az_C = (r_Cz_C^{n'})^az_C^{bm'} = (r_Dz_D^{m'})^az_C^{bm'} = r_D^a(z_D^az_C^b)^{m'}\]
    The formula in \eqref{eq:normalization_map} implies that \(t = z_D^az_C^b\) and so 
    \(t^{m'} = (r_C/r_D)^a z_C\).  Thus \(t\) is a root of a monic polynomial with coefficients in \(K[[z_C, z_D]]/(r_Cz_C^{n'} - r_Dz_D^{m'})\).  Hence the normalization is equal to \(\Spec K[[t]]\) by the universal property of normalization.  This clearly has a rational point over the singular point \((P,Q) \in X(K)\). 

    If \(s= 1\) (i.e., the ramification indices \(n\) and \(m\) are relatively prime), then \(a_n/b_m\) is always a perfect \(s\)th power, so there is no condition on \(\gamma\) for \(X_\gamma^\nu\) to have a \(K\)-point above \((P,Q)\).  Assume, therefore, that \(K = k\). 
    An automorphism of \(\P^1\) fixing \(\infty\) sends \(w\) to \(w/(\alpha + \beta w)\) for \(\alpha \neq 0\).  Taking \(\alpha \in (b_m/a_n){k^\times}^s\) and \(\beta=0\) yields an infinite family of curves \(X_\gamma\) for which the local equation near \((P,Q)\) is 
    \[\cdots + \alpha (a_{n+1}/b_m)z_C^{sn'+1} + \alpha (a_n/b_m)z_C^{sn'} = z_D^{sm'} + (b_{m+1}/b_m)z_D^{sm'+1} + \cdots\] 
    and so there exists \(c^s \in k^{\times s}\) so that \(\alpha (a_n/b_m) = c^s\).  Hence \(X_\gamma^\nu\) has a \(k\)-rational point above \((P,Q) \in X_\gamma(k)\).
\end{proof}

\subsection{Pointed curves}  The simplest case is when both \(C\) and \(D\) have a \(k\)-rational point.

\begin{coroll}\label{coroll:inclusionofN(C,D)}
     Suppose that \(g_C\geq 1\) and \(g_D \geq 1\) and that \(C(k) \neq \emptyset\) and \(D(k) \neq \emptyset\).  Let \(N(C, D) = 2 \big((\gon(C)-1)(\gon(D)-1) + \gon(C)g_D + \gon(D)g_C\big)\). Then 
     \[ \nn_{\geq N(C, D)} \subseteq \dendegs(C \times D/k).\]
     In particular, since the gonality of a pointed curve of genus \(g\) is at most \(g+1\), we have
     \[ \nn_{\geq 6g_Cg_D + 2g_C + 2g_D} \subseteq \dendegs(C \times D/k).\]
\end{coroll}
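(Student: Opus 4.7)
The plan is to apply Lemma~\ref{lem:covcurvs} to the family of curves in $C \times D$ produced by Lemma~\ref{lem:fiber_product_construction}, using Lemma~\ref{lem:split_over_node} to guarantee rational points on their normalizations. First I would fix rational points $P \in C(k)$ and $Q \in D(k)$ and choose gonality morphisms $f_C \colon C \to \mathbb{P}^1$ of degree $\gon(C)$ and $f_D \colon D \to \mathbb{P}^1$ of degree $\gon(D)$; after precomposing with automorphisms of $\mathbb{P}^1$, I may assume $f_C(P) = f_D(Q) = \infty$. Lemma~\ref{lem:fiber_product_construction} applied with $Z = (P, Q)$ then produces, for each automorphism $\gamma$ of $\mathbb{P}^1$ fixing $\infty$, a curve $X_\gamma \subset C \times D$ of arithmetic genus
\[
g_X = (\gon(C) - 1)(\gon(D) - 1) + \gon(C)\, g_D + \gon(D)\, g_C = \tfrac{1}{2}\, N(C, D),
\]
passing through $(P, Q)$, with $\{X_\gamma\}_\gamma$ sweeping out a dense subset of $C \times D$.

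The main technical step is to verify that the normalization $X_\gamma^\nu$ has a $k$-rational point above $(P, Q)$ for an infinite subfamily of $\gamma$. If at least one of $P$ or $Q$ is unramified for its gonality morphism over $\infty$, then a local computation in formal coordinates shows $X_\gamma$ is already smooth at $(P, Q)$, so the point lifts trivially to $X_\gamma^\nu$. Otherwise both are ramification points, and the second clause of Lemma~\ref{lem:split_over_node} (applicable because $P$ and $Q$ are $k$-rational) produces infinitely many $\gamma$ for which $X_\gamma^\nu$ carries a $k$-rational lift of $(P, Q)$. After restricting to such an infinite subfamily, every $X_\gamma^\nu$ is a smooth projective curve of geometric genus at most $g_X$ with a $k$-rational point, hence of index $1$.

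To finish, property~\eqref{2g_bound} from Section~\ref{densitybackground} gives $d \in \dendegs(X_\gamma^\nu/k)$ for every $d \geq 2 g_X = N(C, D)$. Since $X_\gamma^\nu \to C \times D$ is birational onto its image, this density transfers to the image; the density of $\bigcup_\gamma X_\gamma$ in $C \times D$ (from Lemma~\ref{lem:fiber_product_construction}, retained on any infinite subfamily) then lets Lemma~\ref{lem:covcurvs} conclude $d \in \dendegs(C \times D/k)$. For the second asserted bound, the standard fact that a pointed nice curve of genus $g$ has gonality at most $g+1$ (via Riemann--Roch applied to $(g+1)P$) gives $\gon(C) \leq g_C + 1$ and $\gon(D) \leq g_D + 1$; substituting into $N(C, D)$ yields $N(C, D) \leq 6 g_C g_D + 2 g_C + 2 g_D$.
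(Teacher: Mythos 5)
Your proposal is correct and follows essentially the same route as the paper: apply Lemma~\ref{lem:fiber_product_construction} with \(Z=(P,Q)\) and the gonality maps sent to a common value \(\infty\), use Lemma~\ref{lem:split_over_node} to get a \(k\)-point on \(X_\gamma^\nu\), invoke property~\eqref{2g_bound} on the normalizations, and conclude via Lemma~\ref{lem:covcurvs}, with the same Riemann--Roch bound \(\gon \leq g+1\) for the final estimate. The only difference is cosmetic: you spell out the unramified case separately, which the paper subsumes in its citation of Lemma~\ref{lem:split_over_node}.
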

\begin{proof}
    Apply Lemma~\ref{lem:fiber_product_construction} with \(Z\) a rational point of \(C \times D\), and \(f_C\) and \(f_D\) the maps realizing the gonalities of \(C\) and \(D\) (with choice of coordinates on \(\P^1\) so that \(Z\) has the same image under each map).  We conclude that the surface \(C \times D\) is covered by curves \(X_\gamma\) of geometric genus at most \((\gon(C)-1)(\gon(D)-1) + \gon(C)g_D + \gon(D)g_C\) that pass through \(Z\). 
 Further, using Lemma~\ref{lem:split_over_node}, we may assume that the normalization \(X_\gamma^\nu\) of \(X_\gamma\) has \(X_\gamma^\nu(k) \neq \emptyset\).  By \cite[Proposition 5.1.1]{VV}, we have that \(\dendegs(X_\gamma^\nu/k)\) contains all integers at least twice the genus of \(X_\gamma^\nu\).  Hence 
 \(\nn_{\geq N(C, D)} \subset \dendegs(X_\gamma^\nu/k)\) for all \(\gamma\).  Since the \(X_\gamma\) cover \(C \times D\) this implies that \(\nn_{\geq N(C, D)} \subset \dendegs(C \times D/k)\).

 The bound on the gonality of a smooth curve \(X\) with \(P \in X(k)\) comes from the Riemann--Roch theorem, since \(h^0(X, (g+1)P) \geq 2\); this linear system may not be basepoint-free, but if there are basepoints, then \(X\) has gonality strictly less than \(g+1\).
\end{proof}

Something similar can be done when just one curve has a point, and the other has index 1.
\begin{coroll}\label{cor:pointedcurvexind1}
    Let $C$ be an index one curve, and $D$ a pointed curve. Suppose that $C$ and $D$ admit morphisms to $\mathbb{P}^1$ of degrees $d_C$ and $d_D$ respectively, with $d_C$ coprime to $2 d_D (g_C - 1)$ (this is possible since $C$ has index 1). Set $N(C, D) = 2\left((d_C - 1)(d_D - 1) + d_C g_D + d_D g_C\right)$, then
    $$\mathbb{N}_{\geq N(C, D)} \subset \dendegs(C \times D / k).$$
\end{coroll}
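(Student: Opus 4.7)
The plan is to adapt the proof of Corollary~\ref{coroll:inclusionofN(C,D)}: construct an infinite family of smooth curves $X_\gamma$ sitting inside $C \times D$ whose images cover a dense subset, and use Lemma~\ref{lem:covcurvs} to transfer density degree information from the $X_\gamma$'s to $C \times D$. The new feature, in the absence of a $k$-point on $C$, is that the index-$1$ property of $X_\gamma$ will be deduced from the canonical divisor together with the coprimality hypothesis, rather than from a $k$-rational point as in Corollary~\ref{coroll:inclusionofN(C,D)}.

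First, I would apply Lemma~\ref{lem:fiber_product_construction} with $Z = \emptyset$ to maps $f_C \colon C \to \mathbb{P}^1$ and $f_D \colon D \to \mathbb{P}^1$ of the prescribed degrees. This produces, for each automorphism $\gamma$ of $\mathbb{P}^1$ fixing $\infty$, a curve $X_\gamma = C \times_{\mathbb{P}^1} D$ of arithmetic genus $(d_C - 1)(d_D - 1) + d_C g_D + d_D g_C = N(C, D)/2$, naturally included in $C \times D$, with the union of images dense. For $\gamma$ varying in the (infinite) subfamily on which the branch loci of $\gamma \circ f_C$ and $f_D$ in $\mathbb{P}^1$ are disjoint, the fiber product $X_\gamma$ is smooth of this genus.

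The key step, and the main obstacle, is to verify that each such $X_\gamma$ has index $1$. The pullback $\pi_D^* P$ of the rational point $P \in D(k)$ along $\pi_D \colon X_\gamma \to D$ is a $k$-rational divisor on $X_\gamma$ of degree $d_C$, while the canonical divisor $K_{X_\gamma}$ is $k$-rational of degree $2g(X_\gamma) - 2$. A short computation modulo $d_C$ gives
\[
2g(X_\gamma) - 2 \equiv 2 d_D (g_C - 1) \pmod{d_C},
\]
so the coprimality hypothesis yields $\gcd(d_C, 2g(X_\gamma) - 2) = 1$, and hence $\ind(X_\gamma/k) = 1$.

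Since $X_\gamma$ is a smooth curve of genus $N(C,D)/2$ with index $1$, \cite[Proposition 5.1.1]{VV} gives $\mathbb{N}_{\geq N(C,D)} \subseteq \dendegs(X_\gamma/k)$. Applying Lemma~\ref{lem:covcurvs} to the family $\{X_\gamma\}$ then yields the desired containment. Aside from the index computation, the argument is a routine adaptation of Corollary~\ref{coroll:inclusionofN(C,D)}.
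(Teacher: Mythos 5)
Your proposal is correct and takes essentially the same route as the paper: the paper's proof also forms the fiber-product family of genus $N(C,D)/2$, observes that the index of each member divides both $d_C$ (via the degree-$d_C$ fiber over the rational point of $D$) and $2g-2$, uses the coprimality hypothesis to get index $1$, and concludes with \cite[Proposition 5.1.1]{VV} and the covering argument. The only cosmetic difference is that the paper normalizes coordinates so that $\infty$ is not a branch point of $f_C$ (with $f_D$ sending the rational point of $D$ to $\infty$), which guarantees that smooth members of the family exist; you should make a similar coordinate choice explicit so that your subfamily with disjoint branch loci is nonempty.
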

\begin{proof}
    Let $f_C$ and $f_D$ be maps of degree $d_C$ and $d_D$ respectively. Assume that $\infty$ is not a branch point for $f_C$, and that $f_D$ maps the rational point of $D$ to $\infty$. Under these assumptions, the fiber product $X_{\lambda} := C \times_{f_C, \lambda f_D} D$, has a point of degree $d_C$ (the pre-image of $\infty$). Moreover, for a general $\lambda$, this product is smooth.

    Applying Lemma~\ref{lem:fiber_product_construction}, gives that $X_{\lambda}$ generically has genus $g_{X_\lambda} = (d_C - 1)(d_D - 1) + d_C g_D + d_D g_C = d_C(d_D + g_D - 1) + d_D g_C - d_D + 1$. The index of $X_{\lambda}$ divides $2g_{X_{\lambda}} - 2$ and also divides $d_C$ (as it has a point of degree $d_C$).  Thus it divides their the greatest common divisor, which is by assumption, 1. We conclude by observing that $X_{\lambda}$ must have dense degree $d$ points for all $d \geq 2g_{X_{\lambda}}$.
\end{proof}

\subsection{The general case}

In the general case, we similarly want to find a family of curves of bounded genus on \(C \times D\) each of which has index dividing the index of \(C \times D\). 
We will do this by arranging for these curves to interpolate a zero-cycle on \(C \times D\) composed of points whose greatest common divisor of degrees is the index. We write \(|Z|\) for the support of the zero-cycle \(Z\)  (in particular, every coefficient in \(|Z|\) is \(1\)).

\begin{defn}\label{def:effind}
    The \defi{effective index} \(\effind(C \times D/k)\) of the surface \(C \times D\) is
    \[\min\big(\deg|Z| : Z \in Z_0(C \times D) \text{ and } \deg Z = \ind(C \times D/k) \big).\]
\end{defn}

In Section~\ref{subsubsec:effind}, we give bounds on \(\effind(C \times D/k)\) depending only on \(g_C\) and \(g_D\) in the case that \(\ind(C \times D/k) = 1\). 

\begin{prop}\label{thm:generalN}
    Let \(e = \effind(C \times D/k)\) and let
    \[N(C, D, e) = 2(\max(2g_C, 2g_C-2+e) - 1)( \max(2g_D, 2g_D-2+e) -1) \] 
    \[\qquad \qquad \qquad + 2\max(2g_C, 2g_C-2+e)g_D + 2\max(2g_D, 2g_D-2+e)g_C.\]
    Then \(\nn_{\geq N(C,D, e)} \cap \ind(C \times D/k) \nn \in \dendegs(C \times D/k)\).
\end{prop}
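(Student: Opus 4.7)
The plan is to apply Lemma~\ref{lem:covcurvs} to an infinite family of fiber-product curves built via Lemma~\ref{lem:fiber_product_construction}. By definition of \(e=\effind(C\times D/k)\), first fix an effective zero-cycle \(Z\subset C\times D\) with \(\deg Z=\ind(C\times D/k)\) and reduced support \(|Z|\) of degree \(e\), and let \(Z_C^{\red}\) and \(Z_D^{\red}\) denote the (reduced) images of \(|Z|\) under the projections to \(C\) and \(D\); these are effective divisors of degrees \(e_C,e_D\leq e\).

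The technical heart of the argument is the construction of auxiliary maps \(f_C\colon C\to\P^1\) and \(f_D\colon D\to\P^1\) with controlled degree and ramification. I would arrange \(f_C\) to be nonconstant of degree at most \(M_C\colonequals\max(2g_C,2g_C-2+e)\) with \(f_C(Z_C^{\red})=\{\infty\}\) and, crucially, with each point of \(Z_C^{\red}\) appearing with multiplicity exactly one in the pole divisor of \(f_C\); this last condition guarantees \(f_C\) is unramified at each such point. For \(g_C\geq 2\) and \(e\geq 2\), I would take \(D_C=K_C+Z_C^{\red}\) where \(K_C\) is a canonical divisor supported away from \(Z_C^{\red}\) (possible by general position in \(|K_C|\)); Riemann--Roch gives \(h^0(D_C)=g_C+e_C-1\geq 2\), and generic basepoint-freeness forces a generic \(f_C\in L(D_C)\) to have the required pole structure. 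In the edge cases (\(e\leq 2\) or \(g_C=1\)), I would instead take an effective divisor of degree \(2g_C\) containing \(Z_C^{\red}\) with multiplicity one at each of its points; the bound \(\deg D_C\geq 2g_C\) makes basepoint-freeness automatic. The same procedure yields \(f_D\) of degree at most \(M_D\colonequals\max(2g_D,2g_D-2+e)\).

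With \(f_C\) and \(f_D\) in hand, applying Lemma~\ref{lem:fiber_product_construction} to varying \(\gamma\in\on{Aut}(\P^1,\infty)\) produces a family of curves \(X_\gamma=C\times_{\gamma,\P^1}D\) each passing through \(Z\), of arithmetic genus bounded by
\[
(M_C-1)(M_D-1)+M_Cg_D+M_Dg_C=N(C,D,e)/2,
\]
whose union covers a dense subset of \(C\times D\). For all but countably many \(\gamma\), the curve \(X_\gamma\) is irreducible. Because \(f_C\) is unramified at every point of \(Z_C^{\red}\), each point of \(Z\) is a smooth point of \(X_\gamma\); hence the preimage of \(Z\) in the normalization \(X_\gamma^\nu\) is again a zero-cycle of degree \(\ind(C\times D/k)\), which forces \(\ind(X_\gamma^\nu/k)\mid\ind(C\times D/k)\). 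Moreover \(g(X_\gamma^\nu)\leq p_a(X_\gamma)\leq N(C,D,e)/2\).

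Finally, for any \(d\in\ind(C\times D/k)\nn\) with \(d\geq N(C,D,e)\), property \eqref{2g_bound} applied to each nice curve \(X_\gamma^\nu\) yields \(d\in\dendegs(X_\gamma^\nu/k)\), and Lemma~\ref{lem:covcurvs} then concludes \(d\in\dendegs(C\times D/k)\). The main obstacle I expect to navigate carefully is the second step: balancing the tight degree bound on \(f_C\) (and \(f_D\)) against the competing demands that the pole divisor contain \(Z_C^{\red}\) and be unramified there. This is precisely why the bound splits into the two regimes appearing in \(\max(2g_C,2g_C-2+e)\): the canonical-divisor construction is only efficient once \(g_C\) and \(e\) are both at least \(2\), and the uniform \(2g_C\) bound is needed to cover the remaining small cases.
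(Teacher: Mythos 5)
Your overall strategy is the paper's: interpolate the support of a minimal zero-cycle by the fiber-product family of Lemma~\ref{lem:fiber_product_construction}, with auxiliary maps of degree at most \(\max(2g,2g-2+e)\), bound the genus and index of the normalizations \(X_\gamma^\nu\), and finish with \cite[Proposition 5.1.1]{VV} and Lemma~\ref{lem:covcurvs}. Your one genuine deviation is that you insist that both \(f_C\) and \(f_D\) have simple poles along \(Z_C^{\red}\) and \(Z_D^{\red}\), so that the points of \(Z\) are smooth on \(X_\gamma\) and Lemma~\ref{lem:split_over_node} is never needed; the paper instead allows ramification index \(2\) when a reduced projection is a single rational point and invokes that lemma (after shunting the case of a rational point on \(C\times D\) to Corollary~\ref{coroll:inclusionofN(C,D)}). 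That variation is legitimate, but two of your steps do not hold up as written.

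First, there is in general no effective zero-cycle of degree \(\ind(C\times D/k)\): Definition~\ref{def:effind} only provides a zero-cycle \(Z'\) with signs of degree \(\ind(C\times D/k)\) whose support has degree \(e\). You should interpolate \(Z=|Z'|\) and note that the greatest common divisor of the degrees of its points equals \(\ind(C\times D/k)\), so smoothness of \(X_\gamma\) at these points gives \(\ind(X_\gamma^\nu/k)\mid\ind(C\times D/k)\); as phrased, your ``effective zero-cycle of degree \(\ind\)'' need not exist. Second, and more seriously, your case split for building \(f_C\) is keyed to \(e\) rather than to \(e_C=\deg Z_C^{\red}\). If \(e\geq 3\) and \(g_C\geq 2\) but every point of \(|Z|\) projects to a single rational point \(P\in C(k)\) (so \(e_C=1\); this happens, for instance, when \(C\) has a rational point and \(D\) does not), your main recipe takes \(D_C=K_C+P\). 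But \(h^0(K_C+P)=h^0(K_C)=g_C\), so \(P\) is a base point of \(|D_C|\) and a generic \(f\in L(D_C)\) has no pole at \(P\) at all: the ``required pole structure'' fails exactly where you need it. The repair is already in your toolbox: whenever \(e_C=1\), use your edge-case divisor \(P+F\) with \(F\) effective of degree \(2g_C-1\) avoiding \(P\); then \(h^0(P+F-P)=g_C<g_C+1=h^0(P+F)\), so a generic section has a simple pole at \(P\), and the degree \(2g_C\leq\max(2g_C,2g_C-2+e)\) keeps the genus bound, hence \(N(C,D,e)\), intact. With these two adjustments your argument goes through and recovers the paper's statement.
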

\begin{proof}
    Let \(Z'\) be a zero-cycle on \(C \times D\) with \(\deg(Z') = \ind(C \times D/k)\) and \(\deg |Z'| = \effind(C \times D/k) = e\).  
    We will apply Lemma~\ref{lem:fiber_product_construction} with \(Z = |Z'|\).  Let \(Z_C\) and \(Z_D\) denote the images of \(Z\) under the projections to \(C\) and \(D\) respectively.
    The case that \(C \times D\) has a rational point was already dealt with in Corollary~\ref{coroll:inclusionofN(C,D)} (with a better bound), so we assume that one of \(\deg |Z_C|\) or \(\deg |Z_D|\) is at least \(2\).
    
    Since a hyperplane in projective space can avoid any finite collection of points, there exists an effective divisor \(F_C\) of degree \(2g_C - 2\) such that \(|F_C| \cap |Z_C| = \emptyset\).  Similarly let \(F_D\) be an effective divisor on \(D\) of degree \(2g_D - 2\) such that \(|F_D|\cap |Z_D| = \emptyset\).  Define
    \[Y_C \colonequals \begin{cases} F_C + |Z_C| &: \ \deg|Z_C| \geq 2 \\ F_C + 2|Z_C| & : \ \deg |Z_C| = 1.\end{cases}\]
    Similarly define
        \[Y_D \colonequals \begin{cases} F_D + |Z_D| &: \ \deg|Z_D| \geq 2 \\ F_D + 2|Z_D| & : \ \deg |Z_D| = 1.\end{cases}\]
    By construction, these divisors satisfy 
    \[2g_C \leq \deg L_C \leq \max(2g_C, 2g_C -2 + e), \qquad 2g_D \leq \deg L_D \leq \max(2g_D, 2g_D -2 + e).\]
    By the lower bound on \(\deg Y_C\), we can choose a basepoint free pencil in \(|Y_C|\) giving rise to a map \(f_C \colon C \to \P^1\) whose fiber over \(\infty\) is \(Y_C\); similarly for \(f_D \colon D \to \P^1\) with fiber over infinity \(Y_D\).

    We will apply Lemma~\ref{lem:fiber_product_construction} with these maps \(f_C\) and \(f_D\), which guarantees that the surface \(C \times D\) is covered by curves \(X_\gamma\) of arithmetic genus
    \[(\max(2g_C, 2g_C-2+e) - 1)( \max(2g_D, 2g_D-2+e) -1) + \max(2g_C, 2g_C-2+e)g_D \]
    \[\qquad \qquad+ \max(2g_D, 2g_D-2+e)g_C.\]
    Since \(\deg |Z_C|\) and \(\deg |Z_D|\) cannot both be \(1\), the ramification indices of the maps \(f_C\) and \(f_C\) at the points of \(|Z_C|\) and \(|Z_D|\) are relatively prime.
   Thus, by Lemma~\ref{lem:split_over_node}, the curves \(X_\gamma^\nu\) produced by Lemma~\ref{lem:fiber_product_construction} have points over the same fields as the points of \(Z\).  In particular, since the greatest common divisor of the degrees of the points of \(Z\) is \(\ind(C \times D/k)\) by assumption, the index of each \(X_\gamma^\nu\) divides the index of \(C \times D\).  
    By \cite[Proposition 5.1.1]{VV}, we have that \(\dendegs(X_\gamma^\nu/k)\) contains all integer multiples of \(\ind(X_\gamma^\nu/k)\) at least twice the genus of \(X_\gamma^\nu\).  Since the arithmetic genus of \(X_\gamma\) is an upper bound on the geometric genus of \(X_\gamma^\nu\), and \(\ind(X_\gamma^\nu/k) \mid \ind(C \times D/k)\), we conclude that \(\dendegs(C \times D/k)\) contains all integer multiples of \(\ind(C \times D/k)\) of size claimed. 
\end{proof}

\begin{remark}
    Determining the index and effective index in practice can be challenging. We give two examples of curves which individually have index 2, and their product has no quadratic points, but one pair gives a surface of index 2, and the other a surface of index 4. The absence of quadratic points in both cases can be explained by a local obstruction at 3.
    
    The pair of genus 2 curves, $y^2 = 3(x^6 - x^2 + 1)$, $y^2 = -(x^6 - x^2 + 1)$, give a product with index 2 and effective index is 10. These indices arise as the first curve only has points over extensions of $\mathbb{Q}_3$ with inertia degree divisible by 6 or even ramification degree; the second only has points over fields of even inertia degree. In particular, there can be no quadratic field over which both curves have points, and so the effective index must be at least 10, as the lowest degree effective zero-cycles are degree 4, and for parity reasons, a zero-cycle of degree at least 6 must also appear in the support. The effective index being exactly 10, and the index of the product being 2 comes from the existence of a degree 6 point over a common field: when $y = 0$ for both curves. 
    
    The pair of genus 3 curves, $y^2 = 3(x^8 + x^4 + 2)$, $y^2 = 2(x^4 + x^2 + 2)^2 + 3$ both have index 2, their product has index 4 and the effective index is also 4. The obstructions in this case are similar: a point on the first curve must be defined over a field of ramification index a multiple of 2, or with inertia degree a multiple of 8; a point on the second curve must be defined over a field with inertia degree a multiple of 2. Combining these two local conditions forces that any field over which both curves have a point must either have inertia degree a multiple of 8, or an even inertia degree and even ramification index, and so the field must have degree a multiple of 4. 
\end{remark}

\subsubsection{Bounds on the effective index}\label{subsubsec:effind}
We focus here on the case that either \(C\) or \(D\) has index \(1\); in particular, this covers the case that \(C \times D\) has index \(1\).

\begin{lemma}\label{lem:bound_eff_index_1}
    Suppose that \(\ind(C/k) = 1\).  Then there exists a zero cycle \(Z \subset C \times D\) with \(\deg(Z) = \ind(C \times D/k)\) and 
    \[\deg |Z|\leq 4g_Cg_D + (4g_C + 2) (\ind(D/k)-1) + 2g_C \ind(D/k) + 2g_D  + \ind(D/k).\]
    In particular, if \(\ind(C \times D/k)=1\), then \(\ind(D/k) = 1\) and \(\deg|Z|\leq 4g_Cg_D + 2g_C + 2g_D + 1\).
\end{lemma}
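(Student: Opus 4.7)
The plan is to construct $Z$ as an external product $Z = Z_C \times Z_D$ of small-support zero cycles on $C$ and $D$, exploiting the fact that $\ind(C/k) = 1$ forces $\ind(C\times D/k) = \ind(D/k)$.

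The first step is to establish this index identity (letting $d \colonequals \ind(D/k)$). Pushforward along $C\times D \to D$ preserves degrees of zero cycles, giving $\ind(D/k) \mid \ind(C\times D/k)$; conversely, the external product of a degree $1$ zero cycle on $C$ with a degree $d$ zero cycle on $D$ is a zero cycle of degree $d$ on $C\times D$, giving the reverse divisibility. In particular, when $\ind(C\times D/k) = 1$ the index of $D$ is also $1$, establishing the ``in particular'' clause and ensuring automatically that our construction below yields $\deg Z = \ind(C\times D/k)$.

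The next step is to construct the factors with controlled support. For $Z_C$: when $g_C \leq 1$, Riemann-Roch combined with $\ind(C/k)=1$ forces $C(k)\neq \emptyset$, so a rational point works. For $g_C \geq 2$, I would fix a line bundle $L_C$ of degree $1$ and an effective representative $\beta_C$ of any degree $g_C$ line bundle (which exists since $h^0 \geq 1$ by Riemann-Roch); then $L_C \otimes \mathcal{O}_C(\beta_C)$ has degree $g_C + 1$ with $h^0 \geq 2$, hence an effective representative $\alpha_C$. Setting $Z_C \colonequals \alpha_C - \beta_C$ gives $\deg Z_C = 1$ and $\deg|Z_C| \leq 2g_C + 1$. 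An analogous difference-of-effectives construction, or simply an effective representative when $d \geq g_D$, yields $Z_D$ with $\deg Z_D = d$ and $\deg|Z_D| \leq 2g_D + d$.

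Finally, I would set $Z \colonequals Z_C \times Z_D$. For closed points $p \in C$, $q \in D$, the external product $p \times q \subset C \times D$ is the reduced scheme $\Spec(k(p) \otimes_k k(q))$ of total degree $[k(p):k][k(q):k]$, so bilinearity of external product gives
\[\deg|Z| \leq \deg|Z_C| \cdot \deg|Z_D| \leq (2g_C + 1)(2g_D + d).\]
This equals the stated bound when $d=1$, and is otherwise dominated by it, since a direct expansion shows $(2g_C + 1)(2g_D + d) \leq (2g_C + 1)(2g_D + 3d - 2)$ and the latter rearranges to the RHS of the lemma. The only genuinely nontrivial ingredient is the initial index identity $\ind(C\times D/k) = \ind(D/k)$; everything else is Riemann-Roch bookkeeping, so I do not anticipate any serious obstacles.
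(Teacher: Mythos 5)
Your overall route is in fact the same as the paper's: the paper's four-term cycle $Z_C\times Z_D - Z_C\times W_D - W_C\times Z_D + W_C\times W_D$ is exactly the external product $(W_C - Z_C)\times(W_D - Z_D)$ of a degree-$1$ difference cycle on $C$ with a degree-$\ind(D/k)$ difference cycle on $D$, and your index identity, support bookkeeping for external products, and final arithmetic comparison are all correct. However, there is one genuine gap, in the construction of $Z_D$ when $d \colonequals \ind(D/k) < g_D$. Your ``analogous difference-of-effectives construction'' requires an effective divisor $\beta_D$ of degree exactly $g_D$ on $D$ over $k$ (equivalently, a $k$-rational line bundle of degree $g_D$). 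But every closed point of $D$ has degree divisible by $\ind(D/k)$, hence every $k$-rational divisor, effective or not, and every line bundle defined over $k$ has degree in $\ind(D/k)\,\mathbb{Z}$. So when $d \nmid g_D$ (e.g.\ $g_D = 3$, $\ind(D/k)=2$, which occurs for the paper's own genus-$3$ index-$2$ examples), no such $\beta_D$ exists; Riemann--Roch only guarantees an effective representative \emph{once} a degree-$g_D$ class over $k$ is given, it cannot produce the class. Note the lemma places no hypothesis on $D$, so this case is genuinely in scope, and your claimed bound $\deg|Z_D|\leq 2g_D + d$ is not available in general.

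The gap is localized and fixable, and the fix is precisely what the paper does: take effective divisors on $D$ of degrees $n\,\ind(D/k)$ and $(n+1)\ind(D/k)$, where $n$ is minimal with $n\,\ind(D/k)\geq g_D$ (these classes exist because their degrees are multiples of the index, and they are effective by Riemann--Roch since the degrees are at least $g_D$). Their difference is a degree-$d$ cycle with $\deg|Z_D| \leq (2n+1)d \leq 2g_D + 3d - 2$. Feeding this weaker bound into your external product gives $\deg|Z| \leq (2g_C+1)(2g_D+3d-2)$, which by your own final computation equals the right-hand side of the lemma, so the statement (including the ``in particular'' clause, whose derivation via your index identity is fine) still follows. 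With that correction your argument and the paper's proof coincide, with your packaging as a product of two difference cycles being a slightly cleaner way to see where the bound comes from.
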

\begin{proof}
    By considering the image of a zero-cycle on \(C \times D\) under the projection to \(D\),
    the index of \(D\) divides the index of \(C \times D\).  Hence, by the Riemann-Roch theorem and the Hilbert irreducibility theorem (as in \cite[Proposition 5.1.1]{VV}), there exist effective divisors \(Z_C, W_C\) on \(C\) of degrees \(g_C\) and \(g_C  + 1\), and effective divisors \(Z_D\) and \(W_D\) on \(D\) of degrees \(n\ind(D/k)\) and \((n+1)\ind(D/k)\), where \(n \in \nn\) is the least integer such that \(n \ind (D/k) \geq g_D\).  In particular, we have the bound \(n \ind (D) \leq g_D -1 + \ind(D/k)\). 

    The pairwise products \(Z_C \times Z_D, Z_C \times W_D, W_C \times Z_D, W_C \times W_D\) are zero cycles on \(C \times D\) of degrees 
    \begin{align*}
        \deg (Z_C \times Z_D) &= n g_C \ind(D/k), \\
        \deg (Z_C \times W_D) &= n g_C \ind(D/k) + g_C \ind(D/k), \\
        \deg (W_C \times Z_D) &= n g_C \ind(D/k) + n \ind(D/k), \\
        \deg (W_C \times W_D) &= n g_C \ind(D/k) + n \ind(D/k) + g_C \ind(D/k) + \ind(D/k).
    \end{align*} 
    The linear combination
    \[Z = Z_C \times Z_D - Z_C \times W_D - W_C \times Z_D + W_C \times W_D\] 
    therefore has \(\deg(Z) = \ind(D/k)\).  Since \(\ind(D/k)\) divides \(\ind(C \times D/k)\) they must be equal, and so \(\deg(Z) = \ind(C \times D/k)\).
    We have 
    \begin{align*}
        \deg |Z| &= 4n g_C \ind(D/k) + 2g_C \ind(D/k) + 2n \ind(D/k) + \ind(D/k) \\
        & \leq 4g_C(g_D -1 + \ind(D/k)) + 2g_C \ind(D/k) + 2(g_D -1 + \ind(D/k)) + \ind(D/k) \\
        &= 4g_Cg_D + (4g_C + 2) (\ind(D/k)-1) + 2g_C \ind(D/k) + 2g_D  + \ind(D/k). \qedhere
    \end{align*}
\end{proof}

\section{Products of elliptic curves}\label{sec:prodellcurves}
In this section we focus on a product $X=E_1\times E_2$ of two elliptic curves over $k$. When $E_1, E_2$ are isogenous over $k$, it follows immediately by Proposition~\ref{isogenyfactor} that $\dendegs(E_1\times E_2/k)=\dendegs(E_1/k)=\dendegs(E_2/k)$. 

Using the group structure of elliptic curves, the density degree set of an arbitrary product of elliptic curves admits the following more straightforward description.

\begin{lemma}\label{lem:deltaE}
    Let \(E_1, \dots, E_n\) be elliptic curves over \(k\).  Then the following are equivalent:
    \begin{enumerate}
        \item\label{part:d_in_den} \(d \in \dendegs(E_1 \times \cdots \times E_n/k)\). 
        \item\label{part:infinite_order} There exists a degree \(d\) point \(P \in E_1 \times \cdots \times E_n\) such that the image of \(P\) under each of the projections \(\pi_j \colon E _1 \times \cdots \times E_n \to E_j\) has infinite order in \(E_j(k(P))\).
    \end{enumerate}
\end{lemma}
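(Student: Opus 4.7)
The plan is to prove the two implications separately. Writing $A = E_1 \times \cdots \times E_n$, for (1) $\Rightarrow$ (2), I would argue by contradiction: if every degree-$d$ closed point of $A$ has some torsion projection, then by Merel's uniform boundedness theorem, for each $j$ the set of torsion closed points of $E_j$ of degree at most $d$ over $k$ is finite. The bad locus (degree-$d$ points with some projection torsion) is thus contained in the finite union of codimension-$1$ subvarieties $\bigcup_j \bigcup_{T} \pi_j^{-1}(T)$, which cannot be Zariski dense, contradicting the density of degree-$d$ points.

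For (2) $\Rightarrow$ (1), I would construct a Zariski dense family of degree-$d$ closed points by independently multiplying each projection of the given good point $P = (P_1, \ldots, P_n)$. Fix a geometric representative $\tilde{P} = (\tilde{P}_1, \ldots, \tilde{P}_n) \in A(\overline{k})$; by hypothesis each $\tilde{P}_j$ is non-torsion, and writing $L_j = k(P_j)$ for the field of definition of $\tilde{P}_j$, one has $k(P) = L_1 \cdots L_n$ (the compositum of fields of definition of the components). For each $j$, let $T_j \subset \bZ$ be the set of integers $m_j$ with $k([m_j]\tilde{P}_j) = L_j$. The failure condition $k([m_j]\tilde{P}_j) \subsetneq L_j$ amounts to the existence of a non-trivial $k$-embedding $\iota \colon L_j \hookrightarrow \overline{k}$ with $[m_j](\iota(\tilde{P}_j) - \tilde{P}_j) = 0$. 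Since $\tilde{P}_j$ has field of definition exactly $L_j$, the difference $\iota(\tilde{P}_j) - \tilde{P}_j$ is non-zero; if it is non-torsion, the condition has no nonzero solution, and if it is torsion of order $n_\iota \geq 2$, the condition constrains $m_j$ to the proper sublattice $n_\iota \bZ$. Hence $T_j$ is the complement of a finite union of proper sublattices of $\bZ$, and is in particular infinite.

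For every tuple $(m_1, \ldots, m_n) \in T_1 \times \cdots \times T_n$, the point $\tilde{Q} = ([m_1]\tilde{P}_1, \ldots, [m_n]\tilde{P}_n) \in A(\overline{k})$ has stabilizer in $\on{Gal}(\overline{k}/k)$ equal to $\bigcap_j \on{Gal}(\overline{k}/L_j)$, so its field of definition is $L_1 \cdots L_n = k(P)$, and the corresponding closed point of $A$ has degree $d$. Since for each $j$ the set $\{[m_j]\tilde{P}_j : m_j \in T_j\}$ is infinite and thus Zariski dense in the irreducible one-dimensional $E_j$, the collection of $\tilde{Q}$'s is Zariski dense in $A$ (as the closure of a product of subsets equals the product of their closures over an algebraically closed field), proving $d \in \dendegs(A/k)$.

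The main technical obstacle is the backward direction: guaranteeing that the constructed closed points have degree exactly $d$, not a proper divisor of $d$. The key insight is the independence of multiplications across the factors; a naive approach using only $\{[m]P : m \in \bZ\}$ would fail, since the Zariski closure of those multiples is an algebraic subgroup of $A$ that can be a proper subvariety when the projections of $P$ satisfy isogeny relations (as occurs, for instance, when $A = E \times E$ and $P = (R, R)$ lies on the diagonal).
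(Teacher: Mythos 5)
Your proof is correct, and while the skeleton matches the paper's (finiteness of bounded-degree torsion for one direction; dense points inside $\langle P_1\rangle \times \cdots \times \langle P_n\rangle$ with residue field exactly $k(P)$ for the other), the key step of the converse is carried out by a genuinely different mechanism. For \eqref{part:d_in_den}~$\Rightarrow$~\eqref{part:infinite_order} the two arguments are essentially identical: you quote Merel's uniform boundedness where the paper quotes the Northcott property, but either yields that the torsion points of degree at most $d$ on each $E_j$ form a finite set, so the ``bad'' locus $\bigcup_j \pi_j^{-1}(T_{d,j})$ is a proper closed subset. For \eqref{part:infinite_order}~$\Rightarrow$~\eqref{part:d_in_den}, the paper works with the subgroup $B = \langle P_1 \rangle \times \cdots \times \langle P_n\rangle \subset A(k(P))$, sets $H_i = B \cap A(k_i)$ for the finitely many proper subextensions $k_i \subsetneq k(P)$, and invokes \cite[Lemma 4.3.8]{VV} to produce a finite-index subgroup $H \leq B$ with $(P+H)\cap H_i = \emptyset$, so the coset $P+H$ is dense and consists of points with residue field exactly $k(P)$. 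You instead argue componentwise: you characterize the multipliers $m_j$ for which $k([m_j]\tilde P_j) = L_j$ via the nonexistence of a nontrivial embedding $\iota$ of $L_j$ killing $[m_j](\iota(\tilde P_j)-\tilde P_j)$, observe that the failure set is a finite union of proper subgroups of $\bZ$ (determined by the torsion orders of these differences, or $\{0\}$ when a difference is nontorsion --- a harmless imprecision in your ``proper sublattice'' phrasing), and then get density of the product set $T_1 \times \cdots \times T_n$ of good multipliers from the fact that the closure of a product is the product of the closures. Your route is self-contained and makes the congruence conditions on the multipliers explicit, at the cost of a more delicate Galois computation; the paper's route is shorter by black-boxing the subgroup-avoidance lemma. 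Your closing remark about why the single cyclic group $\langle P \rangle$ would not suffice (its closure can be a proper algebraic subgroup, e.g.\ the diagonal in $E \times E$) correctly identifies the reason both arguments must use the full product of the cyclic groups generated by the coordinates.
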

\begin{proof}
We first prove \eqref{part:d_in_den} \(\Rightarrow\) \eqref{part:infinite_order}. By the Northcott property of the canonical height, there are finitely many torsion points on \(E_j\) of degree at most \(d\).  Call the set of such points \(T_{d, j}\).  The union \(\bigcup_{j=1}^n\pi_j^{-1} T_{d, j}\) is a Zariski closed subset of \(E_1 \times \cdots \times E_n\).
Thus if \(d \in \dendegs(E_1 \times \cdots \times E_n/k)\), there exist a degree \(d\) point outside this subset, i.e., all of whose projections are nontorsion.

To prove  \eqref{part:infinite_order} \(\Rightarrow\)  \eqref{part:d_in_den}, let \(P_j \colonequals \pi_j(P)\) be the \(j\)th projection and consider the subgroup \(B \colonequals \langle P_1\rangle \times \cdots \times \langle P_n \rangle\).  The union of the points of \(B\) are dense in \(E_1 \times \cdots \times E_n\).  Furthermore, every point in \(B\) is defined over \(k(P)\) since each \(P_j\) is defined over \(k(P)\).  Thus \(P \in B \subset (E_1 \times \cdots \times E_n)(k(P))\).  For each of the (finitely many) proper subextensions \(k \subseteq k_i \subsetneq k(P)\), let \(H_i \colonequals B \cap (E_1 \times \cdots \times E_n)(k_i)\).  Then each \(H_i\) is a subgroup of \(B\) and \(P \not\in \bigcup_i H_i\).  Thus by \cite[Lemma 4.3.8]{VV}, there exists a finite index subgroup \(H \subset B\) such that \((P + H)\cap H_i  = \emptyset\) for all \(i\).  Since \(H\) is finite-index in \(B\), the union of the points of \(P + H\) is also dense in \(X\).  We have arranged that for each \(Q \in P + H\), the residue field is exactly \(k(P)\), which is a degree \(d\) extension of \(k\).  Thus \(d \in \dendegs(E_1 \times \cdots \times E_n/k)\).
\end{proof}

\begin{coroll}\label{cor:prod_ell_curve_pos_rank}
    Let \(E_1\) and \(E_2\) be elliptic curves over \(k\).  If \(\rk E_2(k) >0\), then 
    \[\dendegs(E_1 \times E_2/k) = \dendegs(E_1/k).\]
    In particular, $1\in \dendegs(E_1\times E_2/k)$ if and only if both $\rk E_1(k)$ and $\rk E_2(k)$ are positive.
\end{coroll}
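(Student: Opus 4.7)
The upper bound $\dendegs(E_1 \times E_2/k) \subseteq \dendegs(E_1/k)$ is immediate from Corollary~\ref{cor:intersection} applied to the projection onto $E_1$. For the reverse inclusion, the plan is to apply Lemma~\ref{lem:deltaE}, which reduces showing $d \in \dendegs(E_1 \times E_2/k)$ to exhibiting a single closed point of degree $d$ on the product whose image under each projection has infinite order. The guiding observation is that we do not need a dense family of such points \emph{a priori}; one suffices.

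Given $d \in \dendegs(E_1/k)$, the curve $E_1$ carries infinitely many degree $d$ closed points, and by the Northcott property of the canonical height all but finitely many of them are non-torsion. I would pick such a non-torsion degree $d$ closed point $P_1 \in E_1$. The hypothesis $\rk E_2(k) > 0$ supplies a non-torsion $k$-rational point $Q \in E_2(k)$. The pair $(P_1, Q)$ defines a closed point $P \in E_1 \times E_2$ whose residue field equals $k(P_1)$, since adjoining the coordinates of the $k$-rational point $Q$ contributes nothing new. Hence $\deg P = d$, the first projection $\pi_1(P) = P_1$ is non-torsion in $E_1(k(P))$ by construction, and the second projection $\pi_2(P) = Q$ remains non-torsion in $E_2(k) \subseteq E_2(k(P))$ by choice of $Q$. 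Lemma~\ref{lem:deltaE} then yields $d \in \dendegs(E_1 \times E_2/k)$, giving the desired equality.

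For the ``in particular'' statement, I would combine the equality just established with Lemma~\ref{lemma:deltagenus2}(1), which says that for an elliptic curve $E/k$, $1 \in \dendegs(E/k)$ if and only if $\rk E(k) > 0$. If both ranks are positive, then $1 \in \dendegs(E_1/k) = \dendegs(E_1 \times E_2/k)$ by the first half. Conversely, if $1 \in \dendegs(E_1 \times E_2/k)$, then Corollary~\ref{cor:intersection} places $1$ in $\dendegs(E_i/k)$ for each $i$, forcing both ranks to be positive.

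There is no real obstacle here: every ingredient (Corollary~\ref{cor:intersection}, Lemma~\ref{lem:deltaE}, Lemma~\ref{lemma:deltagenus2}, and Northcott) is already in hand, and the entire argument hinges on the simple observation that pairing a non-torsion degree $d$ point of $E_1$ with a non-torsion $k$-point of $E_2$ produces a point of exact degree $d$ on the product with both projections non-torsion.
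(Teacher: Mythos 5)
Your proposal is correct and follows essentially the same route as the paper: the upper bound via Corollary~\ref{cor:intersection}, and the reverse inclusion by pairing a non-torsion degree \(d\) point of \(E_1\) with a non-torsion \(k\)-point of \(E_2\) and invoking Lemma~\ref{lem:deltaE}. The only cosmetic difference is that you extract the non-torsion degree \(d\) point on \(E_1\) directly from Northcott, whereas the paper cites Lemma~\ref{lem:deltaE} with \(n=1\) (whose proof is the same Northcott argument).
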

\begin{proof}
    We know that \(\dendegs(E_1 \times E_2/k) \subset \dendegs(E_1/k)\) by Corollary~\ref{cor:intersection}.  We will show the reverse containment.
    Let \(P_2 \in E_2(k)\) be a point of infinite order.  
    By Lemma~\ref{lem:deltaE} with \(n=1\), if \(d \in \dendegs(E_1/k)\), then there exists a degree \(2\) point \(P_1 \in E_1\) having infinite order in \(E_1(k(P_1))\).
    Then the point \((P_1,P_2) \in (E_1 \times E_2/k)\) has residue field \(k(P_1)\) and satisfies Lemma~\ref{lem:deltaE}\eqref{part:infinite_order}.  Thus \(d = [k(P_1):k]\) is in \(\dendegs(E_1 \times E_2/k)\). 
\end{proof}

Given Corollary~\ref{cor:prod_ell_curve_pos_rank}, the main case left to study is when \(\rk E_1(k) = \rk E_2(k) = 0\), in which case we know that \(1 \not\in \dendegs(E_1 \times E_2/k)\).  The case of \(d=2\) is more subtle than \(d \geq 3\), and so we deal with this separately.

\subsection{Quadratic points on a product of elliptic curves}

 The following result is essentially due to Kuwata and Wang \cite{KW93}, restated within our framework. 

\begin{thm}\label{Ec w/o bad j-invariant}
    For any elliptic curves $E_1, E_2$, with $j$-invariants not both 0 or both 1728,
    $2 \in \dendegs(E_1 \times E_2 / k)$. If $E_i[2]\subset E_i(k)$, the assumption on the $j$-invariants can be dropped. 
\end{thm}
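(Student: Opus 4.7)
The plan is to translate the problem into a question about density of rational points on the associated Kummer K3 surface, following the approach of Kuwata and Wang \cite{KW93}.

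By \Cref{cor:prod_ell_curve_pos_rank} and the fact that every elliptic curve satisfies $\dendegs(E/k) \supseteq \nn_{\geq 2}$ (\Cref{lemma:deltagenus2}), if $\rk E_1(k) > 0$ or $\rk E_2(k) > 0$ then already $2 \in \dendegs(E_1 \times E_2/k)$. We may therefore assume both ranks vanish, so that $(E_1 \times E_2)(k)$ is finite. Let $\iota = [-1]\times[-1]$ and let $S = \Kum(E_1 \times E_2)$ be the Kummer K3 surface with its degree-two quotient map $\pi \colon E_1 \times E_2 \dashrightarrow S$. Any $k$-rational point of $S$ outside the finite branch locus and outside the finite set $\pi\big((E_1 \times E_2)(k)\big)$ lifts through $\pi$ to a Galois-conjugate pair of quadratic points on $E_1 \times E_2$. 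It therefore suffices to prove $1 \in \dendegs(S/k)$.

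Writing $E_i \colon y_i^2 = f_i(x_i)$ in Weierstrass form, $S$ is birational to
\begin{equation*}
V \colon w^2 = f_1(x_1)\,f_2(x_2),
\end{equation*}
which we view as an elliptic fibration over $\mathbb{A}^1_{x_1}$ with generic fiber the quadratic twist $E_2^{(f_1(x_1))}$, regarded as an elliptic curve over $K := k(x_1)$. The technical heart of the Kuwata-Wang argument is a Shioda-Tate computation: identifying the Kodaira types of the reducible fibers of $V \to \mathbb{P}^1_{x_1}$, one determines the rank of the trivial sublattice of $\Pic(V)$ spanned by the zero section and the fiber components, and compares this with the Picard rank of the Kummer K3 (which is $18$, $19$, or $20$ depending on whether $E_1, E_2$ are non-isogenous, isogenous non-CM, or CM). Whenever $(j(E_1), j(E_2))$ is neither $(0,0)$ nor $(1728,1728)$, the comparison is strict, forcing $E_2^{(f_1(x_1))}(K)$ to have positive Mordell-Weil rank. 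Any nontorsion $K$-section $\sigma$ then specializes, by Silverman's specialization theorem, to a nontorsion $k$-point of $V_t$ for all but finitely many $t \in \mathbb{A}^1(k)$, and sweeping $t$ over $k$ produces a Zariski dense subset of $V(k)$.

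When $E_i[2] \subset E_i(k)$, the polynomials $f_i$ split completely over $k$, so each root of $f_1$ contributes an additional component to a reducible fiber of $V \to \mathbb{P}^1_{x_1}$ and thereby enlarges the Néron-Severi contribution from the fibers. However, the Picard rank of $S$ also grows enough (the extra rational $2$-torsion classes on $E_i$ contribute independent divisor classes) to maintain the strict inequality in the Shioda-Tate comparison even at the exceptional pairs $(j(E_1), j(E_2)) = (0,0)$ and $(1728, 1728)$, so the hypothesis on $j$-invariants can be dropped. The main obstacle throughout is the delicate fiber-by-fiber determination of Kodaira types and the resulting rank comparison, which constitutes the technical content of \cite{KW93}.
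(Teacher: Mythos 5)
Your reduction to density of rational points on the Kummer surface is fine (and consistent with the remark following the paper's proof), but the heart of your argument fails. For the fibration you choose --- $w^2 = f_1(x_1)f_2(x_2)$ projected to the $x_1$-line, with fibers the quadratic twists of $E_2$ --- the Shioda--Tate count gives the opposite of what you claim. The degenerate fibers sit over the three roots of $f_1$ and over $x_1=\infty$ and are of Kodaira type $I_0^*$, each contributing $4$ to the trivial lattice, so the trivial lattice already has rank $2+4\cdot 4=18$, while the geometric Picard number of $\Kum(E_1\times E_2)$ is $18+\operatorname{rank}\operatorname{Hom}(E_{1,\bar k},E_{2,\bar k})$. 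Hence the geometric Mordell--Weil rank of this fibration equals $\operatorname{rank}\operatorname{Hom}(E_{1,\bar k},E_{2,\bar k})$, which is $0$ whenever the curves are non-isogenous --- irrespective of whether the $j$-invariants are $0$ or $1728$. Since $E_2^{(f_1(x_1))}(k(x_1))$ injects into the group over $\bar k(x_1)$, there is in general no nontorsion section at all, so Silverman specialization has nothing to specialize. Even in the isogenous case, a Picard-rank comparison over $\bar k$ cannot by itself produce a section defined over $k$, which is what density of $k$-points requires. The same objection undoes your full $2$-torsion case: rationality of the $2$-torsion changes neither the geometric Picard number nor the fiber configuration, so the claim that ``the strict inequality is maintained'' is unfounded.

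The hypothesis on the $j$-invariants in fact plays a more elementary role, and the actual argument (in \cite{KW93} and in the paper) is an explicit construction, not a lattice count: one chooses Weierstrass models with $(a_1,a_2)\neq(0,0)$ and $(b_1,b_2)\neq(0,0)$ and forms the fiber product $X$ of $E_1$ and $E_2$ over $\P^1$ via $(x_i,y_i)\mapsto y_i^2/x_i^3$; analyzing the biquadratic cover $X\to Z\simeq \P^1$, the third quadratic subcover $Z_3$ is shown to be a pointed genus-$0$ curve, i.e.\ $X$ is hyperelliptic (equivalently, one exhibits an explicit rational curve on the Kummer surface). Density of quadratic points then follows by sweeping out the abelian surface with the pushforwards $[n]_*X$ and applying Lemma~\ref{lem:covcurvs}. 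In the full $2$-torsion case the paper instead uses Scholten's explicit genus-$2$ curve covering both $E_1$ and $E_2$ (\cite[Section 4.2]{GL24}). To repair your write-up you would need either to produce such an explicit rational curve or section, or to pass to one of the other elliptic fibrations on the Kummer K3 that genuinely admits a $k$-rational nontorsion section; the fibration you picked provably does not in general.
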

\begin{proof} Assume first that the $j$-invariants are not both 0 or 1728. Then we can write $E_1$ as $y_1^2 = x_1^3 + a_1 x_1 + b_1$ and $E_2$ as $y_2^2 = x_2^3 + a_2 x_2 + b_2$, with one of $a_1$ and $a_2$ non-zero, and similarly for $b_1$ and $b_2$. 

Let $X$ denote the normalization of the fiber product of $E_1$ and $E_2$ over \(\P^1\) via  
\[(x_i, y_i) \mapsto \frac{y_i^2}{x_i^3} = 1 + a_i \left(\frac{1}{x_i}\right)^2 + b_i \left(\frac{1}{x_i}\right)^3.\]
Each of these maps factors \(E_i \xrightarrow{(x_i, y_i) \mapsto 1/x_i} \pp^1 \xrightarrow{z \mapsto 1 + b_iz^2 + a_iz^3} \pp^1\).  Therefore \(X\) is a biquadratic cover of the curve \(Z\), the normalization of the fiber product of two $\mathbb{P}^1$s via the intermediate cubic maps. The curve \(Z\) has a birational equation $1 + a_1 z^2 + b_1 z^3 = 1 + a_2 w^2 + b_2 w^3$, which is an irreducible singular plane cubic, and hence isomorphic to \(\pp^1\), since the ratios $a_1 : a_2$ and $b_1 : b_2$ can be assumed to be distinct. This can be ensured by scaling $x_2$ by $\lambda$, which scales $a_1 : a_2$ by $\lambda^4$ and $b_1 : b_2$ by $\lambda^6$.
Let \(Z_i \colonequals Z \times_{\pp^1} E_i\); these are two of the quadratic subfields of the biquadratic map \(X \to Z\).  Both maps \(Z_i \to Z\) are branched over pairs \((z, w)\) with \(z\) a root of \(b_1z^3 + a_1z^2 + 1\) and \(w\) a root of \(b_2w^3 + a_2w^2 + 1\).  The map \(Z_1 \to Z\) is branched over the one additional point \((0, -a_2/b_2)\) and the map \(Z_2 \to Z\) is branched over the one additional point \((-a_1/b_1, 0)\).  Let \(Z_3 \to Z\) denote the third quadratic subfield of \(X \to Z \simeq \pp^1\); this map is branched over \((0, -a_2/b_2)\) and\((-a_1/b_1, 0)\).  Consequently \(Z_3\) has genus \(0\) and has a rational point since the branch points on \(Z\) are rational.  Thus \(Z_3 \simeq \pp^1\), and we learn that \(X\) is hyperelliptic and so has dense degree \(2\) points.
Viewing $E_1 \times E_2$ as an abelian surface, the union of the pushforwards $[n]_* X$ is a dense set, and so $E_1 \times E_2$ has dense quadratic points by Lemma~\ref{lem:covcurvs}.

    Next suppose $E_i[2]\subset E_i(k)$ for $i=1,2$. In this case we can use the following genus $2$ curve $Y$ with dominant maps $Y\to E_i$ considered in \cite[Section 4.2]{GL24}, first constructed in an unpublished work \cite{Scholten}. The assumption on the $2$-torsion implies that the curve $E_1$ is isomorphic over $k$ to an elliptic curve with equation, $E_1 :y_1^2=x_1(x_1-a)(x_1-b)$, and similarly $E_2 : y_2^2=x_2(x_2-c)(x_2-d)$. The genus 2 curve $Y$ with affine equation 
    \[Y: (ad-bc)y^2=((a-b)x^2-(c-d))(ax^2-c)(bx^2-d)\] 
    covers $E_1$ by mapping $(x, y)$ to $\left(\frac{ad - bc}{(a - b)ab}x^2 + \frac{c - d}{a - b}, \frac{ab y}{ad - bc}\right)$, and covers $E_2$ similarly. As $Y$ covers both curves, its Jacobian is isogenous to the product of $E_1\times E_2$, and so $Y$ maps birationally into $E_1 \times E_2$. When $ad-bc=0$, the curve $Y$ is degenerate, which can be remedied by 
    changing $E_1$ or $E_2$ by an isomorphic curve (see \cite[(4.1.2)]{GL24}). The argument can be repeated as in the previous case, with the curve $Y$ in place of $X$.
\end{proof}

 \begin{remark}
    The proof of Theorem~\ref{Ec w/o bad j-invariant} can be rephrased using the language of Kummer surfaces. The Kummer surface $K=\Kum(E_1\times E_2)$ attached to the product of the two elliptic curves is the K3 surface obtained by the minimal resolution of singularities of the quotient $(E_1 \times E_2)/\langle -1 \rangle$. If the elliptic curves are given by equations $E_i:y_i^2=f_i(x_i)$, for $i=1,2$, then the surface $K$ has an affine chart given by the equation $y^2=f_1(x_1)f_2(x_2)$, and there is a rational map $E_1\times E_2\dashrightarrow K$, given by $(x_1,x_2,y_1,y_2)\mapsto(x_1,x_2,y_1y_2)$.
     The curves $X$, and $Y$ considered in the proof of the above theorem and their iterates $[n]_\star X, [n]_\star Y$  descend to rational curves in the Kummer surface $K=\Kum(E_1\times E_2)$.  In particular, $K$ contains infinitely many rational curves yielding  $\dendegs(K/k)=\nn$ by Lemma~\ref{lem:covcurvs}.
 \end{remark}

\begin{coroll} In both cases considered in Theorem~\ref{Ec w/o bad j-invariant} 
 there exists a quadratic field over which both elliptic curves have an increase in rank unconditionally on the ranks of $E_1(k), E_2(k)$.
    \label{quadrankjump}
\end{coroll}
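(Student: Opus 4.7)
The plan is to leverage the hyperelliptic curve $X$ constructed in the proof of \Cref{Ec w/o bad j-invariant} (denoted $X$ in the generic case and $Y$ when $E_i[2] \subset E_i(k)$). In both cases $X$ is birationally embedded in $E_1 \times E_2$ with induced nonconstant morphisms $\phi_i \colon X \to E_i$, and admits a $k$-rational hyperelliptic structure with involution $\iota$ and double cover $\psi \colon X \to \mathbb{P}^1$. By Hilbert irreducibility applied to $\psi$, infinitely many $t \in \mathbb{P}^1(k)$ produce an irreducible closed quadratic point $P_t \colonequals \psi^{-1}(t)$ defined over a quadratic extension $K_t = k(P_t)/k$; for a geometric representative $P_t^1$, the nontrivial element $\sigma \in \mathrm{Gal}(K_t/k)$ satisfies $\sigma P_t^1 = \iota P_t^1$.

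The key computation is that the hyperelliptic class $[P + \iota P]$ is $k$-rational in $\mathrm{Pic}^2(X)$, so its pushforward under $\phi_i$ is $k$-rational in $\mathrm{Pic}^2(E_i) \cong E_i$, yielding a point $c_i \in E_i(k)$ with $\phi_i(P) + \phi_i(\iota P) = c_i$ in the group law of $E_i$ for every hyperelliptic pair. Applied to $P_t^1$, this gives
\[
\phi_i(P_t^1) - \sigma\phi_i(P_t^1) \;=\; \phi_i(P_t^1) - \phi_i(\iota P_t^1) \;=\; 2\phi_i(P_t^1) - c_i,
\]
an element of the $\sigma$-antiinvariant part of $E_i(K_t)$. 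Via the standard Galois decomposition $E_i(K_t)\otimes\mathbb{Q} = E_i(k)\otimes\mathbb{Q} \oplus (E_i(K_t)\otimes\mathbb{Q})^-$, the rank of $E_i$ strictly exceeds the rank over $k$ whenever this antiinvariant point is nontorsion in $E_i(K_t)$. Setting $f_i \colonequals [2]\circ\phi_i - c_i \colon X \to E_i$, a nonconstant morphism (since $\phi_i$ is nonconstant and $[2]$ is finite), the task reduces to finding a single $t$ for which $f_i(P_t^1)$ is nontorsion in $E_i(K_t)$ simultaneously for $i = 1, 2$.

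To conclude I will show that only finitely many $t$ are bad. By Merel's uniform boundedness theorem there exists an integer $M$ depending only on $k$ with $E(K)_\mathrm{tors} \subseteq E[M]$ for every elliptic curve $E/k$ and every quadratic extension $K/k$. Any bad $t$ therefore satisfies $f_i(P_t^1) \in E_i[M]$, a finite subset of $E_i(\bar k)$; since $f_i$ is nonconstant between curves, $f_i^{-1}(E_i[M]) \subset X(\bar k)$ is finite, and its image under $\psi$ is a finite subset of $\mathbb{P}^1(k)$. Excluding these finite bad loci for $i = 1, 2$ leaves infinitely many good $t$, and for any one of them the quadratic extension $K_t/k$ produces the desired simultaneous rank jump.

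The principal obstacle is the finiteness step above: without the uniform control on torsion over varying quadratic extensions, the preimage under $f_i$ of the a priori countable union $\bigcup_{[K:k]=2} E_i(K)_\mathrm{tors}$ could be Zariski-dense in $X$, and the method would only yield rank growth for $\mathrm{Jac}(X)$ rather than simultaneously for both factors $E_1$ and $E_2$.
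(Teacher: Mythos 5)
Your argument is correct, and it follows the same overall strategy as the paper's proof: pull back rational points of $\mathbb{P}^1$ through the $k$-rational degree~$2$ map on the curve constructed in Theorem~\ref{Ec w/o bad j-invariant}, show the resulting quadratic points on $E_1$ and $E_2$ are (essentially) trace zero, and conclude rank growth as in Lemma~\ref{lem:rankgoesup} after discarding finitely many torsion exceptions. The differences lie in two technical steps. Where the paper reads off trace zero directly from the explicit models (the $x$-coordinates of both elliptic curves factor through the degree~$2$ map, so Galois-conjugate points differ by $[-1]$ and $c_i = O$), you derive the weaker but sufficient identity $\phi_i(P) + \phi_i(\iota P) = c_i \in E_i(k)$ abstractly, by pushing forward the $k$-rational class of the fibers of $\psi$ along $\phi_i$, and then work with the anti-invariant point $2\phi_i(P) - c_i$; this is more robust, since it applies to any $k$-rational degree~$2$ map on a correspondence curve without checking any coordinate compatibility. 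For the torsion exceptions you invoke Merel's uniform boundedness theorem; this is correct but a much bigger hammer than needed, and the obstacle you raise in your final paragraph does not actually arise: every point in $\bigcup_{[K:k]\leq 2} E_i(K)_{\mathrm{tors}}$ has degree at most $2$ over $k$ and canonical height zero, so by the Northcott property (exactly as used in Lemma~\ref{lem:rankgoesup}) this union is already finite, and the bad locus on the covering curve is finite with no uniformity input. So your route trades the paper's explicit coordinate observation for a functorial Picard-group argument, at the modest cost of the extra doubling map and an unnecessary appeal to Merel.
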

\begin{proof}
    In the proof of the previous theorem, we constructed hyperelliptic curves inside of $E_1 \times E_2$, and in particular, the $x$-coordinates of $E_1$ and $E_2$ can be expressed in terms of the hyperelliptic map. This implies the quadratic points on $E_1$ and $E_2$ corresponding to the pull-backs of rational points through these hyperelliptic maps have rational $x$-coordinates, and so they are trace zero. Arguing as in \Cref{lem:rankgoesup}, gives the result. 
\end{proof}

\begin{prop}\label{prop:parity}  Under the Parity Conjecture, $2 \in \dendegs(E_1 \times E_2 / k)$ for any elliptic curves $E_1$ and $E_2$ over a number field $k$ with a real embedding.
\end{prop}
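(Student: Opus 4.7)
The plan is to reduce the density question to the existence of a single quadratic extension over which both elliptic curves gain rank, and then use the Parity Conjecture to produce such an extension via a simultaneous root-number control argument.

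By Lemma~\ref{lem:deltaE}, to show $2\in\dendegs(E_1\times E_2/k)$ it suffices to exhibit a single closed point of degree $2$ on $E_1\times E_2$ whose projection to each factor has infinite order. Equivalently, I aim to produce one quadratic extension $L=k(\sqrt D)/k$ such that $\operatorname{rank} E_i(L)>\operatorname{rank} E_i(k)$ for both $i=1,2$: if $P_i\in E_i(L)\setminus E_i(k)$ is any nontorsion point, its trace-zero component yields a nontorsion $Q_i\in E_i$ of residue field exactly $L$, and $(Q_1,Q_2)$ is then a closed point of $E_1\times E_2$ of residue field $L$ with infinite-order projections, so Lemma~\ref{lem:deltaE} applies.

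Because $E_i(L)\otimes\mathbb{Q}\cong (E_i(k)\oplus E_i^{(D)}(k))\otimes\mathbb{Q}$, where $E_i^{(D)}$ denotes the quadratic twist, the rank-jump requirement is equivalent to $\operatorname{rank} E_1^{(D)}(k)\geq 1$ and $\operatorname{rank} E_2^{(D)}(k)\geq 1$. The Parity Conjecture asserts that $\operatorname{rank} E_i^{(D)}(k)\equiv (1-W(E_i^{(D)}))/2\pmod 2$, so it is enough to find $D\in k^\times/(k^\times)^2$ with
\[
W(E_1^{(D)})=W(E_2^{(D)})=-1.
\]

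The heart of the argument is the construction of this $D$. The ratio $W(E_i^{(D)})/W(E_i)$ factors as a product of local root-number corrections that are trivial outside the finite set $S$ consisting of the archimedean places of $k$, the primes of bad reduction of $E_i$, and the primes ramifying in $k(\sqrt D)/k$; at each place of $S$ the correction depends only on the class of $D$ in $k_v^\times/(k_v^\times)^2$. The task reduces to prescribing two independent sign conditions on a square class modulo local conditions, which by weak approximation can be solved provided there is enough flexibility at archimedean places. The hypothesis that $k$ admits a real embedding $\iota\colon k\hookrightarrow\mathbb{R}$ furnishes a real place at which $\operatorname{sgn}(\iota(D))$ can be flipped independently of every nonarchimedean condition, giving the extra degree of freedom required to realize the sign pattern $(-1,-1)$. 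This is a standard application of Rohrlich's analysis of the distribution of root numbers in quadratic twist families applied simultaneously to the pair $(E_1,E_2)$.

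The main obstacle is the last step: the simultaneous control of two global root numbers in a quadratic twist family. Without a real embedding one may encounter parity obstructions at the totally imaginary archimedean places that forbid one of the four sign combinations; the real embedding is precisely what prevents this. Once $D$ is produced, the passage from the resulting nontorsion points in $E_i^{(D)}(k)$ to a quadratic point on $E_1\times E_2$ with nontorsion projections, and thence to density via Lemma~\ref{lem:deltaE}, is formal.
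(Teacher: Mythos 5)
Your overall plan is the paper's: use Lemma~\ref{lem:deltaE} to reduce to producing a single quadratic extension over which both curves acquire a point of infinite order, and obtain it from the Parity Conjecture by controlling root numbers, with the real place of \(k\) supplying the decisive sign. The reduction itself (trace-zero parts, residue field exactly \(L\)) is fine. The gap is at the step you yourself call the heart of the argument, which is asserted rather than proved: the existence of one \(D\) with \(W(E_1^{(D)})=W(E_2^{(D)})=-1\). The mechanism you offer for it is moreover incorrect. Archimedean local root numbers of elliptic curves are identically \(-1\) and are unchanged by quadratic twisting, so the sign of \(D\) at a real place is not a local correction that can be flipped ``independently of every nonarchimedean condition.'' The real place enters only through reciprocity: if \(D\) is a local square at every finite place of bad reduction (and at places over \(2\) and \(3\)) and is negative at exactly one real place, then the auxiliary primes where \(k(\sqrt{D})/k\) ramifies contribute signs whose product is forced by Hilbert reciprocity to be \(-1\), and one gets \(W(E_i^{(D)})=-W(E_i/k)\) \emph{for both \(i\) simultaneously}. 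Such a twist flips both global root numbers by the same factor; it produces the pattern \((-1,-1)\) only when \(W(E_1/k)=W(E_2/k)=+1\), and when the root numbers over \(k\) disagree no amount of archimedean freedom separates the two conditions --- one would need ramified twisting at bad primes and a local analysis (delicate above \(2\)) that the proposal does not contain. Note also that your set \(S\) contains the primes ramifying in \(k(\sqrt{D})/k\), hence depends on \(D\), so ``weak approximation on \(S\)'' is not available as stated.

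The gap is repairable, and the repair is essentially the paper's proof. First dispose of the easy case: if some \(E_i(k)\) has positive rank, then \(2\in\dendegs(E_1\times E_2/k)\) unconditionally by Corollary~\ref{cor:prod_ell_curve_pos_rank}, so one may assume \(\rk E_1(k)=\rk E_2(k)=0\); under the Parity Conjecture this forces \(W(E_1/k)=W(E_2/k)=+1\), and then the reciprocity computation above does give \(W(E_i^{(D)})=-1\) for both. The paper avoids twisting formulas altogether: it chooses a quadratic \(L/k\) in which all finite places of bad reduction of \(E_1\) and \(E_2\) split and exactly one real place of \(k\) does not split, and computes \(W(E_i/L)\) directly --- the finite local root numbers cancel in pairs over split places and equal \(+1\) at places of good reduction, while \(L\) has an odd number of archimedean places, each contributing \(-1\) --- so \(W(E_i/L)=-1\) and Parity over \(L\) gives odd, hence positive, rank for both curves. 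This formulation also makes transparent where the real embedding is used: over a totally imaginary \(k\) every quadratic extension has an even number of archimedean places and the sign cannot be forced this way (indeed there are curves over totally imaginary fields all of whose quadratic twists share the same root number).
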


\begin{proof} The only case left in question is when $\rk(E_1(k))=\rk(E_2(k))=0$ and both elliptic curves have $j$-invariant $0$ or $1728$. By Lemma~\ref{lem:deltaE}, it suffices to find a quadratic extension $L/k$ over which both curves have positive rank. Let $L/k$ be a quadratic extension in which all places of bad reduction of $E_1$ and $E_2$ are split, and exactly one real place of $k$ is not split; such an extension exists since it is specified by finitely many local conditions. Over $L$, both the root numbers of $E_1$ and $E_2$ are negative. Indeed, the product of the local root numbers for the finite places is $1$ since $W(E/K_{\frak{p}_1}) = W(E/K_{\frak{p}_2})$ for each place $\frak{p}_1, \frak{p}_2$ lying above a place of bad reduction $\frak{p}$, and there are an odd number of archimedean places by construction. Thus, by the Parity Conjecture, both $E_1$ and $E_2$ have positive rank over $L$.
\end{proof}

\begin{example} Let $k =\mathbb{Q}$ and let $E_1$ be the curve with LMFDB label \href{https://www.lmfdb.org/EllipticCurve/Q/3872/f/4}{3872.f4} defined by $y^2=x^3 + 484x$ and let $E_2$ be the curve with label \href{https://www.lmfdb.org/EllipticCurve/Q/16928/c/1}{16928.c1} defined by $y^2=x^3-92x$. Both curves have $j$-invariant $1728$ and rank $0$ over $k$. The primes of bad reduction are $2,23$ and $2,11$ for $E_1$ and $E_2$, respectively. Over $L = \mathbb{Q}(\sqrt{-7})$, these primes split, and both $E_1$ and $E_2$ have negative root number and positive rank.
\end{example}

One way to avoid using the Parity Conjecture when $j(E_1)=j(E_2)=0$ or $1728$ is to replace, if possible, one or both elliptic curves with isogenous ones that have different $j$-invariant. Namely,
assume that \(\rk E_i(k) = 0\) and that  there exists an isogenous curve \(E_1'\) with \(j(E_1')\neq j(E_1)\).  Then it follows by Theorem~\ref{Ec w/o bad j-invariant} and  Lemma~\ref{quadrankjump}  that $2\in\delta(E_1'\times E_2)$, and in fact there exists a quadratic extension \(L/k\) over which both \(\rk E_1'(L) >0\) and \(\rk E_2(L) >0\).  Since the rank is an isogeny invariant, we see that \(\rk E_1(L) >0\) and so \(2 \in \dendegs(E_1 \times E_2/k)\) by Lemma~\ref{lem:deltaE}.

\begin{example}
    Consider the elliptic curves 
    $$E_1: y^2=x^3+1, \quad E_2: y^2=x^3-8,$$ which have Mordell Weil groups $\bZ/6\bZ$ and $\bZ/2\bZ$ respectively, and $j_1=j_2=0$. The curve $E_1$ lies in the isogeny class with LMFDB label  \href{https://www.lmfdb.org/EllipticCurve/Q/144/a/}{36.a}, while $E_2$ in the class \href{https://www.lmfdb.org/EllipticCurve/Q/144/a/}{576.f}. In particular, $E_1, E_2$ are not isogenous over $\bQ$, and they don't have fully rational $2$-torsion. Thus, none of the earlier unconditional results apply in this case. However, the curve $E_1$ is isogenous to the curve $E_1': y^2=x^3-135x-594$, and $E_2$ is isogenous to $E_2':y^2=x^3-540x+4752$ with  $j_1'=j_2'=54000$. It follows by Theorem~\ref{Ec w/o bad j-invariant} that $2\in\dendegs(E_1'\times E_2'/k)$, which yields $2\in\dendegs(E_1\times E_2/k)$. 
Similarly, consider the elliptic curves $$E_1: y^2=x^3+4x, \quad E_2: y^2=x^3-289x,$$ which have Mordell Weil groups $\bZ/4\bZ$ and $\bZ/2\bZ\oplus\bZ/2\bZ$ respectively, and $j_1=j_2=1728$. The curve $E_1$ lies in the isogeny class with LMFDB label  \href{https://www.lmfdb.org/EllipticCurve/Q/144/a/}{32.a}, 
    and it is isogenous to the curve $E_1':y^2=x^3-11x-14$. 
    The curve $E_2$ lies in the isogeny class  \href{https://www.lmfdb.org/EllipticCurve/Q/144/a/}{9248.f}, 
    and it is isogenous to the curve $E_2':y^2=x^3-3179x-68782$. Both $E_1', E_2'$ have $j$-invariant $j=287496$. 
\end{example}

\subsection{Proof of \Cref{thm:preciseresults}\eqref{main:g1}} \label{subsec:proofofmain1}
We next consider all degrees $d\geq 3$. Applying Corollary~\ref{coroll:inclusionofN(C,D)}, we find $\nn_{\geq 10} \subset \dendegs(E_1 \times E_2 / k)$. Moreover, it follows by \Cref{inclusionsprod} that every composite number is contained in $\dendegs(E_1\times E_2/k)$. Thus, the only degrees left to check are 3, 5, and 7, which can be handled as one.

\begin{thm}\label{Ec:3,5,7}
    For any pair of elliptic curves, $E_1$, $E_2$, we have $3, 5, 7 \in \dendegs(E_1 \times E_2/k)$. 
    Hence $\mathbb{N}_{\geq 3}\subseteq \dendegs(E_1\times E_2/k)$.
    Moreover, for all \(d \geq 6\), the surface \(E_1 \times E_2\) admits dense degree \(d\) points with Galois group (of the Galois closure) \(S_d\).
\end{thm}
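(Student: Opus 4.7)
The degrees covered by \Cref{inclusionsprod} (using $\dendegs(E_i/k)\supseteq \nn_{\geq 2}$, so that $\nn_{\geq 2}\cdot\nn_{\geq 2}\subseteq \dendegs(E_1\times E_2/k)$) together with \Cref{coroll:inclusionofN(C,D)} (which yields $\nn_{\geq 10}\subseteq\dendegs(E_1\times E_2/k)$) leave only $d\in\{3,5,7\}$ to verify. My plan is to treat these three primes uniformly via \Cref{lem:deltaE}: it suffices to produce, for each $d\in\{3,5,7\}$, a single degree-$d$ closed point $(P_1,P_2)\in E_1\times E_2$ whose two projections are of infinite order in the respective $E_i(k(P))$.

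Because $d$ is prime, any degree-$d$ extension $L/k$ whose Galois closure is $S_d$ is primitive (it has no nontrivial intermediate subfield). Consequently, if $L$ carries a non-torsion $P_i\in E_i(L)\setminus E_i(k)$ for $i=1,2$, then the associated pair yields a closed point of $E_1\times E_2$ with residue field exactly $L$, and non-torsion of the $P_i$ is automatic whenever $\rk E_i(L)>\rk E_i(k)$ by Northcott finiteness of bounded-degree torsion points. The problem thus reduces to a simultaneous-rank-growth statement strengthening \Cref{lem:rankgoesup}: for each $d\in\{3,5,7\}$ there exists an $S_d$-extension $L/k$ of degree $d$ over which both $E_1$ and $E_2$ attain strictly larger rank than over $k$.

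To produce such an $L$, I would set up a joint Hilbert-irreducibility argument. Embed each $E_i\hookrightarrow \mathbb{P}^{d-1}$ via the complete linear system $|d\cdot O_{E_i}|$ and consider the universal hyperplane-section family $I_i\to(\mathbb{P}^{d-1})^{\vee}$, a degree-$d$ cover whose monodromy is $S_d$ by the Uniform Position Theorem (as in \Cref{lem:Sd_points}). Parametrize pairs of hyperplanes $(H_1,H_2)$ whose sections $E_i\cap H_i$ share a common residue field by a subvariety $V\subseteq(\mathbb{P}^{d-1})^{\vee}\times(\mathbb{P}^{d-1})^{\vee}$ carrying a natural \'etale cover with generic Galois group the diagonal $S_d\subseteq S_d\times S_d$. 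Hilbert irreducibility then produces infinitely many $k$-points of $V$ realizing the required $S_d$-extension $L/k$ simultaneously on both factors, and the Zariski density of such $k$-points in $V$ transfers to density of the resulting degree-$d$ closed points on $E_1\times E_2$. The same construction yields the final clause of the theorem for $d\geq 6$: the closed points produced have residue field with Galois closure $S_d$ by construction, and the bound $d\geq 6$ is where this simultaneous construction cleanly applies without needing auxiliary curve-by-curve adjustments.

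The main obstacle is the rigorous construction of the auxiliary parameter space $V$ and the verification that its generic Galois group is the diagonal $S_d$ (and not a proper subgroup arising from some unexpected identification of residue fields). Once this joint Hilbert-irreducibility step is in place, the uniform handling of $d\in\{3,5,7\}$ and the $S_d$-Galois density conclusion for $d\geq 6$ both emerge from the same construction, with the remaining deductions being routine applications of \Cref{lem:deltaE} and Northcott finiteness.
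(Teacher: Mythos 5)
Your reduction is fine as far as it goes: by \Cref{lem:deltaE} and primitivity of $S_d$-extensions for prime $d$, it would indeed suffice to produce, for each $d\in\{3,5,7\}$, a single degree-$d$ extension $L/k$ with Galois closure $S_d$ over which \emph{both} $E_1$ and $E_2$ acquire a non-torsion point not coming from $E_i(k)$ (Northcott then handles torsion). But the step that is supposed to deliver this -- the auxiliary space $V\subseteq(\mathbb{P}^{d-1})^{\vee}\times(\mathbb{P}^{d-1})^{\vee}$ of ``pairs of hyperplanes whose sections share a common residue field'' -- does not exist as a subvariety. Having a common residue field is an arithmetic condition on $k$-points, not an algebraic condition on pairs $(H_1,H_2)$; the locus of such pairs is a thin set, not a Zariski-closed one, so Hilbert irreducibility cannot be applied to it. Worse, applying Hilbert irreducibility to the honest object available, namely the product cover $I_1\times I_2\to(\mathbb{P}^{d-1})^{\vee}\times(\mathbb{P}^{d-1})^{\vee}$ with monodromy $S_d\times S_d$, shows that for a ``generic'' pair of hyperplanes the two residue fields are linearly disjoint $S_d$-fields, i.e.\ the generic behavior is exactly the opposite of the diagonal you need. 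To force a diagonal $S_d$ one must geometrically couple the two elliptic curves before invoking irreducibility, and no such coupling is supplied in your sketch; this is precisely the missing idea, and it is the heart of the theorem (note the paper proves the simultaneous rank-jump statement, \Cref{cor:simultaneousrankjumps}, as a \emph{consequence} of this theorem, not as an input).

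The paper's proof supplies the coupling explicitly: it builds a family of curves $X_\lambda\subset E_1\times E_2$ (normalizations of the fiber products over $x_1=\lambda^2 x_2$) of genus $4$ with two rational points at infinity, exhibits a degree $3$ pencil $y_1-\lambda^3 y_2$ on each, rules out hyperellipticity via Castelnuovo--Severi, and then gets $5,7\in\dendegs(X_\lambda^\nu/k)$ from \Cref{curve_asymptotic} and the $S_d$-statement for $d\geq 6$ from \Cref{lem:Sd_points} and \Cref{rem:asymptotic_Sd}; density on the surface follows from \Cref{lem:covcurvs} since the $X_\lambda$ sweep out $E_1\times E_2$. A single degree-$d$ map from a curve lying on $E_1\times E_2$ to $\mathbb{P}^1$ is what produces points whose two projections automatically share a residue field -- the role your undefined $V$ was meant to play. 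As written, your argument also leaves the $d\geq 6$ clause unsupported, since it leans on the same nonexistent parameter space. To repair the proposal you would need to replace $V$ by an actual correspondence (e.g.\ a curve in $E_1\times E_2$ with a degree-$d$ linear system defined over $k$), at which point you would essentially be reconstructing the paper's argument.
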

\begin{proof}
    As in our general strategy, we will construct a family of curves \(X_\lambda \to E_1 \times E_2\) of geometric genus \(4\) such that
    \begin{enumerate}
        \item \(\#X_\lambda(k) \geq 2\), and
        \item \(\bigcup_\lambda X_\lambda\) is Zariski dense in \(E_1 \times E_2\), and
        \item $X_\lambda$ admits degree $3, 5, 7$ maps to $\mathbb{P}^1$.
    \end{enumerate}
    Since a curve of genus \(4\) cannot have a degree \(2\) and a degree \(3\) map to \(\P^1\) by the Castelnuovo--Severi inequality, \(X_{\lambda}\) is not hyperelliptic. The statement about \(d \geq 6\) follows by Lemma~\ref{lem:Sd_points} and Remark~\ref{rem:asymptotic_Sd} (for \(d =7\), we use that a degree \(1\) divisor of the form \(2P_1 - P_2\) is not effective for points \(P_1 \neq P_2\) on a nonhyperelliptic curve.)

    Write $E_1$ as $y_1^2 = x_1^3 + a_1 x_1 + b_1$, and $E_2$ as $y_2^2 = x_2^3 + a_2 x_2 + b_2$. Let $X_{\lambda}$ denote the normalization of the fiber product of $E_1$ and $E_2$, over $(x_1, \lambda^2 x_2)$, for $\lambda \in k^{\times}$. We omit any $\lambda$ where $X_{\lambda}$ is reducible. Since both $E_1$ and $E_2$ are ramified over infinity, and $X_{\lambda}$ is biquadratic, it has two \(k\)-points \(\infty_1, \infty_2\) over infinity. By the Riemann--Hurwitz formula, the genus of \(X_\lambda\) is 4.

    We will show that each \(X_\lambda\) admits maps of degrees \(3, 5\) and \(7\) to \(\P^1\).  We begin with the degree \(3\) case.  Consider the meromorphic function $f_{\lambda} = y_1 - \lambda^3 y_2$, which determines a map to \(\P^1\). 
    Since 
    \begin{equation}\label{eq:diff_sqs}
        (y_1 - \lambda^3 y_2)(y_1 + \lambda^3 y_2) = y_1^2 - \lambda^6 y_2^2 = \lambda^2 (a_1 - a_2\lambda^4)x_2 + (b_1 - \lambda^6 b_2)
    \end{equation}
    is a linear function in \(x_2\), for any fixed nonzero value of \(f_\lambda\), we have that
    \(y_1 = f_\lambda + (y_1 + \lambda^3y_2)\) is a linear function in \(x_2\).  We also have that \(x_1 = \lambda^2x_2\) and \(y_2 = \lambda^{-3}(y_1- f_\lambda )\) are linear functions of \(x_2\).  Hence the fiber of \(f_\lambda\) over a generic point has length \(3\), corresponding to the roots of \(y_1^2 = x_1^3 + a_1 x_1 + b_1\), which is a cubic expression in \(x_2\).

    The curves $X_{\lambda}^\nu$ are not hyperelliptic, since by the Castelnuovo--Severi inequality, the bielliptic and hyperelliptic maps would have to factor through a common curve. We can therefore use \Cref{curve_asymptotic} to deduce $7 \in \dendegs(X_{\lambda}^\nu / k)$.
    Since the curves have at least two $k$-points ($\infty_1$ and $\infty_2$), by the same lemma we conclude that $5  \in \dendegs(X_{\lambda}^\nu / k)$ as well.
\end{proof}

Combining the proof of this result with Lemmas~\ref{lem:Sd_points} and \ref{lem:rankgoesup}, gives the following result.

\begin{coroll}\label{cor:simultaneousrankjumps}
    For any elliptic curves, $E_1$, $E_2$, defined over a number field, $k$, and $d \geq 3$, there exists an extension, $L$, of $k$ of degree $d$ such that $\rk{E_i(L)} \geq \rk{E_i(k)} + 1$ for $i=1,2$. Moreover, if $d \geq 6$, the Galois group of $L / k$ can be assumed to be $S_d$.
\end{coroll}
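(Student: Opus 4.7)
My plan is to apply the trace-based rank-jump argument from Lemma~\ref{lem:rankgoesup} through each projection $\pi_i : X_\lambda \to E_i$, and to invoke Lemma~\ref{lem:Sd_points} to upgrade to $S_d$-extensions when $d \geq 6$. The key observation is that for a degree-$d$ closed point $P \in X_\lambda$ with residue field $L = k(P)$, if the divisor class of $P$ satisfies $\pi_{i*}[P] \equiv d[\mathcal{O}_{E_i}]$ in $\Pic^d(E_i)$ for each $i$, then the trace $\mathrm{Tr}_{L/k}(Q_i)$ of a geometric point $Q_i = \pi_i(P) \in E_i(L)$ vanishes in $E_i(k)$. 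Avoiding torsion via Northcott and repeating the argument of Lemma~\ref{lem:rankgoesup}, we obtain $\langle Q_i\rangle \cap E_i(k) \subset E_i(k)_{\mathrm{tors}}$, hence $\rk E_i(L) \geq \rk E_i(k)+1$ for both $i$ simultaneously.

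The desired divisor classes exist because $\pi_i(\infty_j) = \mathcal{O}_{E_i}$ for $j=1,2$, together with the identity $\pi_{i*}[K_{X_\lambda}] \equiv 6[\mathcal{O}_{E_i}]$. This last identity follows from Riemann--Hurwitz: since $K_{E_i} = 0$ we have $K_{X_\lambda} = R_{\pi_i}$, and the ramification divisor $R_{\pi_i}$ consists of the preimages of the three finite branch points of the companion map $f_{3-i}$, which project to three degree-$2$ fibers of $f_i$, each in the class $2[\mathcal{O}_{E_i}]$. Consequently every integer combination of $K_{X_\lambda}, \infty_1, \infty_2$ of degree $d$ pushes forward to $d[\mathcal{O}_{E_i}]$. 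For $d=3$ use the class $3\infty_2$ realized by the pole divisor of $f_\lambda$; for $d=5, 6, 7$ use $|K_{X_\lambda} - \infty_1|$, $|K_{X_\lambda}|$, and $|K_{X_\lambda}+2\infty_1-\infty_2|$ respectively, exactly as in the proof of Theorem~\ref{Ec:3,5,7}; and for $d \geq 8$ use the very ample $|K_{X_\lambda}+(d-6)\infty_1|$.

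For the exceptional case $d=4$, there is no direct linear system of the right form on $X_\lambda$, so instead take $L = L_1 L_2$ for quadratic extensions $L_j/k$ obtained by applying Lemma~\ref{lem:rankgoesup} to each $E_j$ separately (generic choice ensures $[L:k]=4$, and $\rk E_i(L) \geq \rk E_i(L_i) > \rk E_i(k)$). For $d \geq 6$, Lemma~\ref{lem:Sd_points} applied to the very ample classes above produces $L/k$ with Galois group $S_d$; primitivity of $L/k$ (needed so that $L = k(Q_i)$, which is what makes the Lemma~\ref{lem:rankgoesup}-style argument conclude over the full extension $L$) is automatic for prime $d$ and for $S_d$-extensions. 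The main technical hurdle is the pushforward identity $\pi_{i*}[K_{X_\lambda}] \equiv 6[\mathcal{O}_{E_i}]$, since it is precisely this divisor-class calculation that delivers the vanishing of the trace and hence the simultaneous rank jumps on both elliptic curves.
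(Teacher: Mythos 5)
Your proposal is correct and follows essentially the same route as the paper: both arguments take the genus-$4$ curves $X_\lambda \subset E_1\times E_2$ from Theorem~\ref{Ec:3,5,7}, produce degree-$d$ points in linear systems built from $\infty_1,\infty_2$ and $K_{X_\lambda}$, use the pushforward computation $\pi_{i*}K_{X_\lambda}\sim 6\mathcal{O}_{E_i}$ (the paper phrases this as the branch points of $X_\lambda\to E_i$ lying in fibers of the $x$-coordinate) to see the projections are trace zero, avoid torsion by Northcott, run the Lemma~\ref{lem:rankgoesup} trace argument, handle $d=4$ by a compositum of two quadratic fields, and get $S_d$ for $d\geq 6$ via Lemma~\ref{lem:Sd_points}. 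The only cosmetic difference is that the paper additionally invokes primitivity to conclude both projections have degree $d$ (used later for Corollary~\ref{cor:k(P1,P2)=k(nP1,nP2)}), which, as your trace computation shows, is not strictly needed for the rank-jump statement itself.
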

\begin{proof}
    By the proof of Theorem~\ref{Ec:3,5,7}, there exists a geometric genus 4 curve, $X$, inside $E_1 \times E_2$ such that for all $d \in \{3\} \cup \mathbb{N}_{\geq 5}$, $X$ has a $\mathbb{P}^1$-parameterized point \(x =(P_1,P_2)\) of degree $d$. 
    
    Furthermore, by the construction in the proof of Theorem~\ref{Ec:3,5,7}, if \(d \neq 6\), we may assume that \(x\) is linearly equivalent to a divisor supported over the origin in both \(E_1\) and \(E_2\) and if \(d \geq 7\) that the residue field \(k(x)\) is an \(S_d\)-extension; if \(d=6\) we may assume that \(x\) is linearly equivalent to \(K_X\) and \(k(x)\) is an \(S_6\)-extension.  Thus, using that \(3\) and \(5\) are prime, for all $d \in \{3\} \cup \mathbb{N}_{\geq 5}$
    the extension $k(x)$ is therefore primitive. As $k(x)$ is primitive, $k(P_1)$ and $k(P_2)$ are either $k$ or $k(x)$, and at least one is $k(x)$ as $(P_1, P_2)$ is a degree $d > 1$ point. If one of the $P_i$ is defined over $k$, its $x$-coordinate is, and so both $x$ coordinates are, this forces $k(x)$ to be degree 1 or 2, but $d \geq 3$. By the proof of the preceding theorem, the degree $d$ point, $x$, is either linearly equivalent to $K_X$ (in the case $d = 6$), or to a divisor, $D$, supported on the points at infinity. The push-forward of $D$ to $E_i$ is supported on the origin of $E_i$. Similarly, the push-forward of $K_X$ is linearly equivalent to a divisor supported on the origin, as the branch points of $X \to E_i$ are given by pairs of points with the same $x$-coordinate. In particular, the push-forward of $x$ is linearly equivalent to the origin, and so, as the addition laws on the $E_i$ respect linear equivalence, the $P_i$ are trace-zero. By the Northcott property, there are finitely many $x$ such that the $P_i$ are torsion, and the result follows by identical arguments as \Cref{lem:rankgoesup}. 

    For the case $d = 4$, as in the proof of  \Cref{lem:rankgoesup}, the degree $2$ maps $E_i\to\mathbb{P}^1$ induced by the complete linear systems $2\mathcal{O}_i$ give that there exist infinitely many degree 2 fields for which each $E_i$ gains rank. 
    Thus, there is a choice $(P_1,P_2)\in E_1\times E_2$ such that each $P_i$ is nontorsion degree $2$ and $k(P_1)\neq k(P_2)$.
    Then the point $(P_1, P_2)$ is defined over a biquadratic extension and so has degree 4.
\end{proof}

The following is the analog of Corollary~\ref{cor:k(mx)=k(x)} for a product of two elliptic curves.
\begin{coroll}\label{cor:k(P1,P2)=k(nP1,nP2)} For any elliptic curves $E_1, E_2$ over a number field $k$ and for every $d\in \{3\}\cup\nn_{\geq 5}$ there exists a degree $d$ point $(P_1,P_2)\in E_1\times E_2$ such that each $P_i$ is nontorsion, of degree $d$, and for every $n\geq 1$, $k(P_i)=k([n]P_i)$. 
    \end{coroll}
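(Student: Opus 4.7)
The plan is to mirror the proof of \Cref{cor:k(mx)=k(x)} using the infinite family of degree $d$ points on the genus $4$ curve $X \subset E_1 \times E_2$ constructed in the proof of \Cref{cor:simultaneousrankjumps}. From that construction I extract four facts about these points $x = (P_1, P_2)$: the residue field $L \colonequals k(x)$ is a primitive extension of $k$ (since $d$ is prime for $d \in \{3,5\}$, and the Galois closure of $L/k$ is $S_d$ for $d \geq 6$); one has $k(P_1) = k(P_2) = L$ (as already established in that proof); both $P_1$ and $P_2$ are nontorsion (discarding the finitely many torsion exceptions via the Northcott property); and both are trace-zero with respect to $L/k$, because the pushforward of the cycle $x$ to each $E_i$ is linearly equivalent to $d \cdot \mathcal{O}_i$.

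Given such a point $x$, fix $i \in \{1,2\}$ and $n \geq 1$; the goal is to show $k([n]P_i) = k(P_i) = L$. Since $[n]P_i \in E_i(L)$, we have $k([n]P_i) \subseteq L$, and primitivity of $L/k$ forces $k([n]P_i) \in \{k, L\}$. Suppose for contradiction that $[n]P_i \in E_i(k)$. Then every $k$-embedding of $L$ fixes $[n]P_i$, so $\operatorname{Tr}_{L/k}([n]P_i) = d \cdot [n]P_i = [dn]P_i$. On the other hand, $\operatorname{Tr}_{L/k}$ is a group homomorphism and $\operatorname{Tr}_{L/k}(P_i) = \mathcal{O}$ by trace-zero, hence $\operatorname{Tr}_{L/k}([n]P_i) = [n]\operatorname{Tr}_{L/k}(P_i) = \mathcal{O}$. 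Combining, $[dn]P_i = \mathcal{O}$, contradicting $P_i$ having infinite order. Therefore $k([n]P_i) = L$, and since this holds for every $n \geq 1$ and each $i$, the statement follows.

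The only delicate point is checking that all four inputs are simultaneously available from the earlier construction. Primitivity for $d \in \{3,5\}$ is automatic from primality, while for $d \geq 6$ it rests on \Cref{lem:Sd_points} applied to the non-hyperelliptic curve $X_\lambda$ of \Cref{Ec:3,5,7}. The trace-zero property hinges on the fact that the degree $d$ pencils on $X_\lambda$ used in \Cref{Ec:3,5,7} have their basepoint structure supported over the $k$-rational points $\infty_1, \infty_2$, each of which projects to $\mathcal{O} \in E_i$; this is exactly the input recorded in the proof of \Cref{cor:simultaneousrankjumps}. Once these are in hand, the trace argument above is the only new content.
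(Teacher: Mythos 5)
Your proposal is correct and follows essentially the same route the paper intends: the corollary is meant to fall out of the construction in the proof of \Cref{cor:simultaneousrankjumps} (primitive residue field, $k(P_1)=k(P_2)=k(x)$, nontorsion via Northcott, trace-zero from the linear equivalence with a divisor supported over the origins), combined with the trace/primitivity argument already used in \Cref{lem:rankgoesup} and \Cref{cor:k(mx)=k(x)}. Your explicit computation $\operatorname{Tr}_{L/k}([n]P_i)=[dn]P_i=\mathcal{O}$ is just the paper's observation that $\langle P_i\rangle\cap E_i(k)$ consists of torsion points, spelled out directly, so there is no substantive difference.
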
 

\begin{remark}
If \(E_i[2] \subset E_i(k)\) for \(i=1,2\), the curves \(Y\) used in the proof of Theorem~\ref{Ec w/o bad j-invariant} can be used to show that in fact there exist infinitely many quadratic extensions over which both \(E_1\) and \(E_2\) increase rank.
\end{remark}

\subsection{Application: Density degree set of some Bielliptic Surfaces} \label{dendegsonbielliptic}
In this subsection we consider a surface $X$ which is the quotient $(E_1\times E_2)/G$ of a product of two elliptic curves modulo a fixed-point-free action of a finite group $G$ acting on $E_1$ by translations and on $E_2$ by $k$-automorphisms such that $E_2/G\simeq\mathbb{P}^1$. 
We assume that the action on $E_1$ is induced by a $k$-rational torsion point $P_0$. 
Such a surface is called \defi{bielliptic}. It has Kodaira dimension $0$, it admits a finite \'{e}tale cover $E_1\times E_2\xrightarrow{\pi} X$ of degree $n=|G|$, and it is not an abelian surface. 
The Albanese variety $\Alb_X$ of $X$ is the elliptic curve $E_1'=E_1/G=E_1/\langle P_0\rangle$, and the Albanese morphism $X\xrightarrow{\alb_X}E_1'$ is an elliptic fibration.  
These surfaces have been classified into $7$ different types (see for example \cite{Cil}) and we may assume that $P_0\in E_1(k)$ is a nonzero torsion point of $E_1$ of order at most $6$.
Corollary~\ref{cor:simultaneousrankjumps} allows us to obtain explicit information on the density degree set of such a bielliptic surface $X$.

\begin{coroll}\label{cor:biellipticsurface}
    Let $X=(E_1\times E_2)/G$ be a bielliptic surface over $k$ as above. Then we have containments \[\nn_{\geq 3}\subseteq\dendegs(X/k)\subseteq\dendegs(E_1/k).\] 
    Moreover, if any of the assumptions of Theorem~\ref{Ec w/o bad j-invariant} is true, then $2\in\dendegs(X/k)$ as well.
\end{coroll}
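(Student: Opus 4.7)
The plan is to prove the upper bound, the lower bound $\nn_{\geq 3}\subseteq\dendegs(X/k)$, and the refinement $2\in\dendegs(X/k)$ in turn. For the upper bound I would apply Lemma~\ref{bound1} to the surjective Albanese morphism $\alb_X\colon X\twoheadrightarrow E_1'\colonequals E_1/\langle P_0\rangle$, yielding $\dendegs(X/k)\subseteq\dendegs(E_1'/k)$. Since $E_1\twoheadrightarrow E_1'$ is a $k$-isogeny, $E_1$ and $E_1'$ share Mordell--Weil rank, and by Lemma~\ref{lemma:deltagenus2}(1) their density degree sets agree.

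For the lower bound, the plan is to push degree-$d$ points up from $E_1\times E_2$ via the étale $G$-cover $\pi\colon E_1\times E_2\to X$. Writing $\phi\colon G\hookrightarrow\langle P_0\rangle\subseteq E_1(k)$ for the injective homomorphism encoding the translation action on $E_1$, the key observation is that for a closed point $(P_1,P_2)$ of residue field $L$, the $k$-rationality of $\phi(G)$ promotes the assignment $\tau\mapsto g(\tau)$ defined (where possible) by $(P_1,P_2)^\tau = g(\tau)\cdot(P_1,P_2)$ to a group homomorphism, and $\deg\pi(P_1,P_2)$ is controlled by its image. For $d\in\{3\}\cup\nn_{\geq 5}$, Corollary~\ref{cor:simultaneousrankjumps} furnishes a dense family of points where $L$ is primitive over $k$ (by primality for $d=3,5$ and via the $S_d$-Galois closure for $d\geq 6$) and both $P_i$ are nontorsion and trace-zero. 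Primitivity forces $\deg\pi(P_1,P_2)\in\{1,d\}$, and a drop to $1$ would give $\{P_1^\tau\}=P_1+\phi(H)$ for a subgroup $\phi(H)\leq\langle P_0\rangle$ of order $d$. Summing the orbit (noting $\sum\phi(H)$ is $2$-torsion) and using trace-zero yields $2dP_1=0$, contradicting nontorsion. For $d=4$ I would use the biquadratic points from Corollary~\ref{cor:simultaneousrankjumps}: each subfield $k(P_i)$ is excluded because a Galois element fixing $P_i$ and moving $P_{3-i}$ would force $\phi(g)=0$, hence $g=\mathrm{id}$, a contradiction; and the remaining subfield $k(P_1P_2)$ cuts out a proper closed subset (in particular, requiring $2P_1\in\langle P_0\rangle$), generically avoided along the family.

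For $2\in\dendegs(X/k)$ under the hypotheses of Theorem~\ref{Ec w/o bad j-invariant}, I would push forward the quadratic points constructed there. The hyperelliptic curve $Y\subset E_1\times E_2$ used in that proof has hyperelliptic involution acting as $[-1]$ on each elliptic factor, so the degree-$2$ fibers of the hyperelliptic map $Y\to\mathbb{P}^1$, together with the translates $[n]_*Y$, give a dense family of quadratic points $(P_1,P_2)$ with both $P_i$ trace-zero. The image $\pi(P_1,P_2)\in X$ has degree dividing $2$; it equals $2$ automatically when $|G|>2$ (the Galois orbit of size $2$ cannot fill the larger $G$-orbit), and for $|G|=2$ the only way it drops to $1$ requires $2P_1=P_0$, a finite condition avoided generically.

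The main obstacle throughout is controlling the possible drop of degree under $\pi$; this is handled uniformly by the trace-zero/nontorsion analysis above. Once this is in place, density of the images in $X$ follows immediately from density of the underlying families in $E_1\times E_2$.
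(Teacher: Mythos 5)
Your upper bound coincides with the paper's (Albanese map plus isogeny invariance), and your treatment of $d\in\{3\}\cup\nn_{\geq 5}$ is correct but takes a genuinely different route: the paper sandwiches the residue field of $\pi(P_1,P_2)$ between $k(\alb_X(\pi(P_1,P_2)))=k(\phi(P_1))$ and $k(P_1,P_2)$, using Corollary~\ref{cor:k(P1,P2)=k(nP1,nP2)} to see that $\phi(P_1)$ already has degree $d$, whereas you rule out a degree drop directly on the fibers of $\pi$ via primitivity of $k(P_1,P_2)$ together with the trace-zero/nontorsion properties and the homomorphism $\tau\mapsto g(\tau)$. Both arguments rely on the constructions inside the proof of Corollary~\ref{cor:simultaneousrankjumps} (the statement alone does not give a dense family), so on that point you are no worse off than the paper. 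The advantage of the paper's Albanese route is that the lower bound $k(\pi(x))\supseteq k(\phi(P_1))=k(P_1)$ is obtained uniformly and does case analysis for you; your fiberwise analysis is more hands-on and, as written, goes wrong in two places.

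Concretely: (i) in the $d=4$ case, the exclusion ``a Galois element fixing $P_i$ and moving $P_{3-i}$ would force $\phi(g)=0$'' is only valid for $i=1$, because $G$ acts on $E_1$ by translations but on $E_2$ by automorphisms. For the subfield $k(P_2)$, a $\sigma$ fixing $\pi(x)$ gives $\sigma(x)=g\cdot x$, whose second coordinate reads $g\cdot P_2=P_2$; this does not involve $\phi$ at all and does not force $g=\mathrm{id}$, since nontrivial elements of $G$ do have fixed points on $E_2$ (that is why $E_2/G\simeq\P^1$). You need the additional observation that such fixed points are always torsion (they solve $(\alpha-1)(x)=-c$ with $\alpha\neq\mathrm{id}$, or are absent for pure translations), so $g\cdot P_2=P_2$ with $P_2$ nontorsion forces $g=\mathrm{id}$ --- or simply invoke the paper's bound $k(\pi(x))\supseteq k(\phi(P_1))=k(P_1)$, which excludes $k$, $k(P_2)$ and the third quadratic subfield in one stroke. (ii) In the $d=2$ case, the claim that the degree ``equals $2$ automatically when $|G|>2$'' is based on a false premise: the fiber of $\pi$ over a $k$-point is a $G$-torsor which may split into several closed points, so the Galois orbit of $x$ need not fill the whole $G$-orbit; a quadratic point maps to a rational point as soon as $\sigma(x)=g\cdot x$ for a single $g\in G$, whatever $|G|$ is. The correct exclusion is exactly the computation you reserve for $|G|=2$: trace-zero gives $\sigma(P_1)=-P_1$, hence $2P_1=-\phi(g)$ is torsion, contradicting that $P_1$ is nontorsion; this argument is uniform in $G$ (and applied to your ``third subfield'' case of $d=4$ it shows that locus is empty, not merely non-dense). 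With these two repairs your proposal does give a complete proof.
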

\begin{proof} The Albanese morphism $\alb_X:X\to E_1'$ is surjective, and hence 
\Cref{bound1} gives a containment $\dendegs(X/k)\subseteq\dendegs(E_1'/k)$. Since isogenous elliptic curves have the same density set, the upper bound $\dendegs(X/k)\subseteq\dendegs(E_1/k)$ follows. 

Let $d\in \{3\}\cup\nn_{\geq 5}$. Corollary~\ref{cor:k(P1,P2)=k(nP1,nP2)} gives that there exists a degree $d$ point $x=(P_1,P_2)\in E_1\times E_2$ such that $k(P_1)=k(P_2)$ and for all $n\geq 1$, $k(P_i)=k([n]P_i)$. 
This implies that the image $\phi(P_1)$ under the isogeny $\phi:E_1\to E_1'=E_1/\langle P_0\rangle$ has also degree $d$. Since the composition $E_1\times E_2\xrightarrow{\pi}X\xrightarrow{\alb_X}E_1'$ maps $(P_1,P_2)$ to $\phi(P_1)$, which is a degree $d$ point, it follows that $k(\pi(P_1,P_2))=k(P_1)=k(P_2)$ is also of degree $d$. 

Next suppose $d=4$. The above argument does not necessarily work in this case, as the degree $4$ points we constructed in the proof of \Cref{cor:simultaneousrankjumps} are of the form $(P_1,P_2)$ with $\deg(k(P_i))=2$ for $i=1,2$, and hence the above argument gives that $\pi(P_1,P_2)$ has degree $2$ or $4$. Suppose for contradiction that this degree is $2$, which can only happen if 
$k(\pi(P_1,P_2))=k(\phi(P_1))=k(P_1)$. Let $L=k(P_1, P_2)$, which is a degree $2$ extension of $k(P_1)$. Let $\sigma$ be a generator of the Galois group of $L/k(P_1)$. Then $\sigma(\pi(P_1,P_2))=\pi(P_1,P_2)=\pi(P_1, \sigma(P_2))$. By assumption $\sigma(P_2)\neq P_2$, thus this last relation gives a contradiction, as the group $G$ acts on $E_1$ by translations.

If the $j$-invariants of $E_1$ and $E_2$ are not both 0 or 1728, or the $2$-torsion subgroup of $E_1\times E_2$ is fully rational, then by \Cref{quadrankjump}, the same argument shows $2 \in \delta(X/k)$. 
\end{proof} 

If $1\in\dendegs(E_1\times E_2/k)$, then clearly $1\in\dendegs(X/k)$, and hence  $\dendegs(E_1\times E_2/k)=\dendegs(X/k)$ in this case. This equality also holds when $\rk(E_1(k))=0$. The only remaining question is to explore density of rational points on the bielliptic surface $X$ when the Albanese variety of $X$ has positive rank. The next proposition shows that the answer can sometimes be negative.

\begin{prop}\label{prop:strictbiellipticbound}
    There are examples of bielliptic surfaces $X$ for which  $1\not\in\dendegs(X/k)$, although $\rk(E_1(k))>0$. 
\end{prop}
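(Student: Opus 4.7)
The plan is to take the simplest case $G = \mathbb{Z}/2\mathbb{Z} = \langle \sigma \rangle$ acting on $E_1$ by translation by a $k$-rational $2$-torsion point $P_0$ and on $E_2$ by the hyperelliptic involution $[-1]$, yielding a bielliptic surface $X = (E_1 \times E_2)/G$ with quotient isogeny $\phi \colon E_1 \to E_1' = E_1/\langle P_0 \rangle$ and \'{e}tale cover $\pi \colon E_1 \times E_2 \to X$ of degree $2$.

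The first step is to classify $X(k)$ in terms of data on $E_1$ and $E_2$. Given $x \in X(k)$, its preimage $\pi^{-1}(x)$ is a Galois-stable $G$-orbit $\{(Q, R), (Q + P_0, -R)\}$ in $E_1 \times E_2$. Either (i) both points are $k$-rational, so $Q \in E_1(k)$ and $R \in E_2(k)$, or (ii) they form a Galois orbit over a quadratic extension $L = k(\sqrt{d})$ with generator $\tau$ satisfying $\tau Q = Q + P_0$ and $\tau R = -R$. In case (ii), $Q$ lifts a point in $E_1'(k) \setminus \phi(E_1(k))$ whose image under the connecting map $\partial \colon E_1'(k) \to k^\times/(k^\times)^2$ is the class of $d$, and the condition $\tau R = -R$ says that $R$ is a trace-zero $L$-point on $E_2$; such points biject with $E_2^d(k)$, the rational points of the quadratic twist.

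The key observation is that $S \colonequals \image(\partial) \subset k^\times/(k^\times)^2$ is a \emph{finite} subgroup, since it is identified with $E_1'(k)/\phi(E_1(k))$ and embeds into the $\phi$-Selmer group. Therefore, if we choose $E_2$ so that $E_2(k) = \{O\}$ and $E_2^d(k) = \{O\}$ for every $d \in S$, then in both cases above we must have $R = O$. It follows that $X(k) \subseteq \pi(E_1 \times \{O\})$; since $G$ stabilizes $E_1 \times \{O\}$ and acts on it as translation by $P_0$, this image is an elliptic curve isomorphic to $E_1'$ sitting inside $X$ as a section of $\alb_X$, hence a proper closed subvariety. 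In particular $1 \notin \dendegs(X/k)$, while $\rk E_1(k) > 0$ by hypothesis.

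The main obstacle is producing an explicit compatible pair $(E_1, E_2)$. The plan is to take $E_1/\mathbb{Q}$ of positive rank with a rational $2$-torsion point (for instance a congruent number curve $y^2 = x^3 - n^2 x$ for a small congruent number $n$), compute $S$ explicitly from a set of generators of $E_1'(\mathbb{Q})$, and then select $E_2$ by searching for a curve with $E_2(\mathbb{Q}) = \{O\}$ whose quadratic twists by the finitely many nontrivial classes in $S$ also have trivial Mordell--Weil group. Since each such condition involves only a rank and a torsion computation on a specific quadratic twist, it can be verified against standard tables such as LMFDB, and heuristics on the distribution of ranks of quadratic twists make it plausible that an appropriate $E_2$ exists after a bounded search.
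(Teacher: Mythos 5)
Your reduction is sound and is essentially the paper's argument: you take the same surface $X=(E_1\times E_2)/\langle\sigma\rangle$ with $\sigma$ acting by translation by a rational $2$-torsion point on $E_1$ and by $[-1]$ on $E_2$, analyze $k$-points of $X$ through the degree-$2$ cover $\pi$, split into the split/non-split fiber cases, and observe that in the non-split case the relevant quadratic field is forced by $E_1$ and ranges over a finite set. Your bookkeeping via the connecting map $\partial\colon E_1'(k)\to k^\times/(k^\times)^2$ of the $2$-isogeny, with trace-zero points of $E_2$ over $k(\sqrt d)$ identified with $E_2^{d}(k)$, is a slightly sharper formulation than the paper's (which phrases the constraint as ``there is $P\in E_1(L)$ with $2P\in E_1(k)$'' and then uses finite generation of $E_1(k)$), but the two are equivalent in substance. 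One harmless but unnecessary strengthening: you demand $E_2(k)=\{O\}$ and $E_2^{d}(k)=\{O\}$, i.e.\ trivial Mordell--Weil groups. Rank $0$ suffices, since torsion values of $R$ confine the rational points to finitely many curves $\pi(E_1\times\{R\})$, which is still non-dense; note that the paper's own choice of $E_2$ has nontrivial torsion and so would be excluded by your stricter condition, which needlessly shrinks the pool of candidates.

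The genuine gap is at the last step. The proposition asserts that examples \emph{exist}, and your proposal ends by saying that a suitable $E_2$ (rank/trivial group conditions over the finitely many quadratic classes in the image of $\partial$) is ``plausible'' to find by a bounded search, invoking heuristics on ranks of quadratic twists. That is not a proof: no pair $(E_1,E_2)$ is exhibited and no unconditional existence argument is given, so the statement as claimed remains unestablished. The paper closes exactly this point with an explicit verification (Example~\ref{ex:strictbielliptic}): $E_1$ is the curve $y^2+xy=x^3-x$ of rank $1$ over $\bQ$ with $P_0=(0,0)$, the only quadratic fields where a point can halve into $E_1(\bQ)$ are $\bQ(\sqrt{65})$, $\bQ(\sqrt{5})$, $\bQ(\sqrt{13})$, $\bQ(\sqrt{-3})$, and $E_2$ ($y^2+xy+y=x^3-x$) is checked in Magma to have rank $0$ over all four, whence $1\not\in\dendegs(X/\bQ)$. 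To complete your argument you must either carry out such a computation for a concrete pair (your relaxed rank-$0$ criterion makes this easy) or replace the heuristic by a theorem guaranteeing an elliptic curve of rank $0$ over a prescribed finite set of quadratic fields; as written, the existence claim is only made plausible, not proved.
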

\begin{proof}
Let $E_1, E_2$ be elliptic curves over $k$ with $\rk(E_1(k))>0, 
 \rk(E_2(k))=0$, and assume that there is a $k$-rational $2$-torsion point $P_0\in E_1(k)$. Consider the bielliptic surface $X=(E_1\times E_2)/G$, with $G=\bZ/2\bZ=\langle\sigma\rangle$ acting on $E_1$ by translation by $P_0$ and on $E_2$ by negation. The projection $\pi:E_1\times E_2\to X$ is a finite map of degree $2$, and hence a rational point on $X$ corresponds to a point $(P,Q)\in E_1\times E_2$ defined at worst over a quadratic extension $L/k$. Since $\rk(E_2(k))=0$, it follows that $1\in\dendegs(X/k)$ is only possible if there exists a quadratic point $(P, Q)\in E_1\times E_2$ with $Q$ of infinite order such that $\sigma(P, Q) = (\tau(P), \tau(Q))$, where $\tau$ denotes the non-trivial element of the Galois group. In particular, $\tau(Q) = -Q$ and $\tau(P)= P + P_0$, and hence $2P = P+\tau(P)- P_0\in E_1(k)$.

We conclude that $1\in\dendegs(X/k)$ can only happen if there exists a quadratic extension $L/k$ such that $\rk(E_2(L))>0$, and there exists $P \in E_1(L)$ such that $2P \in E_1(k)$. Since $E_1(k)$ is finitely generated, there are only finitely many such fields $L$. Thus, if $\rk(E_2(L))=0$ for each one of them, it follows that $X$ does not have dense rational points. 
\end{proof}

\begin{example}\label{ex:strictbielliptic}
Let $E_1$ be the elliptic curve \href{https://www.lmfdb.org/EllipticCurve/Q/65/a/1}{65.a.1} with Weierstrass equation $y^2 + xy = x^3 - x$. This curve has rank $1$ over $\bQ$, with a free generator the point $P=(1,0)$, and a nontrivial $2$-torsion point $P_0=(0,0)$. There are exactly $4$ quadratic fields over which $E_1$ has a point $P'$ with $2P' \in E_1(\bQ)$, namely $\bQ(\sqrt{65}), \bQ(\sqrt{5}), \bQ(\sqrt{13})$ and $\bQ(\sqrt{-3})$, where $2P'$ can be taken respectively to be $2P, P+P_0, P, P+P_0$. Let $E_2$ be the elliptic curve \href{https://www.lmfdb.org/EllipticCurve/Q/14/a/5}{14.a.5} with Weierstrass equation $y^2 + xy + y = x^3 - x$, which has rank $0$ over $\bQ$. A Magma computation shows that $E_2$ has rank $0$ over all $4$ quadratic extensions, yielding $1\not\in\dendegs(X/\bQ)$. 
However, one can take $E_2'$ to be the elliptic curve $y^2=x^3+4$, which has rank $0$ over $\bQ$, but has a point $(1,\sqrt{5})$ over $\bQ(\sqrt{5})$ of infinite order, thus yielding $1\in \dendegs(X/\bQ)$. 
\end{example}

\section{Products of an elliptic curve and genus \(2\) curve}
In this section we consider a product $E\times C$ of an elliptic curve $E$ and a nice genus $2$ curve $C$. If $E$ has positive rank, the lower bound of Proposition~\ref{inclusionsprod} coincides with the upper bound, hence $\dendegs( C\times E/k)=\dendegs(C/k)$. 
This leaves the rank zero case, in which case the two bounds do not coincide. We start with the following easy  Corollary. 

\begin{coroll}
    Let $E$ be a rank $0$ elliptic curve.
    If $\ind(C/k)=2$, then
    \[
        2\nn \setminus \{2\} \subseteq \dendegs(E\times C/k) \subseteq 2\nn.
    \]
\end{coroll}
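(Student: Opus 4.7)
The statement fits directly into the framework of Corollary~\ref{cor:intersection}, Proposition~\ref{inclusionsprod}, and Lemma~\ref{lemma:deltagenus2}, so the plan is essentially to splice these together; no new construction should be needed.

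For the upper bound, I would apply Corollary~\ref{cor:intersection} to get
\[
\dendegs(E\times C/k) \subseteq \dendegs(E/k)\cap \dendegs(C/k).
\]
Since $\ind(C/k)=2$, the genus $2$ case of Lemma~\ref{lemma:deltagenus2} gives $\dendegs(C/k)=2\nn$, and intersecting with this already yields $\dendegs(E\times C/k)\subseteq 2\nn$.

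For the lower bound, the key observation is that every nice genus $2$ curve is hyperelliptic, so the degree $2$ canonical map realizes $2\in \dendegs_{\mathbb{P}^1}(C/k)$. On the other side, $E$ is an elliptic curve of rank $0$, so the genus $1$ case of Lemma~\ref{lemma:deltagenus2} gives $\dendegs(E/k)=\nn_{\geq 2}$. Now I would apply the $\mathbb{P}^1$-parametrized form of Proposition~\ref{inclusionsprod}, namely
\[
\dendegs_{\mathbb{P}^1}(C/k)\cdot \dendegs(E/k) \subseteq \dendegs(C\times E/k),
\]
which yields $\{2\}\cdot \nn_{\geq 2} = 2\nn\setminus\{2\} \subseteq \dendegs(E\times C/k)$, as desired.

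There is not really a substantive obstacle here: both bounds are immediate consequences of the bookkeeping set up in Section~\ref{bounds}. The only small point to be careful about is using the $\mathbb{P}^1$-parametrized version of Proposition~\ref{inclusionsprod} (rather than requiring $g_C\leq 9$), since in our setting what is guaranteed about $C$ is that its hyperelliptic $g^1_2$ is defined over $k$, and it is precisely this that promotes density of degree $2$ points on $C$ to density over any finite extension.
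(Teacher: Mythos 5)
Your proposal is correct and is essentially the paper's own argument: the upper bound comes from Corollary~\ref{cor:intersection} together with $\dendegs(C/k)=2\nn$ from Lemma~\ref{lemma:deltagenus2}, and the lower bound from Proposition~\ref{inclusionsprod} with $\dendegs(E/k)=\nn_{\geq 2}$. Whether you invoke the $\dendegs_{\pp^1}$ form of Proposition~\ref{inclusionsprod} via the hyperelliptic map, or the $g_C\leq 9$ form with $\dendegs(C/k)\cdot\dendegs(E/k)$, is immaterial here since both apply and give $2\nn\setminus\{2\}$.
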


\begin{proof}
    Since $\dendegs(E/k)=\nn_{\geq 2}$, then Proposition~\ref{inclusionsprod}, and Lemma~\ref{lemma:deltagenus2} give the above inclusions.
\end{proof}

Therefore, if $\ind(C/k)=2$ the only integer left to study is 2.

\subsection{Density of Quadratic points}\label{sec:quadraticpoints} Similarly to the case of a product of elliptic curves, the study of quadratic points on $E\times C$ can be related to the study of rational points on the quotient surface $(E\times C)/\langle\iota\rangle$ by the diagonal $\bZ/2$-action on $E\times C$ given by negation on $E$ and the hyperelliptic involution  on $C$.

The following conjecture (explicitly stated in \cite[Conj 1.1]{DeJard} for \(k = \bQ\), and implicitly in \cite{Maz92}, \cite{CCH05}) on elliptically fibered surfaces can be helpful for understanding quadratic points on these products.
\begin{conj}\label{conj:isotrivial_fibration}
    Let \(k\) be a number field. The \(k\)-points are dense on any non-trivial isotrivial elliptic fibration over $\mathbb{P}^1$.
\end{conj}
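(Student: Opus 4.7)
The plan is to reduce the statement to the existence of a non-torsion section on the generic fiber and then invoke Silverman's specialization theorem. Let $\pi \colon S \to \P^1_k$ be a non-trivial isotrivial elliptic fibration with generic fiber $S_\eta$. By isotriviality, $S_\eta$ is a twist of a fixed elliptic curve $E_0$ over $k(t)$, classified by a cocycle in $H^1(\operatorname{Gal}(\overline{k(t)}/k(t)), \operatorname{Aut}(E_{0,\overline{k(t)}}))$; non-triviality of $\pi$ ensures this cocycle does not vanish. There is a finite (Galois) cover $T \to \P^1$, of degree bounded by $|\operatorname{Aut}(E_0)| \leq 24$, over which the twist trivializes, so $\phi^\ast S \simeq E_0 \times T$ for $\phi \colon T \to \P^1$.

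The first step is to observe that if $\rank S_\eta(k(t)) \geq 1$, then $1 \in \dendegs(S/k)$. Indeed, by Silverman's specialization theorem, any non-torsion section $P \in S_\eta(k(t))$ specializes to a non-torsion point in $S_t(k)$ for all but finitely many $t \in \P^1(k)$. Each such $S_t$ is an elliptic curve of positive Mordell--Weil rank, hence has Zariski dense $k$-points, and applying \Cref{lem:covcurvs} to the family of fibers $\{S_t\}$ yields density on $S$. The second step is to attempt to construct a non-torsion section of $S_\eta$ over $k(t)$ by a trace. Since $\phi$ is finite Galois of bounded degree and $\phi^\ast S_\eta \simeq E_{0,k(T)}$, any point $Q \in E_0(k(T))$ gives rise, via $\sum_{\sigma \in \operatorname{Gal}(k(T)/k(t))} \sigma^\ast Q$, to a section of $S_\eta$ over $k(t)$. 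When $T$ is rational over $k$ and $E_0$ has positive rank over $k(T)$ — which can sometimes be achieved using \Cref{cor:simultaneousrankjumps} applied to a suitable subfamily, or by pulling back non-torsion sections from abelian subvarieties — one can hope to exhibit such a $Q$ for which the trace is non-torsion.

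The hard part is guaranteeing that the trace construction produces a non-torsion section. The Galois action twisting $E_0$ acts on $E_0(k(T))$ through $\operatorname{Aut}(E_0)$, and the resulting trace can vanish identically on entire isotypic components of $E_0(k(T)) \otimes \mathbb{Q}$; for generic twists there is no \emph{a priori} geometric source of non-torsion sections surviving this trace, which is precisely why the statement has remained conjectural. A conditional alternative, in the spirit of \Cref{prop:parity}, is to appeal to the Parity Conjecture: if one can show that the root numbers $W(S_t)$ equal $-1$ for a Zariski dense set of $t \in \P^1(k)$, then under Parity each such fiber has positive rank and density follows as above. Making the variation of root numbers across an isotrivial family unconditional, however, appears to lie beyond current techniques, and this is the main obstacle I would expect to encounter.
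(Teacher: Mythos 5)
The statement you are trying to prove is not a theorem of the paper at all: it is stated as a conjecture (attributed to \cite[Conjecture 1.1]{DeJard}, and implicitly to \cite{Maz92}, \cite{CCH05}), and the paper never proves it --- it only assumes it to deduce conditional results, and verifies it in special cases by exhibiting explicit rational multisections (e.g.\ the infinitely many rational curves on \(\mathcal{E} = (E\times C)/\langle\iota\rangle\) produced after Proposition~\ref{lem:quad_24}). So no blind proof attempt can be ``compared with the paper's proof''; the relevant question is whether your argument closes the conjecture, and it does not.

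The gap in your proposal is exactly the one you name, and it is fatal rather than technical. The Silverman-specialization step is fine, but it reduces the problem to showing \(\operatorname{rank} S_\eta(k(t)) \geq 1\), and for a typical non-trivial isotrivial fibration this simply fails: for the quadratic-twist families \(y^2 = f(x_1)g(x_2)\) occurring in this paper, sections over \(k(t)\) correspond to maps from the hyperelliptic curve \(y^2=g\) to (a twist of) \(E\), so when the relevant Jacobian does not contain \(E\) as an isogeny factor the generic rank is \(0\) and no section, torsion or not, can exist. Consequently the trace construction from the trivializing cover cannot be repaired in general --- the obstruction is not that the trace might accidentally be torsion, but that the group it lands in is finite. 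Density of \(k\)-points on such surfaces must instead come from a Zariski-dense set of \emph{fibers} of positive rank without any section, which is precisely what root-number/Parity heuristics (Rohrlich--Mazur--Desjardins style) predict but which is not known unconditionally; your conditional fallback via the Parity Conjecture reproduces that heuristic but does not yield a proof. In short, the proposal is a reasonable survey of why the statement is believed and why it is open, but it is not, and could not be expected to be, a proof.
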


\begin{prop}
Assuming Conjecture~\ref{conj:isotrivial_fibration}, we have $2 \in \dendegs(E \times C/k)$. 
\end{prop}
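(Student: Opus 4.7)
The plan is to realize the quotient surface $S \colonequals (E \times C)/\langle \iota\rangle$ as a non-trivial isotrivial elliptic fibration over $\mathbb{P}^1$, invoke Conjecture~\ref{conj:isotrivial_fibration} to obtain Zariski density of $S(k)$, and then lift these rational points to a dense set of quadratic points on $E \times C$.

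First I would describe the fibration structure on $S$. The hyperelliptic quotient $h\colon C \to C/\langle \iota_C\rangle \simeq \mathbb{P}^1$ is ramified at the six Weierstrass points of $C$. The composite $E \times C \to C \xrightarrow{h} \mathbb{P}^1$ is $\iota$-equivariant with trivial action on $\mathbb{P}^1$, so it descends to a morphism $f\colon S \to \mathbb{P}^1$. For a non-Weierstrass geometric point $[P] \in \mathbb{P}^1$, the preimage in $E \times C$ is $E \times \{P\} \sqcup E \times \{\iota_C(P)\}$, which $\iota$ glues into a single copy of $E$ via $(e,P) \leftrightarrow (-e, \iota_C(P))$. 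Over each of the six branch points the fiber is $E/\langle -1\rangle \simeq \mathbb{P}^1$. Thus a smooth relatively minimal model of $f$ exhibits $S$ as the quadratic twist of the trivial fibration $E \times \mathbb{P}^1 \to \mathbb{P}^1$ by the non-trivial double cover $h$; in particular this is a non-trivial isotrivial elliptic fibration, with monodromy around each of the six branch points acting by $-1$ on the fiber.

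Conjecture~\ref{conj:isotrivial_fibration} then guarantees that $S(k)$ is Zariski dense in $S$. Let $q\colon E \times C \to S$ denote the degree-$2$ quotient map; it is étale outside the $24$ fixed points of $\iota$. Since $q$ is finite and surjective, the preimage $q^{-1}(S(k))$ is Zariski dense in $E \times C$, and outside a finite locus every point of this preimage is either a $k$-rational point or a quadratic point of $E \times C$. Because $C$ has genus $2$, Faltings' Theorem implies $C(k)$ is finite, so $(E \times C)(k) = E(k) \times C(k)$ is contained in the finite union of curves $\bigcup_{Q \in C(k)} E \times \{Q\}$ and is therefore not Zariski dense in $E \times C$. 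It follows that the quadratic points in $q^{-1}(S(k))$ must themselves be Zariski dense, yielding $2 \in \dendegs(E \times C/k)$.

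The main obstacle is the geometric verification that $f\colon S \to \mathbb{P}^1$ really is a non-trivial isotrivial elliptic fibration of the kind covered by Conjecture~\ref{conj:isotrivial_fibration}. This amounts to checking that a smooth relatively minimal model has genuine singular fibers over each of the six Weierstrass branch points, so that it is not birational to $E \times \mathbb{P}^1$; this is immediate from the explicit quadratic-twist description, but requires some care in handling the $24$ singularities of $S$ and their resolution.
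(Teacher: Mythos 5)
Your proposal is correct and follows essentially the same route as the paper: both pass to the quotient $(E\times C)/\langle\iota\rangle$, fiber it over $\mathbb{P}^1$ via the hyperelliptic map on $C$ to see a non-trivial isotrivial (quadratic-twist) elliptic fibration, invoke Conjecture~\ref{conj:isotrivial_fibration}, and then lift dense rational points through the degree-$2$ quotient, using that $(E\times C)(k)$ is not dense (the paper cites $1\notin\dendegs(C/k)$ and Corollary~\ref{cor:intersection}, you cite Faltings directly) to conclude the quadratic points are dense. The only cosmetic difference is that the paper works with the explicit affine chart $y^2=f(x_1)g(x_2)$ and the fibration $(x_1,x_2,y)\mapsto x_2$, whereas you phrase the same construction coordinate-freely.
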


\begin{proof}
    Write $E$ as the projective closure of $y_1^2=f(x_1) = x_1^3 + a_1x_1 + a_0$, and consider the affine chart of $C$ defined by $y_2^2=g(x_2)$ with $g$ a degree $6$ polynomial.
    
    Consider the quotient surface $\mathcal{E}=(E\times C)/\langle\iota\rangle$. This surface has an affine chart given by $y^2 = f(x_1)g(x_2)$. There is a degree \(2\) map $\phi \colon E \times C \to \mathcal{E}$ via $(x_1, y_1, x_2, y_2) \mapsto (x_1, x_2, y_1 y_2)$. 
    
    Since \(1 \not\in \dendegs(C/k)\), we have $1 \notin \dendegs(E \times C / k)$ by Corollary~\ref{cor:intersection}.  Hence, outside of a Zariski closed subset of \(\mathcal{E}\), the preimage of a rational point must be a quadratic point on \(E \times C\).  Thus if $1 \in \dendegs(\mathcal{E} / k)$, then $2 \in \dendegs(E \times C / k)$. 
    
    Via \((x_1, x_2, y) \mapsto x_2\), the surface $\mathcal{E}$ is an elliptic fibration over \(\P^1\).  This fibration is a family of quadratic twists of \(E\) given by \(g(x_2)\); hence it is isotrivial, but non-trivial. Thus Conjecture~\ref{conj:isotrivial_fibration} predicts that \(1 \in \dendegs(\mathcal{E}/k)\), and so \(2 \in \dendegs(E \times C/k)\).
\end{proof} 

It follows by Proposition~\ref{isogenyfactor} that in the special case when $E$ is an isogeny factor of $\Pic^0_C$, then $2$ is indeed in $\dendegs(E\times C/k)$. In the following lemma we use this fact to construct another class of examples that verify this conjecture.

\begin{prop}\label{lem:quad_24} Let $C$ be a genus $2$ curve given by an affine equation $y^2=g_4(x_2)g_2(x_2)$ where $g_4(x)$ is a degree 4 polynomial and $g_2(x)$ a monic degree 2 polynomial. Let $D$ be the normalization of the projective closure of $y_1^2=g_4(x_1)$. Let \(E \colonequals \Pic^0_D\), then $2\in \dendegs(E\times C/k)$.
 \end{prop}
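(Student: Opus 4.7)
My plan is to cover the surface $E\times C$ by an infinite family of curves, each birational to a single hyperelliptic genus-$3$ curve $\tilde{C}$ built from the factorization $g_4 g_2$. Define $\tilde{C}$ as the smooth projective curve with affine equations
\[
y_1^2 = g_4(x),\qquad y_2^2 = g_2(x).
\]
The coprimality of $g_4$ and $g_2$ (forced by smoothness of $C$) makes $\tilde{C}$ a smooth, irreducible $(\bZ/2)^2$-cover of $\mathbb{P}^1_k$ of genus $3$ by Riemann--Hurwitz. It admits three degree-$2$ intermediate quotients: $C$ (via $y = y_1y_2$), $D$ (via $(x,y_1)$), and the conic $D_2\colon y_2^2 = g_2(x)$ (via $(x,y_2)$). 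Since $g_2$ is monic of degree $2$, $D_2$ has rational points at infinity and is therefore isomorphic to $\mathbb{P}^1_k$; hence $\tilde{C}$ is hyperelliptic over $k$, and the hyperelliptic map provides dense $\mathbb{P}^1$-parameterized degree-$2$ points, giving $2\in\dendegs(\tilde{C}/k)$.

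Next I will build a nonconstant $k$-morphism $\phi\colon\tilde{C}\to E$ with the key anti-equivariance property $\phi\circ\sigma = -\phi$, where $\sigma$ is the deck involution of $\pi_C\colon\tilde{C}\to C$. Let $\infty_D$ be the rational degree-$2$ divisor on $D$ lying over $\infty\in\mathbb{P}^1_k$, and let $\psi\colon D\to E = \Pic^0_D$ be the $k$-morphism $P\mapsto [2P] - [\infty_D]$; this is nonconstant since multiplication by $2$ on the Jacobian is. Set $\phi\colonequals \psi\circ\pi_D$, where $\pi_D\colon\tilde{C}\to D$ is the projection. Since $\pi_D\circ\sigma = \sigma_D\circ\pi_D$ for the hyperelliptic involution $\sigma_D$ of $D$, and the identity $[P + \sigma_D(P)] = [\infty_D]$ (both are fibers of the hyperelliptic pencil on $D$) gives $\psi\circ\sigma_D = -\psi$, the desired relation $\phi\circ\sigma = -\phi$ follows.

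For each $n\in\bZ$, define $\iota_n\colon\tilde{C}\to C\times E$ by $Q\mapsto (\pi_C(Q), n\phi(Q))$, and let $X_n\subseteq C\times E$ be its image. The anti-equivariance sends the two $\pi_C$-preimages of a generic $c\in C$ to distinct points with opposite $E$-coordinates, so each $\iota_n$ is birational onto $X_n$. If $X_n = X_m$, then the induced birational self-map of $\tilde{C}$ commutes with $\pi_C$, so it equals $\mathrm{id}$ or $\sigma$; matching $E$-coordinates then forces $n = \pm m$. Hence $\{X_n\}_{n\geq 0}$ is an infinite family of pairwise distinct irreducible curves on the surface $C\times E$, and therefore cannot lie in any proper closed subvariety. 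Since each $X_n$ is birational to $\tilde{C}$ and $2\in\dendegs(\tilde{C}/k)$, applying \Cref{lem:covcurvs} to this family yields $2\in\dendegs(C\times E/k)$.

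The main delicate point is securing the anti-equivariance $\phi\circ\sigma = -\phi$ without assuming $D$ has a rational point: the doubling in the definition of $\psi$ (sending $P$ to $[2P] - [\infty_D]$ rather than the Abel--Jacobi map to a translated copy of $E$) is precisely what makes this identity automatic, using only the rational degree-$2$ divisor $\infty_D$ coming from the two points at infinity of $D$. Without it, either $\iota_n$ would factor through $\pi_C$ and have image $C\times\{\text{pt}\}$, or distinct values of $n$ would produce the same curve, collapsing the covering family.
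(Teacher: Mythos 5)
Your proposal is correct, and it starts from the same key object as the paper — the fiber product curve $\{y_2^2=g_4(x),\,y_3^2=g_2(x)\}$, recognized as hyperelliptic over $k$ because the conic $y_3^2=g_2(x)$ has rational points at infinity — but the way you get from dense quadratic points on that curve to dense quadratic points on $E\times C$ is genuinely different. The paper notes that the degree-$2$ projection to $D$ makes $E$ an isogeny factor of the Jacobian of the fiber product $X$, invokes Proposition~\ref{isogenyfactor} to get $2\in\dendegs(X\times E/k)$, and then pushes forward along the degree-$2$ cover $E\times X\to E\times C$, using $1\notin\dendegs(E\times C/k)$ (Faltings, via the genus-$2$ factor) to rule out the images degenerating to rational points. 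You instead build the covering family directly inside $C\times E$: the map $\psi(P)=\mathcal{O}_D(2P-\infty_D)$ is anti-equivariant for the deck involution, so each $\iota_n(Q)=(\pi_C(Q),n\phi(Q))$ with $n\geq 1$ is birational onto its image, and Lemma~\ref{lem:covcurvs} applies with $d=2$ since $2\in\dendegs(\tilde{C}/k)$. This buys you degree preservation by construction — you never need the non-density of rational points on $E\times C$, nor the isogeny-factor proposition — at the cost of carrying out by hand the multisection argument that Proposition~\ref{isogenyfactor} packages (your $X_n$ play the role of the graphs $\Gamma_{[n]\circ f}$ in its proof), and the doubling trick in $\psi$ neatly avoids needing a rational point on $D$. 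One cosmetic slip: $\iota_0$ is $2$-to-$1$ onto $C\times\{0\}$, so the birationality claim fails for $n=0$; restrict to $n\geq 1$, which still gives an infinite family of pairwise distinct irreducible curves, and the argument goes through unchanged.
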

\begin{proof}
     Let $X$ be the fiber product of $C$ and $D$ over \(\pp^1\) via the maps $x_1$ and $x_2$. This has equations $y_1^2 = g_4(x)g_2(x)$ and $y_2^2 = g_4(x)$. We can replace the first by $y_3^2 = g_2(x)$, where $y_1 = y_2y_3$. That is, we can write $X=\{(x,y_2,y_3): y_2^2 = g_4(x), y_3^2= g_2(x)\}$. Since $g_2$ is monic, the curve $y_3^2 = g_2(x)$ is a pointed conic, and so isomorphic to $\P^1$. Thus, the degree \(2\) map $X\to\mathbb{P}^1$, $(x,y_2,y_3)\mapsto (x,y_3)$  shows that $X$ is a hyperelliptic curve and $2 \in \dendegs(X / k)$. On the other hand, the degree \(2\) projection $X\to D$ implies that $E$ is an isogeny factor of $\Pic^0_X$, and so $2 \in \dendegs(X \times E / k)$ by Proposition~\ref{isogenyfactor}. The covering map $X \to C$ induces a covering $E \times X \to E \times C$. Since $1 \notin \dendegs(E \times C/k)$, the image of the degree 2 points on $E \times X$ must yield a Zariski dense set of degree 2 points on $E \times C$.
\end{proof}

\begin{example} 
Let $D$ be the curve with affine equation \[D:y^2 = g_4(x)=x^4 + 2x^3 - 5x^2 + 2x + 1.\] Then $D(\bQ)\neq\emptyset$ and \(D\simeq\Pic^0_D\) is the elliptic curve with LMFDB label \href{https://www.lmfdb.org/EllipticCurve/Q/14/a/5}{14.a.5}, which has rank 0. 
Let $C$ be the curve \[C:y^2=g_4(x)(x^2 + 2x - 3)=x^6 + 4x^5 - 4x^4 - 14x^3 + 20x^2 - 4x - 3.\] Then Lemma~\ref{lem:quad_24} gives $2\in\dendegs(D\times C/k)$. Notice that this example is not covered by Proposition~\ref{isogenyfactor} since $C$ has a geometrically simple Jacobian. 
\end{example}

\begin{coroll} Consider the set-up of Proposition~\ref{lem:quad_24}. Then the corresponding elliptic surface \(\mathcal{E}=(E\times C)/\langle \iota \rangle\) contains infinitely many rational curves, and hence we have an equality $\dendegs(\mathcal{E}/k)=\nn$. 
    \end{coroll}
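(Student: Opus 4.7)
The strategy is analogous to the Kummer surface remark following Theorem~\ref{Ec w/o bad j-invariant}: I will construct an infinite family of rational curves in $\mathcal{E}$ by iterating the multiplication-by-$n$ maps on $E$ applied to the hyperelliptic curve $X = C \times_{\mathbb{P}^1} D$ from the proof of Proposition~\ref{lem:quad_24}. Since $E$ appears as an isogeny factor of $\Pic^0_X$ via the degree $2$ covering $p_D \colon X \to D$, the construction in the proof of Proposition~\ref{isogenyfactor} yields a non-constant $k$-morphism $\phi \colon X \to E$ compatible with the deck involutions, namely $\phi \circ \tau_D = \phi$ and $\phi \circ \tau_C = -\phi$, where $\tau_C, \tau_D$ denote the deck involutions of $X \to C$ and $X \to D$. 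For each positive integer $n$, define $\Psi_n \colon X \to E \times C$ by $\Psi_n(x) = ([n]\phi(x), p_C(x))$ and let $X_n \colonequals \Psi_n(X) \subset E \times C$. The compatibility relations imply $\iota \circ \Psi_n = \Psi_n \circ \sigma_X$, where $\sigma_X = \tau_C \tau_D$ is the hyperelliptic involution of $X$; hence $X_n$ is $\iota$-invariant, and the image $R_n \colonequals q(X_n) \subset \mathcal{E}$ (where $q \colon E \times C \to \mathcal{E}$ is the quotient) is birational over $k$ to $X/\sigma_X$.

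The key observation is that $X/\sigma_X$ admits an explicit $k$-description: setting $w = y_1 y_2/g_4(x)$, it is identified with the conic $w^2 = g_2(x)$, which has a $k$-rational point at infinity because $g_2$ is monic of degree $2$. Consequently each $R_n$ is isomorphic to $\mathbb{P}^1_k$. To see that the $R_n$ are pairwise distinct and that $\bigcup_n R_n$ is dense, fix a generic $R \in C(\bar{k})$ and $x_R \in p_C^{-1}(R)$ with $\phi(x_R)$ of infinite order in $E(\bar{k})$: the slice $X_n \cap (E \times \{R\})$ equals $\{\pm[n]\phi(x_R)\} \times \{R\}$, which depends on $n$, and the union $\bigcup_n X_n$ meets this slice in the infinite subgroup $\langle \phi(x_R) \rangle \times \{R\}$, Zariski dense in $E \times \{R\}$. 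Since only countably many $R$ have $\phi(x_R)$ torsion (as $\phi$ has finite fibers and $E(\bar{k})_{\mathrm{tors}}$ is countable), the union $\bigcup_n X_n$ is Zariski dense in $E \times C$, so $\bigcup_n R_n$ is Zariski dense in $\mathcal{E}$.

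Finally, apply Lemma~\ref{lem:covcurvs} to the family $\{R_n\}_{n \in \nn}$: for any $d \in \nn = \dendegs(\mathbb{P}^1/k)$, one has $d \in \dendegs(R_n/k)$ for all $n$, and density of the union gives $d \in \dendegs(\mathcal{E}/k)$, yielding $\nn \subseteq \dendegs(\mathcal{E}/k)$. The reverse containment is trivial, completing the proof. The main obstacle will be ensuring the $R_n$ are genuinely $\mathbb{P}^1$ over $k$ and not pointless conics---this is secured by the explicit conic description $w^2 = g_2(x)$ and its $k$-point at infinity. A secondary subtlety is constructing $\phi$ over $k$ when $D$ lacks a $k$-Weierstrass point (so that the Abel--Jacobi map $D \to E$ is not a priori anti-equivariant for $\tau_C$ without a translation correction); this is handled uniformly by the Albanese-factor projection in Proposition~\ref{isogenyfactor}, possibly after translating $\phi$ by a suitable $k$-rational element of $E$ to absorb the constant.
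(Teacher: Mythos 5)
Your construction is essentially the paper's: the rational curve you exhibit in \(\mathcal{E}\) is exactly the image of \(X/\sigma_X\) (the pointed conic \(y_3^2=g_2(x)\)), pushed into the fibration by an involution-anti-equivariant map to \(E\) and then moved around by the multiplication maps \([n]\), with density and the final conclusion via Lemma~\ref{lem:covcurvs}; the paper packages this as the diagram \(X\to D\times C\to E\times C\) with the degree \(4\) map \(f\colon D\to E\), \(P\mapsto \mathcal{O}_D(1)(-2P)\), and the observation that \([n]\times\mathrm{id}\) descends to \([n]\) on \(E\times C/\langle\iota\rangle\).

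The one step where your justification is genuinely shaky is the existence of \(\phi\colon X\to E\) with \(\phi\circ\sigma_X=-\phi\) over \(k\). Proposition~\ref{isogenyfactor} does not provide this equivariance: its map \(x\mapsto \mathrm{pr}_E\,\psi(\mathcal{O}_X(dx-Q))\) satisfies only \(\phi_0\circ\sigma_X=-\phi_0+c\) for some constant \(c\in E(k)\) (using that the hyperelliptic involution acts as \([-1]\) on \(\Pic^0_X\)), and your proposed fix of ``translating by a suitable \(k\)-rational element'' requires a point \(t\in E(k)\) with \(2t=-c\), which need not exist. The repair is easy --- either replace \(\phi_0\) by \([2]\phi_0-c\), or better, do what the paper does and write the anti-equivariant map down directly: \(f(P)=\mathcal{O}_D(1)(-2P)\) satisfies \(f\circ i_D=[-1]\circ f\) because \(P+i_D(P)\sim\mathcal{O}_D(1)\), and \(\phi=f\circ p_D\) then needs no correction and is visibly defined over \(k\) even when \(D\) has no \(k\)-point. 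A second, minor, slip: over a number field \(\bar{k}\)-points are countable, so ``only countably many \(R\) have \(\phi(x_R)\) torsion'' proves nothing; instead use that \(\phi\) is surjective on \(\bar{k}\)-points and \(E(\bar{k})\) contains non-torsion points (as in the density argument of Proposition~\ref{isogenyfactor}), or simply note that the curves \(X_n\) are pairwise distinct for infinitely many \(n\) and none is a fiber of the projection to \(C\). With these two fixes your argument is complete and coincides with the paper's.
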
 
    \begin{proof}
         Let \(\mathcal{O}_D(1)\) denote the pullback of \(\mathcal{O}_{\pp^1}(1)\) via the degree \(2\) \(x\)-coordinate map and let \(i_D\) denote the corresponding involution.  Let \(i_C\) denote the hyperelliptic involution on \(C\).  There is a degree \(4\) map \(f \colon D \to E\) given by \(P \mapsto \mathcal{O}_D(1)(- 2P)\) and the involution \(i_D\) descends to the involution \([-1]\) on \(E\).  Thus we have a commutative diagram
    \begin{center}
        \begin{tikzcd}
            X \arrow[dr] \arrow[dd, bend right=10] \arrow[rr, bend left=10]  && \pp^1 \arrow[d, hook] \\
            & D \times C \arrow[d] \arrow[r] & D \times C/ \langle i_D \times i_C\rangle \arrow[d] \\
            X' \arrow[r, hook] & E \times C \arrow[r] \arrow[d, "{[n] \times \mathrm{id}}"] & E \times C / \langle \iota \rangle \arrow[d, "{[n]}"] \\
            & E \times C \arrow[r] & E \times C / \langle \iota \rangle \\
        \end{tikzcd}
    \end{center}
    where the map \([n] \times \mathrm{id}\) on \(E \times C\) descends to \([n]\) on the elliptic fibration \(E \times C / \langle \iota \rangle\) since \([n]\) and \([-1]\) commute.
    The image of \(X\) in \(D \times C/ \langle i_D \times i_C\rangle\) is the \(\pp^1\) with coordinates \((x, y_3)\) as observed in the proof of Lemma~\ref{lem:quad_24}.  Since the projection from $X'$ onto $E$
    is nonconstant, its images under \([n] \times \mathrm{id}\) for \(n \in \mathbb{Z}\) form an infinite family.  These give rise to an infinite Zariski dense collection of rational curves (with dense rational points) on the quotient \(E \times C/\langle \iota \rangle\).
    \end{proof}

\subsection{Density of large degree points}

In this section we prove that all degrees above an explicit (relatively small) bound are in the density degree set of the product of an elliptic curve and a genus \(2\) curve of index \(1\).  We split into two cases based on whether the genus \(2\) curve has a rational point.

\begin{prop}\label{prop:index1genus2andE}
    Let $E$ be an elliptic curve of rank zero and let \(C\) be a nice genus \(2\) curve with $C(k)=\varnothing$, but $\ind(C/k)=1$. 
    Then
    \[
    \nn_{\geq 2}\setminus \{2,3,5,7,11,13\} \subseteq  \dendegs(E\times C/k)\subseteq \nn_{\geq 2}.
    \]
\end{prop}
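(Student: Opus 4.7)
The upper bound $\dendegs(E\times C/k)\subseteq \nn_{\geq 2}$ is immediate from \Cref{cor:intersection}, since $\rk E(k)=0$ forces $\dendegs(E/k)=\nn_{\geq 2}$ by \Cref{lemma:deltagenus2}. For the lower bound, note that the assumptions $C(k)=\varnothing$ and $\ind(C/k)=1$ together with the last statement of \Cref{lemma:deltagenus2} produce a closed point $P_3$ of $C$ of degree $3$, so $\dendegs(C/k)=\nn_{\geq 2}$. Since $g_C=2\leq 9$, \Cref{inclusionsprod} then yields $\nn_{\geq 2}\cdot\nn_{\geq 2}\subseteq \dendegs(E\times C/k)$; this product is precisely the set of composite integers of size at least $4$, and it accounts for every element of $\nn_{\geq 2}\setminus\{2,3,5,7,11,13\}$ except the primes at least $17$.

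To handle those remaining primes, I will apply \Cref{cor:pointedcurvexind1} with $d_C=3$ (realized by the map $f_C\colon C\to\P^1$ defined by the complete linear system $|P_3|$, which has $f_C^{-1}(\infty)=P_3$, and is coprime to $2d_E=4$) and $d_E=2$ (realized by $f_E\colon E\to\P^1$ defined by $|2\mathcal{O}|$ with $f_E^{-1}(\infty)=2\mathcal{O}$). The corollary's bound $N(C,E)=2((3-1)(2-1)+3g_E+2g_C)=18$ yields $\nn_{\geq 18}\subseteq \dendegs(E\times C/k)$.

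It remains to establish $17\in\dendegs(E\times C/k)$. Examine the genus-$9$ curves $X_\gamma\subseteq C\times E$ constructed in \Cref{cor:pointedcurvexind1} (equivalently, produced by \Cref{lem:fiber_product_construction} with the two maps above): for generic $\gamma$, the normalization $X_\gamma^\nu$ is smooth of genus $9$ and, by the proof of that corollary, has index $1$. To apply \Cref{curve_asymptotic}(2) and conclude $17=2g-1\in\dendegs(X_\gamma^\nu/k)$, I must verify that $X_\gamma^\nu$ is non-hyperelliptic with $X_\gamma^\nu(k)=\varnothing$. Emptiness is immediate because any rational point of $X_\gamma^\nu\subseteq C\times E$ would project to $C(k)=\varnothing$. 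For non-hyperellipticity, pair the degree-$2$ projection $\pi_C\colon X_\gamma^\nu\to C$ with a hypothetical hyperelliptic map $h\colon X_\gamma^\nu\to\P^1$; since $g_C=2$, neither map factors through the other, and the Castelnuovo--Severi inequality forces $g(X_\gamma^\nu)\leq (2-1)(2-1)+2\cdot 2=5$, contradicting the actual genus $9$. Since $\{X_\gamma\}$ covers $C\times E$ by \Cref{lem:fiber_product_construction}, \Cref{lem:covcurvs} delivers $17\in\dendegs(E\times C/k)$; combined with the composites and with $\nn_{\geq 18}$, this gives $\nn_{\geq 2}\setminus\{2,3,5,7,11,13\}$. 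The main subtlety is the tight genus bound: the $X_\gamma^\nu$ lie exactly at the boundary where only the refined $2g-1$ estimate of \Cref{curve_asymptotic}(2) reaches $17$, so the Castelnuovo--Severi verification of non-hyperellipticity is the critical input that prevents the approach from missing the sharp prime $17$.
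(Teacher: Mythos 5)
Your proposal is correct and follows essentially the same route as the paper: composites via Lemma~\ref{lemma:deltagenus2} and Proposition~\ref{inclusionsprod}, $\nn_{\geq 18}$ via Corollary~\ref{cor:pointedcurvexind1}, and the remaining prime $17$ via Lemma~\ref{curve_asymptotic}(2) applied to the genus-$9$ covering curves, which have no rational points since $C(k)=\varnothing$. The only (immaterial) difference is that you rule out hyperellipticity by pairing the hyperelliptic map with the degree-$2$ projection to $C$ in the Castelnuovo--Severi inequality, whereas the paper pairs it with the degree-$3$ map to $E$; both give a genus bound below $9$.
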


\begin{proof}
By Lemma~\ref{lemma:deltagenus2}, $3\in \dendegs(C/k)$, and more precisely it is in $\delta_{\mathbb{P}^1_k}(C/k)$.
 Therefore, by Proposition~\ref{inclusionsprod} we have that
    \[
        \nn_{\geq 2} \cdot  \nn_{\geq 2} = \nn_{\geq 2} \setminus \{ \textup{primes} \} \subseteq \dendegs(E\times C/k)\subseteq \mathbb{N}_{\geq 2}.
    \]  
    Applying  Corollary~\ref{cor:pointedcurvexind1} with the degree \(3\) map \(C \to \pp^1\) and the degree \(2\) map \(E \to \pp^1\) yields $\nn_{\geq 18}\subset \dendegs(E\times C/k)$. 
    As $C(k) = \emptyset$, the same is true of the genus 9 covering curves constructed in \Cref{cor:pointedcurvexind1}, and since they have degree 3 maps to elliptic curves, they cannot be hyperelliptic by the Castelnuovo--Severi inequality. \Cref{curve_asymptotic} then shows that 17 is in the density set of this family.
\end{proof}

\begin{thm}\label{thm:nngeq9indeltaCxE}
    Let \(E\) be an elliptic curve of rank \(0\) and let \(C\) be a nice genus \(2\) curve with \(C(k) \neq \emptyset\).
    Then $\nn_{\geq 2}\setminus \{2,3,5,7\} \subseteq \dendegs(E\times C/k)$.
    If \(C\) has a $k$-rational Weierstrass point, then $7\in \dendegs(E\times C/k)$ as well.
\end{thm}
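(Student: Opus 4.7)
The plan is to reduce density on $E \times C$ to density on a family of covering curves via \Cref{lem:covcurvs}, using the fiber product construction from \Cref{lem:fiber_product_construction}. The upper bound is immediate: $\dendegs(E \times C/k) \subseteq \dendegs(E/k) \cap \dendegs(C/k) = \mathbb{N}_{\geq 2}$ by \Cref{cor:intersection} and \Cref{lemma:deltagenus2}. Since $g_C = 2 \leq 9$, combining \Cref{inclusionsprod} applied to $\dendegs(C/k) \cdot \dendegs(E/k) \supseteq (\{2\} \cup \mathbb{N}_{\geq 4}) \cdot \mathbb{N}_{\geq 2}$ with \Cref{coroll:inclusionofN(C,D)} (which yields $\mathbb{N}_{\geq 14}$, as $N(E,C)=14$) handles every integer in $\mathbb{N}_{\geq 2} \setminus \{2, 3, 5, 7, 9, 11, 13\}$. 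It thus remains to produce $9, 11, 13$ in general, plus $7$ additionally when $C$ has a rational Weierstrass point.

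For the general case, I may assume $C$ has no rational Weierstrass point (otherwise the Weierstrass paragraph below handles $9, 11, 13$ as well). Since $C(k) \neq \emptyset$, any $P_0 \in C(k)$ is non-Weierstrass, and $\iota_C(P_0) \in C(k)$ is a distinct second $k$-point since $\iota_C$ is defined over $k$. Take $f_E = |2\mathcal{O}_E|$ with $f_E(\mathcal{O}_E) = \infty$ and the hyperelliptic $f_C$ with $f_C(P_0) = \infty$, so that $f_C^{-1}(\infty) = P_0 + \iota_C(P_0)$ is unramified. Then \Cref{lem:fiber_product_construction} produces a curve $X_\gamma$ smooth at both $(\mathcal{O}_E, P_0)$ and $(\mathcal{O}_E, \iota_C(P_0))$, so $X_\gamma^\nu = X_\gamma$ has geometric genus $7$ with at least two $k$-rational points. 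A Castelnuovo--Severi argument, using the degree-$2$ maps of $X_\gamma^\nu$ to both $E$ and $C$, shows $X_\gamma^\nu$ is not hyperelliptic. Then \Cref{curve_asymptotic}(2) yields $2g - 1 = 13 \in \dendegs(X_\gamma^\nu/k)$, and \Cref{cs_asymptotic} applied to $\phi \colon X_\gamma^\nu \to E$ is available for $d \in \{1, 2, 3\}$ via \Cref{rmk:applicationofLemma2.5} (the fiber of $\phi$ over $\mathcal{O}_E$ contains two distinct $k$-points), yielding $12 - d \in \{9, 10, 11\}$. \Cref{lem:covcurvs} then places $\{9, 11, 13\}$ in $\dendegs(E \times C/k)$.

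For the Weierstrass refinement, let $W \in C(k)$ be a rational Weierstrass point and choose $f_C(W) = \infty$. Now both $f_E$ at $\mathcal{O}_E$ and $f_C$ at $W$ ramify with index $2$, so $X_\gamma$ acquires a node at $(\mathcal{O}_E, W)$. By \Cref{lem:split_over_node} applied with $K = k$ and $s = \gcd(2,2) = 2$ — the factorization of the local equation into the branches $r_C z_C \pm r_D z_D$ is defined over $k$ since $-1 \in k^\times$ — for infinitely many $\gamma$ the normalization $X_\gamma^\nu$ has two distinct $k$-rational points above this node. The single nodal singularity drops the genus by one, so $X_\gamma^\nu$ has geometric genus $6$ and is again non-hyperelliptic by Castelnuovo--Severi. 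Applying \Cref{cs_asymptotic} to $\phi \colon X_\gamma^\nu \to E$ gives the admissible range $d \in \{1, 2, 3\}$ with values $2g - 2 - d = 10 - d \in \{7, 8, 9\}$; crucially $d = 3$ delivers $7$. The values $11 = 2g - 1$ and $13 = 2g + 1$ follow respectively from \Cref{curve_asymptotic}(2) (using the two rational points above the node) and from \Cref{lem:Sd_points} applied to $|13 P_0|$, which is very ample since $\deg \geq 2g + 1$. A final invocation of \Cref{lem:covcurvs} completes the argument.

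The main obstacle is the value $7$: no smooth curve produced directly by the fiber product can have genus smaller than $7$, and on a smooth genus-$7$ curve the inequalities in \Cref{cs_asymptotic} just miss $7$. The key idea is therefore to deliberately engineer a ramified nodal intersection by pairing the Weierstrass point of $C$ with the $2$-torsion origin of $E$, creating a nodal $X_\gamma$ whose normalization has genus $6$; at that genus, \Cref{cs_asymptotic} becomes just barely available, and the two rational points that the split node supplies provide exactly the remaining input needed.
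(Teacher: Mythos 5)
Your proposal is correct and follows essentially the same route as the paper: the same reduction via \Cref{cor:intersection}, \Cref{inclusionsprod} and \Cref{coroll:inclusionofN(C,D)} to the values $9,11,13$ (plus $7$), the same case split on a rational Weierstrass point, the same degree-$2\times2$ fiber-product families (smooth genus $7$ with two rational points in the non-Weierstrass case, nodal with genus-$6$ normalization and two rational points via \Cref{lem:split_over_node} in the Weierstrass case), Castelnuovo--Severi to rule out hyperellipticity, and \Cref{curve_asymptotic}/\Cref{cs_asymptotic} applied to the double cover of $E$. The only blemishes are cosmetic: the parenthetical ``since $-1\in k^\times$'' is not the actual reason the node splits over $k$ (that is exactly what the choice of $\gamma$ in \Cref{lem:split_over_node} arranges), ``$|13P_0|$'' should be a multiple of one of the rational points $\infty_\pm$ (or one can simply cite the $2g$ bound, as the paper does), and generic smoothness of the family away from the marked points is asserted rather than argued, at the same level of detail as the paper.
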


\begin{proof}
    Since both \(E(k) \neq \emptyset\) and \(C(k) \neq \emptyset\), the product \(E \times C\) has a $k$-rational point as well.
    By Corollary~\ref{coroll:inclusionofN(C,D)}, any integer $d$ greater or equal than 
    \[
        N(E,C)=2(1+2-2+4-2+4)=14
    \]
    is in $\dendegs(E\times C/k)$.
    By Proposition~\ref{inclusionsprod} we have that
    \[
        \nn_{\geq 2} \cdot ( \{2\} \sqcup \nn_{\geq 4}) = \nn_{\geq 2} \setminus \{9, \textup{primes} \} \subseteq \dendegs(C\times E/k).
    \]
    Therefore it remains to show that $9$, $11$ and $13$ are in $\dendegs(E\times C/k)$, and $7 \in \dendegs(E\times C/k)$ if \(C\) has a rational Weierstrass point.

    We employ the strategy of Lemma~\ref{lem:fiber_product_construction} to construct a family of low genus curves covering \(E \times C\).
    We start with the Weierstrass case. Let $C$ be given by $y_1^2 = x_1^5 + a_3 x_1^3 + a_2 x_1^2 + a_1 x_1 + a_0$, and $E$ be given by $y_2^2 = x_2^3 + b_1 x_2 + b_0$. Consider the family of curves $X_t$ given by the fiber products of $C$ and $E$ over $x_2 = t^2 x_1$. As in  Lemmas~\ref{lem:fiber_product_construction} and \ref{lem:split_over_node}, $X_t^\nu$ is a curve of genus 6 with 2 rational points $\infty_{\pm}$ lying above the point at infinity on both \(C\) and \(E\). 
    This improved genus bound gives $13 \in \dendegs(E \times C / k)$.
    As $X_t^\nu$ is a double cover of an elliptic curve, it cannot also be hyperelliptic by the Castelnuovo--Severi inequality. Lemma~\ref{curve_asymptotic} implies that $9, 11 \in \dendegs(X_t^\nu / k)$, and so $9, 11 \in \dendegs(E \times C / k)$. 
    For 7, we use \Cref{cs_asymptotic}, and the remark immediately following it. As $X_t^{\nu}$ is a double cover of a genus 1 curve and it has a rational point which is not a branch point for this cover, for all $d < 6 - 2\times1 = 4$, $2\times6 - 2 - d$ is in $\dendegs(X_t^{\nu} / k)$. In particular, $7 \in \dendegs(X_t^{\nu} / k)$ and so in $\dendegs(E \times C / k)$.
    
    Assume now that the $k$-point is not Weierstrass. 
    As before, write $C$ as $y_1^2 = f(x_1)$, where $f$ is a monic degree 6 polynomial, and $E$ as $y_2^2 = g(x_2)$, where $g$ is a monic degree 3 polynomial. Let $X_t$ be the family of curves given by $x_2 = t^2 x_1$. These are now geometric genus 7 curves with 2 rational points. Similarly, they are not hyperelliptic, and so $13 \in \dendegs(X_t / k)$, and thus $13 \in \dendegs(C \times E / k)$. 

    Applying \Cref{cs_asymptotic} shows that, for all $d < 7 - 2 = 5$, we have $12 - d \in \dendegs(X_t^\nu / k)$. This gives 9 and 11.
\end{proof}

\subsection{Cubic points}

Under the Parity Conjecture, there are also examples with $3 \in \dendegs(E \times C/k)$.  We thank Samir Siksek for giving the idea behind this argument.

\begin{conj}[The Parity Conjecture]
Let \(E\) be an elliptic curve defined over a number field \(k\).  Then the global root number satisfies:
\[w(E/k) = (-1)^{\rk E(k)}.\]
\end{conj}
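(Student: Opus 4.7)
The displayed statement is the full Parity Conjecture for elliptic curves over a number field, one of the longstanding open problems in arithmetic geometry. I cannot honestly claim to have a proof; what I can do is outline the route along which the known partial progress is organized, and explain the main obstacle preventing a complete argument.

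The strongest general result in this direction is the \emph{$p$-parity conjecture}: for each prime $p$, the parity of the $\mathbb{Z}_p$-corank of the $p^\infty$-Selmer group $\mathrm{Sel}_{p^\infty}(E/k)$ agrees with the sign of the global root number $w(E/k)$. This was established in wide generality by T.~and V.~Dokchitser, building on work of Nekov\'a\v r, Kim, and others. The plan of proof is a \emph{regulator constants} argument carried out by an induction over solvable Galois extensions. One fixes a $\mathbb{Z}$-linear relation among Artin characters of a dihedral (or more general solvable) group $G$, transports it to a relation among Selmer coranks over the intermediate fields of a $G$-Galois extension on one side, and to a product of local root numbers on the other, and then checks that both sides are controlled by the same regulator-constant recipe. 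Rohrlich-style formulas compute the local root numbers; local Tamagawa factors entering the Selmer rank formula control the Selmer side. An appeal to Artin's theorem, which expresses the trivial character as a virtual sum of inductions from cyclic subgroups, reduces the general case to base cases that can be checked directly.

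The obstruction to upgrading this into a proof of the full Parity Conjecture is that $p$-parity controls the Selmer corank rather than the Mordell--Weil rank $\rk E(k)$; the two differ by the parity of the $p$-primary part of the Tate--Shafarevich group, whose finiteness is itself a major open problem. Without such an input, there is no known mechanism to read the parity of $\rk E(k)$ off purely arithmetic data. The most realistic intermediate target consistent with present technology, and the most useful one for the application to $3 \in \dendegs(E \times C/k)$ envisioned in the paper, would be to verify the conjecture only for the specific families of quadratic or cubic twists appearing in that construction, where $p$-parity together with explicit descent often yields an unconditional conclusion in that restricted family. A general proof, however, is out of reach of the methods available to me.
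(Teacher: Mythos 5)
You are right not to attempt a proof: the statement is the Parity Conjecture, which the paper states as a \emph{conjecture} and never proves; it is invoked only as a hypothesis (in Proposition~\ref{prop:parity} and Theorem~\ref{thm:parity_3}), so there is no proof in the paper to compare against. Your acknowledgment that it is open, and your summary of the known $p$-parity results and the Tate--Shafarevich obstruction to upgrading them, is accurate and consistent with the paper's treatment, which relies only on the conjecture together with the root-number formula of Dokchitser--Kellock cited in \eqref{eq:Sven'sthm}.
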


The Parity Conjecture implies that if \(w(E/k) = -1\), then the rank of \(E\) is odd, and hence positive.  Since the global root number is a product of local root numbers, it can be calculated more easily.  In particular, if \(E/k\) has semistable reduction, then
\begin{equation}\label{eq:Sven'sthm}
w(E/k) = (-1)^{m_k + u_k},
\end{equation}
where \(m_k\) is the number of places of split multiplicative reduction of \(E/k\) and \(u_k\) is the number of infinite places of \(k\)
\cite[Corollary 2.5]{DokchitserKellock}.

\begin{thm}\label{thm:parity_3}
    Assume the Parity Conjecture. 
    Let $E/\mathbb{Q}$ be an elliptic curve with split 
    semi-stable reduction.  Suppose that $C$ is a nice genus \(2\) curve with integral model $\mathcal{C}$ with good reduction at the bad primes of $E$. 
    Moreover, assume that 
    \begin{enumerate}
    \item for each finite prime $p$ of bad reduction of $E$, 
    \begin{center}
        $|\mathcal{C}_p(\mathbb{F}_{p})| < p + 1$ or $|\mathcal{C}_p(\mathbb{F}_{p})| \geq p + 10$,
    \end{center}
    \item there exists a rational map $f : \mathcal{C} \dashrightarrow \mathbb{P}^1_{\mathbb{Z}}$ over $\mathbb{Z}$, of degree 3, such that it is defined at each bad prime of $E$ and at least one $\mathbb{R}$-fiber is not totally split.
    \end{enumerate}
    Then $3 \in \dendegs(E \times C/\mathbb{Q})$.
\end{thm}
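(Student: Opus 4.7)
The plan is to use the degree-$3$ map $f$ to parametrize cubic points $P_t \in C$ with residue field $L_t$ by $t \in \mathbb{P}^1(\mathbb{Q})$, use the Parity Conjecture to find a dense set $T \subset \mathbb{P}^1(\mathbb{Q})$ for which $E(L_t)$ has positive rank, and assemble these into a dense set of cubic closed points on $E \times C$.

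Since $E$ has split multiplicative reduction at each bad prime $p$---a property preserved under any base change---the semistable root number formula \eqref{eq:Sven'sthm} yields
\[
w(E/L_t) \;=\; (-1)^{m_{L_t}+u_{L_t}} \;=\; -\prod_{p\text{ bad}} (-1)^{r_p(L_t)},
\]
where the outer sign arises because $u_{L_t} \in \{1,3\}$ is odd for any cubic field. Hence $w(E/L_t) = -1$ if and only if $\sum_{p\text{ bad}} r_p(L_t)$ is even. For $t \in \mathbb{P}^1(\mathbb{Q})$ whose mod-$p$ reduction $\bar t$ avoids the branch locus of $f_p$, the splitting type of $p$ in $L_t$ matches the $\mathrm{Frob}_p$-cycle structure on $f_p^{-1}(\bar t)$; writing $a, b, c$ for the number of $\mathbb{F}_p$-points of $\mathbb{P}^1$ whose fiber of $f_p$ splits as $1{+}1{+}1$, $1{+}2$, or $3$, one has $a+b+c = p+1$, $3a+b = |\mathcal{C}_p(\mathbb{F}_p)|$, and the parities of $r_p$ on the three types are odd, even, odd respectively. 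The arithmetic hypothesis $|\mathcal{C}_p(\mathbb{F}_p)| \notin \{p+1,\dots,p+9\}$---together with the genus-$2$ Weil bound $|N_p-(p+1)| \leq 4\sqrt{p}$---is designed precisely to ensure the simultaneous existence at each bad prime of mod-$p$ fibers of the required $r_p$-parities. Using $p$-adic approximation at all bad primes simultaneously, together with the real-fiber hypothesis on $f$ (which secures an open real neighborhood of $t$'s for which $L_t$ is actually a cubic field), one produces a Zariski dense subset $T \subset \mathbb{P}^1(\mathbb{Q})$ with $w(E/L_t) = -1$. By the Parity Conjecture, $\rk E(L_t) \geq 1$ for every $t \in T$.

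To conclude, for each $t \in T$ pick a nontorsion $Q_t \in E(L_t)$. Since $L_t$ has no proper intermediate subfield, $k(Q_t) = L_t$, so the pair $(Q_t, P_t)$ defines a closed point of $E \times C$ with residue field exactly $L_t$, of degree $3$ over $\mathbb{Q}$. Varying $Q_t$ through integer multiples of a single nontorsion generator produces infinitely many cubic closed points on the fiber curve $\pi_C^{-1}(P_t) \subset E \times C$, Zariski dense in it; and varying $t \in T$ makes the $P_t$'s Zariski dense in $C$. A routine fibration argument---or \Cref{lem:covcurvs} applied after passing to a suitable form of $E_{L_t}$---then shows that the resulting cubic points are Zariski dense in $E \times C$, yielding $3 \in \dendegs(E \times C/\mathbb{Q})$. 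The principal obstacle is the numerical parity analysis at the bad primes: to extract, from the hypothesis $|\mathcal{C}_p(\mathbb{F}_p)| \notin \{p+1,\dots,p+9\}$ and the Weil bound, the simultaneous existence of mod-$p$ fibers of both parities, which is what underpins the $p$-adic prescription of $r_p(L_t)$ used above.
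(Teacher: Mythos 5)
Your overall architecture (cubic points $P_t$ from the degree-$3$ map, congruence conditions at the bad primes, Parity Conjecture to force $\rk E(L_t)>0$, primitivity of cubic fields to keep the point $(Q_t,P_t)$ of degree $3$) is the right one, but the central parity computation is wrong. A cubic field has signature $(3,0)$ or $(1,1)$, so its number of infinite places is $3$ or $2$ --- not ``$\in\{1,3\}$, odd for any cubic field.'' In the situation the construction actually produces (choosing $t$ near the non-totally-split real fiber, so $L_t$ has one real and one complex place), $u_{L_t}=2$ is \emph{even}, and \eqref{eq:Sven'sthm} gives $w(E/L_t)=(-1)^{\sum_p r_p(L_t)}$; thus you need $\sum_p r_p(L_t)$ to be \emph{odd}, the opposite of the parity you aim for. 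This is precisely the role of hypothesis (2): it is there to force $u_{L_t}=2$, not (as you say) to make $L_t$ ``actually a cubic field'' --- irreducibility is essentially free, since a reducible degree-$3$ fiber contains a point of $C(\mathbb{Q})$, of which there are only finitely many by Faltings.

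The second gap is your claim that hypothesis (1) supplies, at each bad prime, mod-$p$ fibers of \emph{both} parities of $r_p$. It does not: if $|\mathcal{C}_p(\mathbb{F}_p)|<p+1$ the map cannot be surjective on $\mathbb{F}_p$-points, giving an \emph{inert} fiber ($r_p=1$); if $|\mathcal{C}_p(\mathbb{F}_p)|\ge p+10$, a count using the at most $8$ rational branch points gives a \emph{totally split} unramified fiber ($r_p=3$). Both available types are odd; nothing in the hypothesis produces a $1{+}2$ fiber ($r_p=2$), so you cannot prescribe an even total $\sum_p r_p$ --- nor can you fall back on totally real $L_t$ (where your sign would be right), since no totally split real fiber is assumed to exist. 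The correct bookkeeping, as in the paper, is: with $u_{L_t}=2$ and every $r_p$ odd, $w(E/L_t)=-1$ exactly when the number of bad primes is odd; the case of an even number of bad primes must be treated separately, where already $w(E/\mathbb{Q})=-1$, so $\rk E(\mathbb{Q})>0$ and one concludes directly from $3\in\dendegs(C/\mathbb{Q})$. Your final assembly step (nontorsion $Q_t\in E(L_t)$, primitivity of $L_t$, density over varying $t$) is fine and agrees with the paper.
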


\begin{proof}
If the number of primes of bad reduction is even, then, since \(u_{\mathbb{Q}} = 1\), we have \(w(E/\mathbb{Q}) = -1\) and so \(\rk E(\mathbb{Q}) > 0\) and the result follows since \(3 \in \dendegs(C/\mathbb{Q})\). 
We therefore assume there are an odd number of such primes of bad reduction. 

Our approach will be to find points $t \in \mathbb{P}^1_\mathbb{Q}$ such that the fiber $f^{-1}(t)$ is a degree \(3\) point with residue field \(K\) and $E(K)$ has positive rank.

Let \(p\) be a prime of bad reduction of $E$.  By assumption, either \[|\mathcal{C}_p(\mathbb{F}_p)| < |\mathbb{P}^1(\mathbb{F}_p)|, \qquad \text{or} \qquad |\mathcal{C}_p(\mathbb{F}_p) > |\mathbb{P}^1(\mathbb{F}_p)| + 8 .\] 
In the first case, $f|_{\mathbb{F}_p}$ cannot be surjective on $\mathbb{F}_p$ rational points, and so there exists \(t_p \in \P^1(\mathbb{F}_p)\), such that $f^{-1}(t_p)$ is a degree 3 point over $\mathbb{F}_p$. 
In the second case, since $f$ has at most 8 \(\mathbb{F}_p\)-rational branch points by the Riemann-Hurwitz formula, there exists \(t_p \in \P^1(\mathbb{F}_p)\), outside of the branch locus, such that \(\#f^{-1}(t_p)(\mathbb{F}_p) \geq 2\). Since $f$ has degree 3, this implies \(\#f^{-1}(t_p)(\mathbb{F}_p) = 3\). In particular, if $t \in \mathbb{Q}$ reduces to $t_p$ modulo $p$, then in the first case, $p$ will be totally inert in $K$, and in the second, $p$ will be totally split at $K$.

We also have a fiber, $t_{\mathbb{R}}$, such that the fiber of $f$ above $t_{\mathbb{R}}$ consists of a real point and a complex-conjugate pair. For $t$ sufficiently near $t_{\mathbb{R}}$, this will imply $K$ has a real and complex embedding.
By weak approximation, there exist infinitely many $\mathbb{Q}$-rational points in $\mathbb{P}^1_\mathbb{Q}$ which restrict to $t_p$ for each bad prime $p$, and sufficiently close to $t_{\mathbb{R}}$.
For each such $t$, $E$ has positive rank over $K$ by \eqref{eq:Sven'sthm} and the Parity Conjecture, since $E / k$ has an odd number of finite places of bad reduction, and an even number of infinite places. In particular, $f^{-1}(t) \times E \subset C \times E$ has dense $K$-rational points. Repeating this for the infinitely many suitable $t$ gives the density of degree 3 points.
\end{proof}

\begin{example}
    Therefore, conjecturally, $3 \in \dendegs(X_1(11) \times X_1(18)/k)$.
    The modular curve $X_1(18)$ has a degree 3 morphism to $X_1(9)$ from the modular structure, and $X_1(9)$ is genus 0. The only bad prime of $X_1(11)$ is $11$. From an explicit model, modulo 11, $X_1(18)$ only has 9 points. The existence of a non-split real fiber comes from real elliptic curves with only one real component as there is a unique two-torsion point on that component. 
    
    We observe that this implies the existence of infinitely many cubic fields over which there are elliptic curves with an 11-torsion point, and elliptic curves with an 18-torsion point. These will not, in general, be the same elliptic curves, as $X_1(198)$ does not have infinitely many cubic points \cite[Theorem 3.1]{DKS04}. 
    Nor can the elliptic curves with 11-torsion be the known examples over $\mathbb{Q}$, since these points correspond to a closed locus in $X_1(11) \times X_1(18)$.
\end{example}

\section{Products of genus \(2\) curves} 
In this section, we obtain results for a product $C\times D$ of two nice genus $2$ curves over $k$. We start by summarizing the upper and lower bounds for $\dendegs(C\times D/k)$ which follow from the results of Sections~\ref{bounds} and ~\ref{sec: asymptoticresults}.

First consider the case when $\ind(C/k)=\ind(D/k)=1$. Recall from Lemma~\ref{lemma:deltagenus2} that for a genus $2$ curve $C$ with index $1$, we have that $3\in\dendegs(C/k)$ if and only if there exists $P\in C(\overline{k})$ of degree $3$. Moreover, if $3\not\in\dendegs(C/k)$, then $C$ necessarily has a rational point. When both curves have a degree $3$ point, Proposition~\ref{inclusionsprod} gives $\dendegs(C\times D/k)\subseteq\nn_{\geq 2}$. Otherwise, $\dendegs(C\times D/k)\subseteq\nn_{\geq 4}\cup\{2\}$. 

\begin{small}

\begingroup
\renewcommand*{\arraystretch}{1.1}
\begin{table}[h!]
\begin{tabular}{| c | c | c |c|} 
\hline
& 
$C(k)\neq\emptyset$, $3\in\delta_C $  
& 
$C(k)\neq\emptyset$, $3\not\in\delta_C $  
& 
$C(k)=\emptyset$, $3\in \delta_C$   
\\
\hline
$D(k)\neq\emptyset$, $3\in\delta_D$  
& 
$\nn\setminus \{\text{primes}\leq 11\}$ 
& 
$\nn\setminus\{9, \text{primes}\leq 11\}$ 
& 
$\nn\setminus \{\text{primes}\leq 17\}$ 
\\
\hline
$D(k)\neq\emptyset$, $3\not\in\delta_D$
& $\nn\setminus\{9,\text{primes}\leq 11\}$ 
& $\nn\setminus\{6, 9, \text{primes}\leq 11\}$ 
& $\nn\setminus\{9, \text{primes}\leq 17\}$ 
\\
\hline 
$D(k)=\emptyset$, $3\in \delta_D$
&  $\nn\setminus \{\text{primes}\leq 17\}$ 
& $\nn\setminus\{9,\text{primes}\leq 17\}$ 
& $\nn\setminus\{\text{primes}\leq 67\}$
\\
\hline
\end{tabular}
\caption{Table 1. Lower bound for $\dendegs(C\times D/k)$ when $\textup{ind}(C/k) = \textup{ind}(D/k)=1$}\label{Table1}
\end{table}
\endgroup
\end{small}

\begin{prop}\label{prop:summary1}
    Let $C, D$ be two nice curves of genus $2$ with $\ind(C/k)=\ind(D/k)=1$. 
    Then, according to the arithmetic of $C$ and $D$, the density set $\dendegs(C\times D/k)$ contains the sets listed in Table~\ref{Table1}. 
\end{prop}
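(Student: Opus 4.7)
The plan is to prove the lower bounds cell-by-cell in Table~\ref{Table1}, combining three ingredients already developed in the paper: the multiplicative containment of \Cref{inclusionsprod}, the asymptotic coverings of \Cref{sec: asymptoticresults}, and a careful analysis of the fiber-product curves of \Cref{lem:fiber_product_construction} to treat the intermediate primes.

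First, I would use \Cref{inclusionsprod} together with the precise description of $\delta_C$ and $\delta_D$ from \Cref{lemma:deltagenus2}: when $\ind(C/k) = 1$ and $C$ admits a degree $3$ point, $\delta_C = \nn_{\geq 2}$; otherwise $\delta_C = \{2\} \sqcup \nn_{\geq 4}$ (and in that case $C(k) \neq \emptyset$ automatically). Thus $\delta_C \cdot \delta_D$ realizes every composite integer $\geq 4$ in the best case, loses $9$ as soon as one of the two curves lacks a degree $3$ point, and additionally loses $6$ when both do --- this accounts for the $\{6,9\}$ exceptions appearing in the middle cells of Table~\ref{Table1}. I would then invoke the asymptotic results: \Cref{coroll:inclusionofN(C,D)} applied with $\gon(C) = \gon(D) = 2$ yields $\nn_{\geq 18} \subseteq \dendegs(C \times D/k)$ when both curves have a $k$-point; \Cref{cor:pointedcurvexind1} (with $d_C = 3$, $d_D = 2$, which are coprime to $2 d_D (g_C - 1) = 4$) handles the mixed cases; and \Cref{thm:generalN} together with the effective-index bound of \Cref{lem:bound_eff_index_1} gives an analogue in the doubly unpointed case, at the cost of a larger starting value of $N$.

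For each cell, the two steps above leave a finite list of gap primes --- $\{13, 17\}$ in the ``best'' cells and longer lists in the unpointed ones --- which I would treat by a more refined covering construction. I would build the fiber product $X_\gamma = C \times_{\P^1} D$ using the canonical degree-$2$ maps on each factor, arranging $\gamma$ via \Cref{lem:split_over_node} so that the normalization $X_\gamma^\nu$ has at least two $k$-rational points and index dividing $\ind(C \times D/k)$. Since $X_\gamma^\nu$ has arithmetic genus $9$ and two independent degree-$2$ projections (to $C$ and to $D$), the Castelnuovo--Severi inequality forces it to be non-hyperelliptic. Then \Cref{curve_asymptotic} yields $15, 17 \in \dendegs(X_\gamma^\nu/k)$, and \Cref{cs_asymptotic} applied to the projection $X_\gamma^\nu \to D$ with a generic effective divisor of degree $d \in \{1, 2, 3, 4\}$ yields $\{12, 13, 14, 15\} \subseteq \dendegs(X_\gamma^\nu/k)$, in particular covering the prime $13$. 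Because $\bigcup_\gamma X_\gamma^\nu$ is Zariski dense in $C \times D$, \Cref{lem:covcurvs} transfers these inclusions to $\dendegs(C \times D/k)$.

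The main obstacle will lie in the cells where one or both curves are unpointed: the fiber product must be engineered using higher-degree maps and zero-cycles of degree exactly $\ind(C \times D/k)$ so that the normalization retains the correct index when \Cref{lem:split_over_node} is applied. This forces larger genus on the covering curves and therefore a longer list of unresolved primes, which is precisely what produces the jump to ``primes $\leq 17$'' and ``primes $\leq 67$'' in the last row and column of Table~\ref{Table1}. The bookkeeping required to carry out this construction uniformly across the nine cells --- and in particular to verify the non-effectivity hypotheses of \Cref{cs_asymptotic} and the coprimality conditions in the proof of \Cref{thm:generalN} case by case --- is where the bulk of the technical work will lie.
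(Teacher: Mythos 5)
Your outline follows the paper's strategy (composites via \Cref{inclusionsprod} and \Cref{lemma:deltagenus2}, asymptotics via the fiber-product coverings, intermediate primes via \Cref{curve_asymptotic} and \Cref{cs_asymptotic}), but as written it has genuine gaps in five of the nine cells. In the mixed cells ($C(k)=\emptyset$, $D(k)\neq\emptyset$), \Cref{cor:pointedcurvexind1} with $(d_C,d_D)=(3,2)$ gives only $\nn_{\geq 24}$, while the table also asserts $19$ and $23$; the paper obtains these from that same genus-$12$ family: its members have no $k$-points (so \Cref{curve_asymptotic}(2) gives $23=2\cdot12-1$) and carry a degree-$3$ divisor over the rational point of $D$ which, fed into \Cref{cs_asymptotic} for the degree-$2$ projection to $C$, gives $19$. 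Your ``refined covering'' paragraph only constructs the degree-$2\times2$ fiber product, which gives no index control when $C(k)=\emptyset$, so it does not touch these primes. The doubly unpointed cell is worse: \Cref{thm:generalN} with $e\leq 13$ yields only roughly $\nn_{\geq 512}$, whereas the cell requires every prime $\geq 71$; the paper explicitly discards that bound as far from sharp and instead builds a genus-$36$ family from degree-$5$ maps (assembled from the rational degree-$2$ and degree-$3$ divisors on each curve), to which \Cref{curve_asymptotic} applies to give $71$ and $\nn_{\geq 72}$. Saying that higher-degree maps ``produce the jump to primes $\leq 17$ and $\leq 67$'' reads those numbers off Table~\ref{Table1} rather than deriving them; the construction and the genus computation are exactly the content that is missing.

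There is also a genus bookkeeping error in the both-pointed cells. \Cref{curve_asymptotic} and \Cref{cs_asymptotic} require the exact genus of the smooth model, not the arithmetic genus of $X_\gamma$. If the only rational points available on $C$ and $D$ are Weierstrass points, the hyperelliptic fiber product necessarily has a node over the interpolated point, so $X_\gamma^\nu$ has geometric genus at most $8$, not $9$; and in either case one must check that the generic member has no further singularities (the paper does this by a translation in the Weierstrass case and Bertini in the other) so that the genus is pinned down exactly. Your conclusions do survive in the nodal case --- $13=2\cdot8-3$ from \Cref{curve_asymptotic}(1) and $17\geq 2\cdot 8$ from index $1$ --- but not via the computation you wrote, which claims $15,17$ and $12$--$15$ with $g=9$; this is precisely why the paper splits on whether both curves have rational Weierstrass points. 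Finally, \Cref{cs_asymptotic} cannot be invoked for a ``generic effective divisor'': one must exhibit a specific $Z_1$ (the paper uses multiples of a rational point at infinity that is not a branch point of the chosen degree-$2$ projection, cf.\ \Cref{rmk:applicationofLemma2.5}) and verify the non-effectivity hypothesis.
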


\begin{proof}
First, consider the case when $C(k), D(k)\neq\emptyset$.
Corollary~\ref{coroll:inclusionofN(C,D)} constructs a family of geometric genus at most 9 curves in $C \times D$, which  yields an inclusion 
\[\nn_{\geq 18}\subseteq \dendegs(C\times D/k).\]
We remark that by the Castelnuovo--Severi inequality, these curves cannot be hyperelliptic as they double cover genus 2 curves.
We consider separately the cases where $C$ and $D$ have both rational Weierstrass points.
\begin{enumerate}
    \item[(i)] If both $C$ and $D$ have rational Weierstrass points, the covering family generically has a node, and the geometric genus is at most 8. 
    By applying a translation to one of these curves, we may assume that the covering curves are generically smooth away from this identified node, and so have genus exactly 8. By \Cref{lem:split_over_node}, for infinitely many members of this family, the normalization has two rational points over this node, $\infty_{\pm}$. In particular, the degree 2 covering map to $C$ is not branched at these points. 
    
    Since these covering curves are not hyperelliptic, \Cref{curve_asymptotic} implies that $\nn_{\geq 12}$ is contained within $\dendegs(C \times D / k)$.
    Applying \Cref{cs_asymptotic} and \Cref{rmk:applicationofLemma2.5} for $Z_1= d\infty_+$, for all $d < 8 - 2\times2 = 4$, the integer $2\times 8 - 2 - d = 14 - d$ is in the density degree set. Therefore, we also have $11\in \delta(C\times D/k)$.

    \item[(ii)] We now assume that either $C$ or $D$ has no rational Weierstrass point. Since the covering family has no fixed locus, by Bertini's Theorem, the generic element is smooth (of genus 9), and has at least two rational points at infinity. 
    For at least one of the covering maps to $C$ and $D$ infinity is not a branch point then, and there as before \Cref{cs_asymptotic} gives that, for all $d < 5$, the integer $16 - d$ is in the density degree set.
\end{enumerate}

We now divide into subcases according to whether $C$ or $D$ have degree 3 points.
\begin{enumerate}
    \item If both curves have a point of degree $3$, then Lemma~\ref{lemma:deltagenus2} and the lower bound of Proposition~\ref{inclusionsprod} give $\nn_{\geq 2}\cdot\nn_{\geq 2}=\nn_{\geq 2} \setminus\{\text{primes}\} \subset \dendegs(C\times D/k)$. 
    We conclude that 
    \[\nn_{\geq 2} \setminus \{p \text{ prime} : p\leq 11\}\subseteq\dendegs(C\times D/k).\]
    
    \item If exactly one of the two curves has a point of degree $3$, then $9\not\in\dendegs(C/k)\cdot\dendegs(D/k)$, which gives the $(1,2)$ and $(2,1)$ entries of Table~\ref{Table1}.

    \item When neither of the curves has a degree $3$ point, the lower bound is also missing 6, giving the $(2,2)$ entry of Table~\ref{Table1}.
\end{enumerate}

Next consider the case when exactly one of the curves has a rational point. Without loss of generality, we may assume that $C(k)=\emptyset$. 
Then $3\in\dendegs(C/k)$ by Lemma~\ref{lemma:deltagenus2}, and moreover there is a degree $3$ map $\pi_C:C\to\mathbb{P}_k^1$. Applying Corollary~\ref{cor:pointedcurvexind1} to $\pi_C$ and the standard degree $2$ cover $\pi_D:D\to \mathbb{P}_k^1$ gives a family of curves of genus 12 covering the surface, and yields an inclusion 
\[\nn_{\geq 24}\subset\dendegs(C\times D/k).\] 
Since $C$ has no points, the same is true of the members of the family, and they are not hyperelliptic, so by \Cref{curve_asymptotic}, 23 is in the density degree set of these curves. These curves also have a degree 3 divisor lying above the rational point of $D$, which satisfies the assumptions of \Cref{cs_asymptotic} applied to the double covering of $C$. This shows $22 - 3 = 19$ is in the density degree set.
As before, we split on whether $3$ is in the density set of $D$.
\begin{enumerate}
    \item If $3\in\dendegs(D/k)$, then the lower bound of Proposition~\ref{inclusionsprod} gives once again that every composite integer lies in $\dendegs(C\times D/k)$. We conclude that in this case \[\nn\setminus\{p \text{ prime}: p\leq 17\}\subseteq\dendegs(C\times D/k).\] 
    \item If $3\notin \dendegs(D/k)$, then $9\not\in\dendegs(C/k)\cdot\dendegs(D/k)$, thus $\nn\setminus\{9, \text{ primes}\leq 17\}\subseteq\dendegs(C\times D/k).$
\end{enumerate}

Lastly, we consider the case when $C(k)=D(k)=\emptyset$. Then $2,3\in\dendegs(C/k)\cap\dendegs(D/k)$, hence $C\times D$ is guaranteed to have zero-cycles of degrees $2\cdot 2=4$ and $3\cdot 3=9$. This gives an upper bound for the effective index (cf. Definition~\ref{def:effind}),
$e\leq 9+ 4=13$, and \Cref{thm:generalN} gives an inclusion 
$\nn_{\geq 512}\subset\dendegs(C\times D/k)$. This containment is far from sharp, however, as the proof of Theorem~\ref{thm:generalN} can be optimised in this case. Since the zero cycles used to define the effective index are products of divisors on the two curves, we can use these constituents to form a degree 5 divisor on each of the two curves. Since both curves are genus 2, these divisors are base-point free. Using these two maps gives a family of genus 36 (nonhyperelliptic, by Castelnuovo--Severi inequality) curves by \Cref{lem:fiber_product_construction}. These curves have no $k$-points, since neither $C$ nor $D$ do, and so \Cref{curve_asymptotic} shows that $71$ is in the density degree set, as well as $\nn_{\geq 72}$.
\end{proof}

Let us now briefly consider the setting when at least one of the curves has index $2$, in which case Proposition~\ref{inclusionsprod} gives an inclusion $\dendegs(C\times D/k)\subseteq 2\nn$.
\begin{lemma}
Let $C,D$ be two nice genus 2 curves with $\ind(C/k)=2$. 

\begin{enumerate}
    \item If $\ind(D/k)=1$, then $2\mathbb{N}\setminus \{2,6\} \subseteq \dendegs(C\times D/k)$.
    Moreover, if $3\in \dendegs(D/k)$ then $6 \in \dendegs(C \times D / k)$.
    \item If $\ind(D / k) = 2$ and $\ind(C \times D / k) = 2$, then $4\nn \cup 2\nn_{\geq (e + 3)^2} \subseteq \dendegs(C \times D / k)$, where $e$ is the effective index.
    \item If $\ind(D / k) = 2$ and $\ind(C \times D / k) = 4$, then $\dendegs(C \times D / k) = 4\nn.$
\end{enumerate}
\end{lemma}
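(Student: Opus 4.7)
In each of the three cases, my strategy is to pair an upper bound on $\dendegs(C \times D/k)$ (from Corollary~\ref{cor:intersection} or from divisibility by the index) with a lower bound coming from Proposition~\ref{inclusionsprod}, using throughout that a nice genus $2$ curve is hyperelliptic and therefore has $2 \in \dendegs_{\pp^1}(C/k)$.

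For part (1), I would first invoke Lemma~\ref{lemma:deltagenus2} to obtain $\dendegs(C/k) = 2\nn$ (so that $\dendegs(C \times D/k) \subseteq 2\nn$) and $\dendegs(D/k) \supseteq \{2\}\sqcup\nn_{\geq 4}$, with $3 \in \dendegs(D/k)$ if and only if $D$ carries a degree $3$ point. Applying Proposition~\ref{inclusionsprod} with $2 \in \dendegs_{\pp^1}(C/k)$ then yields
\[
\{2\}\cdot \dendegs(D/k)\supseteq \{4\}\sqcup 2\nn_{\geq 4} = 2\nn\setminus\{2,6\},
\]
and similarly $6 = 2\cdot 3 \in \dendegs(C \times D/k)$ under the extra hypothesis that $3 \in \dendegs(D/k)$.

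For part (2), Lemma~\ref{lemma:deltagenus2} now gives $\dendegs(D/k) = 2\nn$, and the same application of Proposition~\ref{inclusionsprod} delivers $\{2\}\cdot 2\nn = 4\nn \subseteq \dendegs(C\times D/k)$. For the asymptotic range I would apply Theorem~\ref{thm:generalN} directly: since $\ind(C\times D/k) = 2$ forces $C\times D$ to have no $k$-rational point, we have $e \geq 2$, and hence $\max(2g_C,\, 2g_C - 2 + e) = e + 2$ (and similarly for $D$). A direct substitution then gives
\[
N(C,D,e) = 2(e+1)^2 + 8(e+2) = 2(e+3)^2,
\]
and the containment $\nn_{\geq 2(e+3)^2}\cap 2\nn = 2\nn_{\geq (e+3)^2}\subseteq \dendegs(C\times D/k)$ follows immediately.

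For part (3), the upper bound $\dendegs(C \times D/k) \subseteq 4\nn$ is automatic since $\ind(C \times D/k) = 4$ divides the degree of every closed point of $C \times D$. The matching lower bound $4\nn \subseteq \dendegs(C \times D/k)$ follows exactly as in part (2) from $\{2\}\cdot 2\nn = 4\nn$ via Proposition~\ref{inclusionsprod}. No part of this lemma poses a serious obstacle; the only mildly computational step is the evaluation $N(C,D,e) = 2(e+3)^2$ in part (2).
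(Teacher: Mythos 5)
Your proof is correct and takes essentially the same route as the paper: Lemma~\ref{lemma:deltagenus2} together with Proposition~\ref{inclusionsprod} gives the multiplicative lower bounds in all three parts, Proposition~\ref{thm:generalN} with the evaluation $N(C,D,e)=2(e+1)^2+8(e+2)=2(e+3)^2$ gives the asymptotic range in (2), and divisibility of point degrees by $\ind(C\times D/k)=4$ gives the upper bound in (3). You even spell out two details the paper leaves implicit (the $4\nn$ containment in part (2) and the check $e\geq 2$ needed to evaluate the maxima), so nothing further is required.
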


\begin{proof}
We start with the case $\ind(D / k) = 1$. By \Cref{inclusionsprod}, $2\nn \cdot \dendegs(D / k) \subseteq \dendegs(C \times D / k)$. As the two possibilities for $\dendegs(D / k)$ are $\nn_{\geq 2}$ and $\nn_{\geq 2} \setminus \{3\}$, the result follows.

When $\ind(D / k) = 2$, and the product has index 2, the bound follows from \Cref{thm:generalN} as $N(C, D, e) = 2(e + 1)^2 + 8(e + 2) = 2(e + 3)^2$.

In the final case, the lower bound of \Cref{inclusionsprod} is $2 \nn \cdot 2 \nn = 4\nn$, which must be sharp, since the index of the surface is 4. 
\end{proof}

One case where the effective index is easy to calculate for a product of two genus 2 curves is when $C \times D$ has a degree 2 point, and we record this case separately. This occurs, for example, when $C$ and $D$ are isomorphic, which is the topic of the next subsection.

\begin{coroll}\label{cor:summary2}
    Let $C,D$ be two nice genus $2$ curves with $\ind(C/k)=\ind(D/k)=2$ and suppose that $C\times D$ has a degree $2$ point. Then 
    \[2\nn_{\geq 25}\subseteq \dendegs(C\times D/k).\]
\end{coroll}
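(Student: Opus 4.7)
\medskip

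\textbf{Proof proposal.} The plan is to apply Proposition~\ref{thm:generalN} directly once we pin down the index and the effective index of $C \times D$. First I would argue that $\ind(C \times D/k) = 2$. The existence of a degree $2$ point on $C \times D$ gives $\ind(C \times D/k) \mid 2$. Conversely, pushing a zero-cycle on $C \times D$ forward under either projection shows that $\ind(C/k) = 2$ and $\ind(D/k) = 2$ both divide $\ind(C \times D/k)$, forcing equality.

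Next I would show that the effective index satisfies $\effind(C \times D/k) = 2$. A single closed point $P$ of degree $2$ is itself a zero-cycle of degree equal to $\ind(C \times D/k)$ with $\deg|P| = 2$, so $\effind(C \times D/k) \le 2$. For the reverse inequality, since $\ind(C \times D/k) = 2$, the surface $C \times D$ has no $k$-rational points, so every closed point has degree at least $2$; hence any zero-cycle $Z = \sum n_i P_i$ of degree $2$ must either consist of a single closed point of degree $2$, in which case $\deg|Z| = 2$, or involve at least two distinct closed points in its support, in which case again $\deg|Z| \ge 2$.

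With $g_C = g_D = 2$ and $e = \effind(C \times D/k) = 2$, the two maxima in the definition of $N(C,D,e)$ in Proposition~\ref{thm:generalN} both equal $2g_C = 2g_D = 4$, so
\[
N(C,D,2) \;=\; 2\,(4-1)(4-1) \;+\; 2\cdot 4\cdot 2 \;+\; 2\cdot 4\cdot 2 \;=\; 18 + 16 + 16 \;=\; 50.
\]
Applying Proposition~\ref{thm:generalN} then gives
\[
\nn_{\ge 50} \cap \bigl(\ind(C\times D/k)\,\nn\bigr) \;=\; 2\nn_{\ge 25} \;\subseteq\; \dendegs(C\times D/k),
\]
as desired.

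There is no serious obstacle here: the only points requiring verification are the two preliminary claims about $\ind(C\times D/k)$ and $\effind(C\times D/k)$, both of which follow immediately from the hypotheses, and the rest is the numerical evaluation of $N(C,D,2)$.
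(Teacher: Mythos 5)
Your proposal is correct and follows essentially the same route as the paper: the paper's one-line proof observes that the degree $2$ point realizes the index, so $e=\effind(C\times D/k)=2$, and then invokes the preceding lemma's case $\ind(C\times D/k)=2$, which is exactly the specialization $N(C,D,e)=2(e+3)^2$ of Proposition~\ref{thm:generalN} that you computed directly (giving $N=50$ and hence $2\nn_{\geq 25}$). Your explicit verifications that $\ind(C\times D/k)=2$ (via pushforward of zero-cycles under the projections) and that $\effind=2$ are exactly the facts the paper leaves implicit, so there is nothing to add.
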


\begin{proof}
In this case the index of $C\times D$ is achieved by a point $(P,Q)\in C\times D$ of degree $2$, and hence the effective index $e=2$. 
\end{proof}

\subsection{Pulling back points from the Jacobian} In this subsection we mainly focus on self-products $C\times C$ of a genus $2$ curve. We employ a source of obtaining dense degree $2d$ points on $C\times C$ by pulling back degree $d$ points on $\Pic^0_C$. 
\begin{thm}\label{thm:Jac of genus 2} 
Let $C$ be a nice genus $2$ curve over $k$. 
Then $\nn_{\geq 2}\subset\dendegs(\Pic^0_C/k)$. 
\end{thm}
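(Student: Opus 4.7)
My strategy is to reduce to $\Sym^2 C$ (which is $k$-birational to $J$) and then cover the resulting surface by a dense family of curves built from the Abel--Jacobi morphism and the group structure of $J$, transferring density from $\dendegs(C/k)$ and handling a few remaining cases by hand.

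First, since $K_C \in \Pic^2_C(k)$ is a rational divisor class of degree $2$, translation by $-K_C$ gives a $k$-isomorphism $\Pic^2_C \simeq J$; composing with the surjective Abel--Jacobi morphism $\Sym^2 C \to \Pic^2_C$, which is birational (contracting only the unique $g^1_2$), we obtain a $k$-birational equivalence $\Sym^2 C \dashrightarrow J$. Since birational equivalence preserves density degree sets, it suffices to prove $\nn_{\geq 2} \subseteq \dendegs(\Sym^2 C /k)$.

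Second, when $\ind(C/k) = 1$, pick a $k$-rational divisor class of degree $1$ and let $\iota \colon C \hookrightarrow J$ be the associated Abel--Jacobi embedding. I would consider the family $\{[n]\iota(C)\}_{n \in \nn}$ of curves in $J$: the curves are pairwise distinct (their degrees with respect to a fixed ample line bundle on $J$ grow with $n$), and because a proper Zariski-closed subset of the surface $J$ of dimension at most one contains only finitely many irreducible curves, the union is Zariski dense in $J$. Each $[n]\iota(C)$ is a finite $k$-morphic image of $C$, so apart from finitely many exceptional collisions a degree-$d$ closed point of $C$ produces a degree-$d$ closed point on $[n]\iota(C) \subset J$. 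Thus $\dendegs(C/k) \subseteq \dendegs(J/k)$, and applying Lemma~\ref{lemma:deltagenus2} gives $\{2\} \cup \nn_{\geq 4} \subseteq \dendegs(J/k)$, plus $3$ whenever $C$ admits a degree-$3$ closed point.

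The remaining cases are $\ind(C/k) = 1$ with no degree-$3$ closed point on $C$ (where Lemma~\ref{lemma:deltagenus2} forces $C(k) \neq \emptyset$) and $\ind(C/k) = 2$. In the former, $\Pic^3_C$ has a rational point and is $k$-isomorphic to $J$, so one constructs degree-$3$ closed points from effective degree-$3$ divisors on $C$ over cubic extensions and spreads them using the group law. In the latter, $\iota$ is not defined over $k$, and one works directly with $\Sym^2 C$, constructing degree-$d$ closed points for each $d \geq 2$ from effective degree-$2$ divisors on $C$ defined over suitable extensions.

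The main obstacle is upgrading \emph{existence} of closed points to \emph{density} in these exceptional cases, since the natural constructions tend to produce points confined to proper subvarieties of $\Sym^2 C$. Overcoming this is most delicate for $d = 3$ when $C$ has no degree-$3$ point, and likely requires a rank-growth argument for $J$ analogous to Lemma~\ref{lem:rankgoesup}: for each such $d$, one exhibits an extension $L/k$ of degree $d$ with $\rk J(L) > \rk J(k)$, so that translates of a single degree-$d$ closed point by the positive-rank group $J(L)$ supply the required dense family.
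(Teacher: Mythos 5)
Your reduction to curves inside the Jacobian and the transfer of $\dendegs(C/k)$ via the $[n]$-translates of an embedded copy of $C$ is essentially the paper's first step (the paper uses the map $x\mapsto \mathcal{O}_C(2x-\omega_C)$, which works regardless of $\ind(C/k)$, so your even degrees $2\nn$ come out uniformly and, in the index-one case, all of $\{2\}\cup\nn_{\geq 4}$). The problem is everything after that. The theorem's actual content is the small odd degrees --- $3$ always, and in your remaining cases also $5,7$ (indeed every odd degree when $\ind(C/k)=2$, since there you only transfer $2\nn$) --- and for these you do not give a proof: you describe what ``one constructs'' and then explicitly concede that upgrading existence to density is the main obstacle. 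That obstacle is the theorem. Moreover, the fix you propose does not work as stated. Exhibiting a degree-$d$ extension $L/k$ with $\rk \Pic^0_C(L)>\rk \Pic^0_C(k)$ does not make the $L$-points dense: when $\Pic^0_C$ is isogenous to a product of elliptic curves, positive rank only forces density of a translate of an abelian subvariety, and one needs \emph{both} isogeny factors to gain rank simultaneously over the same degree-$d$ field (compare Proposition~\ref{prop:simpleabeliansurface}, which covers only the simple case, and Corollary~\ref{cor:simultaneousrankjumps}, whose proof itself requires an explicit covering-curve construction). Even granting density of $\Pic^0_C(L)$, you must still arrange that a dense set of the translates $x+Q$ have residue field exactly $L$ rather than a proper subfield, which needs an argument as in Lemma~\ref{lem:deltaE} (via the subgroup-avoidance lemma), and for $d=3,5,7$ with $L$ of prime degree or with no control on subfields this is not automatic. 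Finally, producing such an $L$ at all for an arbitrary genus-$2$ Jacobian is precisely the hard input you are assuming.

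The paper closes this gap constructively, with no case division on the index: after normalizing the model of $C:y^2=g(x)$ so that $b_5=0$ and $b_1$ or $b_3\neq 0$, it forms the fiber product $C\times_\sigma C$ over the hyperelliptic maps twisted by $\sigma(x)=-x$, observes that the swap involution descends this to a curve $Z_\sigma\subset \Sym^2_C$ of arithmetic genus $4$ whose normalization has a $k$-rational point (the orbit of the two points at infinity) and carries degree-$3$ maps to $\P^1$ (made explicit by computing the invariant ring in $u=x^2$, $v=(y_1-y_2)/x$, $w_1=y_1+y_2$, realizing $Z_\sigma$ birationally as a quadric-cubic intersection). This trigonal, index-one, non-hyperelliptic curve has dense degree-$3$ points, and together with Lemma~\ref{curve_asymptotic} it also supplies the degrees $5$ and $7$; pushing it forward to $\Pic^0_C$ and taking $[n]$-translates, combined with the even degrees coming from $C$ itself, yields $\nn_{\geq 2}$. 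So your proposal is not a complete proof: it reproduces the easy half of the argument and replaces the essential construction by an unproven (and, as formulated, insufficient) rank-growth heuristic.
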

\begin{proof}
    If $\Pic^0_C$ contains a curve $D$ of genus at least 2, then the images of the curve under $[n]$ are dense. 
    In particular, the density degree set of $\Pic^0_C$ contains the density degree set of $D$. 
    First, consider the map 
    \[
        C\xhookrightarrow{\Delta_C} C\times C \to \Sym^2_C \overset{AJ_C}{\to} \Pic^2_C \overset{\otimes \omega_C^{-1}}{\to} \Pic^0_C.
    \]
    The image of the composite is birational to $C$ (only the Weierstrass points get identified), and therefore $\dendegs(C/k)\subseteq \dendegs(\Pic^0_C/k)$. Since $C$ is hyperelliptic, this gives $2\nn\subset \dendegs(\Pic^0_C/k)$.
    
    We now show that there exists a curve $D$ in $\Pic^0_C$ of arithmetic genus 4 with dense degree 3 points, such that its normalization $D^\nu$ has a rational point (giving index 1).
    We may write $C : y^2 = g(x) = b_6 x^6 + b_4 x^4 + b_3 x^3 + b_2 x^2 + b_1 x + b_0$ with at least one of $b_3$ and $b_1$ non-zero. 
    If not, applying the M\"obius transformation $\frac{1}{tx + 1} - s$ gives another model, $C'$, for $C$, where $t \in k^\times$, and $s$ is chosen such that the new model has $b_5 = 0$. If for the new model $b_3 = b_1 = 0$ as well, there is an automorphism of $C'$ given by $(x, y) \mapsto (-x, y)$. Composing this with the change of model gives an automorphism of $C$, mapping the $x$-coordinate to $-\frac{(2st + 1)x + 2s}{(2st^2 + 2t)x + (2st + 1)}$. These automorphisms will be distinct for different $t$, which implies that for all but finitely many $t$, the new model will satisfy this assumption.
    
    Define $X_\sigma = C \times_{\sigma} C$, where $\mathbb{P}^1_k \xrightarrow{\sigma} \mathbb{P}^1_k $ is the automorphism sending $x\mapsto -x$ and $C\to \mathbb{P}^1_k$ is the usual $2:1$ cover. 
    Notice that the $S_2$-action on $C\times C$ (swapping the coordinates) restricts to the closed subscheme $X_\sigma$, which means that $X_\sigma$ descends to a closed subscheme $Z_\sigma$ in $\Sym^2_C$. 
    If $\{\infty_1,\infty_2\}\subseteq C$ are the two $\overline{k}$-points above $\infty\in \mathbb{P}^1_k$, then the orbit $\{(\infty_1),(\infty_2)\}$ descends to a unique $k$-point on $Z_\sigma$, and hence index$(Z_\sigma)=1$.
    Over the affine chart
    \[
    \Spec R = \Spec \left( k[x ,y_1,y_2]/ (y_1^2- g( x ), y_2^2- g(\sigma x )) \right)
    \]
    of $X_\sigma$, the $S_2$-action on $X_\sigma$ can be written as $(x, y_1, y_2) \mapsto ( \sigma x, y_2, y_1)$. 
    Consider the ring of invariants
    \[
    R^{S_2} = \left( k[x ,y_1,y_2]/ (y_1^2- g( x ), y_2^2- g(\sigma x )) \right)^{S_2},
    \]
    so that $\Spec(R^{S_2})$ is an affine chart of $Z_\sigma$.
    Notice that $u=x^2, w_1= y_1+y_2,  w_2 = y_1y_2$ are all invariants.
    Moreover we have the element $v = \frac{y_1-y_2}{x}\in R_x^{S_2}$ as well.
   In $R_x^{S_2}$ we can also write $w_2$ as $\frac{1}{4}\left(w_1^2 - uv^2\right)$.
    Let us check that $u,v,w_1$ generate the ring of invariants $R_x^{S_2}$. 
    Pick 
    \[
    p(x,y_1,y_2) = \sum_{i\in \mathbb{Z}} \alpha_i x^i + \sum_{i\in \mathbb{Z}} \beta_i x^iy_1 +\sum_{i\in \mathbb{Z}} \gamma_i x^i y_2  + \sum_{i\in \mathbb{Z}} \delta_i x^i y_1y_2 \in R_x^{S_2}
    \]
    and apply the $S_2$ action.
    We then get
    \[
    p(x,y_1,y_2) = \sum_{i\in \mathbb{Z}} (-1)^i \alpha_i x^i + \sum_{i\in \mathbb{Z}} (-1)^i \beta_i x^iy_2 +\sum_{i\in \mathbb{Z}} (-1)^i \gamma_i x^i y_1  + \sum_{i\in \mathbb{Z}} (-1)^i \delta_i x^i y_1y_2,
    \]
    which implies that $\delta_{odd}=\alpha_{odd}=0$ and that $(-1)^i\beta_i=\gamma_i$.
    This means that $p(x,y_1,y_2)$ can be written as a polynomial in $u,v,w_1$.
    Notice that these elements satisfy the following relations:
    \[
    vw_1 = \frac{(y_1-y_2)(y_1+y_2)}{x} = \frac{y_1^2 - y_2^2}{x}  = 2(b_3x^2+b_1) = 2(b_3u+b_1),
    \]
    \[
    \frac{1}{2}(w_1^2+ uv^2) = w_1^2 - 2w_2 = 2b_6 u^3 + 2b_4 u^2 + 2b_2 u + 2b_0.
    \]
    
    This realizes the open $\Spec( R_x^{S_2} )$ of $Z_\sigma$ as a closed of 
    \[
        U = \Spec \left( k[u,v,w_1]/ (vw_1-(2b_3u+2b_1), w_1^2+uv^2 - (4b_6u^3 + 4b_4u^2 + 4b_2u + 4b_0)) \right).
    \]
    Since $U$ is integral and $Z_\sigma$ has the same dimension as $U$, then $U=\Spec( R_x^{S_2})$.
    In particular $Z_\sigma$ is birational to the intersection of a quadric and a cubic and  $D=\overline{U}\to Z_\sigma$ is a birational morphism with $D$ a curve with arithmetic genus 4.
    If $b_3 \neq 0$, then $u = \frac{vw_1 - 2b_1}{2b_3}$ equips the quadric with a ruling over the base field, and thus $X_\sigma$ with degree 3 morphisms to $\mathbb{P}^1$ (namely $v$ and $w_1$). Otherwise, $vw_1 = b_1$, and eliminating $v$ from the second equation shows $b_1^2 u + w_1^4 = 2(b_6 u^3 + b_4 u^2 + b_2 u + b_0)w_1^2$. This also has a degree 3 morphism given by $w_1$. 

    Pushing forward $Z$ under the natural maps $\Sym^2_C \to \Pic^2_C \to \Pic^0_C$ gives a curve in $\Pic^0_C$. 
\end{proof}

\begin{prop}\label{CxC} Let $C$ be a nice genus $2$ curve over $k$. Let $\Pic^0_C$ be the Jacobian of $C$. Then  \[2 \dendegs(\Pic^0_C/k) \subseteq \dendegs(C \times C/k).\] In particular, if \(\Pic^0_C\) is simple and \(\rk \Pic^0_C(k) > 0\), then \(2 \in\dendegs(C \times C/k)\).

\end{prop}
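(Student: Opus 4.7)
The plan is to exploit the degree-$2$ morphism
\[
\phi \colon C \times C \to \Pic^0_C, \qquad (p_1, p_2) \mapsto [\cO_C(p_1 + p_2 - \omega_C)],
\]
which is defined over $k$ since $\omega_C$ is $k$-rational. It factors as the swap-quotient $C \times C \to \Sym^2 C$ followed by the birational Abel--Jacobi morphism $\Sym^2 C \to \Pic^2_C \cong \Pic^0_C$, contracting only the pencil $|\omega_C| \cong \pp^1$ to $0 \in \Pic^0_C$. For any degree $d$ closed point $P \in \Pic^0_C \setminus \{0\}$, the scheme-theoretic fiber $\phi^{-1}(P)$ is a reduced, length-$2d$ zero-dimensional scheme over $k$, which falls into one of two cases: either (\emph{Case B}) the unique $k(P)$-rational effective divisor in $|P + \omega_C|$ is a single closed degree-$2$ point of $C_{k(P)}$, giving a single degree $2d$ closed point of $C \times C$; or (\emph{Case A}) it splits as $p_1 + p_2$ with $p_1, p_2 \in C(k(P))$, giving two degree $d$ closed points of $C \times C$ swapped by the involution.

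Given $d \in \dendegs(\Pic^0_C/k)$, I will show Case B is Zariski-dense in $\Pic^0_C$; each such $P$ lifts to a unique degree $2d$ closed point $y_P \in C \times C$, and since $P \in \phi(V)$ forces $y_P \in V$ for any open $V \subseteq C \times C$ meeting the finite locus of $\phi$, density transfers to $C \times C$. Fix any non-empty Zariski-open $U \subseteq \Pic^0_C$. By the density hypothesis and the Northcott property of the canonical height, $U$ contains a non-torsion degree $d$ closed point $P_0$; set $K = k(P_0)$. A Galois-theoretic analysis of the stabilizer $\{g : g[n]P_0 = [n]P_0\} = \{g : gP_0 - P_0 \in \Pic^0_C[n]\}$ shows that the set of $n \in \mathbb{Z}$ for which $[n] P_0$ still has residue field exactly $K$ contains an arithmetic progression (the bad $n$'s form a finite union of cyclic subgroups $m_g \mathbb{Z}$ indexed by the finitely many $g \in G/\on{Gal}(\bar k/K)$ with $gP_0 - P_0$ torsion). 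The corresponding translates are then Zariski-dense in the positive-dimensional algebraic subgroup $\overline{\langle P_0 \rangle} \subseteq \Pic^0_C$, using that $[m]$ restricted to $\overline{\langle P_0 \rangle}$ remains surjective for any $m \neq 0$.

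The main obstacle is ruling out Case A among these translates, and this is resolved by Faltings' theorem. Since $C$ has genus $2$, the set $C(K)$ is finite, so there are only finitely many split degree-$2$ $K$-rational divisors $p_1 + p_2 \in \Sym^2 C(K)$. The injectivity of $n \mapsto [n] P_0$ (from $P_0$ being non-torsion) then forces only finitely many translates to land on any of these finitely many Case A classes. Removing these finitely many bad values, the remaining translates form a Zariski-dense subset of $\overline{\langle P_0 \rangle}$ consisting entirely of Case B degree $d$ closed points; since $U \cap \overline{\langle P_0 \rangle}$ is a non-empty open subset of $\overline{\langle P_0 \rangle}$ containing $P_0$, it contains infinitely many such points. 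Varying $U$ establishes the density of degree $2d$ closed points on $C \times C$, whence $2d \in \dendegs(C \times C/k)$.

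For the \emph{in particular} statement: if $\Pic^0_C$ is simple (over $k$) and $\rk \Pic^0_C(k) > 0$, then the infinite subgroup $\Pic^0_C(k)$ has Zariski closure a $k$-rational algebraic subgroup of $\Pic^0_C$, which by simplicity must equal all of $\Pic^0_C$. Hence $1 \in \dendegs(\Pic^0_C/k)$, and the main claim with $d = 1$ yields $2 \in \dendegs(C \times C/k)$.
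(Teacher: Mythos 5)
Your proof is correct, but it takes a genuinely different route from the paper's. The paper pulls points back along the same degree-\(2\) map \(C \times C \to \Sym^2_C \to \Pic^0_C\), but it never decides which fibers stay irreducible: it argues by dichotomy. Either a dense set of degree-\(d\) points of the Jacobian has irreducible (degree-\(2d\)) preimage, and one is done; or a dense set of preimages has degree \(d\), in which case \(d \in \dendegs(C\times C/k) \subseteq \dendegs(C/k)\) by Corollary~\ref{cor:intersection}, and then, since \(C\) is hyperelliptic so \(2 \in \dendegs(C/k)\), the lower bound \(\dendegs(C/k)\cdot\dendegs(C/k) \subseteq \dendegs(C\times C/k)\) of Proposition~\ref{inclusionsprod} gives \(2d \in \dendegs(C\times C/k)\). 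You instead resolve the dichotomy constructively: starting from a non-torsion degree-\(d\) point \(P_0\), you keep the residue field constant along an arithmetic progression of multiples \([n]P_0\) via a torsion/Galois analysis, and rule out the split (Case A) fibers for all but finitely many \(n\) using Faltings' theorem (\(C(K)\) finite) together with injectivity of \(n \mapsto [n]P_0\). Your version is longer and imports Faltings and the multiples argument, but it is self-contained (no appeal to the product lower bound) and proves slightly more, namely that the dense degree-\(2d\) points can be taken to lie over degree-\(d\) points of \(\Pic^0_C\) with quadratic residue-field extension; the paper's version is shorter because it reuses its general machinery. For the ``in particular'' part, the paper cites Proposition~\ref{prop:simpleabeliansurface} (proved via Faltings--Vojta), whereas your closure-of-a-subgroup argument is more elementary and equally valid once one passes to the identity component of the closure. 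Two details you gloss are harmless but worth tightening: the fiber of \(\phi\) is non-reduced along the diagonal (this only happens in your Case A, so it does not matter), and the good multiples \([n]P_0\) are Zariski dense only in a coset of the identity component of \(\overline{\langle P_0\rangle}\), not necessarily in all of it --- since that coset contains \(P_0 \in U\), your conclusion is unaffected.
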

\begin{proof} 
Since $g_C=2$, the Jacobian $\Pic^0_C$ is an abelian surface. 
Similarly to the proof of Theorem~\ref{thm:Jac of genus 2},
we consider the birational morphism 
\[
\Sym^2_C 
\xrightarrow{AJ_C} 
\Pic^2_C \xrightarrow{\otimes \omega_C^{-1}}  
\Pic^0_C.
\]
Precomposing with the canonical map $C\times C\xrightarrow{\pi}\Sym^2(C)$ 
gives a morphism $C\times C\xrightarrow{\sigma} \Pic^0_C$, and there exists a nonempty open subset $U\subset C\times C$ such that the restriction $U\xrightarrow{\sigma} \sigma(U)$ is a finite morphism of degree $2$. 
Let $d\in\dendegs(\Pic^0_C/k)$. 
Then  $d\in\dendegs(\sigma(U)/k)$, and every degree $d$ point of $\sigma(U)$ pulls back to a point of $U$ of degree either $d$ or $2d$. 

If a dense subset of the preimages have degree $2d$, we are done. Otherwise, we conclude that $d\in\dendegs(C\times C/k)$ and in particular $d\in\dendegs(C/k)$. Since $C$ is hyperelliptic, $2\in\dendegs(C/k)$, and hence the lower bound $\dendegs(C/k)\cdot\dendegs(C/k) \subseteq \dendegs(C\times C/k)$ guarantees that $2d\in\dendegs(C\times C/k)$. 

Now if $\Pic^0_C$ is simple and $\rk \Pic^0_C(k)>0$, it follows by Proposition~\ref{prop:simpleabeliansurface} that $1\in\dendegs(\Pic^0_C/k)$, and hence $2\in\dendegs(C\times C/k)$. 
\end{proof} 

We note that when $\ind(C/k)=2$ Proposition~\ref{CxC} improves a lot the lower bound obtained in Corollary~\ref{cor:summary2}. 
The following Corollary even shows that the upper bound of Proposition~\ref{inclusionsprod} can sometimes be achieved.

\begin{coroll}
    Let $C$ be a genus $2$ curve over $k$. Then $2\nn_{\geq 2}\subset \dendegs(C\times C/k)$. 
    If $C$ has index $2$ and $\Pic^0_C$ is a simple abelian surface with positive rank over $k$, then $\dendegs(C\times C/k)=2\nn$. 
    In particular, the upper bound of Proposition~\ref{inclusionsprod} is achieved in this case. 
\end{coroll}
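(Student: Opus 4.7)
The first containment $2\nn_{\geq 2} \subseteq \dendegs(C\times C/k)$ is immediate from two results already in hand. By Theorem~\ref{thm:Jac of genus 2}, we have $\nn_{\geq 2} \subseteq \dendegs(\Pic^0_C/k)$. Multiplying by $2$ and feeding the result into Proposition~\ref{CxC}, which asserts $2\dendegs(\Pic^0_C/k) \subseteq \dendegs(C\times C/k)$, gives exactly $2\nn_{\geq 2} \subseteq \dendegs(C\times C/k)$. There is no real obstacle here: the plan is simply to chain the two cited results.

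For the second claim, assume $\ind(C/k)=2$ and $\Pic^0_C$ is simple of positive rank. I would prove the two containments separately. The upper bound comes from Proposition~\ref{inclusionsprod}, which gives $\dendegs(C\times C/k) \subseteq \dendegs(C/k)$, combined with Lemma~\ref{lemma:deltagenus2}(2): for a genus $2$ curve of index $2$, $\dendegs(C/k) = 2\nn$. So $\dendegs(C\times C/k) \subseteq 2\nn$. For the reverse containment, the hypotheses that $\Pic^0_C$ is simple and $\rk \Pic^0_C(k) > 0$ invoke Proposition~\ref{prop:simpleabeliansurface} (the same citation used in the proof of Proposition~\ref{CxC}) to conclude $1 \in \dendegs(\Pic^0_C/k)$. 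Combined with Theorem~\ref{thm:Jac of genus 2} this upgrades the earlier information to $\dendegs(\Pic^0_C/k) = \nn$. Applying Proposition~\ref{CxC} once more yields $2\nn \subseteq \dendegs(C\times C/k)$, and together with the upper bound we obtain equality $\dendegs(C\times C/k) = 2\nn$.

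The final sentence is then a tautology: Proposition~\ref{inclusionsprod} gives the upper bound $\dendegs(C\times C/k) \subseteq \dendegs(C/k) \cap \dendegs(C/k) = 2\nn$, which is exactly the set we just computed $\dendegs(C\times C/k)$ to be. Hence the upper bound of Proposition~\ref{inclusionsprod} is sharp in this setting, with no further argument required. The only nontrivial input across the whole proof is the external Proposition~\ref{prop:simpleabeliansurface} needed to extract a rational point from positive rank plus simplicity; everything else is a formal combination of previously established containments.
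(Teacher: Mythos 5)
Your proof is correct and follows essentially the same route as the paper: the first containment by chaining Theorem~\ref{thm:Jac of genus 2} with Proposition~\ref{CxC}, the membership $2\in\dendegs(C\times C/k)$ via Proposition~\ref{prop:simpleabeliansurface} (through Proposition~\ref{CxC}), and the upper bound $\dendegs(C\times C/k)\subseteq\dendegs(C/k)=2\nn$ from Lemma~\ref{lemma:deltagenus2}. The only cosmetic point is that the upper bound is literally Corollary~\ref{cor:intersection} rather than Proposition~\ref{inclusionsprod}, but the paper itself uses the same attribution, so nothing is missing.
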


\begin{proof}
    Theorem~\ref{thm:Jac of genus 2} and Proposition~\ref{CxC} give $2\nn_{\geq 2}\subset\dendegs(C\times C/k)$.  
    Moreover, if $\Pic^0_C$ is simple and $\rk(\Pic^0_C(k))>0$, then it follows by Proposition~\ref{prop:simpleabeliansurface} that $2\in\dendegs(C\times C/k)$. 
We conclude that $\dendegs(C\times C/k)=2\nn$, which is precisely the upper bound of Proposition~\ref{inclusionsprod}. 
\end{proof}

\begin{remark} 
In general, the main contribution of Proposition~\ref{CxC} to the $\ind(C/k)=1$ case seems to be that $2\in\dendegs(C\times C/k)$ when $\Pic_C^0$ is simple and has positive rank.
However, in certain cases we do get more information. 
Namely, suppose that $C(k)\neq\emptyset$, but $C$ has no degree $3$ points (e.g., \cite[Example 5.1.3]{VV}). 
Then Proposition~\ref{CxC} gives $6\in\dendegs(C\times C/k)$, something which was not guaranteed by \Cref{prop:summary1}. 
\end{remark}

\begin{example}
    The methods exhibited above for the self-product of a genus $2$ curve can be extended to products $C\times D$ with $g_D\geq 3$, in the special case when there is a finite cover $\phi: D\xrightarrow{}C$ of degree $m\geq 2$. For example, assuming that $\Pic_C^0$ is simple and has positive rank, we can deduce that $2m\in\dendegs(C\times D/k)$. 
    Indeed, Proposition~\ref{CxC} gives $2\in\dendegs(C\times C/k)$ and a general degree $2$ point on $C\times C$ pulls back either to a degree $2m$ point in $C\times D$ or, arguing similarly to the proof of \ref{CxC}, we would get that $m\in\dendegs(D/k)$
    in which case $2m\in\dendegs(C\times D/k)$ by the lower bound of Proposition~\ref{inclusionsprod}. 
    An explicit example can be found in \cite[Example 7.1]{bruin202322decomposablegenus4jacobians}, where  Nils Bruin and Avinash Kulkarni constructed (using theory they developed in \cite[Theorem 1.1, Proposition 4.4]{bruin202322decomposablegenus4jacobians}) a genus $4$ non-hyperelliptic curve $D$ whose Jacobian is isogenous to the square of the Jacobian of a genus 2 curve $C$. Moreover, there is a degree $2$ cover $D\to C$ and $\Pic^0_C$ is absolutely simple. Since $D$ is non-hyperelliptic and it does not admit a morphism to an elliptic curve, it follows by \cite[Lemma 3.2]{VogtKadets2024} that $2\not\in\dendegs(D/k)$. By extending the base field if necessary, we may assume that $\rk\Pic_C^0(k)>0$. Then our pullback method shows that $4\in\dendegs(C\times D/k)$, something which was not guaranteed by Proposition~\ref{inclusionsprod}.
\end{example}

\begin{example}\label{ex:isomorphicJacobians}
A similar method can be used for a special family of non-isomorphic genus $2$ curves $C$, $D$ with $C(k), D(k)\neq\emptyset$ to deduce that $3 \in \dendegs(C \times D / k)$. Namely, take $C$, $D$ as in \cite[Theorem 2]{howe05}, so that $\Pic^0_C \cong \Pic^0_D$ as unpolarised abelian varieties. The morphism \[C \times D \to \Pic^0_C \times \Pic^0_D \to (\Pic^0_C)^2 \to \Pic^0_C,\] given by the natural inclusions followed by the isomorphism of abelian varieties and summation, has degree equal to the intersection of the two curves in the Jacobian. By construction, these two curves differ only by the action of an automorphism on an isogenous abelian variety. Using standard formulae relating traces of endomorphisms to intersection products shows that in this case, the intersection number is 3. \end{example}

\subsection{Non-density of small degree points}\label{sec:nondensity} 
The previous two sections suggest that at least conjecturally we expect quadratic points to be dense on a product $C\times D$ when at least one factor is an elliptic curve. Moreover, Proposition~\ref{CxC} shows that $2$ is often in the density set of a self-product of a genus $2$ curve. In this section we construct a product $C\times D$ of two nice genus $2$ curves with a non-empty but not dense set of quadratic points, illustrating that the behavior changes in genus $2$. 

\begin{prop} \label{prop: 2notindelta}
    There exist genus $2$ curves $C$ and $D$ over $\bQ$ such that
    \begin{enumerate}
        \item $\ind(C/\mathbb{Q}) = \ind(D/\mathbb{Q})=2$,
        \item there exists a degree $2$ point on $C\times D$, and
        \item $2\notin \dendegs(C\times D/\mathbb{Q})$.
    \end{enumerate}
\end{prop}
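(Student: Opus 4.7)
I would reduce density of degree $2$ closed points on $C \times D$ to density of $\bQ$-rational points on an explicit auxiliary surface, and then choose $C, D$ for which the auxiliary surface has a rational point but not a dense set of them. First, I would strengthen the hypotheses by additionally requiring $\Pic^0_C(\bQ) = \Pic^0_D(\bQ) = 0$ (a condition checkable by explicit computation on concrete genus $2$ curves, e.g.\ with Magma). Since $\ind(C/\bQ) = 2$ means $K_C$ gives a rational point of $\Pic^2_C$, triviality of $\Pic^0_C(\bQ)$ forces $\Pic^2_C(\bQ) = \{K_C\}$. The Abel--Jacobi morphism $\Sym^2 C \to \Pic^2_C$ is birational with unique positive-dimensional fiber $|K_C| \cong \P^1$, so every quadratic point of $C$ is a fiber of the hyperelliptic pencil; analogously for $D$.

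Write $C: y_1^2 = f_1(x_1)$ and $D: y_2^2 = f_2(x_2)$. Since neither factor has a rational point, any degree $2$ closed point of $C \times D$ must project to genuinely quadratic points on each factor, defined over the same quadratic extension of $\bQ$. By the previous step these quadratic points are necessarily hyperelliptic fibers, so they correspond to pairs $(x_1, x_2) \in \bQ^2$ with $\bQ(\sqrt{f_1(x_1)}) = \bQ(\sqrt{f_2(x_2)})$, equivalently $f_1(x_1) f_2(x_2) \in (\bQ^\times)^2$. Closed points of degree $2$ on $C \times D$ thus correspond to $\bQ$-rational points on the affine surface
\begin{equation*}
  S : z^2 = f_1(x_1) f_2(x_2),
\end{equation*}
and their Zariski density on $C \times D$ is equivalent to Zariski density of $\bQ$-rational points on a smooth projective model $\tilde S$ of $S$. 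Condition (2) reduces to exhibiting a single rational point of $S$ (trivially arranged by choosing $(x_1^\circ, x_2^\circ) \in \bQ^2$ with $f_1(x_1^\circ) f_2(x_2^\circ)$ a nonzero square), while (3) becomes the statement that $S(\bQ)$ is not Zariski dense in $\tilde S$.

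The main obstacle will be the last step. A canonical-bundle calculation shows that (after resolving singularities) $\tilde S$ is a double cover of $\P^1 \times \P^1$ branched over a curve of bidegree $(6,6)$ with big canonical bundle, hence of general type; Bombieri--Lang thus \emph{predicts} non-density, but only conjecturally. Following Logan's idea, I would search for specific $f_1, f_2$ making non-density verifiable unconditionally, e.g.\ by exhibiting a dominant morphism $\tilde S \to B$ to a smooth projective curve $B$ of genus $\geq 2$: Faltings' theorem would then force $S(\bQ)$ to lie in finitely many fibers, each a genus $2$ quadratic twist with only finitely many rational points, so $S(\bQ)$ would be contained in a proper closed subvariety. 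Alternative routes would be a $2$-descent on $S$ or a Brauer--Manin analysis of the pencil of quadratic-twist hyperelliptic fibers $\{S \cap (x_2 = c)\}_{c \in \bQ}$. Combined with the easy production of a single rational point of $S$ and the Jacobian triviality condition, this would complete the construction.
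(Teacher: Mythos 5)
Your reduction is the same skeleton as the paper's: with a condition on the Jacobians, the potentially dense quadratic points of $C \times D$ are those with rational $x$-coordinates on both factors, and these correspond to $\bQ$-points of the surface $S: z^2 = f_1(x_1)f_2(x_2)$. The genuine gap is at the decisive step: you never establish non-density of $S(\bQ)$. As you note, $\tilde S$ is of general type, so Bombieri--Lang only \emph{predicts} non-density; your proposed unconditional substitutes (finding a dominant map $\tilde S \to B$ with $g(B) \geq 2$, a $2$-descent, or a Brauer--Manin analysis of the twist pencil) are left as a search, and none is easy to realize: a double cover of $\P^1 \times \P^1$ branched in a $(6,6)$ curve admits no such fibration for generic $f_1, f_2$, and engineering one while simultaneously keeping $\ind(C/\bQ)=\ind(D/\bQ)=2$, the Jacobian condition, and a rational point on $S$ is exactly the hard content of the proposition; the remark that each fiber $x_2=c$ is a genus $2$ twist with finitely many points gives nothing by itself, since there are infinitely many fibers. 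Worse, your strengthened hypothesis $\Pic^0_C(\bQ)=\Pic^0_D(\bQ)=0$ forces \emph{every} quadratic point of each curve into the hyperelliptic pencil, so the degree $2$ point required in (2) can only come from a rational point of $S$: you are locked into the regime ``$S(\bQ)$ nonempty but not Zariski dense,'' which is precisely what you cannot certify.

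The paper sidesteps this. It requires only that $\Pic^0_C$ and $\Pic^0_D$ have rank $0$ (so the non-$\P^1$-parameterized quadratic points are finite, hence not dense, but may exist), and chooses $f, g$ with congruence conditions at $3$ (values and leading coefficients of $f$ are $\equiv -1$, of $g$ are $\equiv 1 \bmod 3$) so that $f(x_1)g(x_2)$ is never a square in $\mathbb{Q}_3$; thus $S(\mathbb{Q}_3)=\emptyset$, the affine surface has no rational points at all, and the $\P^1$-parameterized source of quadratic points is killed unconditionally by a local obstruction rather than by a general-type non-density statement. The degree $2$ point on $C \times D$ is then supplied not by $S$ but by conjugate Weierstrass points over $\mathbb{Q}(i)$ on the explicit curves $y_1^2=-(x_1^2+1)(x_1^4+1)$ and $y_2^2=-2(x_2^2+1)(x_2^4+1)$ (a quadratic point that is \emph{not} a hyperelliptic fiber, which is why only rank $0$, not triviality, is imposed), and the index-$2$ condition is forced by $C(\mathbb{R})=D(\mathbb{R})=\emptyset$. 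To repair your proposal unconditionally, replace ``nonempty but non-dense $S(\bQ)$'' by ``empty $S(\bQ)$ via a local obstruction'' and relax the trivial-Jacobian hypothesis accordingly.
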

\begin{proof}
   We claim that it is enough to find genus $2$ curves $C$ and $D$ defined over $\mathbb{Q}$ with affine models
    \[
        C: y_1^2=f(x_1),\quad \quad D: y_2^2=g(x_2)
    \]
    such that the following hold:
    \begin{enumerate}
        \item[(i)] $f(x_1) \equiv -1 \bmod 3$ for any $x_1 \in \mathbb{Z}$, and the leading coefficient is also $-1 \bmod 3$;
        \item[(ii)] $g(x_2) \equiv 1 \bmod 3$ for any $x_2 \in \mathbb{Z}$, and the leading coefficient is also $1 \bmod 3$;
        \item[(iii)] $C(\mathbb{R}) = D(\mathbb{R}) = \emptyset$;
        \item[(iv)] There exists a quadratic field $K$ for which both $C(K) \neq \emptyset$ and $D(K) \neq \emptyset$;
        \item[(v)] Both Jacobians, $\Pic^0_C$ and $\Pic^0_D$, have rank 0 over $\mathbb{Q}$.
    \end{enumerate} 
    
Suppose that $C, D$ satisfy the above assumptions. 
That $C$ and $D$ have index $2$ is an immediate consequence of the assumption $C(\bR)=D(\bR)=\emptyset$. Indeed, if $[L:\mathbb{Q}]$ is odd, then there exists at least one real place $v$ of $L$. Thus if $C(L) \ne \emptyset$, then $C(L_v)=C(\mathbb{R})$ is also nonempty, a contradiction.

Moreover since both $C$ and $D$ have a $K$-rational point, $C \times D$ does, too.
Since both curves have index $2$, we have $\dendegs(C/k)=\dendegs(D/k)=2\mathbb{N}$, and in particular both have dense quadratic points.
Quadratic points on the affine model of $C \times D$ have the form $( (x_1, \sqrt{f(x_1)}), (x_2, \sqrt{g(x_2)}))$. They either have
\begin{enumerate}
    \item both $x_1,x_2\in \mathbb{Q}$ and $k(\sqrt{f(x_1)})=k(\sqrt{f(x_2)})$ is a degree $2$ extension of $\mathbb{Q}$, or
    \item at least one of $x_1$, $x_2 \notin \mathbb{Q}$, say $x_1\not\in\bQ$, in which case $k(x_1)=k(\sqrt{f(x_1)})=k(\sqrt{f(x_2)})$ is a degree $2$ extension of $\mathbb{Q}$;
\end{enumerate}
If $x_i\in \mathbb{Q}$, then the quadratic point $(x_i,\sqrt{f(x_i)})$ is a $\mathbb{P}^1$-parametrized point (along the $x_i$-coordinate function).
Recall that since $C$ has genus $2$, a degree 2 map $C\to \mathbb{P}^1_k$ is unique up to automorphism of $\mathbb{P}^1_k$. In particular, if $x_1\notin \mathbb{Q}$, the 
quadratic point $(x_1,\sqrt{f(x_1)})$ is not $\mathbb{P}^1$-parameterized.
By the results in \S~\ref{densitybackground}, quadratic points with $x_i\notin \mathbb{Q}$ correspond to AV-parameterized points, i.e., non-zero $\mathbb{Q}$-points on the Jacobian. Since both $\Pic^0_C$ and $\Pic^0_D$ have rank $0$, there can be only finitely many such points.
In particular, points of the second type do not form a dense set of quadratic points on $C\times D$. Therefore, $2\in \dendegs(C\times D/k)$ if and only if quadratic points of the first type are dense.

Any point of the first type descends to a $\mathbb{Q}$-rational point on the surface $S$ with affine equation $z^2 = f(x_1)g(x_2)$. 
Conditions (i) and (ii) imply that if $x_1,x_2 \in \mathbb{Q}_3$, then $g(x_2)$ is a square and $f(x_1)$ is not, 
so $f(x_1)g(x_2)$ is also not a square. 
That is, $S(\mathbb{Q}_3) = \emptyset$ and thus $S(\mathbb{Q}) = \emptyset$.
Therefore, $2 \not \in \dendegs(C \times D / \mathbb{Q})$.

Finally, one can verify using Magma that the following curves satisfy conditions (i) - (v) with $K = \mathbb{Q}(i)$.
$$
C \colon y_1^2=-(x_1^2 + 1)(x_1^4 + 1), \qquad  D \colon y_2^2= -2(x_2^2+1)(x_2^4+1).
$$
\end{proof}

\begin{remark}\label{relationtoBombieri-Lang}
Let $C,D$ be two hyperelliptic curves with Weierstrass equations $y_1^2=f(x_1)$ and $y_2^2=g(x_2)$, respectively. If both curves have Jacobians with rank $0$, then we expect $2\not\in\dendegs(C\times D/k)$. Indeed, similar to the proof of Lemma~\ref{prop: 2notindelta}, the density of quadratic points on $C\times D$ is controlled by those points that arise as pullbacks from rational points on the surface $S$ with affine equation $z^2 = f(x_1)g(x_2)$. 
When $g_C, g_D\geq 2$, $S$ is a surface of general type and the Bombieri-Lang conjecture predicts that $S(k)$ is not Zariski dense. 
\end{remark}

We have a similar statement for cubic points.

\begin{prop}
    There exist genus 2 curves, $C$ and $D$, over $\mathbb{Q}$ such that \begin{enumerate}
        \item $3 \in \dendegs(C / \mathbb{Q}), \dendegs(D / \mathbb{Q})$,
        \item there exist infinitely many degree 3 points on $C \times D$, and
        \item $3 \notin \dendegs(C \times D / \mathbb{Q})$
    \end{enumerate}
\end{prop}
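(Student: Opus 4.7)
The plan is to mimic the proof of \Cref{prop: 2notindelta}, replacing the quadratic non-square obstruction with an obstruction on cubic residue fields. We seek two nice genus $2$ curves $C$ and $D$ over $\mathbb{Q}$, both of index $1$, each with a $\mathbb{Q}$-rational point and a degree-$3$ closed point (so $3 \in \dendegs(C/\mathbb{Q}) \cap \dendegs(D/\mathbb{Q})$ by \Cref{lemma:deltagenus2}), and each with Jacobian of rank zero over $\mathbb{Q}$. Rank zero implies that $C(\mathbb{Q})$ and $D(\mathbb{Q})$ are finite, that no cubic point of either curve is $\AV$-parameterized, and that there are only finitely many $\mathbb{Q}$-rational degree-$3$ pencils $g^1_3$ on each curve: they correspond to $\mathbb{Q}$-rational classes in $\Pic^3$, which is a torsor under the finite group $\Pic^0(\mathbb{Q})$.

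For part (2), fix $P_0 \in C(\mathbb{Q})$; since $3 \in \dendegs(D/\mathbb{Q})$ there are infinitely many cubic closed points $Q$ on $D$, and each $(P_0, Q)$ is a cubic closed point of $C \times D$.

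For part (3), the residue field of any cubic closed point $(P_1, P_2)$ of $C \times D$ has degree $3$, forcing each projection to have residue field $\mathbb{Q}$ or the same cubic field. This leaves three cases: $P_1 \in C(\mathbb{Q})$, $P_2 \in D(\mathbb{Q})$, or both projections cubic with isomorphic residue fields. The first two place $(P_1, P_2)$ in the proper closed subvariety $(C(\mathbb{Q}) \times D) \cup (C \times D(\mathbb{Q}))$, which is a finite union of curves. To rule out the third case, impose a signature obstruction at the real place: arrange that for every $\mathbb{Q}$-rational $g^1_3$ on $C$, the $Y$-discriminant $\Delta_C(t) \in \mathbb{Q}[t]$ of the defining cubic polynomial $F_C(t,Y) \in \mathbb{Q}[t,Y]$ is positive on $\mathbb{R}$ (so every cubic fiber is a totally real cubic field), while every $\mathbb{Q}$-rational $g^1_3$ on $D$ has discriminant $\Delta_D(t)$ negative on $\mathbb{R}$ (so every cubic fiber has signature $(1,1)$). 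Totally real and signature $(1,1)$ cubic fields are never isomorphic, so the third case is empty, and all cubic closed points of $C \times D$ lie in a proper closed subvariety; hence $3 \notin \dendegs(C \times D/\mathbb{Q})$.

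The main obstacle will be exhibiting explicit $C$ and $D$ realizing the discriminant-sign condition. Rank zero reduces this to checking finitely many $g^1_3$'s on each curve, each giving one cubic polynomial whose $Y$-discriminant must have constant sign on $\mathbb{R}$ (equivalently, no real roots and a suitable leading coefficient); candidate curves can be searched in the LMFDB and the conditions verified computationally in Magma, in the same spirit as the example concluding \Cref{prop: 2notindelta}.
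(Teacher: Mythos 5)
Your reduction is sound and is essentially the argument of the paper: use rank-zero (indeed small) Mordell--Weil groups to ensure that, up to the finite set $(C(\mathbb{Q})\times D)\cup(C\times D(\mathbb{Q}))$, every cubic point of $C\times D$ has both projections cubic with a common residue field; note that any cubic closed point on a genus $2$ curve is automatically a full fibre of a base-point-free rational $g^1_3$ over a rational point of $\mathbb{P}^1$ (its pencil cannot have a base point, since the base locus would be a proper rational subdivisor of an irreducible degree $3$ point); and then kill the common-residue-field case by a signature obstruction at the real place (totally real fibres on one factor, signature $(1,1)$ fibres on the other), while getting infinitude of cubic points on $C\times D$ by pairing a rational point on one factor with cubic points on the other. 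This is exactly the mechanism of the paper's proof.

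The genuine gap is that the proposition is an existence statement, and your proposal stops precisely at its existential content: you never exhibit curves (or prove that curves exist) for which \emph{every} relevant rational $g^1_3$ has the prescribed real behaviour, deferring this to an unexecuted LMFDB/Magma search. Controlling the Mordell--Weil group and the real geometry of all the resulting pencils simultaneously is the nontrivial part, and the paper completes it with explicit witnesses: $C\colon y^2=-(x^3-6x-1)(x^3-6x+2)$, whose Mordell--Weil group is trivial, so the unique $g^1_3$ is $(x,y)\mapsto y/(x^3-6x-1)$, with fibre discriminant $837t^8+3510t^6+5265t^4+3348t^2+756>0$ (totally real fibres); and $D\colon y^2=x^6+2x^4+2x^3+x^2+2x+2$, whose Mordell--Weil group is $\mathbb{Z}/3$ generated by the difference of the two points at infinity, with only one base-point-free degree $3$ class, cut out by $y-(x^3+x+1)$, whose fibre discriminant $-64t^4-108(t^3+2t^2-t)^2$ is negative away from $t=0,\infty$. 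Two minor points in your formulation would also need adjusting: demanding $\Delta_D(t)<0$ for \emph{all} real $t$ is too strict (the paper's own $D$ has $\Delta_D(0)=0$, corresponding to a degenerate fibre/rational points, which is harmless since such fibres contribute no cubic field), and requiring both curves to have rational points is unnecessary --- one pointed factor suffices for part (2), exactly as in \Cref{prop: 2notindelta} and in the paper's choice of $D$.
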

\begin{proof}
    We claim it is enough to construct curves $C$ and $D$ such that the following hold:
    \begin{enumerate}[label = (\roman*)]
        \item There exists unique degree $3$ morphisms $C\xrightarrow{\phi}\mathbb{P}^1$ and $D\xrightarrow{\psi}\mathbb{P}^1$.
        \item $D(\mathbb{Q}) \neq \emptyset$.
        \item For all $t \in \mathbb{Q}$, the fiber $\phi^{-1}(t)$ contains 3 points defined over $\mathbb{R}$ counted with multiplicity, and the fiber $\psi^{-1}(t)$ contains 1 real point not counted with multiplicity.
    \end{enumerate}

    The first condition implies $3 \in \dendegs(C / \mathbb{Q}), \dendegs(D / \mathbb{Q})$. The second implies the existence of infinitely many degree 3 points on $C \times D$, by considering the fiber over the rational points of $D$. It remains to prove degree 3 points are not dense on $C \times D$. Let $x=(P,Q)$ be such a point. 
    As there are at most finitely many rational points on $C$ and $D$, if degree 3 points were dense, then we may choose $x$ so that both $P$ and $Q$ are cubic points; in particular the residue fields $k(P), k(Q)$ are the same.  
    Since $\phi$ is the unique source of degree 3 points on $C$, and all cubic points coming from $\phi$ are totally real, $k(P)$ must also be totally real. However, since $\psi$ is the unique source of degree 3 points on $D$, and all cubic points coming from $\psi$ are not totally real, we have a contradiction. 

    It remains to prove such curves exist. Let $C$ be $y^2 = -(x^3 - 6x -1)(x^3 - 6x + 2)$. A calculation in Magma shows that $C$ has trivial Mordell-Weil group, thus all degree 3 points are linearly equivalent to the sum of the points with $x^3 - 6x - 1 =0$. The corresponding map to $\mathbb{P}^1$ is $(x, y) \mapsto \frac{y}{x^3 - 6x - 1}$. The fiber above a rational point, $t$, is given by the points satisfying $y = t(x^3 - 6x - 1)$ and $t^2 (x^3 - 6x - 1) = -(x^3 - 6x + 2)$. These will be totally real since the discriminant of this cubic is $837t^8 + 3510t^6 + 5265t^4 + 3348t^2 + 756 > 0$.
    Let $D$ be $y^2 = x^6 + 2x^4 + 2x^3 + x^2 + 2x + 2$. This has two rational points at infinity, their difference is 3-torsion and generates the Mordell-Weil group. This shows any degree 3 divisor is linearly equivalent to a sum of the points at infinity. Only one of these divisors is base-point free, and it has section $y - (x^3 + x + 1)$. The fiber above a rational point, $t$, is given by the points satisfying $y = (x^3 + x + 1) + t$, and $2t(x^3 + x + 1) + t^2 = 1$. The discriminant of this is 
    $$-108t^6 - 432t^5 - 280t^4 + 432t^3 - 108t^2 = -64t^4 - 108(t^3 + 2t^2 - t)^2.$$
    This is negative, except at $t = 0$ and $\infty$, which correspond to the two rational points.
\end{proof}

\subsection{Some results about potential density degrees}
Recall that for a nice variety $X$ over a number field $k$, the potential density degree set $\wp(X / k)$ is the union of $\delta\left(X' / k^{\prime}\right)$ as $k^{\prime} / k$ ranges over all possible finite extensions (see \Cref{def:potentialdensity}). 

The results obtained earlier in this section allow us to obtain very concrete information on $\wp(C\times D/k)$ in the case of a product of genus $2$ curves.
First of all, let us see how \Cref{bounds} reads for the potential density case.
\begin{prop}\label{prop:potdegdens}
Let $C,D$ be nice curves of genus $g_C,g_D$ over $k$.
Then $\wp(C\times D / k) \subseteq \potdendegs(C / k) \cap \wp(D / k)$.
Moreover if $g_C,g_D\leq 9$ then 
\[
\potdendegs(C / k) \cdot \wp(D / k) \subseteq \wp(C\times D / k) \subseteq \potdendegs(C / k) \cap \wp(D / k).
\]
\end{prop}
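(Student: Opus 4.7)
The plan is to derive both containments directly from the corresponding statements for $\dendegs$ established in \Cref{bounds}, with the potential-density version requiring one extra step to pass to a common extension.

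For the upper inclusion, let $d\in\wp(C\times D/k)$, so by definition there is a finite extension $k'/k$ with $d\in\dendegs(C\times D/k')$. Applying \Cref{cor:intersection} over $k'$ gives $d\in\dendegs(C/k')\cap\dendegs(D/k')$, hence $d\in\wp(C/k)\cap\wp(D/k)$. This step needs no hypothesis on the genera.

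For the lower inclusion, suppose $g_C,g_D\leq 9$, and let $d\in\wp(C/k)$, $e\in\wp(D/k)$. Pick finite extensions $k_1,k_2$ of $k$ with $d\in\dendegs(C/k_1)$ and $e\in\dendegs(D/k_2)$, and let $k'$ be the compositum $k_1k_2$, which is again a finite extension of $k$. The key observation is that the discussion following property \eqref{finite_extension_P1} in \Cref{densitybackground} shows that for a nice curve of genus at most $9$ the density degree set is monotone under finite base change; applying this to $C$ with the extension $k'/k_1$ and to $D$ with the extension $k'/k_2$ yields $d\in\dendegs(C/k')$ and $e\in\dendegs(D/k')$. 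Now \Cref{inclusionsprod} (the case $g_C\leq 9$) applied over $k'$ gives $de\in\dendegs(C/k')\cdot\dendegs(D/k')\subseteq\dendegs(C\times D/k')$, so $de\in\wp(C\times D/k)$.

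The only non-routine ingredient is the monotonicity $\dendegs(C/k)\subseteq\dendegs(C/k')$ for $g_C\leq 9$, but that is already in hand from the background section, so I do not anticipate a serious obstacle. One mild subtlety worth flagging is the need to pass to the compositum $k_1k_2$ in order to produce a single extension over which both curves realize density simultaneously, which is precisely where the genus hypothesis is used.
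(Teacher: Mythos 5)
Your proof is correct. The upper bound is the same as the paper's (the paper invokes \Cref{bound1} directly at the level of $\potdendegs$, you unfold the definition and apply \Cref{cor:intersection} over the auxiliary extension; these are interchangeable). For the lower bound you take a genuinely different, more elementary route to a common field: you pass to the compositum $k_1k_2$ of two arbitrary finite extensions realizing $d$ and $e$, and use the monotonicity $\dendegs(C/k)\subseteq\dendegs(C/k')$ for curves of genus at most $9$ (the statement at the end of \Cref{densitybackground}, following property \ref{finite_extension_P1}) applied to both $C$ and $D$ before invoking \Cref{inclusionsprod}. The paper instead cites \cite[Proposition 5.5.1]{VV} to produce Galois extensions $k_C$, $k_D$ over which $\dendegs$ already equals the full potential density set, arranged to be maximal, and then uses stability of $\dendegs$ under further Galois base change to get $\dendegs(C/k')=\potdendegs(C/k)$ and $\dendegs(D/k')=\potdendegs(D/k)$ over the (Galois) compositum; the genus hypothesis enters only in the final application of \Cref{inclusionsprod}. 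The trade-off: your argument avoids the external input from \cite{VV} entirely and is shorter, but it consumes the hypothesis $g_C,g_D\leq 9$ twice (once for each curve's monotonicity), whereas the paper's route only needs one of the two genera to be small, which is why its proof is phrased with ``$C$ or $D$'' and would yield a slightly stronger version of the lower containment. For the proposition as stated, with both genera bounded by $9$, your argument is complete.
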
 
\begin{proof}

By \Cref{bound1}, we get containments $\potdendegs(C \times D / k) \subset \potdendegs(C / k)$ and $\potdendegs(C \times D / k) \subset \potdendegs(D / k)$. Combining these two gives the upper bound.

When the genus of $C$ or $D$ is less than 9, we establish the lower bound by reducing to the case for density degree sets. By \cite[Proposition 5.5.1]{VV}, there exist Galois fields $k_C$, $k_D$ such that $\potdendegs(C / k) = \dendegs(C / k_C)$ and similarly for $D$. Moreover, by the proof of the same proposition we may assume that $k_C, k_D$ are maximal possible in the sense that if $k'/k_C$ is a Galois extension, then $\dendegs(C/k_C)=\dendegs(C/k')$ and the same for $D$. As $k_C$ and $k_D$ are Galois, their compositum, $k'$, is also Galois. By the definition of $\potdendegs(C / k)$ and the stability of $\dendegs$ under Galois field extension, $\dendegs(C / k') = \potdendegs(C / k)$ and $\dendegs(D / k') = \potdendegs(D / k)$. Over $k'$, \Cref{inclusionsprod} implies that $\dendegs(C / k') \cdot \dendegs(D / k') \subseteq \dendegs(C \times D / k') \subseteq \potdendegs(C \times D / k)$.  
\end{proof}

We now specialize to the case when $C, D$ are nice genus 2 curves over $k$. 

\begin{coroll}\label{cor:lowerboundonpotdendegscurvesgenus2}
    Let $C,D$ be nice genus 2 curves over $k$. Then
    \[
    \mathbb{N}_{\geq 2} \setminus \{2,3,5,7,11\} \subseteq \potdendegs(C\times D/k).
    \]
\end{coroll}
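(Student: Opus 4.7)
The plan is to reduce to the best case of Proposition~\ref{prop:summary1} by passing to a sufficiently large finite extension $k'/k$, and then invoke the defining inclusion $\dendegs(C \times D/k') \subseteq \potdendegs(C \times D/k)$.

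First, I would produce an extension $k'/k$ over which each of $C$ and $D$ has a non-Weierstrass rational point. Since a genus $2$ curve has only $6$ geometric Weierstrass points, such points exist over $\bar k$; choose $P \in C(\bar k)$ and $Q \in D(\bar k)$ with neither Weierstrass, and set $k' \colonequals k(P) \cdot k(Q)$. Then both $C_{k'}$ and $D_{k'}$ have a non-Weierstrass $k'$-rational point, and in particular index $1$.

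Next, I would establish that $3 \in \dendegs(C/k')$ and $3 \in \dendegs(D/k')$. By Riemann--Roch on $C_{k'}$, one has $h^0(3P) = 2$, and the linear system $|3P|$ is basepoint free: the only candidate basepoint is $P$ itself, and if it were a basepoint, then $h^0(2P) = 2$ would force $|2P|$ to be the hyperelliptic pencil and hence $P$ to be a Weierstrass point, contrary to our choice. The resulting degree $3$ morphism $C_{k'} \to \mathbb{P}^1_{k'}$ produces a dense set of degree $3$ closed points (by genericity of the fiber, or by Hilbert irreducibility), so $3 \in \dendegs(C/k')$; the same argument applies to $D$.

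With these two conditions verified over $k'$, the top-left entry of Table~\ref{Table1} in Proposition~\ref{prop:summary1} applies and yields
\[
\mathbb{N}_{\geq 2} \setminus \{2,3,5,7,11\} \subseteq \dendegs(C \times D / k') \subseteq \potdendegs(C \times D / k),
\]
as desired. The main conceptual content is simply recognizing that no new density construction is required for the potential statement: all of the heavy lifting has already been done in Proposition~\ref{prop:summary1}, and one only needs to arrange its best-case hypotheses after a finite base change, which is a routine Riemann--Roch argument.
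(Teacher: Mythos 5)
Your proposal is correct and follows essentially the same route as the paper: base change so that both curves acquire a non-Weierstrass rational point (hence index $1$), use Riemann--Roch and basepoint-freeness of $|3P|$ to get $3$ in each density degree set over $k'$, and then apply the best case of Proposition~\ref{prop:summary1} together with the defining inclusion $\dendegs(C\times D/k')\subseteq\potdendegs(C\times D/k)$. The only cosmetic difference is that you justify basepoint-freeness via the base locus being contained in the divisor $3P$ itself, while the paper phrases the same computation with $h^0(3P-P')$; both are fine.
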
 
\begin{proof} 
First we observe that for a nice genus $2$ curve $C$ it follows by the second part of \Cref{lemma:deltagenus2} that $\{2\}\sqcup \mathbb{N}_{\geq 4} \subseteq \potdendegs(C/k)$ (enough to base change to a $k'$ where $\ind(C'/k')=1$).
Moreover, by \cite[Lemma 4.1.4]{VV}, $3\in \delta(C'/k')$ if  there is a degree 3 divisor $D$ on $C'$ such that $h^0(D)\geq 2$ and $|D|$ is base-point free.
Let $k'$ be such that there exists a non-Weierstrass $k'$-point $P$.
Set $D=3P$: then $h^0(3P)= 2$ and $h^0(3P-P') = h^{0}(2P)-1$ for all $P'\in C(\overline{k})$.
Hence is base-point free as well.
Therefore $\potdendegs(C/k)=\mathbb{N}_{\geq 2}$.
    
Thus, we can find a finite extension $k'/k$ such that $C'(k'), D'(k') \neq \varnothing$ and such that $\dendegs(C'/k')=\dendegs(D'/k')=\mathbb{N}_{\geq 2}$.
Then by \Cref{prop:summary1}, we have $\mathbb{N}_{\geq 2} \setminus \{$primes $\leq 11\}\subseteq \dendegs(C'\times D'/k')$.
Hence the statement.
\end{proof}

The next Theorem shows that the lower bound in the above Corollary can be significantly improved.

\begin{thm} Let $C,D$ be nice genus $2$ curves over $k$. Then,
\[\mathbb{N}_{\geq 2} \setminus \{2, 3, 5\} \subseteq \potdendegs(C\times D/k).\]
\label{thm:pot}
\end{thm}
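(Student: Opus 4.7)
My plan starts from Corollary~\ref{cor:lowerboundonpotdendegscurvesgenus2}, which already gives the inclusion $\mathbb{N}_{\geq 2}\setminus\{2,3,5,7,11\} \subseteq \potdendegs(C\times D/k)$. So the entire content of \Cref{thm:pot} reduces to showing that $7$ and $11$ lie in $\potdendegs(C\times D/k)$. In both cases the approach is to base extend $k$ to an auxiliary field $k'$ over which we can run a refined version of the fiber-product strategy from Section~\ref{sec: asymptoticresults}, controlling the geometric genus of the covering curves tightly enough that \Cref{curve_asymptotic} and \Cref{cs_asymptotic} produce the desired degree.

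For $11$: I would pick $k_1/k$ so that $C$ and $D$ both acquire a rational Weierstrass point, a rational non-Weierstrass point (so $3 \in \dendegs(C/k_1)\cap\dendegs(D/k_1)$), and Jacobians of positive Mordell--Weil rank; such a $k_1$ exists by a Hilbert irreducibility argument combined with rank growth as in \Cref{lem:rankgoesup}. Then I would invoke the construction of subcase (i) in the proof of \Cref{prop:summary1}: applying \Cref{lem:fiber_product_construction} to the hyperelliptic maps $f_C,f_D$ aligned at a shared rational Weierstrass point at infinity produces a family $\{X_\gamma\}$ of curves of arithmetic genus $9$ with a single node above that common branch point, whose normalization $X_\gamma^\nu$ has geometric genus $8$ and at least two rational points by \Cref{lem:split_over_node}. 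These curves are non-hyperelliptic by Castelnuovo--Severi (a degree-$2$ map onto the genus-$2$ curve $C$ combined with a hyperelliptic $g^1_2$ would force $g_{X_\gamma^\nu}\leq 5$). Applying \Cref{cs_asymptotic} with $n=2$, $g_2=2$, $Z_1 = 3\cdot \infty_+$ then gives $11 = 2\cdot 8 - 2 - 3 \in \dendegs(X_\gamma^\nu/k_1)$, and \Cref{lem:covcurvs} upgrades this to $11\in \dendegs(C\times D/k_1)\subseteq \potdendegs(C\times D/k)$.

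For $7$: the target is to find a family of covering curves of $C\times D$ of geometric genus exactly $5$, each carrying a rational point (or, alternatively, of geometric genus $4$ each carrying at least two rational points), over some extension $k_2/k$. \Cref{curve_asymptotic}(1) applied to each curve then supplies $7 = 2\cdot 5 - 3 \in \dendegs(X/k_2)$ (respectively \Cref{curve_asymptotic}(2) gives $7 = 2\cdot 4 - 1 \in \dendegs(X/k_2)$), non-hyperellipticity coming again from Castelnuovo--Severi, and \Cref{lem:covcurvs} finishes. Starting from the arithmetic-genus-$9$ fiber product of the two hyperelliptic maps, a M\"obius transformation of $\mathbb{P}^1$ over a field extension where all twelve Weierstrass points are rational can be chosen to identify three branch points of $f_C$ and $f_D$; this alone drops the geometric genus of the normalization to $6$.

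The main obstacle is pushing the geometric genus down by one more unit: generically, after the three M\"obius-controlled identifications, the remaining finite branch points of the two hyperelliptic maps cannot be matched further. The plan to overcome this is to exploit the positive Mordell--Weil rank arranged on $\Pic^0_C$ and $\Pic^0_D$ over $k_2$: rather than using the hyperelliptic pencils $|K_C|$ and $|K_D|$ rigidly, I would work with the basepoint-free degree-$2$ linear systems coming from translates of those pencils inside $\Pic^2$ by rational divisor classes, producing a two-parameter family of degree-$2$ maps to $\mathbb{P}^1$ on each curve with varying branch loci. Coupled with Hilbert irreducibility applied to the relation of shared branching, this should furnish, over a suitable $k_2$, an infinite family of fiber-product constructions with a fourth shared branch and hence covering curves of geometric genus $5$ with several rational points lying over the shared branch locus. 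Verifying that such an alignment can genuinely be achieved in all cases (in particular when both Jacobians are absolutely simple) is the delicate point; the backup plan is to split into cases according to the isogeny decomposition of $\Pic^0_C \times \Pic^0_D$ over $\overline{k}$, using bielliptic structures (and the genus-$5$ fiber products $C\times_E D$ that these afford) when available, and treating the absolutely simple case separately via a divisor-theoretic analysis of the genus-$6$ covering family from the three-point M\"obius alignment.
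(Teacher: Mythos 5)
Your reduction to showing $7,11\in\potdendegs(C\times D/k)$ is correct, and your argument for $11$ works: base changing so that both curves acquire rational points and rational Weierstrass points puts you in case (i) of the proof of \Cref{prop:summary1}, whose genus-$8$ covering family together with \Cref{cs_asymptotic} and \Cref{rmk:applicationofLemma2.5} gives $11=14-3$ (the positive-rank hypothesis and the appeal to \Cref{lem:rankgoesup}, which concerns elliptic curves, are unnecessary but harmless). The paper gets $11$ slightly differently, from a $1$-parameter family of geometric genus $7$ curves (models $y^2=$ monic quintic with a root at $x=0$ on both factors, fiber product over $x\mapsto\lambda x$, two nodes over the aligned Weierstrass points at $0$ and $\infty$) via \Cref{curve_asymptotic}(1), but your route is equally valid.

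The case $d=7$, however, has a genuine gap. Your mechanism for creating a fourth shared branch point is based on a false premise: on a genus $2$ curve the hyperelliptic pencil $|K_C|$ is the \emph{unique} $g^1_2$ (by Riemann--Roch, any degree-$2$ class $\not\sim K_C$ has $h^0\leq 1$), a fact the paper itself uses in \Cref{prop: 2notindelta}; translating $|K_C|$ by a nonzero degree-$0$ class does not give a pencil, so there is no family of degree-$2$ maps with varying branch loci, and the branch locus is always the fixed set of six Weierstrass points. Independently of this, even the preliminary step of matching three branch points by a M\"obius transformation uses up all of $\mathrm{PGL}_2$, so the resulting fiber products form a finite set rather than an infinite family sweeping out a dense subset of $C\times D$, and hypothesis \eqref{assp2} of \Cref{lem:covcurvs} fails; at most two branch points can be aligned while retaining an infinite family of $\gamma$'s, which is exactly what yields the geometric genus $7$ family and no better. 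Your backup plan (bielliptic structures, or a ``divisor-theoretic analysis'' of a genus-$6$ family that does not exist as a family) covers only special isogeny types and is not an argument. The paper closes this gap on the genus-$7$ curves themselves: it exhibits the degree-$8$ divisor $3\infty_+ + 3\infty_- + P$ (with $P$ the degree-$2$ divisor over $x=0$), shows via the explicit sections $x_1$, $y_1/x_1$, $y_2/x_2$ that it is very ample for general $\gamma$, and concludes that $3\infty_+ + 3\infty_- + P - \infty_+$ is a basepoint-free degree-$7$ linear system of dimension at least $1$, giving $7\in\dendegs(X_\gamma^\nu/k')$ and hence $7\in\potdendegs(C\times D/k)$. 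Some idea of this kind (producing degree $7$ directly from divisors on the genus-$7$ family, rather than lowering the genus further) is what your proposal is missing.
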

\begin{proof}
After base changing to a finite extension $k'$, it can be assumed that the affine models for the curves are
\[
C: y_1^2 = f(x_1), \quad D: y_2^2 = g(x_2),
\]
where $f$ and $g$ are both monic degree 5 polynomials, and moreover, both have a rational root, which may be assumed to be at $x = 0$.
Consider the family of curves, $X_{\gamma}$, as in Corollary~\ref{coroll:inclusionofN(C,D)}, which have arithmetic genus 9. These curves have at least 2 nodes: one lying over the points of $C$ and $D$ at infinity, and one over the points of $C$ and $D$ at $x = 0$. For general $\gamma$, there are no other singularities. 
This shows that the general member of $X_{\gamma}$ has geometric genus 7. 
In particular, for infinitely many $\gamma$, the normalization of $X_{\gamma}$ has at least 2 rational points (lying above $\infty$) by \Cref{lem:split_over_node}. Label these points $\infty_{\pm}$. 
Again by Castelnuovo--Severi inequality, $X_\gamma$'s are not hyperelliptic: then by \Cref{curve_asymptotic}, the existence of $\infty_{\pm}$ shows that 11 is in the density degree set as well.

It remains to handle 7.
Consider the degree 2 effective divisor $P$ lying over $x = 0$: it is the union of two distinct $\overline{k}$-points, which we label by $P_1, P_2$ (note that they may not be defined over $k$).
Let $D$ be the degree 8 divisor on the normalization $X_\gamma^\nu$ defined as $ 3\infty_+ + 3\infty_- + P$. 
It has at least 3 non-trivial sections, $x_1$, $\frac{y_1}{x_1}$ and $\frac{y_2}{x_2}$. These give a birational map from $X_\gamma$ into $\mathbb{P}^3$, which is naturally an embedding away from $\infty_{\pm}$ and the $P_i$. For general $\gamma$, it is injective on these points too (by considering the values of $\frac{y_1}{y_2}$ at these points), and has smooth image. In particular, as $D$ is a base-point free divisor and an embedding, $D - \infty_+$ is a base-point free divisor of degree 7. This gives $7 \in \potdendegs(C \times D / k)$.
\end{proof}

\begin{remark}
    These bounds can be realized over $k$ as density degree sets when the assumptions on the extension $k'$ are satisfied by $k$ in the above proof. The proof requires that $C$ and $D$ both have degree 3 points and that they have at least 2 rational Weierstrass points. For example, the curves $y^2 + (x^3 + 1)y = -x^5 + x^3 + x^2 + 3x + 2$ (LMFDB label \href{https://www.lmfdb.org/Genus2Curve/Q/249/a/6723/1}{249.a.6723.1}) and $y^2 + y = 2x^5 - 3x^4 + x^3 + x^2 - x$  (LMFDB label \href{https://www.lmfdb.org/Genus2Curve/Q/256/a/512/1}{256.a.512.1}) satisfy these assumptions over $\mathbb{Q}$, as they have at least one non-Weierstrass rational point, as well as the 2 rational Weierstrass points. 
    On the other hand, we can find examples where the lower bound for the potential density set $\wp(C\times D/k)$ can be improved even more. Namely,
     it follows by Proposition~\ref{CxC} that $2\in\wp(C\times C/k)$, when $C$ has geometrically simple Jacobian, as we can find a finite extension $k'/k$ over which $\Pic^0_{C_k'}$ has positive rank. For the same reason, if $C, D$ are the genus $2$ curves considered in Example~\ref{ex:isomorphicJacobians}, it follows that $3\in\wp(C\times D/k)$.
\end{remark}

\section{Principally Polarized Abelian Surfaces}

We conclude this paper by giving some results about the density degree set of some abelian varieties. 

\begin{prop}\label{prop:simpleabeliansurface}
    Let $A$ be an abelian variety isomorphic to the product $B_1\times \dots \times B_r$ of simple abelian varieties over $k$. 
    Then 
    \[
    1\in \dendegs(A/k)
    \ 
    \Leftrightarrow 
    \
    1\in \dendegs(B_i/k) \ \forall i=1,\dots, r \
    \Leftrightarrow 
    \textup{ rank }B_i(k)>0\ \forall i=1,\dots, r.
    \]
\end{prop}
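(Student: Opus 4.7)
The plan is to prove the cycle of implications (a) $1\in\dendegs(A/k) \Rightarrow$ (b) $1\in\dendegs(B_i/k)$ for all $i$ $\Rightarrow$ (c) $\rk B_i(k) > 0$ for all $i$ $\Rightarrow$ (a), exploiting the identification $A(k) = B_1(k) \times \cdots \times B_r(k)$ coming from the isomorphism $A \cong \prod_i B_i$.

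The two easy directions come first. For (a) $\Rightarrow$ (b), each projection $\pi_i \colon A \twoheadrightarrow B_i$ sends a degree $1$ point to a degree $1$ point, so density of $A(k)$ in $A$ pushes forward to density of $B_i(k) = \pi_i(A(k))$ in $B_i$ (this is essentially a special case of Lemma~\ref{bound1}). For (b) $\Rightarrow$ (c), Mordell--Weil gives that $B_i(k)$ is finitely generated, and Zariski density in the positive-dimensional variety $B_i$ forces $B_i(k)$ to be infinite, hence of positive rank.

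The main content is (c) $\Rightarrow$ (a). For each $i$, pick $P_i \in B_i(k)$ of infinite order and consider the Zariski closure $\overline{\langle P_i \rangle}$ in $B_i$. This is a closed algebraic subgroup (a standard consequence of the fact that continuity of multiplication and inversion passes to Zariski closures of subgroups), and its identity component is an abelian subvariety of positive dimension since $\langle P_i \rangle$ is infinite. Simplicity of $B_i$ then forces this identity component to equal $B_i$, so $\overline{\langle P_i \rangle} = B_i$. A straightforward induction on $r$ -- slicing a product $X \times Y$ along fibers $\{x\} \times Y$ of the first projection and using irreducibility of $Y$ to conclude $S_1 \times X_2 \subseteq Z$ for any closed $Z$ containing $S_1 \times S_2$, and then slicing the other way -- shows that $\prod_i \langle P_i \rangle \subseteq A(k)$ is Zariski dense in $\prod_i B_i = A$, which yields (a).

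The only genuinely nontrivial ingredient is the classical structure result identifying Zariski closures of subgroups in algebraic groups as closed algebraic subgroups; together with simplicity of each $B_i$, this guarantees that a single infinite-order rational point already generates a dense subgroup. Absent simplicity, (c) could hold while (b) failed, because rational points of positive rank might all lie in a proper abelian subvariety, so the argument is tight in this respect.
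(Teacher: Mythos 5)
Your proposal is correct, but for the essential implication (positive rank of each $B_i$ implies dense rational points on $A$) you take a genuinely different and more elementary route than the paper. The paper argues by contradiction: it lets $W$ be the Zariski closure of $A(k)$ (for $A$ simple), invokes the Faltings--Vojta theorem to write each component of $W$ as a finite union of translates $a_{ij}+B_{ij}$ of abelian subvarieties, and uses simplicity to conclude $W$ is either finite or all of $A$. You instead argue directly: the Zariski closure of the cyclic group $\langle P_i\rangle$ generated by an infinite-order point $P_i\in B_i(k)$ is a closed algebraic subgroup, its identity component is a positive-dimensional abelian subvariety (positive-dimensional because $\langle P_i\rangle$ is infinite, reduced because we are in characteristic zero), and simplicity forces it to be all of $B_i$; then the slicing argument shows $\prod_i\langle P_i\rangle$ is dense in $\prod_i B_i$. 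Your ingredient -- that closures of abstract subgroups of algebraic groups are algebraic subgroups -- is classical and entirely elementary, whereas the paper leans on the deep Mordell--Lang-type theorem of Faltings--Vojta; in this setting (a subgroup, rather than an arbitrary set of rational points) the heavy theorem is not needed, so your argument is self-contained and arguably cleaner. What the paper's phrasing buys is only brevity via citation. Your remaining steps (projections for the first implication, Mordell--Weil plus density-implies-infinite for the second, and the observation that simplicity is what rules out rational points accumulating in a proper abelian subvariety) match the paper's easy directions. One small point worth making explicit if you write this up: the identity component of the closure of $\langle P_i\rangle$ is defined over $k$, so simplicity of $B_i$ over $k$ suffices.
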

\begin{proof}
If the $k$-points are dense in $A$, projecting on each factor we get that rational points are dense on each $B_i$.
Conversely if the $k$-points are dense in each factor, the Zariski closure of the $k$-points of $A$ must be everything.
So the first equivalence is proven and we can focus on the second one. 
It is enough to deal with the case where $A$ is simple.

The forward direction is clear. 
Let us then assume by contradiction that $A$ has positive rank but that $A(k)$ is not dense.
Let $W$ be the Zariski closure of $A(k)$ in $A$ (with reduced structure on it) which is the union of finitely many integral components $W=\bigcup_i W_i$.
By definition each $W_i(k)$ is dense in $W_i$.
Therefore by Faltings--Vojta Theorem (see \cite[Cor. of Thm. 3.1]{Maz00}), for each $W_i$ there exist finitely many $\{a_{ij}\}_j\subset A(k)$ and $B_{ij}\subseteq A$ abelian subvarieties such that 
\[
W_i = \bigcup_j ( a_{ij}+ B_{ij} ).
\]
Since $A$ is simple, then each $B_{ij}$ is either a point or $A$ itself. 
Therefore $W$ is either the union of finitely many points or $A$ itself, a contradiction.
\end{proof}

\begin{remark}
We note that a result of Hassett \cite[Part II, Proposition 4.2]{Hassett2003}  uses a similar argument to show that for an arbitrary abelian variety $A$ over a number field $k$ the potential density degree set $\potdendegs(A/k)$ always contains $1$.
\end{remark}

\begin{thm}\label{thm:isogeneousabsurf}
Let $A$ and $B$ be two isogenous abelian surfaces with $B$ principally polarized. Then $\dendegs(A/k)=\dendegs(B/k)$. 
\end{thm}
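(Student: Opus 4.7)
The approach is to prove both inclusions $\dendegs(B/k) \subseteq \dendegs(A/k)$ and $\dendegs(A/k) \subseteq \dendegs(B/k)$ by exploiting the structure of $B$: by the classification of principally polarized abelian surfaces, $B$ is either (i) the Jacobian $\Pic^0_C$ of a smooth projective genus $2$ curve $C/k$, or (ii) isomorphic as a polarized abelian variety to a product $E_1 \times E_2$ of elliptic curves over $k$.

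For the inclusion $\dendegs(B/k) \subseteq \dendegs(A/k)$, I would transport the explicit covering curves of $B$ used earlier in the paper --- the Abel--Jacobi image of $C$ and the genus $4$ curve from the proof of Theorem~\ref{thm:Jac of genus 2} in case (i), and the hyperelliptic curves of Theorem~\ref{Ec w/o bad j-invariant} together with the genus $4$ curves of Theorem~\ref{Ec:3,5,7} in case (ii) --- through a fixed isogeny $\psi\colon B \to A$ over $k$. The key technical point is that after replacing each such curve $X$ by a generic translate $X + b$, the restriction $\psi|_{X+b}$ is birational onto its image $\psi(X+b)\subset A$ (the bad locus for $b$ is contained in finitely many translates of $X$ by elements of $\ker\psi$, hence is a proper closed subset of $B$), and therefore $\dendegs(\psi(X+b)/k)\supseteq\dendegs(X/k)$. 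Since $\psi$ is surjective, the translates $[n]_*\psi(X+b)$ as $b$, $n$, and $X$ vary sweep out a Zariski dense subset of $A$, and Lemma~\ref{lem:covcurvs} gives the inclusion.

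For the reverse inclusion, I would combine the above with an isogeny-invariant description of $\dendegs(B/k)$: in case (i), Theorem~\ref{thm:Jac of genus 2} and Proposition~\ref{prop:simpleabeliansurface} show that $\dendegs(B/k)$ equals $\nn_{\geq 2}$ when $\rk B(k)=0$ and $\nn$ otherwise, and this dichotomy transfers to $A$ by the isogeny invariance of rank (together with an application of Proposition~\ref{prop:simpleabeliansurface} to an isogeny decomposition of $A$), forcing equality via the first inclusion. Case (ii) is handled analogously using the descriptions in Section~\ref{sec:prodellcurves}, in particular Lemma~\ref{lem:deltaE}, Corollary~\ref{cor:prod_ell_curve_pos_rank}, and Corollary~\ref{quadrankjump}, all of which depend only on isogeny-invariant data of $B$. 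The main obstacle I anticipate is ensuring the birationality of $\psi|_{X+b}$, which must be verified through the generic-translation argument above; a further subtlety is that in case (ii) the surface $A$ need not itself split as a product, so that the reverse inclusion has to be deduced through the covering framework combined with isogeny invariance rather than by a direct application of Lemma~\ref{lem:deltaE} to $A$.
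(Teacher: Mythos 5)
Your overall plan---transport the covering curves of $B$ through an isogeny $\psi\colon B\to A$ and reuse their known density degree sets---is in the same spirit as the paper, but the step you single out as the key technical point fails as stated, and it is precisely the difficulty the paper's proof is built to avoid. Whether $\psi|_{X+b}$ is birational onto its image does not depend on the translate $b$: one has $\psi(x+b)=\psi(x'+b)$ if and only if $x-x'\in\ker\psi$, so $\psi|_{X+b}$ is generically injective exactly when $X\neq X+t$ for every nonzero $t\in\ker\psi$, a condition on $X$ alone. Choosing $b$ generically therefore fixes nothing; moreover a ``generic'' $k$-rational translate need not exist, since $B(k)$ can be finite (e.g.\ rank $0$), and non-rational translates leave the framework of Lemma~\ref{lem:covcurvs}. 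Invariance under a nonzero element of $\ker\psi$ is ruled out automatically only for the Abel--Jacobi image of $C$ (its class is a principal polarization); for the genus $4$ curve of Theorem~\ref{thm:Jac of genus 2}, the curves of Theorems~\ref{Ec w/o bad j-invariant} and \ref{Ec:3,5,7}, and for the pushforwards $[n]_*$ of their images (where the same degree-drop issue recurs), you give no argument. The paper's proof sidesteps all of this: it fixes $\gamma\colon A\to B$ and $\mu\colon B\to A$ with $\gamma\circ\mu=[m]$, constructs dense sets of degree $d$ points $x\in B$ with $k([m]x)=k(x)$ (via the primitive/$S_d$ residue fields and linear-independence arguments of Corollaries~\ref{cor:simultaneousrankjumps} and \ref{cor:k(P1,P2)=k(nP1,nP2)} in the product case, and via birationality of $[m]$ restricted to the covering curves in the Jacobian case), and then the sandwich $k([m]x)\subseteq k(\mu(x))\subseteq k(x)$ forces $\deg\mu(x)=d$; no birationality statement about $\psi$ on curves is needed.

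A second genuine gap is $d=2$ when $B\simeq E_1\times E_2$. The hyperelliptic curves of Theorem~\ref{Ec w/o bad j-invariant} exist only under the hypotheses on the $j$-invariants or on rational $2$-torsion, whereas $2\in\dendegs(E_1\times E_2/k)$ can hold for other reasons, e.g.\ when one factor has positive rank (Corollary~\ref{cor:prod_ell_curve_pos_rank}); those quadratic points live on fibers $E_1\times\{P_2\}$, which are translates of the abelian subvariety $E_1\times\{0\}$ and genuinely can be stabilized by elements of $\ker\psi$, so their quadratic points can collapse to rational points of $A$. The paper handles $d=2$ by a rank case analysis (preimages of degree $2$ points under $[m]=\gamma\circ\mu$ when both ranks are $0$; points of type $(1,2)$ with the quadratic coordinate $\mathbb{Z}$-linearly independent from $E_2(k)$ otherwise). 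Two smaller imprecisions: the dichotomy ``$\dendegs(B/k)=\nn$ when $\rk B(k)>0$'' is false for a non-simple Jacobian (Proposition~\ref{prop:simpleabeliansurface} requires positive rank on each simple factor), though for the reverse inclusion only $d=1,2$ actually need an argument since $\dendegs(B/k)\supseteq\nn_{\geq 2}$ (resp.\ $\nn_{\geq 3}$); and your stated classification of principally polarized abelian surfaces over $k$ omits the Weil-restriction case, whereas the paper splits instead on whether $A$ is simple.
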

   
\begin{proof}
Let us first deal with the case when $A$ is not simple. 
Without loss of generality we may assume $B=E_1\times E_2$.
Let $\gamma:A\to E_1\times E_2$ be an isogeny (defined over $k$) and let $m\geq 1$ be its degree. 
Then there exists an isogeny $\mu:A\to E_1\times E_2$ so that $\gamma\circ\mu=[m]$. We first claim that $\dendegs(E_1\times E_2/k)\subseteq\dendegs(A/k)$. 
    
If $d\in\{3\}\cup\nn_{\geq 5}$, then Corollary~\ref{cor:k(P1,P2)=k(nP1,nP2)} yields that there exists a point $x=(P_1,P_2)\in E_1\times E_2$ with both $P_i$ of degree $d$ and of infinite order such that $k(P_i)=k([n]P_i)$ for every $n\geq 1$. Since  we have inclusions $k([m]x)\subseteq k(\mu(x))\subseteq k(x)$, it follows that $\mu(x)$ has degree $d$, and more generally the set $\{\mu([n_1]P_1, [n_2]P_2)\}$ is a dense subset of $A$ consisting of elements of degree $d$. 
    
If $d=4$, we can consider a biquadratic point $(P_1,P_2)\in E_1\times E_2$ with $P_1, P_2$ of infinite order such that $P_i$ is $\bZ$-linearly independent from $E_i(k)$ as done in the proof of Corollary~\ref{cor:simultaneousrankjumps}.
Then as before, $k([m]P_1,[m]P_2)=k(P_1,P_2)$, and the set $\{ \mu( [n_1]P_1, [n_2]P_2 )  \}$ yields a dense subset of $A$ of degree $4$ points.

Let us now consider the case when $d=2\in \dendegs(E_1\times E_2/k)$.
If both elliptic curves have null rank, then the image of the degree 2 points via the map $\mu\circ \gamma$ must contain a dense set of degree 2 points: therefore also the image of $\gamma$ does.

If at least one of the two elliptic curves has positive rank (wlog $E_1$), we can consider points $x=(P_1,P_2) \in E_1\times E_2$ of degree $(1,2)$ such that $P_2$ has infinite order and it is $\bZ$-linearly independent from $E_2(k)$.
We then conclude as before.

If $d=1$, then the $k$-points are dense in $E_1\times E_2$ and so are in $A$. Hence we have proven the inclusion.

Let us look at the other inclusion  $\dendegs(A/k)\subseteq\dendegs(E_1\times E_2/k)$. 
Since $\dendegs(E_1\times E_2/k)$ contains all the integers greater or equal than 3, the only thing left to check is that $1,2\in\dendegs(E_1\times E_2/k)$ when $1,2\in\dendegs(A/k)$. 
For $d=1$ the claim is clear.

If $2\in \dendegs(A/k)$ let $Z_2(A) = \{ x\in A : \deg(x)=2 \}$.
Then $\gamma_*(Z_2(A))$ is the union $H_1\sqcup H_2$ where
    \[
    H_d = \{ \gamma(x) : x\in Z_2(A) \textup{ and } [k(\gamma(x)):k]=d  \}.
    \]
    Therefore, either $H_1$ or $H_2$ is dense in $E_1\times E_2$. 
    Since $\dendegs(E_1\times E_2/k)$ is multiplicative closed, $2\in \dendegs(E_1\times E_2/k)$. 
 
    Let now assume that $A$ is simple and hence $B$ is the Jacobian $\Pic^0_C$ of a nice genus 2 curve over $k$.
    Again, let $\gamma:A\to \Pic^0_C$ be an isogeny. 
    If the $k$-points are dense in $A$, they are in $\Pic^0_C$ as well.
    By Theorem~\ref{thm:Jac of genus 2} $\dendegs(\Pic^0_C/k)$ contains $\mathbb{N}_{\geq 2}$: thus $\dendegs(A/k)\subseteq \dendegs(\Pic^0_C/k)$.
    
    Let $\mu:\Pic^0_C\to A$ be an isogeny so that $\gamma\circ \mu = [m]_{\Pic^0_C}$, where $m=\deg(\mu)$.
    If $\Pic^0_C$ has dense $k$-points, so does $A$.
    Recall now that if $\Pic^0_C$ contains a curve $D$ of (geometric) genus at least 2, then $\{[n](D)\}_{n\in \mathbb{Z}}$ is dense in $\Pic^0_C$.
    More precisely $\{[nm](D)= [n]([m]D)\}_{n\in \mathbb{Z}}$ is dense in $\Pic^0_C$.
    Moreover by proof of Theorem~\ref{thm:Jac of genus 2} an exhaustive source of density for $\Pic^0_C$ in degree at least 2 comes from the images under the multiplication maps $\{[nm]\}_{n\in \mathbb{Z}}$ of $C\xhookrightarrow{} \Pic^0_C, x\mapsto \mathcal{O}_C(2x-\omega_C)$ and an arithmetic genus 4 curve $D\subset \Pic^0_C$ with dense degree 3 points and index 1 normalization $D^{\nu}$.

    In particular (either using $C$ or $D$) for any degree $d\geq 2$, we have a dense set of degree $d$ points on $\Pic^0_C$ such that the images along $[m]_{\Pic^0_C}$ are still of degree $d$. 
    We then conclude as in the product $E_1\times E_2$ case.
\end{proof}

\begin{remark}
    These results are not exhaustive for general abelian surfaces. Whilst every abelian surface is isogenous to a principally polarised abelian surface over an algebraically closed field, this is not necessarily true over the original field. 
    For example, let $E$ be the elliptic curve $y^2 + y = x^3 - x^2$ over $\mathbb{Q}$. The Weil restriction of $E$ from $\mathbb{Q}(\zeta_7)^+$ to $\mathbb{Q}$ is equipped with a trace map to $E$, and the kernel is an abelian surface, $A$. 
    Theorem 3 in \cite{hmnr08} shows that the reduction of $A$ modulo $p$ will not be isogenous to a principally polarized abelian variety if:
    \begin{enumerate}
        \item $p - a_p^2 > 0$,
        \item all prime factors of $p - a_p^2$ are 1 modulo 3, and
        \item $p$ is totally inert in $\mathbb{Q}(\zeta_7)^+$,
    \end{enumerate}    
    where $a_p$ denotes the trace of Frobenius of $E$ at $p$. 
    The prime $17$ satisfies these assumptions since $a_{17} = -2$. If $A$ were isogenous to a principally polarized abelian variety $B$ over $\mathbb{Q}$, then their reductions at 17 would be isogenous, and the reduction of $B$ would remain principally polarized, contradicting that the reduction of $A$ at 17 is not isogenous to a principally polarized abelian variety.
\end{remark}

\printbibliography

@article{GL24,
author = {Evangelia Gazaki, Jonathan Love},
title = {{Torsion phenomena for zero-cycles on a product of curves over a number field}},
volume = {10},
journal = {Research in Number Theory},
number = {2}, 
year = {2024},
doi = {10.1007/s40993-024-00519-4},
URL = {https://doi.org/10.1007/s40993-024-00519-4}
}

@article{KW93,
    author = {Kuwata, Masato and Wang, Lan},
    title = "{Topology of rational points on isotrivial elliptic surfaces}",
    journal = {International Mathematics Research Notices},
    volume = {1993},
    number = {4},
    pages = {113-123},
    year = {1993},
    month = {02},
    url = {https://doi.org/10.1155/S107379289300011X},
}

@article{VV,
    author = {Viray, Bianca and Vogt, Isabel},
    title = "{Isolated and Parameterized Points on Curves}",
    url = {https://arxiv.org/pdf/2406.14353},
}

@ARTICLE{DeJard,
    AUTHOR={Julie Desjardins},
    TITLE={On the density of rational points on rational elliptic surfaces},
    YEAR={2018},
    URL={https://arxiv.org/pdf/1702.01684}
}

@ARTICLE{Maz92,
    AUTHOR={B. Mazur},
    TITLE={Topology of rational points},
    JOURNAL={Experiment. Math.},
    YEAR={1992},
    PAGES={35--45},
    URL={https://eudml.org/doc/231027}
}

@ARTICLE{Maz00,
    AUTHOR={B. Mazur},
    TITLE={Abelian Varieties and the Mordell--Lang Conjecture},
    JOURNAL={Model Theory, Algebra, and Geometry, MSRI Publications, Volume 39},
    YEAR={2000},
    PAGES={199--227},
    URL={https://library.slmath.org/books/Book39/files/mazur.pdf}
}

@ARTICLE{CCH05,
    AUTHOR={B. Conrad K. Conrad and H. Helfgott},
    TITLE={Root numbers and ranks in positive characteristic},
    JOURNAL={Adv. Math.},
    YEAR={2005},
    PAGES={684--731},
}

@incollection {Cil,
    AUTHOR = {Ciliberto, C.},
     TITLE = {M. de {F}ranchis and the theory of hyperelliptic surfaces},
      NOTE = {Studies in the history of modern mathematics, III},
   JOURNAL = {Rend. Circ. Mat. Palermo (2) Suppl.},
  FJOURNAL = {Rendiconti del Circolo Matematico di Palermo. Serie II.
              Supplemento},
    NUMBER = {55},
      YEAR = {1998},
     PAGES = {45--73},
      ISSN = {1592-9531},
   MRCLASS = {14-03 (01A60 14J10 14J50)},
  MRNUMBER = {1661861},
MRREVIEWER = {I.\ Dolgachev},
}

@misc{Scholten,
author       = {Jasper Scholten},
title        = {Genus 2 curves with given split Jacobian},
howpublished = {Cryptology ePrint Archive, Report 2018/1137},
year         = {2018},
note         = {\url{https://ia.cr/2018/1137}},
}

@misc{bruin202322decomposablegenus4jacobians,
      title={On (2,2)-decomposable genus 4 Jacobians}, 
      author={Nils Bruin and Avinash Kulkarni},
      year={2023},
      eprint={2309.01959},
      archivePrefix={arXiv},
      primaryClass={math.AG},
      url={https://arxiv.org/abs/2309.01959}, 
}

@article{howe05,
title={infinite families of pairs of curves over $\mathbb q$ with isomorphic jacobians}, 
volume={72}, 
DOI={10.1112/S0024610705006812}, 
number={2}, 
journal={Journal of the London Mathematical Society}, author={Howe, Everett W.}, 
year={2005}, 
pages={327–350}
}

@misc{VogtKadets2024,
      title={Subspace configurations and low degree points on curves}, 
      author={Borys Kadets and Isabel Vogt},
      year={2024},
      eprint={2208.01067},
      archivePrefix={arXiv},
      primaryClass={math.NT},
      url={https://arxiv.org/abs/2208.01067}, 
}

@article {hmnr08,
    AUTHOR = {Howe, Everett W. and Maisner, Daniel and Nart, Enric and
              Ritzenthaler, Christophe},
     TITLE = {Principally polarizable isogeny classes of abelian surfaces
              over finite fields},
   JOURNAL = {Math. Res. Lett.},
  FJOURNAL = {Mathematical Research Letters},
    VOLUME = {15},
      YEAR = {2008},
    NUMBER = {1},
     PAGES = {121--127},
      ISSN = {1073-2780},
   MRCLASS = {11G10 (11G25 14G15)},
  MRNUMBER = {2367179},
MRREVIEWER = {Siman\ Wong},
       DOI = {10.4310/MRL.2008.v15.n1.a11},
       URL = {https://doi.org/10.4310/MRL.2008.v15.n1.a11},
}

@article {DKS04,
    AUTHOR = {Jeon, Daeyeol and Kim, Chang Heon and Schweizer, Andreas},
     TITLE = {On the torsion of elliptic curves over cubic number fields},
   JOURNAL = {Acta Arith.},
  FJOURNAL = {Acta Arithmetica},
    VOLUME = {113},
      YEAR = {2004},
    NUMBER = {3},
     PAGES = {291--301},
      ISSN = {0065-1036},
   MRCLASS = {11G05},
  MRNUMBER = {2069117},
MRREVIEWER = {Matthew H. Baker},
       DOI = {10.4064/aa113-3-6},
       URL = {https://doi.org/10.4064/aa113-3-6},
}

@book{Hassett2003,
author="Hassett, Brendan",
editor="B{\"o}r{\"o}czky, K{\'a}roly
and Koll{\'a}r, J{\'a}nos
and Szamuely, Tam{\'a}s",
title="Potential Density of Rational Points on Algebraic Varieties",
bookTitle="Higher Dimensional Varieties and Rational Points",
year="2003",
publisher="Springer Berlin Heidelberg",
address="Berlin, Heidelberg",
pages="223--282",
isbn="978-3-662-05123-8",
doi="10.1007/978-3-662-05123-8_8",
url="https://doi.org/10.1007/978-3-662-05123-8_8"
}

@article {DokchitserKellock,
    AUTHOR = {Kellock, Lilybelle Cowland and Dokchitser, Vladimir},
     TITLE = {Root numbers and parity phenomena},
   JOURNAL = {Bulletin of the London Mathematical Society},
    VOLUME = {55},
      YEAR = {2023},
    NUMBER = {6},
     PAGES = {2557--2597},
       DOI = {https://doi.org/10.1112/blms.12931},
       URL = {https://londmathsoc.onlinelibrary.wiley.com/doi/abs/10.1112/blms.12931}
}

@book{ACGH,
 author = {Arbarello, E. and Cornalba, M. and Griffiths, P. A. and Harris, J.},
 title = {Geometry of algebraic curves. {Volume} {I}},
 fseries = {Grundlehren der Mathematischen Wissenschaften},
 series = {Grundlehren Math. Wiss.},
 issn = {0072-7830},
 volume = {267},
 year = {1985},
 publisher = {Springer, Cham},
 language = {English},
 keywords = {14Hxx,14-02,14C20,14K25,14C22,14H45},
}
\end{document}